\tikzset{commutative diagrams/.cd,every label/.append style = {font = \normalsize}}
\numberwithin{equation}{section}
\newtheorem{thm}[equation]{Theorem}
\newtheorem{theorem}[equation]{Theorem}
\newtheorem{corollary}[equation]{Corollary}
\newtheorem{lemma}[equation]{Lemma}
\newtheorem{lem}[equation]{Lemma}
\newtheorem{proposition}[equation]{Proposition}
\newtheorem{conj}[equation]{Conjecture}
\newtheorem{question}[equation]{Question}
\theoremstyle{definition}
\newtheorem{defn}[equation]{Definition}
\newtheorem{definition}[equation]{Definition}
\newtheorem*{pf_no_qed}{Proof}
\newtheorem{eg_no_qed}[equation]{Example}
\newenvironment{example}[1][]{\begin{eg_no_qed}[#1]\pushQED{\qed}}{\popQED\end{eg_no_qed}}
\newtheorem{rmk}[equation]{Remark}
\newtheorem{remark}[equation]{Remark}
\theoremstyle{remark}
\newtheorem*{claimpf_no_qed}{Proof of Claim}
\newcommand{\Le}{\textup{\protect\scalebox{-1}[1]{L}}}
\font\pipefont=lcircle10
\def\elbow{\smash{\raise3pt\hbox{\pipefont\rlap{\rlap{\char'014}\char'016}}}}
\def\halfelbow{\smash{\raise2pt\hbox{\pipefont\rlap{\rlap{\rlap{\char'015}\phantom{\char'017}}}}}}
\def\cross{\smash{\lower5pt\hbox{\rlap{\vrule height16pt}}\raise3pt\hbox{\rlap{\hskip-8pt \vrule height0.4pt depth0pt width16pt}}}}
\def\O{\mathcal{O}}
\def\B{\mathcal{B}}
\def\PP{\mathbb{P}}
\DeclareMathOperator{\convex}{convex}
\DeclareMathOperator{\dual}{dual}
\DeclareMathOperator{\Dr}{Dr}
\DeclareMathOperator{\Mat}{Mat}
\newcommand{\Gr}{Gr_{k,n}^{\geq 0}}
\newcommand{\R}{\mathbb{R}}
\newcommand{\RR}{\mathbb{R}}
\newcommand{\Z}{\mathbb{Z}}
\newcommand{\x}{\mathbf{x}}
\newcommand{\U}{\widetilde{U_{k,n}}}
\DeclareMathOperator{\pre}{pre}
\DeclareMathOperator{\inc}{inc}
\newcommand{\ipre}{\mathfrak{i}_{\pre}}
\newcommand{\iinc}{\mathfrak{i}_{\inc}}
\newcommand{\Ipre}{\iota_{\pre}}
\newcommand{\Iinc}{\iota_{\inc}}
\newcommand{\rf}[1]{\hyperref[#1]{(\ref*{#1})}}
\DeclareMathOperator{\spn}{span}
\DeclareMathOperator{\Trop}{Trop}
\title[The positive tropical Grassmannian
and 
 the $m=2$ amplituhedron]{The positive tropical Grassmannian, the 
hypersimplex, and the 
$m=2$ amplituhedron}
\author{Tomasz {\L}ukowski}
\author{Matteo Parisi}
\author{Lauren K.\ Williams}
\address{}
\email{\href{mailto:t.lukowski@herts.ac.uk}{t.lukowski@herts.ac.uk}}
\email{\href{mailto:matteo.parisi@maths.ox.ac.uk}{matteo.parisi@maths.ox.ac.uk}}
\email{\href{mailto:williams@math.harvard.edu}{williams@math.harvard.edu}}
\begin{document}

\begin{abstract}
The study of the moment map 
from the Grassmannian to the hypersimplex,
and the relation between torus orbits and matroid polytopes,
dates back to the foundational 1987 
 work of Gelfand-Goresky-MacPherson-Serganova \cite{GGMS}.
On the other hand, the \emph{amplituhedron} is a very new object,
defined by Arkani-Hamed--Trnka  \cite{arkani-hamed_trnka} in connection 
with scattering amplitudes in $\mathcal{N}=4$ super Yang Mills theory.
In this paper we discover a striking duality  between
the {moment map} $\mu:Gr^{\geq 0}_{k+1,n} \to \Delta_{k+1,n}$ from 
	the positive Grassmannian $Gr^{\geq 0}_{k+1,n}$ 
to the hypersimplex,
and the {amplituhedron map} 
$\tilde{Z}: Gr^{\geq 0}_{k,n} \to \mathcal{A}_{n,k,2}(Z)$ from 
	$Gr^{\geq 0}_{k,n}$ 
to the $m=2$
amplituhedron.  We consider the 
\emph{positroid dissections} of both objects, which informally,
are subdivisions of $\Delta_{k+1,n}$ (respectively, 
$\mathcal{A}_{n,k,2}(Z)$) into a disjoint union 
of images of  positroid cells of the positive Grassmannian.
At first glance,
$\Delta_{k+1,n}$ and $\mathcal{A}_{n,k,2}(Z)$ seem very different -- the former is an $(n-1)$-dimensional polytope, while 
the latter is a $2k$-dimensional non-polytopal subset of $Gr_{k,k+2}$. 
Nevertheless,  
we conjecture that positroid dissections of $\Delta_{k+1,n}$
are in bijection with positroid dissections of $\mathcal{A}_{n,k,2}(Z)$ via a map we call \emph{T-duality}. 
We prove this conjecture for the (infinite) class of 
\emph{BCFW dissections} and give additional experimental evidence. 
Moreover, we prove that the \emph{positive tropical Grassmannian}
is the secondary fan for the regular positroid subdivisions
of the hypersimplex, and propose that it also 
controls the T-dual positroid subdivisions of the amplituhedron.
Along the way, we 
	prove that a matroid 
polytope is a positroid polytope if and only if all two-dimensional faces
are positroid polytopes.  Towards the goal of generalizing T-duality
for higher $m$, we also define the \emph{momentum
amplituhedron} for any even $m$.
\end{abstract}

\maketitle
\setcounter{tocdepth}{1}
\tableofcontents

\section{Introduction}\label{sec_intro}

In 1987, the foundational work of Gelfand-Goresky-MacPherson-Serganova \cite{GGMS}
initiated the study of the Grassmannian and torus orbits in the Grassmannian via the 
\emph{moment map} and \emph{matroid polytopes}, 
which arise as moment map images of (closures of) 
torus orbits.  Classifying points of the Grassmannian based on the moment map images
of the corresponding torus orbits leads naturally to the \emph{matroid stratification} of the 
Grassmannian.
The moment map image of the entire Grassmannian $Gr_{k+1,n}$ is the $(n-1)$-dimensional
hypersimplex
$\Delta_{k+1,n} \subseteq \R^n$, the convex hull of the indicator vectors $e_I\in \R^n$ where $I \in {[n] \choose k+1}$.
Over the last decades 
there has been a great deal of work on 
matroid subdivisions of the hypersimplex
\cite{Kapranov, Lafforgue, Speyer}; these are closely connected to the 
\emph{tropical Grassmannian} \cite{tropgrass, Speyer, Dressian} 
and the \emph{Dressian} \cite{Dressian}, 
which parametrizes regular matroidal subdivisions of the hypersimplex.

The matroid stratification of the real Grassmannian is notoriously complicated: Mnev's universality
theorem says that the topology of the matroid strata can be as bad as that of any algebraic variety.
However, there is a subset of the Grassmannian called the \emph{totally nonnegative 
Grassmannian} or (informally) the \emph{positive Grassmannian} \cite{lusztig, postnikov}, where 
these difficulties disappear: the restriction of the matroid stratification to the positive 
Grassmannian gives a cell complex \cite{postnikov, rietsch, PSW}, whose cells $S_{\pi}$
are called \emph{positroid cells} and labelled by (among other things) \emph{decorated permutations}.
Since the work of Postnikov \cite{postnikov}, there has been an extensive study of \emph{positroids} \cite{Suho, ARW, ARW2} -- 
the matroids associated to the positroid cells.  The moment map images of 
positroid cells are precisely the positroid polytopes 
\cite{tsukerman_williams}, and as we will discuss in this paper, the 
\emph{positive tropical Grassmannian} \cite{troppos} 
(which equals the \emph{positive Dressian} \cite{SW})
parametrizes the regular positroid subdivisions of the hypersimplex.

Besides the moment map, there is another interesting map on the positive 
Grassmannian, which was recently introduced by Arkani-Hamed and Trnka \cite{arkani-hamed_trnka} 
in the context of \emph{scattering amplitudes} in $\mathcal{N}=4$ SYM.
In particular, any $n \times (k+m)$ 
matrix $Z$ with maximal minors positive induces
a map $\tilde{Z}$ from $Gr^{\geq 0}_{k,n}$ to the Grassmannian
$Gr_{k,k+m}$, whose image has full dimension $mk$ and is called
the \emph{amplituhedron} $A_{n,k,m}$ \cite{arkani-hamed_trnka}.
The case $m=4$ is most relevant to physics: in this case, the \emph{BCFW recurrence} (named for Britto, Cachazo, Feng, and Witten \cite{BCFW}) gives rise to
collections of $4k$-dimensional cells in $Gr^{\geq 0}_{k,n}$, whose images conjecturally 
\emph{tile} or \emph{triangulate} the amplituhedron.  

Given that the hypersimplex and the amplituhedron are images of the positive Grassmannian, 
which has a decomposition into positroid cells, one can ask the following questions.
When does a collection of positroid cells give -- via the moment map -- a \emph{positroid dissection} of the hypersimplex?
By \emph{dissection}, we mean that the images of these cells 
are disjoint and cover a dense subset of the hypersimplex (but we do not
put any constraints on how their boundaries match up).
When does a collection of positroid cells give -- via the $\widetilde{Z}$-map -- a dissection of the amplituhedron?
We can also ask about \emph{positroid tilings}, which are dissections 
coming from cells on which 
the moment map (respectively, the $\widetilde{Z}$-map) is injective.

The combinatorics of positroid tilings for both the hypersimplex and the amplituhedron
is very interesting: 
Speyer's $f$-vector theorem \cite{Speyer, Ktheory} gives an upper bound
on the number of 
matroid polytopes of each dimension in a matroidal subdivision coming
from the tropical Grassmannian.
In particular, it 
says that the number of 
top-dimensional matroid polytopes in such a subdivision of 
$\Delta_{k+1,n}$ is at most 
${n-2 \choose k}$.  
This number is in particular achieved by finest positroid subdivisions \cite{SW}.
Meanwhile, the third author together with Karp and Zhang 
\cite{Karp:2017ouj} conjectured 
that the number of cells in a tiling of the amplituhedron
 $\mathcal{A}_{n,k,m}(Z)$ for even $m$ is precisely 
$M(k, n-k-m, \frac{m}{2})$, where 
$$M(a,b,c) := \prod_{i=1}^a\prod_{j=1}^b\prod_{k=1}^c\frac{i+j+k-1}{i+j+k-2}
$$
 is the number of plane partitions contained in an $a \times b \times c$ box.
Note that when $m=2$, this conjecture says that the number of cells in a tiling of 
$\mathcal{A}_{n,k,2}(Z)$ equals ${n-2 \choose k}$.

What we show in this paper is that the appearance of the number ${n-2 \choose k}$ 
in the context of both the hypersimplex $\Delta_{k+1,n}$ and the amplituhedron $\mathcal{A}_{n,k,2}(Z)$
is not a coincidence!  Indeed, we can obtain tilings of the amplituhedron from 
tilings of the hypersimplex, by applying a \emph{T-duality map}.
This T-duality map sends loopless positroid cells $S_{\pi}$ of $Gr^{\geq 0}_{k+1,n}$ to coloopless
positroid cells
$S_{\hat{\pi}}$ of $Gr^{\geq 0}_{k,n}$ via a simple operation on the decorated permutations, 
see 
\cref{T-duality}.
T-duality sends \emph{tiles} for the hypersimplex (cells
where the moment map is injective) to tiles for the amplituhedron (cells
where $\widetilde{Z}$ is injective), see 
\cref{prop:injective}, and moreover
it sends dissections of 
the hypersimplex to dissections of the amplituhedron, see 
\cref{thm:shift}
and \cref{conj:dis}.
This explains the two appearances of the number ${n-2 \choose k}$ on the two
sides of the story. 

The fact that dissections of $\Delta_{k+1,n}$ and $\mathcal{A}_{n,k,2}(Z)$
 are in bijection is a rather surprising statement.
 Should there be a map from $\Delta_{k+1,n}$ to $\mathcal{A}_{n,k,2}(Z)$ or 
 vice-versa?
We have $\dim \Delta_{k+1,n} = n-1$ and $\dim \mathcal{A}_{n,k,2}(Z) = 2k$,
with no relation between $n-1$ and $2k$ (apart from $k \leq n$) so it is 
not obvious that a nice map between them should exist.  Nevertheless we do show
  that T-duality descends from a certain map
 that can be defined directly on positroid cells of $Gr^{\geq 0}_{k+1,m}$.

The T-duality map provides a handy tool for studying the amplituhedron $\mathcal{A}_{n,k,2}(Z)$: 
we can try to 
understand properties of the amplituhedron (and its dissections)
by studying the hypersimplex and applying T-duality.
For example, we show in \cref{sec:parity} that the rather mysterious 
\emph{parity duality}, which relates dissections of $\mathcal{A}_{n,k,2}(Z)$
with dissections of $\mathcal{A}_{n,n-k-2,2}$, can be obtained by composing
the hypersimplex duality $\Delta_{k+1,n} \simeq \Delta_{n-k-1,n}$ (which comes
from the Grassmannian duality $Gr_{k+1,n} \simeq Gr_{n-k-1,n}$) with T-duality on both sides.
As another example, we can try to obtain ``nice'' dissections of the amplituhedron from 
correspondingly nice dissections of the hypersimplex.
In general, dissections of $\Delta_{k+1,n}$ and $\mathcal{A}_{n,k,2}(Z)$ may have 
unpleasant properties, with images of cells intersecting badly at their boundaries, see \cref{sec:good}. 
However, the 
 \emph{regular subdivisions} of $\Delta_{k+1,n}$ are very nice polyhedral subdivisions.
By \cref{prop:positroid}, the 
regular positroid dissections of $\Delta_{k+1,n}$ come precisely from 
the positive Dressian $Dr^+_{k+1,n}$ (which equals the positive tropical Grassmannian 
$\Trop^+Gr_{k+1,n}$).  And moreover the images of these subdivisions 
under the T-duality map are very nice subdivisions of the amplituhedron $\mathcal{A}_{n,k,2}(Z)$, see 
\cref{sec:regularA}.   We speculate that $\Trop^+Gr_{k+1,n}$ plays the role of secondary fan
for the regular positroid subdivisions of $\mathcal{A}_{n,k,2}(Z)$, see
\cref{conj:fan}.

One step in proving \cref{prop:positroid} is the following new characterization of positroid
polytopes 
(see \cref{thm:char}):
a matroid polytope is a positroid polytope if and only if all of its two-dimensional faces are positroid polytopes.

Let us now explain how  
the various geometric objects in our story are related to scattering amplitudes in  
supersymmetric fields theories.
The main emphasis so far has been on the so-called ``planar limit'' of $\mathcal{N}=4$ super Yang-Mills.
In 2009, the works of Arkani-Hamed--Cachazo--Cheung--Kaplan \cite{ArkaniHamed:2009dn} and Bullimore--Mason--Skinner \cite{Bullimore:2009cb} introduced beautiful Grassmannian formulations for scattering amplitudes in this theory. Remarkably, this led to the discovery that the positive Grassmannian encodes most of the physical properties of amplitudes \cite{abcgpt}. 
Building on these developments and on Hodges' idea that scattering amplitudes might be `volumes' of some geometric object \cite{Hodges:2009hk}, Arkani-Hamed and Trnka arrived at the definition of the \emph{amplituhedron} $\mathcal{A}_{n,k,m}(Z)$ \cite{arkani-hamed_trnka} in 2013.

The $m=4$ amplituhedron $\mathcal{A}_{n,k,4}$ is the object most relevant
to physics: it encodes the geometry of (tree-level) scattering amplitudes in planar $\mathcal{N}=4$ SYM.  However, the amplituhedron is a well-defined and interesting mathematical object for any $m$. 
For example, the $m=1$ amplituhedron $\mathcal{A}_{n,k,1}$ can be identified with the complex of bounded faces of a cyclic hyperplane arrangement \cite{karpwilliams}.  
The $m=2$ amplituhedron $\mathcal{A}_{n,k,2}(Z)$, which is a main subject of this paper,
also has a beautiful combinatorial structure, and 
has been recently studied e.g. in \cite{Arkani-Hamed:2017vfh, Karp:2017ouj,BaoHe,Lukowski:2019sxw,Lukowski:2019kqi}.  
From the point of view of physics, $\mathcal{A}_{n,k,2}(Z)$ is often considered as a toy-model for the $m=4$ case.
However it has applications to physics as well:
 $\mathcal{A}_{n,2,2}$ governs the geometry of scattering amplitudes in $\mathcal{N}=4$ SYM at the subleading order in perturbation theory for the so-called `MHV' sector of the theory, and remarkably,
the $m=2$ amplituhedron $\mathcal{A}_{n,k,2}(Z)$ 
is also relevant for the `next to MHV' sector, enhancing its connection with the geometries of loop amplitudes \cite{Kojima:2020tjf}.

Meanwhile, in recent years physicists have been increasingly interested in understanding how \emph{cluster algebras} encode the analytic properties of scattering amplitudes, both at tree- and loop- level \cite{Golden:2013xva}. This led them to explore the connection between cluster algebras and the \emph{positive tropical Grassmannian} which was observed in \cite{troppos}.
In particular, the positive tropical Grassmannian has been increasingly playing a role in different areas of scattering amplitudes: from bootstrapping loop amplitudes in $\mathcal{N}=4$ SYM \cite{Drummond:2019cxm, Arkani-Hamed:2019rds,Henke:2019hve} to computing scattering amplitudes in certain scalar theories \cite{Cachazo:2019ngv}. 

Finally, physicists have already observed a duality between the formulations of scattering amplitudes $\mathcal{N}=4$ SYM in momentum space\footnote{More precisely, it is `spinor helicity' space, or, equivalently (related by half-Fourier transform), in twistor space. See \cite[Section 8]{abcgpt}.} and in momentum twistor space.
This is possible because of the so-called `Amplitude/Wilson loop duality' \cite{Alday:2008yw}, which was shown to arise from a more fundamental duality in String Theory called `T-duality' \cite{Berkovits:2008ic}. 
The geometric counterpart of this fact is a duality 
between collections of $4k$-dimensional `BCFW' cells of $Gr^{\geq 0}_{k,n}$ which 
(conjecturally) tile 
the amplituhedron $\mathcal{A}_{n,k,4}$, and corresponding collections of $(2n-4)$-dimensional cells of $Gr^{\geq 0}_{k+2,n}$ which (conjecturally) tile the \emph{momentum amplituhedron} $\mathcal{M}_{n,k,4}$;
the latter object
was introduced very recently by the first two authors together with Damgaard and Ferro
\cite{mamp}.
In this paper we see that this duality, which we have evocatively called \emph{T-duality}, extends beyond $m=4$. 
In particular, for $m=2$, the \emph{hypersimplex} $\Delta_{k+1,n}$ and the $m=2$ amplituhedron $\mathcal{A}_{n,k,2}(Z)$  are somehow 
dual to each other, a phenomenon that we explore and employ to study 
properties of both objects. 
We believe that this duality holds for any (even) $m$: 
in \cref{sec:twistor} we introduce
a generalization $\mathcal{M}_{n,k,m}$ of the momentum amplituhedron $\mathcal{M}_{n,k,4}$, and a 
corresponding
notion of \emph{T-duality}.

\textsc{Acknowledgements:}
All three authors would like to thank
the Harvard Center for Mathematical Sciences and Applications (CMSA)
for its hospitality, 
and the first and second authors would like to thank the organizers of the 
``Spacetime and Quantum Mechanics master class workshop" at the CMSA
for providing an excellent environment for discussions.  
Additionally we would like to thank 
Nick Early, Pasha Galashin, Felipe Rincon, 
Mario Sanchez,
 Melissa Sherman-Bennett, and David Speyer for useful comments.
The second author would like to acknowledge the support of the ERC grant number 724638, and the
third author would  like to acknowledge the support of the National Science Foundation
under agreements No.\ DMS-1854316 and No.\ DMS-1854512.  Any opinions,
findings and conclusions or recommendations expressed in this material
are those of the authors and do not necessarily reflect  the
views of the National Science Foundation.

\section{The positive Grassmannian, the hypersimplex, and the amplituhedron}

In this section we introduce the three main geometric objects in this paper:
the positive Grassmannian, the hypersimplex,
and the amplituhedron.  The latter two objects are images of the positive Grassmannian  under
the \emph{moment map} and the \emph{$\widetilde{Z}$-map}.

\begin{defn} The {\itshape (real) Grassmannian} $Gr_{k,n}$ (for $0\le k \le n$) is the space of all $k$-dimensional subspaces of $\R^n$.  An element of
$Gr_{k,n}$ can be viewed as a $k\times n$ matrix of rank $k$ modulo invertible row operations, whose rows give a basis for the $k$-dimensional subspace.
\end{defn}

Let $[n]$ denote $\{1,\dots,n\}$, and $\binom{[n]}{k}$ denote the set of all $k$-element subsets of $[n]$. Given $V\in Gr_{k,n}$ represented by a $k\times n$ matrix $A$, for $I\in \binom{[n]}{k}$ we let $p_I(V)$ be the $k\times k$ minor of $A$ using the columns $I$. The $p_I(V)$ do not depend on our choice of matrix $A$ (up to simultaneous rescaling by a nonzero constant), and are called the {\itshape Pl\"{u}cker coordinates} of $V$.

\subsection{The positive Grassmannian and its cells}
\begin{defn}[{\cite[Section~3]{postnikov}}]
	\label{def:positroid}
We say that $V\in Gr_{k,n}$ is {\itshape totally nonnegative} if $p_I(V)\ge 0$ for all $I\in\binom{[n]}{k}$.  
	The set of all totally nonnegative $V\in Gr_{k,n}$ is the {\it totally nonnegative Grassmannian} $Gr_{k,n}^{\geq 0}$; abusing notation, we will often refer to 
	$\Gr$ as the \emph{positive Grassmannian}.
	For $M\subseteq \binom{[n]}{k}$, let $S_{M}$ be
the set of $V\in Gr_{k,n}^{\geq 0}$ with the prescribed collection of Pl\"{u}cker coordinates strictly positive (i.e.\ $p_I(V)>0$ for all $I\in M$), and the remaining Pl\"{u}cker coordinates
equal to zero (i.e.\ $p_J(V)=0$ for all $J\in\binom{[n]}{k}\setminus M$). If $S_M\neq\emptyset$, we call $M$ a \emph{positroid} and $S_M$ its \emph{positroid cell}.
\end{defn}

Each positroid cell $S_{M}$ is indeed a topological cell \cite[Theorem 6.5]{postnikov}, and moreover, the positroid cells of $Gr_{k,n}^{\ge 0}$ glue together to form a CW complex \cite{PSW}.

As shown in \cite{postnikov}, the cells of $Gr_{k,n}^{\geq 0}$
are in bijection 
with various combinatorial objects, including 
\emph{decorated permutations} $\pi$ on $[n]$ with $k$ anti-excedances
and equivalence classes of \emph{reduced plabic graphs} $G$ of type $(k,n)$.
In \cref{app} we review these objects and give bijections between them.  This gives a canonical way to label each positroid by a decorated permutation and an equivalence class of plabic graphs; we will correspondingly refer to positroid cells as $S_{\pi}$, 
$S_G$, etc.

\subsection{The moment map and the hypersimplex} \label{def:hyper}

The \emph{moment map} from the Grassmannian $Gr_{k,n}$ to $\R^n$
is defined as follows.
\begin{definition}\label{def:moment1}
	Let $A$ be a $k \times n$ matrix representing a point of 
	$Gr_{k,n}$.
	The \emph{moment map}\footnote{We remark that 
	there is another version of the moment map called the 
	\emph{algebraic moment map}, which we will briefly discuss
	later, see \cref{def:moment2}.}
	$\mu: Gr_{k,n} \to \R^n$ is defined by 
	$$\mu(A) = \frac{ \sum_{I \in \binom{[n]}{k}} |p_I(A)|^2 e_I}
	{\sum_{I \in \binom{[n]}{k}} |p_I(A)|^2},$$
where $e_I := \sum_{i \in I} e_i \in \R^n$, and $\{e_1, \dotsc, e_n\}$ is the standard basis of $\RR^n$.
\end{definition}

It is well-known that the image of the Grassmannian $Gr_{k,n}$ under the moment
map is the \emph{$(k,n)$-hypersimplex} 
$\Delta_{k,n}$, 
which is the convex hull of the points $e_I$ where $I$ runs over $\binom{[n]}{k}$.
If one restricts the moment map to $\Gr$ then the image is again the hypersimplex
$\Delta_{k,n}$ 
\cite[Proposition 7.10]{tsukerman_williams}.

We will consider the restriction of the moment map to 
positroid cells of $\Gr$.  
\begin{definition}\label{def:Gamma}
Given a positroid cell $S_{\pi}$ of $Gr^{\geq 0}_{k,n}$, we let 
$\Gamma^{\circ}_{\pi} = \mu(S_{\pi})$, and 
$\Gamma_{\pi} = \overline{\mu(S_{\pi})}$.  
\end{definition}

There are a number of natural questions to ask.
What do the $\Gamma_{\pi}$ look like,
and how can one characterize them?
On which positroid cells is the moment
map injective?
The images $\Gamma_{\pi}$ of (closures of) positroid cells 
are called 
\emph{positroid polytopes}; we will explore their nice properties 
 in \cref{sec:polytopes}.

One of our main motivations is to understand \emph{positroid dissections} 
 of the hypersimplex.

\begin{definition}\label{def:dissection1}
	A \emph{positroid dissection} (or simply a \emph{dissection})
	of $\Delta_{k,n}$ 
is a collection 
 $\mathcal{C} = \{S_{\pi}\}$ of positroid cells of $\Gr$, such that:
	\begin{itemize}
		\item $\dim \Gamma_{\pi} = n-1$ for each $S_{\pi}\in \mathcal{C}$
		\item the images $\Gamma^{\circ}_{\pi}$ and $\Gamma^{\circ}_{\pi'}$
			of two distinct cells in the collection are disjoint 
		\item $\cup_{\pi} \Gamma_{\pi} = \Delta_{k,n}$, i.e. 
			the union of the images of the cells is dense in $\Delta_{k,n}$.
	\end{itemize}
	We say that a positroid dissection 
 $\mathcal{C} = \{S_{\pi}\}$ 
	of $\Delta_{k,n}$ is a \emph{positroid tiling}
	of $\Delta_{k,n}$ if $\mu$ is injective on each $S_{\pi} \in \mathcal{C}$.
\end{definition}

\begin{question}
Let $\mathcal{C} = \{S_{\pi}\}$ be a collection of positroid cells of $\Gr$.
	When is $\mathcal{C}$ a positroid dissection of $\Delta_{k,n}$?  When is it a positroid tiling?
\end{question}

\subsection{The $\widetilde{Z}$-map and the amplituhedron}

Building on \cite{abcgpt},
Arkani-Hamed and Trnka \cite{arkani-hamed_trnka} recently introduced a beautiful new
mathematical object called the \emph{(tree) amplituhedron}, which
is the image of the positive Grassmannian under a map $\widetilde{Z}$
induced by a positive matrix $Z$.

\begin{defn}\label{def:amp}
For $a \le b$, define $\Mat_{a,b}^{>0}$ as the set of real $a \times b$ matrices whose $a\times a$ minors are all positive.
Let $Z\in \Mat_{n,k+m}^{>0}$. The \emph{amplituhedron map} $\tilde{Z}:Gr_{k,n}^{\ge 0} \to Gr_{k,k+m}$ is defined by $\tilde{Z}(C):=CZ$, where $C$ is a $k \times n$ matrix representing an element of $Gr_{k,n}^{\geq 0}$, and $CZ$ is a $k \times (k+m)$ matrix representing an element of $Gr_{k,k+m}$. The \emph{amplituhedron} $\mathcal{A}_{n,k,m}(Z) \subseteq Gr_{k,k+m}$ is the image $\tilde{Z}(Gr_{k,n}^{\ge 0})$.
\end{defn}

In special cases the amplituhedron recovers familiar objects. If $Z$ is a square matrix, i.e.\
$k+m=n$, then $\mathcal{A}_{n,k,m}(Z)$ is isomorphic to
the positive Grassmannian. If $k=1$, then it follows from
\cite{Sturmfels} that $\mathcal{A}_{n,1,m}(Z)$ is a {\itshape cyclic polytope} in projective space $\mathbb{P}^m$.
If $m=1$, then $\mathcal{A}_{n,k,1}(Z)$ can be identified with the complex of 
bounded faces of a cyclic hyperplane arrangement \cite{karpwilliams}.

We will consider the restriction of the $\widetilde{Z}$-map to 
positroid cells of $\Gr$. 

\begin{definition}\label{def:Z}
Given a positroid cell $S_{\pi}$ of $Gr^{\geq 0}_{k,n}$, we let 
	$Z^{\circ}_{\pi} = \widetilde{Z}(S_{\pi})$, and 
	$Z_{\pi} = \overline{\widetilde{Z}(S_{\pi})}$. We refer to $Z^{\circ}_{\pi}$ and $Z_{\pi}$ as \emph{open Grasstopes} and \emph{Grasstopes} respectively.
\end{definition}

As in the case of the hypersimplex, one of our main motivations 
is to understand \emph{positroid dissections}  of the amplituhedron $\mathcal{A}_{n,k,m}(Z)$.

\begin{definition}\label{def:dissection2}
Let $\mathcal{C} = \{Z_{\pi}\}$ be a collection 
of Grasstopes, with $\{S_{\pi}\}$ a collection of positroid cells  of $\Gr$.  We say that 
$\mathcal{C}$ is 
a \emph{positroid dissection} of $\mathcal{A}_{n,k,m}(Z)$ if we have that:
\begin{itemize}
	\item $\dim {Z}_{\pi} = mk$ for each $Z_{\pi}\in \mathcal{C}$
	\item pairs of distinct open Grasstopes $Z^{\circ}_{\pi}$ and $Z^{\circ}_{\pi'}$  in the collection are disjoint 
	\item $\cup_{\pi} Z_{\pi} = \mathcal{A}_{n,k,m}(Z)$.
	\end{itemize}
	We say that a positroid dissection 
 $\mathcal{C} = \{Z_{\pi}\}$ 
	of $\mathcal{A}_{n,k,m}(Z)$ is a \emph{positroid tiling}
	(or simply a \emph{tiling})
	of $\mathcal{A}_{n,k,m}(Z)$ if $\widetilde{Z}$ is injective on each $S_{\pi}$.
\end{definition}

\begin{remark}
	Let $\mathcal{S}$ be an index set for 
	 cells of $\Gr$.  It is expected that  
if $Z$ and $Z'$ both lie in $ \Mat_{k+m,n}^{>0}$, then
	$\{Z_{\pi}\}_{\pi \in \mathcal{S}}$ is a positroid tiling
	(respectively, dissection)
	of $\mathcal{A}_{n,k,m}(Z)$ if and only if 
	$\{Z'_{\pi}\}_{\pi \in \mathcal{S}}$ is a positroid tiling
	(respectively, dissection)
	of $\mathcal{A}_{n,k,m}(Z')$.

	The results we prove
	in this paper will be independent of $Z$. 
\end{remark}

\begin{question}
	Let $\mathcal{C} = \{Z_{\pi}\}$ be a collection of Grasstopes, with $\{S_{\pi}\}$ positroid cells of $\Gr$.
	When is $\mathcal{C}$ a positroid dissection of $\mathcal{A}_{n,k,m}(Z)$?  When is it a positroid tiling?
\end{question}

In this paper we will primarily focus on the case $m=2$ (with the exception of \cref{sec:twistor},
where we give some generalizations of our results and conjectures to general even $m$).
(positroid) tilings of the amplituhedron
have been studied in \cite{arkani-hamed_trnka}, \cite{Ferro:2015grk},
 \cite{Arkani-Hamed:2017tmz}, 
  \cite{Karp:2017ouj}, 
  \cite{Galashin:2018fri}, 
  \cite{Ferro:2018vpf}.  Very recently 
  the paper \cite{BaoHe} constructed (with proof) many
  tilings of the $m=2$ amplituhedron.
 The $m=2$ amplituhedron has also been studied in 
 \cite{Arkani-Hamed:2017vfh} (which gave an alternative description of it in terms of 
sign patterns; see also \cite{Karp:2017ouj}), in 
\cite{Lukowski:2019kqi} (which described
the boundary stratification of the amplituhedron $\mathcal{A}_{n,k,2}(Z)$), 
and in \cite{Lukowski:2019sxw} (which discussed
its relation to cluster algebras).
Note that our notion of dissection above is the same as the notion of subdivision from 
\cite[Definition 7.1]{Galashin:2018fri}.  (However, we prefer the word ``dissection,''
as the word ``subdivision'' is often used to indicate that there
are constraints on how the boundaries match up.)

\section{Positroid polytopes and the moment map}\label{sec:polytopes}

In this section we study \emph{positroid polytopes}, which are images
of positroid cells of $\Gr$ under the moment map $\mu:\Gr\to \R^n$.  
We recall some of the known properties of matroid and positroid polytopes, 
we give a new
characterization of positroid polytopes
(see \cref{thm:char}),
 and we describe when the moment map is an injection on a positroid cell,
or equivalently, when the 
moment map restricts to a homeomorphism from the closure of a positroid cell
to the corresponding positroid polytope (see \cref{prop:homeo}
and \cref{prop:homeo2}). 

\subsection{Matroid polytopes}
The torus $T = \R^n$ acts on $Gr_{k,n}$ by scaling the columns of 
a matrix representative $A$.  (This is really an $(n-1)$-dimensional torus
since the Grassmannian is a projective variety.)  
We let $TA$ denote the orbit of $A$ under the action of $T$, and 
$\overline{TA}$ its closure.  It follows from classical work of 
Atiyah \cite{A:82} and Guillemin-Sternberg \cite{GS} that the image
$\mu(\overline{TA})$ is a convex polytope, whose vertices
are the images of the torus-fixed points, i.e. the vertices are the
points $e_I$ such that $p_I(A) \neq 0$.

This motivates the notion of \emph{matroid polytope}.  
Note that any full
rank $k\times n$ matrix $A$ gives rise to a matroid 
$M(A)=([n],\B)$, where $\B = \{I \in \binom{[n]}{k} \ \vert \ p_I(A) \neq 0\}$.

\begin{definition}\label{def:mpolytope}
Given a matroid $M=([n],\B)$, the (basis) \emph{matroid polytope} $\Gamma_M$ of $M$ is the convex hull of the indicator vectors of the bases of~$M$:
\[
\Gamma_M := \convex\{e_B \mid B \in \B\} \subset \RR^n.
\]
\end{definition}

The following elegant characterization of matroid polytopes
is due to Gelfand,
Goresky, MacPherson, and Serganova.

\begin{theorem}[\cite{GGMS}]\label{r:GS} Let $\B$ be a collection of subsets of $[n]$ and let
$\Gamma_{\B}:=\convex\{e_B \mid B \in \B\} \subset \RR^n$. Then $\B$ is the collection of bases of a matroid if and only if every edge of  $\Gamma_\B$ is a parallel translate of $e_i-e_j$ for some $i,j \in [n]$.
\end{theorem}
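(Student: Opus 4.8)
The plan is to prove both implications, relying throughout on two standard facts: every $0/1$ vector that lies in $\Gamma_{\B}$ is a vertex of $\Gamma_{\B}$ (so the vertex set is exactly $\{e_B : B\in\B\}$), and the $1$-skeleton of any polytope is connected. I tacitly assume $\B\neq\emptyset$; a singleton $\B$ is trivially a matroid and has no edges to inspect, so nothing is required there.

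For the ``only if'' direction, suppose $\B$ is the set of bases of a matroid, so all $B\in\B$ have the same size. Let $E$ be an edge of $\Gamma_{\B}$ with endpoints $e_B,e_{B'}$, and choose $\omega\in\R^n$ whose associated linear functional attains its maximum value $c$ on $\Gamma_{\B}$ exactly along $E$, so $\langle\omega,e_B\rangle=\langle\omega,e_{B'}\rangle=c$ while $\langle\omega,e_C\rangle<c$ for every other $C\in\B$. Assume for contradiction that $|B\triangle B'|\ge 4$ and pick $i\in B\setminus B'$. By the symmetric exchange property of matroids (a well-known strengthening of the basis-exchange axiom: for bases $B,B'$ and $i\in B\setminus B'$ there is $j\in B'\setminus B$ with both $B\setminus\{i\}\cup\{j\}$ and $B'\setminus\{j\}\cup\{i\}$ bases) choose such a $j$, and set $B_1:=B\setminus\{i\}\cup\{j\}$, $B_2:=B'\setminus\{j\}\cup\{i\}$. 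Since $e_{B_1}+e_{B_2}=e_B+e_{B'}$ we get $\langle\omega,e_{B_1}\rangle+\langle\omega,e_{B_2}\rangle=2c$, and as each term is $\le c$ both equal $c$; thus $e_{B_1}$ is a vertex of $\Gamma_{\B}$ lying on the edge $E$, so $B_1\in\{B,B'\}$. But $B_1\neq B$ since $i\neq j$, and $B_1\neq B'$ since $|B\triangle B_1|=2<|B\triangle B'|$ — a contradiction. Hence $|B\triangle B'|=2$ and $e_{B'}-e_B=e_j-e_i$.

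For the ``if'' direction, suppose every edge of $P:=\Gamma_{\B}$ is a parallel translate of some $e_i-e_j$. Connectedness of the $1$-skeleton, together with the fact that such edge directions have coordinate sum $0$, forces all $B\in\B$ to share a common size $k$; it remains to verify the basis-exchange axiom. Fix $B,B'\in\B$ and $i\in B\setminus B'$, and consider $\omega:=e_B+e_{B'}\in\R^n$. For $C\in\B$ one computes $\langle\omega,e_C\rangle=|B\cap C|+|B'\cap C|=|C\cap(B\cup B')|+|C\cap B\cap B'|\le k+|B\cap B'|$, with equality exactly when $B\cap B'\subseteq C\subseteq B\cup B'$. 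So the face $F$ of $P$ on which $\omega$ is maximized is the convex hull of $\{e_C : C\in\B,\ B\cap B'\subseteq C\subseteq B\cup B'\}$ and contains $e_B,e_{B'}$. All these $e_C$ agree on the coordinates outside $S:=B\triangle B'$, so $F$ lies in the corresponding coordinate subspace; restricting to the coordinates in $S$ identifies $F$ with a $0/1$ polytope $Q\subseteq[0,1]^S$ all of whose edges, being edges of $P$, still have direction $e_a-e_b$ with $a,b\in S$. Write $v:=\mathbf{1}_{B\setminus B'}$ and $v':=\mathbf{1}_{B'\setminus B}=\mathbf{1}_S-v$ for the vertices of $Q$ corresponding to $e_B,e_{B'}$, and note $v_i=1$, $v'_i=0$. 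Then $\{x\in Q:x_i=1\}$ is a proper face containing $v$ but not $v'$, so not every edge of $Q$ at $v$ can keep $x_i$ unchanged — otherwise $Q$, being contained in the cone at $v$ generated by its edge directions, would lie in $\{x_i=1\}$. Hence some edge at $v$ alters the $i$-coordinate, and since $v_i=1$ is maximal in $[0,1]$ its direction must be $e_p-e_i$ for some $p\in S\setminus\{i\}$; the opposite endpoint $v-e_i+e_p$ lies in $[0,1]^S$, which forces $v_p=0$, i.e.\ $p\in B'\setminus B$. Lifting back to $[n]$, this says $B\setminus\{i\}\cup\{p\}\in\B$ with $p\in B'\setminus B$, which is exactly the exchange axiom. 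Therefore $\B$ is the set of bases of a matroid.

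The crux of the argument is the ``if'' direction. The ``only if'' direction is essentially bookkeeping once the symmetric exchange property is invoked. For the converse, the two points that need care are the choice of the functional $\omega=e_B+e_{B'}$, which carves out the ``interval'' of bases wedged between $B\cap B'$ and $B\cup B'$ and reduces the problem to the sub-cube $[0,1]^S$, and then the verification — via the ``an edge must exit the facet $\{x_i=1\}$'' argument — that the exiting edge raises a coordinate $p$ genuinely lying in $B'\setminus B$, not merely outside $B$. I expect this last point to be the main obstacle.
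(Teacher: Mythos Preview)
The paper does not actually prove this theorem: it is quoted as a classical result of Gelfand--Goresky--MacPherson--Serganova and cited to \cite{GGMS} with no argument given, so there is no ``paper's own proof'' to compare against.

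That said, your proof is correct. The ``only if'' direction via the symmetric exchange property is the standard argument. For the ``if'' direction your choice of $\omega=e_B+e_{B'}$ is the right move: it isolates exactly the face spanned by those $e_C$ with $B\cap B'\subseteq C\subseteq B\cup B'$, and projecting to the coordinates in $S=B\triangle B'$ lets you work inside a cube. The key step---that at the vertex $v$ some edge must leave the proper face $\{x_i=1\}$, because otherwise the tangent cone at $v$ (which contains $Q-v$) would lie in that hyperplane---is handled correctly, and your check that the resulting $p$ lies in $B'\setminus B$ rather than merely in $S$ is exactly the point that needs care.
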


The dimension of a matroid polytope is determined by the number of 
connected components of the matroid.
Recall that a matroid which cannot be written as the direct sum
of two nonempty matroids is called \emph{connected}.

\begin{proposition}[\cite{Oxley}]
\label{prop:equiv}
Let $M$ be a matroid on $E$.  For two elements $a, b \in E$, we set
$a \sim b$ whenever there are bases $B_1, B_2$ of $M$ such that
	$B_2 = (B_1 - \{a\}) \cup \{b\}$.  The relation $\sim$ is an equivalence
relation, and the equivalence classes are precisely the connected components of $M$.
\end{proposition}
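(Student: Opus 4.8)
The plan is to translate the exchange relation $\sim$ into a statement purely about circuits, verify the equivalence-relation axioms in that language, and then identify the resulting classes with the pieces of the unique decomposition of $M$ into connected matroids. The key lemma to establish first is: for distinct $a,b\in E$, one has $a\sim b$ if and only if some circuit of $M$ contains both $a$ and $b$ (declaring $a\sim a$ for every $a$, so that loops produce singleton classes). I would prove the ``only if'' direction by the fundamental-circuit argument: if $B_2=(B_1\setminus\{a\})\cup\{b\}$ with $B_1,B_2$ bases, then the fundamental circuit $C$ of $b$ with respect to $B_1$ satisfies $C\subseteq B_1\cup\{b\}$, and $a\in C$ because deleting $a$ from $B_1\cup\{b\}$ leaves the independent set $B_2$; of course $b\in C$ as well. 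For ``if'', given a circuit $C\ni a,b$, I would extend $C\setminus\{a\}$ to a basis $B_2$; then necessarily $a\notin B_2$, so $B_2\cup\{a\}$ contains $C$ as its unique circuit, hence $B_1:=(B_2\cup\{a\})\setminus\{b\}$ is a basis and $B_2=(B_1\setminus\{a\})\cup\{b\}$.

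Granting the lemma, symmetry of $\sim$ is clear (the circuit condition is symmetric in $a,b$; or directly, swap the roles of $B_1$ and $B_2$) and reflexivity holds by convention, so the one substantive point is transitivity. I would deduce it from the circuit elimination axiom: given distinct $a,b,c$ and circuits $C_1\ni a,b$ and $C_2\ni b,c$, one argues by induction on $|C_1\cup C_2|$, applying circuit elimination at a common element to replace $(C_1,C_2)$ by a pair of circuits with strictly smaller union that still joins $a$ to $c$ through a common element — using repeatedly that a circuit cannot be a proper subset of another circuit to prevent the new circuits from collapsing back to old ones — until $a$ and $c$ actually lie on a single circuit. This is the standard argument for the transitivity of matroid connectivity.

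For the identification with connected components I would argue two inclusions. First, in any direct sum $M=M_1\oplus M_2$ with ground set split as $E_1\sqcup E_2$, the rank identity $r_M(C)=r_{M_1}(C\cap E_1)+r_{M_2}(C\cap E_2)$ together with the fact that a circuit contains no proper circuit forces every circuit to lie inside a single $E_i$; hence each $\sim$-class lies within one connected component. Second, writing $S_1,\dots,S_r$ for the $\sim$-classes, I would show $M=\bigoplus_j M|_{S_j}$: every basis $B$ of $M$ restricts to a basis of each $M|_{S_j}$ (if $B\cap S_j$ could be enlarged by some $x\in S_j$, the fundamental circuit of $x$ with respect to $B$ would have all of its remaining elements $\sim x$, hence in $S_j$, so that circuit would lie in $(B\cap S_j)\cup\{x\}$, contradicting independence), and conversely any union of bases of the $M|_{S_j}$ is independent in $M$, since a circuit inside such a union would meet two blocks and so place two elements of different $\sim$-classes on one circuit. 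A rank count then yields $\B(M)=\{\bigcup_j B_j\colon B_j\in\B(M|_{S_j})\}$, which is exactly the statement $M=\bigoplus_j M|_{S_j}$. Thus each $S_j$ is a union of connected components, and combined with the first inclusion and the uniqueness of the decomposition into connected matroids, each $S_j$ is precisely one connected component.

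The only genuinely non-formal ingredient is the transitivity of $\sim$: that is the sole place where a matroid axiom beyond basis exchange (namely circuit elimination) must enter, and making the induction clean there — choosing the right pair of circuits to which to apply elimination and controlling the size of their union — is what I expect to be the main obstacle. Everything else is routine manipulation of fundamental circuits, ranks, and the definition of a direct sum.
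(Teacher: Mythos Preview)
The paper does not give its own proof of this proposition; it is quoted from Oxley's textbook and used as a black box. So there is no paper proof to compare against, and your write-up should be read as supplying the standard argument that the paper omits.

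Your approach is exactly the classical one (and essentially what appears in Oxley): reduce the basis-exchange relation to the circuit relation via fundamental circuits, prove transitivity of the circuit relation by circuit elimination, and then identify the classes with the summands of the finest direct-sum decomposition. The lemma and the two ``inclusion'' arguments in your last paragraph are correct as written; one small wording slip is that a circuit inside $\bigcup_j B_j$ cannot meet two blocks (all its elements are $\sim$-related and hence lie in a single $S_j$), which then forces it inside some $B_j$ and gives the contradiction --- your sentence states the opposite conclusion but clearly intends this. The one place to be careful is the transitivity induction: weak circuit elimination alone does not obviously produce a new pair $(C_1',C_2')$ with strictly smaller union in every case, and the clean way through is to invoke (or prove) the strong form of circuit elimination --- given distinct circuits $C_1,C_2$, $b\in C_1\cap C_2$, and $a\in C_1\setminus C_2$, there is a circuit $C_3$ with $a\in C_3\subseteq (C_1\cup C_2)\setminus\{b\}$ --- and then run the minimal-counterexample argument with that. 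You have correctly flagged this as the only non-routine step.
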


\begin{proposition}[\cite{Coxeter}]\label{prop:dim}
For any matroid,
 the dimension of its matroid polytope is 
$\dim \Gamma_M = n-c$, where $c$ is the number of connected components of 
$M$.  
\end{proposition}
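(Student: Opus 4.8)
The plan is to combine the edge characterization of matroid polytopes (Theorem~\ref{r:GS}) with the equivalence-relation description of connected components (Proposition~\ref{prop:equiv}). First I would observe that $\Gamma_M$ lives in the affine hyperplane $\{x \in \R^n : \sum_i x_i = k\}$, so $\dim \Gamma_M \le n-1$; more to the point, if $M = M_1 \oplus \cdots \oplus M_c$ is the decomposition into connected components with ground sets $E_1, \dots, E_c$, then every basis $B$ of $M$ satisfies $|B \cap E_j| = \rank(M_j)$ for each $j$, so $\Gamma_M$ is contained in the affine subspace cut out by the $c$ independent equations $\sum_{i \in E_j} x_i = \rank(M_j)$. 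This immediately gives the upper bound $\dim \Gamma_M \le n - c$.

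For the matching lower bound, the cleanest route is to show that the affine span of $\Gamma_M$ is \emph{exactly} the subspace cut out by those $c$ equations. Pick any basis $B_0$ of $M$ as a basepoint; it suffices to exhibit, inside the edge set of $\Gamma_M$, enough vectors of the form $e_a - e_b$ to span the $(n-c)$-dimensional linear space $V := \{y : \sum_{i \in E_j} y_i = 0 \text{ for all } j\}$. By Theorem~\ref{r:GS} the edges of $\Gamma_M$ emanating from the vertex $e_{B_0}$ are among the $e_a - e_b$; by Proposition~\ref{prop:equiv}, whenever $a$ and $b$ lie in the same connected component there exist bases $B_1, B_2$ with $B_2 = (B_1 \setminus \{a\}) \cup \{b\}$, and the segment $[e_{B_1}, e_{B_2}]$ is an edge of $\Gamma_M$ parallel to $e_b - e_a$. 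Since within each component $E_j$ the relation $\sim$ is the \emph{all-pairs} relation on $E_j$, the differences $\{e_a - e_b : a, b \in E_j\}$ appear as edge directions of $\Gamma_M$, and these differences span $V$ (they span each coordinate block $\{y : \sum_{i \in E_j} y_i = 0, \ y_i = 0 \text{ for } i \notin E_j\}$, and $V$ is the direct sum of these blocks). Therefore the affine span of $\Gamma_M$ has dimension at least $\dim V = n - c$, and combined with the upper bound we get equality.

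The one point requiring a little care — and the main obstacle — is the translation between the combinatorial statement "$a \sim b$" from Proposition~\ref{prop:equiv} and the geometric statement "$[e_{B_1}, e_{B_2}]$ is an \emph{edge} (one-dimensional face) of $\Gamma_M$." A priori the basis-exchange only guarantees that $e_{B_1}$ and $e_{B_2}$ are vertices of $\Gamma_M$ differing by $e_b - e_a$, not that the segment between them is a face. However, $e_{B_1}$ and $e_{B_2}$ agree in all coordinates except positions $a$ and $b$, so the segment $[e_{B_1}, e_{B_2}]$ is exposed by the linear functional $x \mapsto x_a$ (maximized precisely on the matroid polytope of the minor/contraction obtained by forcing $b$ into every basis and forbidding... ), which one checks meets $\Gamma_M$ in exactly that segment; alternatively one can bypass faces entirely and argue directly with affine spans: the vectors $e_{B_2} - e_{B_1} = e_b - e_a$ for all same-component pairs $a,b$ are genuine chords of $\Gamma_M$ (differences of vertices), hence lie in the linear part of $\operatorname{aff}(\Gamma_M)$, which is all that the dimension count needs — so Theorem~\ref{r:GS} is only needed for the clean upper-bound direction and the edge-versus-chord subtlety evaporates. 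I would present the argument in this streamlined form: upper bound from the defining equations of the connected components, lower bound from same-component basis exchanges viewed as chords.
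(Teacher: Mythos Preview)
The paper does not supply its own proof of this proposition; it is quoted as a known result from \cite{Coxeter}. Your argument is the standard one and is correct: the upper bound from the $c$ independent affine constraints $\sum_{i\in E_j} x_i = \rank(M_j)$, and the lower bound from the observation that whenever $a\sim b$ (Proposition~\ref{prop:equiv}) the chord $e_{B_2}-e_{B_1}=e_b-e_a$ lies in the linear part of the affine hull of $\Gamma_M$, with these chords spanning the $(n-c)$-dimensional space $V=\{y:\sum_{i\in E_j}y_i=0\text{ for all }j\}$. You are also right that the edge-versus-chord issue is a red herring for the dimension count, since affine span only sees differences of vertices; Theorem~\ref{r:GS} is not actually needed anywhere in your final streamlined version.
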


We note that there is an inequality description of any matroid polytope.
\begin{proposition}[\cite{welsh}]
\label{r:inequalitiesmatroids}
Let $M = ([n], \B)$ be any matroid of rank $k$, and let $r_M:2^{[n]} \to \Z_{\geq 0}$ be its rank function. Then the matroid polytope $\Gamma_M$ can be described as
\[
\Gamma_M = \left\{ {\bf x} \in \RR^n \mid \sum_{i \in [n]} x_i = k, \, \sum_{i \in A} x_i \leq r_M(A) \, \text{ for all $A \subset [n]$} \right\}.
\]
\end{proposition}

\subsection{Positroid polytopes}
In this paper we are interested in \emph{positroids}; these are the
matroids $M(A)$ associated to $k\times n$ matrices $A$ with 
maximal minors all nonnegative.  

In \cref{def:mpolytope}, we 
defined the matroid polytope $\Gamma_M$ to be the convex hull of the indicator vectors of 
the bases of the matroid $M$. We can of course apply the same definition to 
any positroid $M$, obtaining the \emph{positroid polytope} $\Gamma_M$.  
On the other hand, in \cref{def:Gamma}, for each positroid cell $S_{\pi}$,
we defined $\Gamma_{\pi} = \overline{\mu(S_{\pi})}$ to be the closure of the image of 
the cell under the moment map.  Fortunately these two objects coincide.

\begin{proposition}\cite[Proposition 7.10]{tsukerman_williams}\label{prop:moment}
Let $M$ be the positroid associated to the positroid cell $S_{\pi}$.  Then 
$\Gamma_{M} = \Gamma_{\pi} = \mu(\overline{S_{\pi}}) = \overline{\mu(S_{\pi})}$.
\end{proposition}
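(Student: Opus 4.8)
The plan is to prove the chain of equalities $\Gamma_M = \Gamma_\pi = \mu(\overline{S_\pi}) = \overline{\mu(S_\pi)}$ by a combination of general facts about torus orbits and moment maps, plus density/continuity arguments. The key input is the Atiyah--Guillemin--Sternberg description recalled just before \cref{def:mpolytope}: for any matrix $A$ representing a point of $Gr_{k,n}$, the image $\mu(\overline{TA})$ of the closed torus orbit is the convex hull of the points $e_I$ with $p_I(A)\neq 0$. First I would observe that if $A$ represents a point of $S_\pi$ lying in the interior of the cell (so that its nonvanishing Plücker coordinates are \emph{exactly} the bases $\mathcal{B}$ of the positroid $M$), then $\mu(\overline{TA}) = \convex\{e_B : B\in\mathcal{B}\} = \Gamma_M$ by definition. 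Since the torus $T$ preserves $\Gr$ (scaling columns by positive reals keeps all maximal minors nonnegative) and in fact the positive part $T_{>0}$ preserves $S_\pi$, this already shows $\Gamma_M \subseteq \mu(\overline{S_\pi})$ via $\overline{T_{>0}A}\subseteq\overline{S_\pi}$, modulo checking that the positive-torus orbit closure still surjects onto the full matroid polytope.

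Next I would establish the reverse containments. For $\mu(\overline{S_\pi})\subseteq\Gamma_M$: every point of $\overline{S_\pi}$ is a totally nonnegative matrix all of whose nonvanishing Plücker coordinates are indexed by subsets in $\mathcal{B}$ (the closure of a positroid cell only \emph{loses} nonzero coordinates, it does not gain any — this is part of the cell structure of \cite{postnikov}), so for such a point $B$, $\mu(B)$ is a convex combination of the $e_I$ with $I\in\mathcal{B}$, hence lies in $\Gamma_M$. This gives $\mu(\overline{S_\pi})\subseteq\Gamma_M$, and since $\Gamma_M$ is closed this also gives $\overline{\mu(S_\pi)}\subseteq\Gamma_M$. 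Combining with the previous paragraph yields $\Gamma_M = \mu(\overline{S_\pi})$. Finally, $\mu$ is continuous and $\overline{S_\pi}$ is compact (it is a closed subset of the compact $\Gr$), so $\mu(\overline{S_\pi})$ is compact, hence closed, and contains $\mu(S_\pi)$; thus $\overline{\mu(S_\pi)}\subseteq\mu(\overline{S_\pi})$. For the opposite inclusion I would use that $S_\pi$ is dense in $\overline{S_\pi}$ together with continuity of $\mu$: $\mu(\overline{S_\pi}) = \mu(\overline{S_\pi})\subseteq\overline{\mu(S_\pi)}$. This closes the loop: all four sets agree.

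The step I expect to be the main obstacle is making the first containment $\Gamma_M\subseteq\mu(\overline{S_\pi})$ genuinely rigorous, i.e. checking that the \emph{positive} torus orbit closure of an interior point of $S_\pi$ already realizes every vertex $e_B$ of $\Gamma_M$ in its moment image. The subtlety is that $\mu(\overline{T_{>0}A})$ need not obviously be convex (the Atiyah--Guillemin--Sternberg theorem is usually stated for the full complex torus, or at least requires the orbit to be a smooth toric-type object), so one cannot blindly invoke convexity of the moment image. The cleanest fix is probably to argue directly: for each basis $B\in\mathcal{B}$, send a one-parameter subgroup $t\mapsto (t^{w_1},\dots,t^{w_n})\cdot A$ with weights $w$ chosen so that $p_B$ dominates all other Plücker coordinates as $t\to 0^+$ (possible since the matroid polytope $\Gamma_M$ is full-dimensional in its affine span and $e_B$ is a vertex, so a separating linear functional exists); then $\mu$ of the limit is exactly $e_B$, and the limit lies in $\overline{S_\pi}$ because $T_{>0}$ preserves $S_\pi$. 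Thus all vertices $e_B$ of $\Gamma_M$ lie in $\mu(\overline{S_\pi})$; since the latter is compact and $\mu$ of a \emph{single} point of $S_\pi$ lies in the relative interior of $\Gamma_M$, a short connectedness-plus-convex-combination argument (or simply quoting \cite[Proposition 7.10]{tsukerman_williams} for this last mile, as the statement already does) finishes it. Alternatively, one may simply cite the Tsukerman--Williams result directly for the whole proposition, in which case the ``proof'' is a one-line reference and the remaining content is merely assembling the equalities $\mu(\overline{S_\pi}) = \overline{\mu(S_\pi)}$ from compactness and density as above.
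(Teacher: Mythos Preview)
The paper does not give its own proof of this proposition; it is simply quoted from \cite[Proposition~7.10]{tsukerman_williams} with no argument supplied. So there is no ``paper's approach'' to compare against---your proposal is an attempt to reconstruct the argument behind the citation.

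Your sketch is largely sound. The containment $\mu(\overline{S_\pi})\subseteq\Gamma_M$ is correct for the reason you give (cells in the closure only lose bases), and the equality $\mu(\overline{S_\pi})=\overline{\mu(S_\pi)}$ follows cleanly from compactness of $\overline{S_\pi}$ and continuity of $\mu$ exactly as you say. The one genuinely incomplete step is the one you yourself flag: for $\Gamma_M\subseteq\mu(\overline{S_\pi})$, having all vertices $e_B$ in the image together with one interior point and ``connectedness-plus-convex-combination'' is not enough---the continuous image of a connected set need not be convex nor contain the convex hull of a subset of itself. Your one-parameter-subgroup argument correctly places each vertex $e_B$ in $\mu(\overline{S_\pi})$, but the passage from there to the full polytope needs more.

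The clean fix (and this is essentially what the paper invokes later, in the proof of \cref{prop:homeo}, citing \cite[Section~4.2]{Fulton}) is to note that for any $A\in S_\pi$, the closure $\overline{T_{>0}\cdot A}$ is exactly the nonnegative part of the projective toric variety $\overline{T\cdot A}$, and it is a standard fact from toric geometry that the moment map is a homeomorphism from this nonnegative part onto the moment polytope $\Gamma_M$. Since $T_{>0}$ preserves $S_\pi$, we have $\overline{T_{>0}\cdot A}\subseteq\overline{S_\pi}$, whence $\Gamma_M = \mu(\overline{T_{>0}\cdot A})\subseteq\mu(\overline{S_\pi})$ in one stroke. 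This replaces your handwavy step with a single citation and makes the argument complete.
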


The first statement in \cref{thm:faces} below was proved in 
 \cite[Corollary 5.4]{ARW} 
 (and generalized to the setting of Coxeter 
 matroids in 
\cite[Theorem 7.13]{tsukerman_williams}.)
The second statement follows from the proof of \cite[Theorem 7.13]{tsukerman_williams}.
\begin{theorem}\label{thm:faces}
Every face of a positroid polytope is a positroid polytope.
	Moreover, every face $\Gamma_{\pi'}$ of a positroid polytope
	$\Gamma_{\pi}$ has the property that 
	$S_{\pi'} \subset \overline{S_{\pi}}$.
\end{theorem}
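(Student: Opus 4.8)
The plan is to prove both assertions of \cref{thm:faces} together, leveraging \cref{prop:moment} to pass freely between the polytope $\Gamma_\pi$ and the closure $\overline{\mu(S_\pi)}$, and between faces and subcells. First I would recall the standard fact (following from the torus action and \cref{r:GS}) that the faces of a matroid polytope $\Gamma_M$ are themselves matroid polytopes, and more precisely that each face is obtained by the following combinatorial operation: for a face $F$ determined by a linear functional, there is a flag of subsets (equivalently, an ordered set partition of a subset of $[n]$) such that $F = \Gamma_{M'}$ where $M'$ is obtained from $M$ by a sequence of restrictions and contractions. So the real content is to show that this class of operations preserves the property of being a positroid, and that it is realized on the cell level by passing to a subcell $S_{\pi'}\subset\overline{S_\pi}$.

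The key step is therefore to understand faces of positroid polytopes in terms of decorated permutations or \Le-diagrams. I would first reduce to facets: any face is an intersection of facets, so if I can show (i) every facet of $\Gamma_\pi$ is a positroid polytope $\Gamma_{\pi'}$ with $S_{\pi'}\subset\overline{S_\pi}$, and (ii) the operation "pass to a facet" can be iterated, then induction on dimension (or codimension of the face) finishes the argument. For positroid polytopes, the facets are known to be of two types: those cut out by inequalities $x_i\ge 0$ or $x_i\le 1$ (coming from deletion/contraction of a single element $i$), and those cut out by the cyclic-interval rank inequalities $\sum_{j\in[a,b]}x_j \le \rank_M([a,b])$ that characterize positroid polytopes (these correspond to "splitting" the positroid along a cyclic interval as a direct sum of two smaller positroids on complementary cyclic intervals, after relabeling). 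In each case I would exhibit the corresponding subcell explicitly: deletion/contraction of $i$ corresponds to sending the decorated permutation to one where $i$ becomes a loop or coloop, and the cyclic-interval facet corresponds to the decorated permutation that restricts to cycles on $[a,b]$ and on its complement — in both cases one checks directly from \cref{def:positroid} that $S_{\pi'}$ is a positroid cell contained in $\overline{S_\pi}$ (some Plücker coordinates that were positive on $S_\pi$ are set to zero, the rest remain positive).

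The main obstacle I expect is the bookkeeping needed to verify that the combinatorial face operation genuinely produces a positroid (as opposed to a general matroid) and that it matches the cell-closure relation on the nose. Two subtleties deserve care: first, positroids are not closed under arbitrary restriction/contraction unless one respects the cyclic order, so I must check that the specific flag arising from a supporting functional of $\Gamma_\pi$ always has the "cyclic interval" structure that keeps us inside the positroid world — this should follow from the fact that $\Gamma_\pi$ already satisfies the cyclic rank inequalities, so any supporting hyperplane is compatible with the cyclic order; second, I should confirm that taking closures commutes with taking faces, i.e. that $\overline{\mu(S_\pi)}$ has as its faces exactly the $\overline{\mu(S_{\pi'})}$ for the relevant subcells $S_{\pi'}$, which again is immediate from \cref{prop:moment} together with the description of torus-orbit-closure moment polytopes. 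Once these are in place, the "moreover" clause $S_{\pi'}\subset\overline{S_\pi}$ comes for free, since the subcell we constructed at each stage was a face in the cell complex, hence lies in the closure. I would present the facet case in full and then invoke induction, citing \cite[Corollary 5.4]{ARW} and \cite[Theorem 7.13]{tsukerman_williams} for the pieces already established in the literature.
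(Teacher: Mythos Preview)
The paper does not actually prove \cref{thm:faces}; the text preceding the statement simply records that the first assertion is \cite[Corollary~5.4]{ARW} (generalized in \cite[Theorem~7.13]{tsukerman_williams}) and that the second assertion follows from the proof of \cite[Theorem~7.13]{tsukerman_williams}. So there is no argument in the paper to compare against beyond these citations, which you already list at the end of your proposal.

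Your sketch is a reasonable reconstruction of the argument behind those references, and the reduction to facets plus induction on codimension is the right skeleton. Two comments on the details. First, your worry about whether an \emph{arbitrary} supporting functional of $\Gamma_\pi$ has ``cyclic interval'' structure is misplaced as stated: for a face of codimension larger than one the corresponding flag need not consist of cyclic intervals of $[n]$. But this is exactly why you should run the induction on facets rather than try to analyze general faces directly. By \cref{prop:facets} every \emph{facet} of $\Gamma_\pi$ is cut by a cyclic-interval hyperplane, the facet is the matroid polytope of $M|_{[i,j]} \oplus M/[i,j]$, and this direct sum is again a positroid on $[n]$ (since restriction and contraction of a positroid to a cyclic interval yield positroids on complementary cyclic intervals, cf.\ \cite{ARW}). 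Once the facet is a positroid polytope, \cref{prop:facets} applies to \emph{it}, and the induction proceeds. So drop the attempt to control general supporting functionals and keep only the facet step.

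Second, for the ``moreover'' clause there is a cleaner route than tracking the combinatorics of deletion/contraction on decorated permutations. If $\Gamma_{\pi'}$ is a face of $\Gamma_\pi$ then the vertices of $\Gamma_{\pi'}$ are a subset of the vertices of $\Gamma_\pi$, i.e.\ the bases of $M_{\pi'}$ form a subset of the bases of $M_\pi$. Since $\overline{S_\pi}$ is exactly the locus in $Gr^+_{k,n}$ where $p_I = 0$ for all $I\notin M_\pi$, any point of $S_{\pi'}$ automatically lies in $\overline{S_\pi}$. This avoids the case analysis on facet types entirely.
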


There is a simple inequality 
characterization
of positroid polytopes.

\begin{proposition}\label{prop:facets}\cite[Proposition 5.7]{ARW}
A matroid $M$ of rank $k$ on $[n]$ is a positroid if and only if its 
	matroid polytope $\Gamma_M$ can be described by the equality $x_1+ \dotsb + x_n = k$ and inequalities of the form
\[\sum_{\ell \in [i,j]} x_\ell \leq r_{ij}, \, \text{ with }i,j \in [n].\]
	Here $[i,j]$ is the cyclic interval given by 
	$[i,j]=\{i, i+1, \dots, j\}$ if $i<j$ and 
	$[i,j]=\{i, i+1,\dots,n,1, \dots, j\}$ if $i>j$.
\end{proposition}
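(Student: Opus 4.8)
The plan is to prove both directions using the structure of positroids established by Postnikov and Oh. For the forward direction, suppose $M$ is a positroid of rank $k$ on $[n]$. The key input is that any positroid $M$ is determined by its \emph{Grassmann necklace} $\mathcal{I} = (I_1, \dots, I_n)$, equivalently by the collection of cyclic rank values $r_{ij} = \rank_M([i,j])$. I would first recall that for each cyclic interval $[i,j]$, one has the valid inequality $\sum_{\ell \in [i,j]} x_\ell \le r_{ij}$ on $\Gamma_M$, simply because every basis $B$ satisfies $|B \cap [i,j]| \le \rank_M([i,j])$; this holds for \emph{any} matroid and any subset, so in particular for cyclic intervals. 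The substance of the forward direction is that these inequalities, together with $x_1 + \dots + x_n = k$, \emph{suffice} to cut out $\Gamma_M$ — i.e. no other facet inequalities are needed. For this I would invoke the description of positroids as intersections of cyclically shifted uniform-matroid-like conditions: a positroid is exactly the matroid whose independent sets are controlled by the Grassmann necklace, and Oh's theorem expresses $M$ as an intersection of Schubert matroids whose polytopes are each cut out by an interval inequality of the required form. Intersecting matroid polytopes that are all of this shape gives a polytope cut out by inequalities of this shape, so $\Gamma_M$ has the claimed description.

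For the converse, suppose $M$ is a matroid of rank $k$ on $[n]$ whose polytope $\Gamma_M$ admits a description by $x_1 + \dots + x_n = k$ and inequalities $\sum_{\ell \in [i,j]} x_\ell \le r_{ij}$ for cyclic intervals $[i,j]$. I would show $M$ is a positroid by reconstructing a Grassmann necklace and checking it defines $M$. Concretely, define $r_{ij}$ to be the actual rank $\rank_M([i,j])$ (one checks these are the tightest valid constants, so WLOG the description uses exactly these). Then build the candidate positroid $M'$ from this necklace via Postnikov's construction; by the forward direction $\Gamma_{M'}$ is cut out by exactly the same inequalities and equality, hence $\Gamma_{M'} = \Gamma_M$, and since a matroid is determined by its polytope, $M = M'$ is a positroid. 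One subtlety to address: one must verify that the cyclic rank data $\{r_{ij}\}$ coming from a matroid with this polytope description actually satisfies the axioms needed to be a legitimate Grassmann necklace (the exchange/submodularity-type conditions Postnikov requires) — this should follow from submodularity of the matroid rank function restricted to cyclic intervals.

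The main obstacle I anticipate is the forward direction's claim that \emph{only} cyclic-interval inequalities are needed — i.e. ruling out that some facet of $\Gamma_M$ is given by a non-interval subset. The cleanest route is to avoid proving this by hand and instead cite the Grassmann-necklace / Schubert-matroid intersection description of positroids (Postnikov, Oh, or Ardila–Rincón–Williams), from which the polytope description is a formal consequence. If that machinery is not available to quote, the fallback is a direct argument: take a facet-defining inequality $\sum_{\ell \in S} x_\ell \le c$ of $\Gamma_M$; using that $M$ is a positroid, its connected components are cyclic intervals and its restriction/contraction to each behaves well, and one shows $S$ can be replaced by a cyclic interval without changing the halfspace's intersection with $\Gamma_M$. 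I would also need to handle loops and coloops (forcing $x_i = 0$ or $x_i = 1$), but these are exactly the degenerate cyclic intervals $[i,i]$, so they fit the stated form. Everything else — validity of the inequalities, the equality constraint, and the fact that a matroid polytope determines its matroid (via \cref{r:GS}) — is routine.
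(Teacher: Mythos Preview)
The paper does not prove this proposition; it is stated with a citation to \cite[Proposition~5.7]{ARW} and no proof is given. So there is no ``paper's own proof'' to compare against. That said, your sketch is a reasonable outline of how the result is actually established in \cite{ARW}: the forward direction via Oh's theorem (a positroid is the intersection of cyclically shifted Schubert matroids coming from its Grassmann necklace), and the converse by reconstructing the positroid from the cyclic rank data.

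One genuine gap to flag in your forward direction: you assert that intersecting the Schubert matroid polytopes yields $\Gamma_M$, but this is not automatic. The intersection of the halfspace descriptions certainly \emph{contains} $\Gamma_M$ (since every basis of $M$ satisfies all the interval rank bounds), and its $0/1$ vertices are exactly the bases of $M$ (by Oh's theorem). What still needs to be checked is that this intersection has no non-integral vertices --- equivalently, that the polytope cut out by $\sum x_i = k$, $0 \le x_i \le 1$, and the cyclic interval inequalities is itself a $0/1$-polytope. This is where the argument has real content; in \cite{ARW} it is handled by showing the system of inequalities is of a type (related to interval constraints / lattice path matroids) known to give integral polytopes. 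Your sketch acknowledges this obstacle in passing but does not indicate how to resolve it beyond ``cite the machinery,'' which is fine if you are allowed to cite \cite{ARW} --- but then you are citing the very result you are proving. If you want a self-contained argument you need to address integrality directly.

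Your converse is cleaner and essentially correct, modulo the verification (which you note) that the cyclic rank data from an arbitrary matroid with such a polytope description forms a valid Grassmann necklace.
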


We now give a new characterization of positroid polytopes. 
In what follows, we use $Sab$ as shorthand for $S \cup \{a,b\}$, etc.

\begin{theorem}\label{thm:char}
	Let $M$ be a matroid of rank $k$ on the ground set $[n]$, and 
	consider the matroid polytope $\Gamma_M$.  It is a positroid polytope
	(i.e. $M$ is a positroid) if and only if all of its two-dimensional
	faces are positroid polytopes.

	Moreover, if $M$ fails to be a positroid polytope, then $\Gamma_M$ 
	has a two-dimensional face $F$ with vertices
		$e_{Sab}, e_{Sad},
		e_{Sbc}, e_{Scd}$, for some $1 \leq a<b<c<d\leq n$ and $S$ 
		of size $k-2$ disjoint from $\{a,b,c,d\}$.
\end{theorem}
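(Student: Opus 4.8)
The plan is to prove both directions using the inequality description of positroid polytopes from \cref{prop:facets} together with the edge characterization of matroid polytopes from \cref{r:GS}. One direction is immediate: if $M$ is a positroid, then by \cref{thm:faces} every face of $\Gamma_M$ is a positroid polytope, in particular every two-dimensional face. So the entire content is the converse, and the ``moreover'' clause will in fact drive that converse: I will show that if $M$ is \emph{not} a positroid, then $\Gamma_M$ has a two-dimensional face of the stated special form, and such a square is visibly not a positroid polytope (one checks directly that the square with vertices $e_{Sab},e_{Sad},e_{Sbc},e_{Scd}$ violates the cyclic-interval inequality description of \cref{prop:facets} — for instance no bound of the form $\sum_{\ell\in[i,j]}x_\ell\le r_{ij}$ can cut out this square because the two ``diagonal'' vertices $e_{Sad}$ and $e_{Sbc}$ would have to be separated from $e_{Sab},e_{Scd}$, which an interval sum cannot do for the pattern $a<b<c<d$).

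So suppose $M$ is not a positroid. By \cref{prop:facets} its matroid polytope cannot be described purely by the equation $\sum x_i = k$ and cyclic-interval inequalities $\sum_{\ell\in[i,j]}x_\ell\le r_{ij}$. Concretely, there is a facet of $\Gamma_M$ whose supporting hyperplane is not of cyclic-interval type; equivalently, applying the rank-submodular description of matroid polytopes, there is a set $A\subseteq[n]$ for which the inequality $\sum_{\ell\in A}x_\ell\le \rk_M(A)$ is facet-defining (or at least tight on a large face) but $A$ is not a cyclic interval. The first real step is to extract from the failure of the \cref{prop:facets} characterization a witness set $A$ that is ``non-interval'' in a minimal way: after cyclically rotating, $A$ contains two elements, say with a gap on both sides, so there exist indices $a<b<c<d$ with $a,c\in A$ and $b,d\notin A$ (a ``crossing'' configuration). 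This is essentially the combinatorial heart: translating ``$M$ is not a positroid'' into the existence of such a crossing quadruple realized by actual bases of $M$.

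The second, more delicate step is to promote this crossing configuration to an honest two-dimensional face of $\Gamma_M$ of the asserted form. Here I would use the connectivity/exchange machinery: \cref{prop:equiv} and \cref{prop:dim} let me pass to the face of $\Gamma_M$ where all coordinates outside $\{a,b,c,d\}$ are ``frozen'' (each element outside is forced to be either in every basis or in no basis of the restricted matroid), and argue that the restriction/contraction $M'$ of $M$ to $\{a,b,c,d\}$ — with the frozen part contributing a common set $S$ of size $k-2$ — is the rank-$2$ matroid on $4$ elements whose bases are exactly $\{a,b\},\{a,d\},\{b,c\},\{c,d\}$ (this is the ``non-Schubert'' rank-$2$ matroid on four elements, the unique non-positroid rank-$2$ matroid on a $4$-element ground set up to the crossing pattern), so that the corresponding face of $\Gamma_M$ is the square with vertices $e_{Sab},e_{Sad},e_{Sbc},e_{Scd}$. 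That this face is genuinely two-dimensional (a square, not degenerate) follows from \cref{prop:dim} once we know $M'$ restricted to $\{a,b,c,d\}$ is connected with exactly one component, i.e. has polytope dimension $2$; and that it is genuinely a \emph{face} of $\Gamma_M$, not just a subpolytope, follows by exhibiting a linear functional — built from the witness inequality for $A$ together with a complementary inequality for $[n]\setminus\{a,b,c,d\}$ — that is maximized exactly on these four vertices.

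\textbf{Expected main obstacle.} The routine parts are the reverse direction (immediate from \cref{thm:faces}) and checking that the offending square is not a positroid polytope (direct from \cref{prop:facets}). The crux is the forward direction's extraction step: showing that ``fails the \cref{prop:facets} inequality description'' forces not merely an abstract non-interval rank inequality but a crossing quadruple $a<b<c<d$ and a common frozen set $S$ that are \emph{simultaneously} realized by bases of $M$, cut out as a $2$-face by a single linear functional. One must be careful that the witnessing set $A$ can be chosen so that contracting the ``always-in'' elements and deleting the ``always-out'' elements leaves precisely a four-element ground set on which the induced matroid is the crossing rank-$2$ matroid — a priori one only gets \emph{some} minor that is non-positroid, and reducing to this minimal four-element obstruction (rather than a larger non-positroid minor) is where the real work lies. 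I would handle this by an induction on $n$: if the obstruction already involves more than four ``moving'' elements, restrict or contract at an element to shrink the ground set while preserving non-positroidality (using that positroids are closed under minors and under the cyclic rotation that realigns $A$), bottoming out at the unique four-element case.
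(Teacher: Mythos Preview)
Your overall plan is in the right spirit, but the crucial extraction step has a genuine gap, and the paper's argument fills it in a way you have not anticipated.

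You correctly identify that the content lies in producing, from a non-cyclic-interval facet inequality $\sum_{i\in A}x_i\le \rk_M(A)$, a square $2$-face of the asserted shape. But your mechanism for doing this---``freeze all coordinates outside $\{a,b,c,d\}$'' and hope the resulting minor on $\{a,b,c,d\}$ is the crossing rank-$2$ matroid---does not work as stated. There is no reason the minor obtained by contracting/deleting everything outside four chosen elements should have rank $2$, or should have exactly the bases $\{ab,ad,bc,cd\}$; nothing in the mere fact that $a,c\in A$ and $b,d\notin A$ forces this. Your fallback induction (shrink $n$ while preserving non-positroidality, bottoming out at four elements) could in principle be made to work, but you would need to argue that the class of matroids with ``all $2$-faces positroidal'' is closed under the minors you take, and this is not addressed.

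The paper's route is cleaner and avoids induction entirely. The key structural observation you are missing is that a \emph{facet} of $\Gamma_M$ is always the matroid polytope of a direct sum $M|A\oplus M/A$ with exactly \emph{two connected components}, on ground sets $A$ and $[n]\setminus A$. When $A$ is not a cyclic interval, these ground sets form a crossing partition: there exist $a<b<c<d$ with $a,c\in A$ and $b,d\notin A$. Now the paper exploits the product structure $\Gamma_{M|A}\times\Gamma_{M/A}$ directly: since $a,c$ lie in the \emph{same} connected component $M|A$, \cref{prop:equiv} guarantees bases $A'a$ and $A'c$ of $M|A$ differing only in $a\leftrightarrow c$, hence an \emph{edge} of $\Gamma_{M|A}$; likewise $b,d$ give an edge of $\Gamma_{M/A}$. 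The product of these two edges is a square $2$-face of the facet, hence of $\Gamma_M$, with vertices $e_{Sab},e_{Sad},e_{Sbc},e_{Scd}$ (where $S=A'\cup B'$). This is \cref{lem:2}. For disconnected $M$, one invokes the known fact (\cite[Lemma~7.3]{ARW}) that a direct sum of connected matroids is a positroid iff each summand is and the ground sets form a noncrossing partition, and applies the same lemma.

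So the missing idea is: do not try to freeze down to four moving coordinates; instead, use that the offending facet already has a \emph{product} decomposition into two connected pieces, and build the square as a product of two edges, one from each piece, found via the basis-exchange characterization of connectivity in \cref{prop:equiv}.
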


\begin{remark}
A different characterization of positroids in terms
of faces of their matroid polytopes
was given in \cite[Proposition 6.4]{RVY}, see also
\cite[Lemma 6.2 and Lemma 6.3]{RVY}.
There are also some related ideas in the proof of 
	\cite[Lemma 30]{Early}.
\end{remark}

By \cref{thm:faces}, every 
two-dimensional face of $\Gamma_M$ is a positroid
polytope.
To prove the other half of \cref{thm:char}, we use the following lemma.

\begin{lem}\label{lem:2}
	Let $M$ be a matroid of rank $k$
	on $[n]$ which has two connected components,
	i.e. $M = M_1 \oplus M_2$ such that the ground sets of $M_1$ and $M_2$
	are $S$ and $T = [n] \setminus S$.  Suppose that $\{S,T\}$ fails to be
	a noncrossing partition of $[n]$, in other words, there exists
	$a<b<c<d$ (in cyclic order) such that $a,c\in S$ and $b,d\in T$.
	Then $\Gamma_M$ has a two-dimensional face which is not a positroid
	polytope; in particular, that face is a square with vertices
		$e_{Sab}, e_{Sad},
		e_{Sbc}, e_{Scd}$, for some $1 \leq a<b<c<d\leq n$ and $S$ 
		of size $k-2$ disjoint from $\{a,b,c,d\}$.
\end{lem}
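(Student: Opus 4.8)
The plan is to exhibit the desired square as a face of $\Gamma_M$ cut out by an explicit linear functional, using the direct sum structure and the failure of noncrossingness. Since $M = M_1 \oplus M_2$ with ground sets $S$ and $T = [n]\setminus S$, every basis of $M$ has the form $B_1 \sqcup B_2$ with $B_i$ a basis of $M_i$; write $k_1 = \rank M_1$ and $k_2 = \rank M_2$, so $k_1 + k_2 = k$. Correspondingly $\Gamma_M = \Gamma_{M_1} \times \Gamma_{M_2}$ (product of polytopes sitting in complementary coordinate subspaces), so faces of $\Gamma_M$ are products of faces of $\Gamma_{M_1}$ and $\Gamma_{M_2}$. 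Thus it suffices to produce a one-dimensional face (an edge) $e_{I_1 a} , e_{I_1 c}$ of $\Gamma_{M_1}$ using the elements $a,c \in S$, and an edge $e_{I_2 b}, e_{I_2 d}$ of $\Gamma_{M_2}$ using $b,d \in T$; their product is then the square with vertices $e_{Sab}, e_{Sad}, e_{Sbc}, e_{Scd}$ where $S := I_1 \sqcup I_2$ has size $(k_1 - 1) + (k_2 - 1) = k - 2$ and is disjoint from $\{a,b,c,d\}$.

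First I would produce the edge in $\Gamma_{M_1}$. By \cref{prop:equiv}, the relation $\sim$ on the ground set $S$ of the connected matroid $M_1$ has a single equivalence class, so in particular $a \sim c$: there are bases $B_1, B_1'$ of $M_1$ with $B_1' = (B_1 \setminus \{a\}) \cup \{c\}$. Actually I want $a$ and $c$ to be exchangeable across a \emph{common} complement, i.e. I want a common $(k_1-1)$-subset $I_1$ with $I_1 a$ and $I_1 c$ both bases; this is exactly the statement that $e_{I_1 a}$ and $e_{I_1 c}$ are the two endpoints of an edge of $\Gamma_{M_1}$, which by \cref{r:GS} is the condition that the segment between them is a face. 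The existence of such an $I_1$ follows from the basis exchange axiom applied to $B_1$ and $B_1'$: since $a \in B_1 \setminus B_1'$, there is $y \in B_1' \setminus B_1 = \{c\}$ with $(B_1 \setminus \{a\}) \cup \{y\}$ a basis, forcing $y = c$ and $I_1 := B_1 \setminus \{a\}$ to work. To see that $\{e_{I_1 a}, e_{I_1 c}\}$ is genuinely a face and not just a chord, I would use the linear functional on the $S$-coordinates that is $0$ on $a$ and $c$, very negative on all other elements of $I_1^c \cap S$, and chosen on $I_1$ so that $I_1 a$ and $I_1 c$ (and only these two bases) maximize it — concretely, maximize $\sum_{\ell \in S} w_\ell x_\ell$ where $w$ rewards being in $I_1 \cup \{a,c\}$ and penalizes everything else heavily; standard matroid-polytope/greedy reasoning shows the maximizing face is exactly $\conv\{e_{I_1 a}, e_{I_1 c}\}$. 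Symmetrically, using that $M_2$ is connected and $b,d \in T$, I get $I_2$ of size $k_2 - 1$ with $I_2 b, I_2 d$ bases of $M_2$ and $\conv\{e_{I_2 b}, e_{I_2 d}\}$ an edge of $\Gamma_{M_2}$.

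Having the two edges, I take $S := I_1 \sqcup I_2$ and the face $F := \conv\{e_{I_1 a}, e_{I_1 c}\} \times \conv\{e_{I_2 b}, e_{I_2 d}\}$, which is a $2$-dimensional face of $\Gamma_M = \Gamma_{M_1} \times \Gamma_{M_2}$ with the four vertices $e_{Sab}, e_{Sad}, e_{Sbc}, e_{Scd}$, a square (it is a product of two segments, hence combinatorially a square, and its vertices are these four lattice points). Relabeling so that $a < b < c < d$ as integers — which we may, since $a<b<c<d$ in cyclic order and we are free to rotate — the restriction of $F$ to the $\{a,b,c,d\}$-coordinates is $\conv\{e_a+e_b, e_a+e_d, e_b+e_c, e_c+e_d\}$, the "uniform $U_{2,4}$" square; by \cref{prop:facets} this square (equivalently, the matroid of $F$) is not a positroid polytope, because the inequality description of a positroid polytope only allows cyclic-interval constraints, and here the two "crossing" vertices $e_a+e_c$ and $e_b+e_d$ are \emph{excluded} while $e_a+e_b, e_b+e_c, e_c+e_d, e_d+e_a$ are all present — a pattern forbidden by \cref{prop:facets} (one can check directly that no set of cyclic-interval inequalities $\sum_{\ell\in[i,j]} x_\ell \le r_{ij}$ cuts out this square). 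The main obstacle I anticipate is the bookkeeping in the previous paragraph: namely, certifying via an explicit weight vector that the two chosen segments are actually \emph{faces} of $\Gamma_{M_1}$ and $\Gamma_{M_2}$ (not merely chords), and handling the cyclic-versus-linear order issue so that the final square really has the claimed cyclically ordered vertex labels; the direct-sum / product decomposition and the appeal to \cref{prop:facets} for the non-positroid conclusion should be routine once those points are nailed down.
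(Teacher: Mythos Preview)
Your approach is essentially the same as the paper's: use \cref{prop:equiv} to find bases $I_1a,I_1c$ of $M_1$ and $I_2b,I_2d$ of $M_2$, exhibit each pair as an edge via a linear functional, and take the product in $\Gamma_M=\Gamma_{M_1}\times\Gamma_{M_2}$ to get the square. The paper resolves your ``main obstacle'' cleanly by writing down the functional explicitly: take $w\in\R^S$ with $w_h=1$ for $h\in I_1$, $w_h=\tfrac12$ for $h\in\{a,c\}$, and $w_h=0$ otherwise; a short count shows the dot product with $e_B$ is maximized (with value $k_1-\tfrac12$) exactly when $B\in\{I_1a,I_1c\}$, so this really is an edge. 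For the non-positroid conclusion the paper simply observes that $\{ab,ad,bc,cd\}$ is not a rank-$2$ positroid on $\{a,b,c,d\}$, which is equivalent to your \cref{prop:facets} argument; and the cyclic-versus-linear order issue is harmless since any four cyclically ordered elements, sorted linearly, still alternate between $S$ and $T$ (possibly after swapping the roles of $M_1$ and $M_2$).
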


\begin{proof}
By \cref{prop:equiv}, we have bases $Aa$ and $Ac$ of $M_1$
	and also bases $Bb$ and $Bd$ of $M_2$.
	We can find a linear functional on $\Gamma_{M_1}$ given by a vector in $\R^S$
	 whose dot product is maximized on the convex hull of 
	 $e_{Aa}$ and 
	 $e_{Ac}$  (choose the vector $w$ such that $w_h = 1$ for $h\in A$,
	 $w_h = \frac{1}{2}$ for $h=a$ or $h=c$, and $w_h=0$ otherwise); therefore
	 there is an edge in $\Gamma_{M_1}$ between 
	 $e_{Aa}$ and 
	 $e_{Ac}$.
Similarly, 
	 there is an edge in $\Gamma_{M_2}$ between 
	 $e_{Bb}$ and 
	 $e_{Bd}$.
	 Therefore $\Gamma_M = \Gamma_{M_1} \times \Gamma_{M_2}$ has a two-dimensional
	 face whose vertices are 
		$e_{ABab}, e_{ABad},
		e_{ABbc}, e_{ABcd}$.
This is not a positroid polytope because 
	$\{ab, ad, bc, cd\}$ are not the bases of a rank 
	$2$ positroid.
\end{proof}

\begin{proposition}\label{prop:connected}
Let $M$ be a connected matroid.  If all of the two-dimensional
faces of $\Gamma_M$ are positroid polytopes,  then
	$\Gamma_M$ is a positroid polytope (i.e. $M$ is a positroid).
\end{proposition}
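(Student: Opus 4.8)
The plan is to prove the contrapositive: if $M$ is a connected matroid that is \emph{not} a positroid, then $\Gamma_M$ has a two-dimensional face that is not a positroid polytope. The natural tool is \cref{prop:facets}: since $M$ is connected, $\Gamma_M$ is a full-dimensional polytope in the hyperplane $\{x_1 + \dots + x_n = k\}$, and if $M$ is not a positroid then $\Gamma_M$ cannot be cut out by inequalities of the cyclic-interval form $\sum_{\ell \in [i,j]} x_\ell \le r_{ij}$. So some facet of $\Gamma_M$ must fail to be of this form — equivalently, there is a valid inequality defining a facet whose supporting hyperplane is not (a positive multiple of) a cyclic-interval inequality. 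The first step is therefore to extract from this failure a concrete ``bad'' facet $F$, and to understand the matroidal meaning of a facet: each facet of $\Gamma_M$ corresponds either to deleting or contracting an element, or more generally to a \emph{matroid flat} (or a partition into cyclic intervals in the positroid case). A facet of the matroid polytope is itself the matroid polytope of $M|_A \oplus M/A$ for some subset $A$ (this is the standard description of matroid polytope facets via flats and their complements).

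Next I would pass from the bad facet down to a bad two-dimensional face by induction on dimension. The facet $F = \Gamma_{M_F}$ for some matroid $M_F = M_1 \oplus M_2$ on a splitting of the ground set. If $M_F$ (suitably restricted to its connected components) is already a non-positroid, or if its ground-set splitting $\{S, T\}$ is a \emph{crossing} partition in the cyclic order, then \cref{lem:2} applies directly and produces the desired square $e_{Sab}, e_{Sad}, e_{Sbc}, e_{Scd}$ as a two-dimensional face of $\Gamma_{M_F}$, hence of $\Gamma_M$. Otherwise the splitting is noncrossing and each connected component is a positroid; one then has to argue that the \emph{way} these positroid pieces are arranged still witnesses a non-positroid — i.e., that the reason $F$ was a ``bad'' facet (not a cyclic-interval hyperplane) forces a crossing somewhere among the flats/components. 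Here I would iterate: replace $M$ by a connected component of $M_F$ that is still not a positroid, note its matroid polytope is a (lower-dimensional) face of $\Gamma_M$, and recurse; since dimension strictly drops, we eventually reach a connected non-positroid whose matroid polytope is at most... well, we need it to bottom out at dimension two. A connected non-positroid of rank $\le 2$ or corank $\le 2$ can be analyzed directly: rank-$2$ matroids are positroids iff their parallel classes form a noncrossing-compatible cyclic arrangement, so a non-positroid rank-$2$ connected matroid on few elements immediately exhibits the square.

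The main obstacle I expect is the middle step: cleanly converting ``$\Gamma_M$ is not cut out by cyclic-interval inequalities'' into ``some face is either a crossing direct sum or a smaller connected non-positroid'', without circularity. The subtlety is that a facet being a non-cyclic-interval hyperplane does not obviously mean the corresponding matroid $M_1 \oplus M_2$ has a crossing component-partition — it could be that $M_1$ and $M_2$ are each positroids on cyclic intervals but $M$ itself still fails \cref{prop:facets} because of how higher inequalities interact. I would handle this by combining two facts: (i) by \cref{thm:faces} every face of a \emph{positroid} polytope is a positroid polytope, so if every proper face of $\Gamma_M$ were a positroid polytope we could try to glue the local inequality descriptions; and (ii) a connected matroid $M$ all of whose facets are positroid polytopes \emph{on cyclic intervals in a compatible way} must itself satisfy the inequality description of \cref{prop:facets} — this is essentially the statement that a polytope is determined by its facets, so the cyclic-interval inequalities of the facets, pulled back, already cut out $\Gamma_M$. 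The careful point is compatibility of the cyclic orders across different facets, and I would resolve it by using connectedness of $M$ to fix a single cyclic order (the positroid's Grassmann necklace / the order making one facet cyclic) and then showing every other facet is forced to be cyclic in that same order, else \cref{lem:2} already gives the square. Once all facets are cyclic-interval in a common order, \cref{prop:facets} yields that $M$ is a positroid, contradiction; so some facet is not, and we descend via \cref{lem:2} or induction to the two-dimensional square.
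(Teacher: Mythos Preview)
Your overall strategy---take the contrapositive, invoke \cref{prop:facets} to produce a ``bad'' facet whose defining set $S$ is not a cyclic interval, then apply \cref{lem:2}---is exactly the paper's approach. But you talk yourself out of the direct argument by worrying about a case that cannot occur.

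The missing observation is this: for a \emph{connected} matroid $M$, every facet of $\Gamma_M$ is the matroid polytope of $M|S \oplus M/S$ where both $M|S$ and $M/S$ are themselves connected (this is the ``flacet'' condition; the paper cites \cite[Proposition 2.12]{ARW}). Hence the facet matroid has \emph{exactly two} connected components, and their ground sets are precisely $S$ and $T=[n]\setminus S$. So if the facet-defining hyperplane is $\sum_{i\in S} x_i = r_M(S)$ with $S$ not a cyclic interval, then $\{S,T\}$ \emph{is} the connected-component partition of the facet matroid, and it is crossing. \cref{lem:2} applies immediately and you are done.

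Your ``otherwise'' case---where the ground-set splitting is noncrossing and both pieces are positroids---never arises for the bad facet you started with, so the recursion, the discussion of compatibility of cyclic orders, and the rank-$2$ base case are all unnecessary. In particular, your proposed induction step (``replace $M$ by a connected component of $M_F$ that is still not a positroid'') has no traction in that hypothetical case anyway, since you assumed each component \emph{is} a positroid. Once you supply the flacet fact, your proof collapses to the paper's four-line argument.
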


\begin{proof}
Suppose for the sake of contradiction that 
$\Gamma_M$ is not a positroid polytope.  
	
	Since $\Gamma_{M}$ is not a positroid polytope,
then by 
 \cref{r:inequalitiesmatroids}
and 	\cref{prop:facets},
	 it has a facet $F$ of the form
	$\sum_{i\in S} x_i = r_M(S)$, where $S$ is not a cyclic interval.
	In other words, $S$ and $T = [n]\setminus S$ fail to form a noncrossing partition.
Each facet of $\Gamma_M$ is the matroid polytope of a matroid with two
	connected components, so by the greedy algorithm for matroids (see e.g.
	\cite[Proposition 2.12]{ARW}), $F$ must be the matroid
	polytope of $M|S \oplus M/S$.  
	But now by \cref{lem:2}, $F$ has a two-dimensional face which is 
	not a positroid polytope.
\end{proof}

We now complete the proof
 of \cref{thm:char}.

\begin{proof}
We start by writing $M$ as a direct sum of connected matroids 
	$M = M_1 \oplus \dots \oplus M_{l}$.
	Let $S_1,\dots, S_l$ be the ground sets of 
	$M_1,\dots, M_l$.  By 
	\cite[Lemma 7.3]{ARW}, either one of the $M_i$'s fails to be a 
	positroid, or 
	$\{S_1,\dots,S_l\}$ fails to be a non-crossing 
	partition of $[n]$. 
        If one of the $M_i$'s fails to be a positroid, then by 
	\cref{prop:connected}, $\Gamma_{M_i}$
	has a two-dimensional face which fails to be a positroid.
	But then so does $\Gamma_M = \Gamma_{M_1} \times \dots \times \Gamma_{M_l}$.
On the other hand, if 
	$\{S_1,\dots,S_l\}$ fails to be a non-crossing 
	partition of $[n]$, then by 
\cref{lem:2}, $\Gamma_M$ has a two-dimensional face which fails to be a positroid.
This completes the proof.
\end{proof}

Our next goal is to use \cref{prop:dim} to  determine when the moment map restricted to a 
positroid cell is a homeomorphism.  To do so, 
we need to understand how to compute the number of connected components 
of a positroid.
The following result comes from 
\cite[Theorem 10.7]{ARW} and its proof.
We say that a permutation $\pi$ of $[n]$ is \emph{stabilized-interval-free 
(SIF)} if it does not stabilize any proper interval of $[n]$; that is,
$\pi(I) \neq I$ for all intervals $I \subsetneq [n]$.

\begin{proposition}\label{prop:conn}
Let $S_{\pi}$ be a positroid cell of $\Gr$ and 
let $M_{\pi}$ be the corresponding positroid.
	Then $M_{\pi}$ is connected if and only if 
	$\pi$ is a SIF 
	permutation of $[n]$.  
	More generally, the number of connected components  of 
	$M_{\pi}$ equals  the number of connected components
	of any reduced plabic graph associated to $\pi$.
\end{proposition}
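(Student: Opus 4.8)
The plan is to deduce both assertions from the direct-sum structure theory of positroids in \cite{ARW}, together with the compatibility of direct sums with reduced plabic graphs. By \cref{prop:equiv}, the connected components of $M_\pi$ are the blocks of the finest partition $\{S_1,\dots,S_c\}$ of $[n]$ along which $M_\pi$ splits as a direct sum $M_\pi = M_1\oplus\dots\oplus M_c$; and by \cite[Lemma 7.3]{ARW}, for a positroid this partition is \emph{noncrossing}, and each $M_i=M_\pi|_{S_i}$, with the cyclic order induced from $[n]$, is again a (connected) positroid. Hence it suffices to prove: (1) if $\pi$ stabilizes a proper cyclic interval of $[n]$, then $M_\pi$ is disconnected; (2) conversely, if $M_\pi$ is disconnected, then $\pi$ stabilizes a proper cyclic interval; and (3) the number $c$ of connected components of $M_\pi$ equals the number of connected components of an arbitrary reduced plabic graph of $\pi$.

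The main tool is a gluing lemma: if $G'$, $G''$ are reduced plabic graphs whose boundary vertices are indexed by complementary cyclic intervals $I$ and $[n]\setminus I$, then the disjoint union $G'\sqcup G''$ --- drawn in the disk so that $G'$ occupies the arc $I$ --- is again a reduced plabic graph (reducedness is controlled by conditions on single trips and pairs of trips, each of which stays inside one connected component), its trip permutation is the concatenation of those of $G'$ and $G''$, and its positroid is $M_{G'}\oplus M_{G''}$ (a flow in a disconnected graph splits into a flow in each component, so a $k$-subset $J$ is a basis exactly when $J\cap I$ and $J\cap([n]\setminus I)$ are bases of $M_{G'}$ and $M_{G''}$). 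Step (1) is then immediate: if $\pi(I)=I$ for a proper cyclic interval $I$, then $\pi$ restricts to decorated permutations on $I$ and on $[n]\setminus I$; realizing each by a reduced plabic graph \cite{postnikov} and gluing produces a disconnected reduced plabic graph for $\pi$, whence $M_\pi$ is a nontrivial direct sum. For step (2): if $M_\pi=M_1\oplus\dots\oplus M_c$ with $c\ge 2$, then $\{S_1,\dots,S_c\}$ is noncrossing, and a noncrossing partition with at least two blocks always has a block $S_i$ that is a cyclic interval (delete any block that has a gap and recurse into the gap); moreover $\pi$ maps each $S_j$ to itself, because the decorated permutation of a direct sum of positroids restricts to that of each summand (both being read off from the rank function, which is additive over the summands); so $\pi$ stabilizes the proper cyclic interval $S_i$. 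Together, (1) and (2) give that $M_\pi$ is connected if and only if $\pi$ is SIF.

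For step (3), applying the gluing lemma to the connected components $G_1,\dots,G_t$ of an arbitrary reduced plabic graph $G$ of $\pi$ gives $M_\pi = M_{G_1}\oplus\dots\oplus M_{G_t}$ with all summands nonempty, so $c\ge t$; for the reverse inequality one needs that each $M_{G_j}$ is connected, i.e.\ that a connected reduced plabic graph has a connected positroid. This is the point I expect to require the most care, since step (1) only produces \emph{some} disconnected reduced plabic graph of a non-SIF permutation, not that every reduced plabic graph of such a permutation is disconnected. I would close the gap with a move-invariance argument: the number of connected components of a reduced plabic graph is unchanged by the local moves that connect all reduced plabic graphs of a fixed decorated permutation (square moves, insertions and removals of degree-two vertices, unicolored-edge contractions) --- each of these is manifestly connectivity-preserving --- so that number depends only on $\pi$; applying this to $G_j$ and to the disconnected graph produced in step (1) shows that a non-SIF $\pi_j$ forces every reduced plabic graph of $\pi_j$, in particular $G_j$, to be disconnected. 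Thus $G_j$ connected implies $\pi_j$ SIF, hence $M_{G_j}$ is connected by the equivalence just proved, so $c=t$. Everything else is bookkeeping with \cite{ARW} and Postnikov's dictionary \cite{postnikov} among decorated permutations, \Le-diagrams, and reduced plabic graphs.
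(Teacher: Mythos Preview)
The paper does not give its own proof of this proposition; it simply records that the statement ``comes from \cite[Theorem 10.7]{ARW} and its proof.'' Your proposal is therefore not competing against an argument in the paper but is a reconstruction of the cited result, and as such it is correct in outline and in all the essential points: the noncrossing partition into connected components from \cite{ARW}, the fact that $\pi$ stabilizes each block (read off from the rank function / Grassmann necklace), the existence of a cyclic-interval block in any nontrivial noncrossing partition, and the move-invariance of the number of connected components of a reduced plabic graph (each of (M1)--(M3) is a local move inside a single component). One small presentational wrinkle: you phrase the ``gluing lemma'' for two pieces on complementary cyclic intervals, but in step~(3) you apply it to the connected components $G_1,\dots,G_t$ of an arbitrary reduced plabic graph, whose boundary sets form a noncrossing partition rather than a pair of intervals. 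This is harmless --- the direct-sum identity $M_G=M_{G_1}\oplus\dots\oplus M_{G_t}$ follows directly, as you note, from the description of bases via perfect orientations (a perfect orientation of $G$ is exactly a choice of perfect orientation of each $G_j$) --- but you should either state the gluing lemma in that generality or invoke the basis description directly at that point. With that adjustment the argument is complete.
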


\begin{example}
Consider the permutation 
$\pi = (5,3,4,2,6,7,1)$ (which in cycle notation is 
$(2 3 4) (1 5 6 7)$. 
Then there are two minimal-by-inclusion 
cyclic intervals such that $\pi(I)=I$,
namely $[2,4]$ and $[5,1]$, and hence 
the matroid $M_{\pi}$ has two connected components.
	(Note that $[1,7]$ is also a cyclic interval with $\pi([1,7]) = [1,7]$ but it is not minimal-by-inclusion.)
\end{example}

\begin{proposition}\label{prop:homeo}
Consider a positroid cell $S_{\pi} \subset \Gr$ and let $M_{\pi}$
be the corresponding positroid.  Then the following statements are 
equivalent:
\begin{enumerate}
	\item the moment map restricts to an injection on $S_{\pi}$
	\item 
the moment
map is a homeomorphism from 
$\overline{S_{\pi}}$ to $\Gamma_{{\pi}}$
		\item 
$\dim S_{\pi}
=\dim \Gamma_{{\pi}} = n-c,$ where $c$ is the number
of connected components of the matroid $M_{\pi}$.
	\end{enumerate}
	\end{proposition}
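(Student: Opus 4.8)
The plan is to prove the cycle of equivalences $(3) \Rightarrow (2) \Rightarrow (1) \Rightarrow (3)$, relying on the dimension formula for matroid polytopes (\cref{prop:dim}) and the identification $\Gamma_\pi = \overline{\mu(S_\pi)}$ from \cref{prop:moment}. The implication $(2) \Rightarrow (1)$ is immediate, since a homeomorphism from $\overline{S_\pi}$ onto $\Gamma_\pi$ restricts to an injection on the open cell $S_\pi$. The implication $(1) \Rightarrow (3)$ is also quick: if $\mu$ is injective on $S_\pi$, then since $S_\pi$ is a topological cell of dimension $\dim S_\pi$ and $\mu$ is continuous and injective there, its image $\mu(S_\pi) = \Gamma^\circ_\pi$ has dimension $\dim S_\pi$ (a continuous injection cannot lower dimension on a manifold, e.g.\ by invariance of domain applied to a chart); hence $\dim \Gamma_\pi = \dim S_\pi$, and by \cref{prop:dim} this common value equals $n-c$.

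The substantive direction is $(3) \Rightarrow (2)$. Here I would argue as follows. We always have a continuous surjection $\mu : \overline{S_\pi} \to \Gamma_\pi$ by \cref{prop:moment}, and $\overline{S_\pi}$ is compact, so it suffices to show $\mu$ is injective on $\overline{S_\pi}$; a continuous bijection from a compact space to a Hausdorff space is automatically a homeomorphism. The key idea is a dimension-counting argument stratified by cells: $\overline{S_\pi}$ decomposes as the disjoint union of the cells $S_{\pi'}$ with $S_{\pi'} \subseteq \overline{S_\pi}$, and by \cref{thm:faces} these correspond exactly to the faces $\Gamma_{\pi'}$ of $\Gamma_\pi$, with $\mu(S_{\pi'}) = \Gamma^\circ_{\pi'}$ lying in the relative interior of that face. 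Since distinct faces have disjoint relative interiors, $\mu$ can only fail to be injective by failing to be injective on some single cell $S_{\pi'}$. But for each such $\pi'$ we have $\dim S_{\pi'} \le \dim \Gamma_{\pi'}$ always (the image of a $d$-cell under a continuous map has dimension at most $d$), and the hypothesis (3) forces equality $\dim S_\pi = \dim \Gamma_\pi = n-c$ on the top cell; the point is to propagate this equality to all faces. I would do this by noting that $\dim S_{\pi'} \le \dim \Gamma_{\pi'}$ and $\dim \Gamma_{\pi'} = n - c(\pi')$ by \cref{prop:dim}, while combinatorially (via \cref{prop:conn}, counting connected components of the plabic graph, together with the description of the cell dimension $\dim S_{\pi'}$) one checks that the hypothesis on the top cell implies the corresponding equality of codimension and number of components for every face. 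Once $\dim S_{\pi'} = \dim \Gamma_{\pi'}$ for all faces, a continuous surjection between two spaces of the same dimension with $S_{\pi'}$ a cell is in fact injective (again by invariance of domain: a non-injective continuous self-map would collapse a positive-measure / open set, contradicting that the target has full dimension and the map is onto a set of the same dimension).

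The main obstacle I expect is the last step — promoting "same dimension on each cell" to "injective on each cell." The cheapest honest route is to observe that $\overline{\mu(S_{\pi'})} = \Gamma_{\pi'}$ is a polytope of dimension $\dim S_{\pi'}$, that $S_{\pi'}$ is homeomorphic to an open ball of that dimension, and that a continuous surjection from an open $d$-ball onto (a dense subset of) a $d$-dimensional convex body, which additionally extends continuously to the closed ball mapping onto the polytope, must be injective on the open ball — this can be reduced to invariance of domain by a standard degree / open-map argument, but writing it carefully requires knowing that $\mu|_{\overline{S_\pi}}$ is a \emph{proper} cellular map compatible with the face structure, which is exactly what \cref{thm:faces} gives. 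An alternative, perhaps cleaner, route that sidesteps point-set subtleties: use \cref{prop:moment} plus the known fact that the moment map image of a torus orbit closure $\overline{TA}$ is the matroid polytope and the map $\overline{TA} \to \Gamma_M$ is a submersion onto the interior when the torus acts with the expected dimension; then injectivity of $\mu$ on $S_\pi$ is equivalent to the torus orbit through a generic point of $S_\pi$ having dimension equal to $\dim \Gamma_\pi$, which by \cref{prop:dim} and \cref{prop:conn} is equivalent to (3). I would present whichever of these the authors set up earlier; structurally the proof is short, with all the real content deferred to \cref{prop:dim}, \cref{prop:moment}, \cref{thm:faces}, and \cref{prop:conn}.
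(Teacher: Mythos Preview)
Your implications $(2)\Rightarrow(1)$ and $(1)\Rightarrow(3)$ are fine. The substantive gap is in $(3)\Rightarrow(2)$, and here your argument does not go through as written.

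First, a small but telling slip: the inequality you want from ``image of a $d$-cell has dimension at most $d$'' is $\dim \Gamma_{\pi'}\le \dim S_{\pi'}$, not $\dim S_{\pi'}\le\dim\Gamma_{\pi'}$. More seriously, the stratification argument rests on two unjustified claims. You assert via \cref{thm:faces} that the cells $S_{\pi'}\subset\overline{S_\pi}$ correspond \emph{exactly} to the faces of $\Gamma_\pi$, but \cref{thm:faces} only says every face of $\Gamma_\pi$ is some $\Gamma_{\pi'}$; it does not say every subcell maps onto a face, nor that distinct subcells map to distinct faces. (Indeed, when $\dim S_\pi>\dim\Gamma_\pi$ there are necessarily more subcells than faces, and proving that this collapses precisely under hypothesis (3) is essentially the content of the proposition.) Second, the ``propagation'' step---that $\dim S_\pi=n-c$ forces $\dim S_{\pi'}=n-c(\pi')$ for every subcell---is asserted but not proved; and even granting it, your own discussion correctly identifies that ``same dimension plus surjective'' does not yield injectivity without further input (e.g.\ $z\mapsto z^2$ on the disk).

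The paper bypasses all of this with a single structural fact you did not use: \cite[Proposition~7.12]{tsukerman_williams} states that the positroid variety $X_\pi$ is a \emph{toric variety} if and only if $\dim\Gamma_\pi=\dim S_\pi$. Under hypothesis (3) this makes $\overline{S_\pi}$ the nonnegative part of a toric variety, and it is classical \cite[Section~4.2]{Fulton} that the moment map restricts to a homeomorphism there. Your ``alternative route'' at the end gestures toward this, but the actual proof is a two-line citation, not a torus-orbit-dimension computation.
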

\begin{proof}
	Suppose that (1) holds, i.e. that 
	the moment map is an injection when 
restricted to a cell $S_{\pi}$.  Then 
$\dim \Gamma_{{\pi}} = \dim S_{\pi}$. 
By \cite[Proposition 7.12]{tsukerman_williams},
the positroid variety $X_{\pi}$ is a toric variety if and only if
$\dim \Gamma_{{\pi}} = \dim S_{\pi}$, 
so this implies that $X_{\pi}$ is a toric variety,
and $\overline{S_{\pi}}$ is its nonnegative part.
It is well-known that the moment map is a homeomorphism
when restricted to the nonnegative part of a toric variety
\cite[Section 4.2]{Fulton}, so it follows that 
	$\mu$ is a homeomorphism on $\overline{S_{\pi}}$.
	Therefore (1) implies (2).  But obviously (2) 
	implies (1).

	Now suppose that (2) holds.  Since $\Gamma_{\pi}$
	is the moment map image of $\overline{S_{\pi}}$,
	it follows that 
$\dim \Gamma_{{\pi}} = \dim S_{\pi}$, 
	and by \cref{prop:dim},
we have that 
	$\dim \Gamma_{{\pi}} = n-c,$ 
	where $c$ is the number of connected components of 
	the matroid $M_{\pi}$.  Therefore (2) implies (3).

	Now suppose  (3) holds.  
Then by \cite[Proposition 7.12]{tsukerman_williams},
	$X_{\pi}$ must be a toric variety, and so the 
	moment map restricts to a homeomorphism from 
	$\overline{S_{\pi}}$ to $\Gamma_{\pi}$.  So (3)
	implies (2).
\end{proof}

\begin{proposition}\label{prop:homeo2}
Consider a positroid cell $S_{\pi} \subset \Gr$ and let $M_{\pi}$
be the corresponding positroid.
Then the moment
map is a homeomorphism from 
$\overline{S_{\pi}}$ to $\Gamma_{{\pi}}\subset \R^n$ if and only if 
any reduced plabic graph associated to $\pi$ is a forest.
The $(n-1)$-dimensional cells $S_{\pi}$ on which 
the moment map is a homeomorphism to their image 
are precisely those cells whose reduced plabic graphs are trees.
\end{proposition}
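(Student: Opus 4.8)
The plan is to derive this from the criterion already established in \cref{prop:homeo}. By that proposition, the moment map restricts to a homeomorphism from $\overline{S_\pi}$ onto $\Gamma_\pi$ if and only if $\dim S_\pi = \dim \Gamma_\pi$; and by \cref{prop:moment}, \cref{prop:dim}, and \cref{prop:conn} we have $\dim\Gamma_\pi = n-c$, where $c$ is the number of connected components of any reduced plabic graph $G$ representing $\pi$ (equivalently, of the positroid $M_\pi$). So the whole statement comes down to computing $\dim S_\pi$ in terms of $G$ and comparing it with $n-c$.

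The heart of the matter is the identity
\[
\dim S_\pi \;=\; n - c + \beta_1(G),
\]
where $\beta_1(G) := E - V + c$ is the first Betti number (cycle rank) of $G$, and $V$, $E$ are the numbers of vertices (internal and boundary) and of edges of $G$. To prove it I would invoke the standard parametrization of a positroid cell by the positive edge weights of a reduced plabic graph modulo the gauge action at internal vertices, which yields $\dim S_\pi = E - V_{\mathrm{int}}$, where $V_{\mathrm{int}} = V - n$ is the number of internal vertices of $G$ (\cite{postnikov}; see also \cref{app}). Then
\[
\dim S_\pi \;=\; E - V_{\mathrm{int}} \;=\; E - V + n \;=\; n + (E - V + c) - c \;=\; n - c + \beta_1(G).
\]
(Equivalently one could use $\dim S_\pi = \#\{\text{faces of }G\} - 1$ and count the faces by Euler's formula, after adjoining to $G$ the $n$ boundary arcs of the disk; this uses that every connected component of a reduced plabic graph meets the boundary, so that the enlarged graph cellulates the disk.)

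Granting the identity, the first assertion is immediate: $\beta_1(G)\ge 0$, with equality exactly when $G$ is a forest, so $\dim S_\pi = n-c = \dim\Gamma_\pi$ if and only if $G$ is a forest, and by \cref{prop:homeo} this is equivalent to $\mu$ restricting to a homeomorphism $\overline{S_\pi}\to\Gamma_\pi$. For the second assertion: if in addition $\dim S_\pi = n-1$, then $G$ is a forest and $n-c = n-1$, forcing $c=1$, so $G$ is a tree; conversely a tree has $c=1$ and $\beta_1(G)=0$, hence $\dim S_\pi = n-1$ and, by the first part, $\mu$ is a homeomorphism on $\overline{S_\pi}$.

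The only step needing real care is the dimension identity $\dim S_\pi = n-c+\beta_1(G)$ — more precisely, correctly invoking the dimension formula for $S_\pi$ in terms of $G$ and keeping the vertex/edge bookkeeping straight (in the face-counting variant the delicate point is instead verifying that $G$ together with the boundary arcs is a connected cell complex on the disk). Everything after that is a one-line deduction from \cref{prop:homeo}, \cref{prop:dim}, \cref{prop:moment}, and \cref{prop:conn}.
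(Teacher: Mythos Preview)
Your proposal is correct and follows essentially the same route as the paper: reduce to \cref{prop:homeo} and \cref{prop:conn}, and then compare $\dim S_\pi$ with $n-c$ using plabic-graph combinatorics. The only cosmetic difference is that the paper invokes the face-count formula $\dim S_\pi = \#\{\text{faces of }G\}-1$ directly (which you mention as your alternative), whereas you primarily use $\dim S_\pi = E - V_{\mathrm{int}}$; either way the comparison with $n-c$ amounts to the vanishing of $\beta_1(G)$.
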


\begin{proof}
This follows from \cref{prop:homeo} and 
\cref{prop:conn},
	together with the fact
that we can read off the dimension of a positroid cell
from any reduced plabic graph $G$ for it as the number of regions of 
$G$ minus $1$.
\end{proof}

\begin{remark}\label{rem:cycle}
	The connected $(n-1)$-dimensional positroid cells $S_{\pi}$
	of $\Gr$ are 
	precisely those $(n-1)$-dimensional cells where $\pi$ is 
	a single cycle of length $n$.
\end{remark}

As an alternative to the moment map from \cref{def:moment1}, 
we can also consider the 
 \emph{algebraic moment map} as in \cite{Sottile},
 defined as follows.\footnote{The reference \cite{Sottile}
 defines this map for toric varieties, but it makes sense for 
 $Gr_{k,n}$.}
\begin{definition}\label{def:moment2}
	Let $A$ be a $k \times n$ matrix representing a point of 
	$Gr_{k,n}$.
	The \emph{algebraic moment map} 
	$\tilde{\mu}: Gr_{k,n} \to \R^n$ is defined by 
	$$\tilde{\mu}(A) = \frac{ \sum_{I \in \binom{[n]}{k}} |p_I(A)| e_I}
	{\sum_{I \in \binom{[n]}{k}} |p_I(A)|}.$$
\end{definition}

\begin{lemma}\label{lem:alg}
\cref{prop:homeo} and 
\cref{prop:homeo2} hold verbatim after replacing \emph{moment map}
by \emph{algebraic moment map}.  In particular, if $S_{\pi}$
is a positroid cell whose reduced plabic graph is a tree,
then $\tilde{\mu}$ is an injection on $S_{\pi}$
and $\Gamma_{\pi} =\tilde{\mu}(\overline{S_{\pi}})$.
\end{lemma}
\begin{proof}
We note that both the moment map and the algebraic moment map
are homeomorphisms when restricted to the nonnegative part of 
a toric variety \cite[Theorem 8.5]{Sottile}, \cite[Section 4.2]{Fulton}. 
	Therefore the proofs of \cref{prop:homeo} and 
	\cref{prop:homeo2} hold when
we use the algebraic moment map.
\end{proof}

\begin{proposition}
We have 
	$\tilde{\mu}(Gr_{k,n}^{\geq 0}) = \Delta_{k,n}$.
\end{proposition}
\begin{proof}
It follows immediately from the definition that 
$\tilde{\mu}(A)$ will always be a convex combination 
of the points $e_I$ for $I\in {[n]\choose k}$ so 
 $\tilde{\mu}(Gr_{k,n}^{\geq 0}) \subseteq \Delta_{k,n}$.

 In the other direction, choose any positroid tiling
$\{S_{\pi}\}$ 
of $\Delta_{k,n}$, e.g. as in 
	\cref{prop:largeclass}.  Then by \cref{lem:alg}
	and the definition of positroid tiling,
	we have $\tilde{\mu}(\overline{S_{\pi}}) = \Gamma_{\pi}$
	and $\bigcup \Gamma_{\pi} = \Delta_{k,n}$.
	It follows that 
	$\tilde{\mu}(Gr_{k,n}^{\geq 0}) = \Delta_{k,n}$.
\end{proof}

\section{Dissecting the hypersimplex and the amplituhedron}

In this section we provide two 
recursive recipes for dissecting the hypersimplex 
$\Delta_{k+1,n}$, and dissecting the amplituhedron $\mathcal{A}_{n,k,2}(Z)$;
 the recipe for dissecting the $m=2$ amplituhedron  was proposed in \cite[Section 4.1]{Karp:2017ouj} and 
  proved in \cite{BaoHe}.  These recursive recipes are 
  completely parallel: as we will see in \cref{T-duality}, 
  the cells of corresponding dissections are in bijection with each 
  other via the \emph{T-duality map} on positroid cells.
Since these two recursions are analogous to the BCFW recurrence
(which conjecturally gives tilings of the $m=4$ amplituhedron),
 we refer to them as \emph{BCFW-style} recurrences.

\subsection{BCFW dissections of the hypersimplex}

\begin{definition}
Let $G$ (resp. $G'$) be a reduced plabic graph
with $n-1$ boundary vertices, associated to 
a positroid cell of 
$Gr^{\geq 0}_{k+1,n-1}$ (resp. $Gr^{\geq 0}_{k,n-1}$), which 
do not have a loop at vertex $n-1$.
We define $\ipre$ (resp. $\iinc$) to be the map which takes $G$ (resp. $G'$) and replaces the $(n-1)$st boundary vertex with a trivalent internal 
	white (resp. black) vertex attached to boundary vertices $n-1$ and $n$, as in the middle 
	(resp. rightmost) graph of 
\cref{fig:hyp}. 
\end{definition}

\begin{remark}
Using \cref{app}, it is straightforward to verify that both $\ipre(G)$ and $\iinc(G')$
	are reduced plabic graphs for cells of $Gr^{\geq 0}_{k+1,n}$. Moreover, we can in fact define
	$\ipre(G)$ (resp. $\iinc(G')$) on any reduced plabic graph for 
	$Gr^{\geq 0}_{k+1,n-1}$ (resp. $Gr^{\geq 0}_{k,n-1}$) which does not have a black (resp. white)
	lollipop at vertex $n-1$,
	and will again have that $\ipre(G)$ and $\iinc(G')$ represent cells of $Gr^{\geq 0}_{k+1,n}$.
\end{remark}

Abusing notation slightly, we also use $\ipre$ and $\iinc$ to denote the corresponding maps on 
positroid cells and polytopes, decorated permutations,  etc. 
Using \cref{def:rules},
 it is easy to determine the effect of $\ipre$ and $\iinc$ on decorated permutations.
We leave the proof of the following lemma as an exercise.

\begin{lemma}\label{lem:pre-inc}
If $\pi = (a_1, a_2,\dots,a_{n-1})$ is a decorated permutation such 
	that 
 $(n-1)\mapsto a_{n-1}$ is not a black fixed point, 
then $\ipre(\pi) = (a_1, a_2,\dots,a_{n-2},n,a_{n-1})$.

If $\pi = (a_1, a_2,\dots,a_{n-1})$ is a decorated permutation such that 
	$(n-1) \mapsto a_{n-1}$ is not a  white fixed point, 
then $\iinc(\pi) = 
(a_1,a_2,\dots, a_{j-1}, n, a_{j+1},\dots,a_{n-1},n-1)$ 
where $j =\pi^{-1}(n-1)$.
\end{lemma}

\begin{remark} 
\cref{lem:pre-inc} can be equivalently expressed in terms of 
 $\Le$-diagrams (see \cite{postnikov} or \cite[Section 2]{Karp:2017ouj}).
If $D$ is the $\Le$-diagram associated to $\pi$ as in the first paragraph
of \cref{lem:pre-inc}, then 
$\ipre(D)$ is obtained from $D$ by adding
a new column to the left of $D$,
where the new column consists of a single $+$ at the bottom. 
If $D$ is the $\Le$-diagram associated to $\pi$ as in the second
	paragraph of \cref{lem:pre-inc}, then 
$\iinc(D)$ is obtained from $D$ by adding
 a new row at the bottom of $D$,
 where the row consists of a single box containing a $+$.
\end{remark}

\begin{theorem}[BCFW recursion for the hypersimplex] \label{thm:dishyper}
	Let $\mathcal{C}_{k+1,n-1}$ (respectively $\mathcal{C}_{k,n-1}$)
	be a collection of positroid polytopes which dissects
	the hypersimplex $\Delta_{k+1,n-1}$ (resp. $\Delta_{k,n-1}$).  
	Then $$\mathcal{C}_{k+1,n} = \ipre(\mathcal{C}_{k+1,n-1}) \cup 
	    \iinc(\mathcal{C}_{k,n-1})$$ dissects $\Delta_{k+1,n}.$
\end{theorem}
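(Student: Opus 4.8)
The plan is to reduce the claim to a statement about how the moment-map images of cells behave under the two operations $\ipre$ and $\iinc$, and in particular to show that the hypersimplex $\Delta_{k+1,n}$ decomposes naturally into two pieces that are "captured" by $\ipre(\mathcal{C}_{k+1,n-1})$ and $\iinc(\mathcal{C}_{k,n-1})$ respectively. The key geometric observation is that $\Delta_{k+1,n}$ fibers over the segment $x_n\in[0,1]$: the slice $\{x_n=0\}$ is a copy of $\Delta_{k+1,n-1}$ (the facet where column $n$ is not used), the slice $\{x_n=1\}$ is a copy of $\Delta_{k,n-1}$ (the facet where column $n$ is always used), and $\Delta_{k+1,n}$ is (up to measure zero) the union of the "prism over $\Delta_{k+1,n-1}$" region where $x_{n-1}+x_n\le 1$-type inequalities dominate and the complementary region. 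The first step, therefore, is to identify precisely: (i) $\overline{\mu(S_{\ipre(G)})}$ as a polytope, and (ii) $\overline{\mu(S_{\iinc(G)})}$ as a polytope, in terms of $\overline{\mu(S_G)}\subseteq \Delta_{k+1,n-1}$ (resp. $\subseteq \Delta_{k,n-1}$). Using the $\Le$-diagram description from the preceding lemma — $\ipre$ adds a new leftmost column with a single $+$ at the bottom, $\iinc$ adds a new bottom row with a single $+$ — together with \cref{prop:facets} (the cyclic-interval inequality description of positroid polytopes), I would compute the facet inequalities of these enlarged positroid polytopes explicitly. I expect $\Gamma_{\ipre(G)}$ to be essentially $\{(x,x_n): x\in\Gamma_G,\ 0\le x_n\le \text{(something)}\}$ living in the half where column $n$ is "small", and $\Gamma_{\iinc(G)}$ to sit in the complementary half.

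Second, I would verify the three bullet conditions of \cref{def:dissection1} for $\mathcal{C}_{k+1,n}$. Full-dimensionality: since $\Gamma_G$ has dimension $n-2$ in $\Delta_{k+1,n-1}$ (resp. $\Delta_{k,n-1}$) and each of $\ipre,\iinc$ adds exactly one new "direction" (the $x_n$-coordinate is now free to vary), $\Gamma_{\ipre(G)}$ and $\Gamma_{\iinc(G)}$ both have dimension $n-1$; this should follow from the $\Le$-diagram count (number of boxes, hence cell dimension, goes up by $1$) combined with \cref{prop:homeo} / the fact that $\mu$ restricted to the relevant cells has image of the expected dimension — or more directly from the explicit polytope descriptions in step one. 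Covering: the images of $\ipre(\mathcal{C}_{k+1,n-1})$ cover the region $R_{\le}$ of $\Delta_{k+1,n}$ where $x_n$ lies below the "interface", because projecting $R_\le$ forgetting $x_n$ lands (densely) in $\Delta_{k+1,n-1}$, which is covered by $\mathcal{C}_{k+1,n-1}$; similarly $\iinc(\mathcal{C}_{k,n-1})$ covers $R_{\ge}$, which projects via $x\mapsto x$ (after accounting for the forced use of column $n$) onto $\Delta_{k,n-1}$. Since $R_\le\cup R_\ge=\Delta_{k+1,n}$ up to measure zero, the union is dense. Disjointness: two cells both coming from $\ipre$ have disjoint images because their projections to $\Delta_{k+1,n-1}$ are disjoint (as $\mathcal{C}_{k+1,n-1}$ is a dissection) and $\ipre$ acts "fiberwise" in a way that is injective on the relevant open region; likewise for two cells from $\iinc$; and a $\ipre$-cell and a $\iinc$-cell have images separated by the interface hyperplane, so their interiors are disjoint.

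The main obstacle I anticipate is making the "interface" picture rigorous — precisely identifying the hypersurface inside $\Delta_{k+1,n}$ that separates the $\ipre$-region from the $\iinc$-region and checking that the two families of polytope images really do tile the two sides without overlap and without leaving a positive-measure gap. In particular I need that $\overline{\mu(S_{\ipre(G)})}$, as $G$ ranges over $\mathcal{C}_{k+1,n-1}$, glues up along the interface exactly with $\overline{\mu(S_{\iinc(H)})}$ as $H$ ranges over $\mathcal{C}_{k,n-1}$; this is where the specific combinatorics of adding a $+$ at the bottom of a column versus a $+$ in a new row — and the compatibility of the cyclic-interval inequalities of \cref{prop:facets} under these operations — has to be used carefully, since a priori the two dissections $\mathcal{C}_{k+1,n-1}$ and $\mathcal{C}_{k,n-1}$ are unrelated. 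A clean way to handle the bookkeeping is to prove a slightly stronger statement by induction: that the images $\Gamma^{\circ}_{\ipre(G)}$ are exactly $\{(y,t): y\in\Gamma^{\circ}_G,\ 0<t<g(y)\}$ and $\Gamma^{\circ}_{\iinc(H)}$ are exactly $\{(z',t): z'\in\Gamma^{\circ}_H \text{ (suitably embedded)},\ h(z')<t<1\}$ for the appropriate piecewise-linear interface functions, so that density and disjointness become immediate from the corresponding properties one dimension down. I would also double-check the two base-case-ish hypotheses — that the graphs in $\mathcal{C}_{k+1,n-1}$ have no black lollipop at $n-1$ and those in $\mathcal{C}_{k,n-1}$ have no white lollipop at $n-1$ — noting that an $(n-1)$-dimensional positroid cell of $Gr^+_{k+1,n-1}$ cannot have any loop or coloop (its permutation is a single $(n-1)$-cycle by \cref{rem:cycle}), so $\ipre$ and $\iinc$ are in fact defined on every cell appearing in a dissection, removing that as a concern.
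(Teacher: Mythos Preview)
Your overall strategy is correct and matches the paper's: split $\Delta_{k+1,n}$ into two pieces along a hyperplane, show the $\ipre$-images tile one side and the $\iinc$-images tile the other. Two points where the paper is sharper than your sketch are worth noting.

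First, the interface is not merely of ``$x_{n-1}+x_n\le 1$-type'': it is exactly the hyperplane $x_{n-1}+x_n=1$. Once you commit to this, your ``fibering over $x_n$'' picture should be replaced by the linear projection $(x_1,\dots,x_n)\mapsto(x_1,\dots,x_{n-2},x_{n-1}+x_n)$, which sends $\{x_{n-1}+x_n\le 1\}\cap\Delta_{k+1,n}$ onto $\Delta_{k+1,n-1}$ and carries $\Gamma_{\ipre(G)}$ onto $\Gamma_G$; similarly $(x_1,\dots,x_n)\mapsto(x_1,\dots,x_{n-2},x_{n-1}+x_n-1)$ sends the other half onto $\Delta_{k,n-1}$. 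Your piecewise-linear functions $g,h$ then become unnecessary --- the fibers are just segments, and the two families automatically meet along the interface regardless of how $\mathcal{C}_{k+1,n-1}$ and $\mathcal{C}_{k,n-1}$ are chosen.

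Second, rather than computing facet inequalities via \cref{prop:facets} and the $\Le$-diagram, the paper computes the \emph{bases} of $M_{\ipre(G)}$ and $M_{\iinc(G)}$ directly from perfect orientations (\cref{prop:perf}): the bases of $M_{\ipre(G)}$ are $\mathcal B\sqcup\{(B\setminus\{n-1\})\cup\{n\}:B\in\mathcal B,\ n-1\in B\}$ (so each basis contains at most one of $n-1,n$, forcing $x_{n-1}+x_n\le 1$), and the bases of $M_{\iinc(G)}$ are $\{B\cup\{n\}:B\in\mathcal B\}\sqcup\{B\cup\{n-1\}:B\in\mathcal B,\ n-1\notin B\}$ (so each basis contains at least one of $n-1,n$, forcing $x_{n-1}+x_n\ge 1$). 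This is more direct than the facet-inequality route and immediately yields the polytope descriptions you were after.
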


We use the term \emph{BCFW dissection}  
(respectively, \emph{BCFW tiling}) to refer to any dissection or 
tiling 
that has the form $\mathcal{C}_{k,n}$ from \cref{thm:dishyper}.

Diagrammatically, \cref{thm:dishyper} is depicted in Fig.~\ref{fig:hyp}.
\begin{figure}[h]
\includegraphics[scale=0.32]{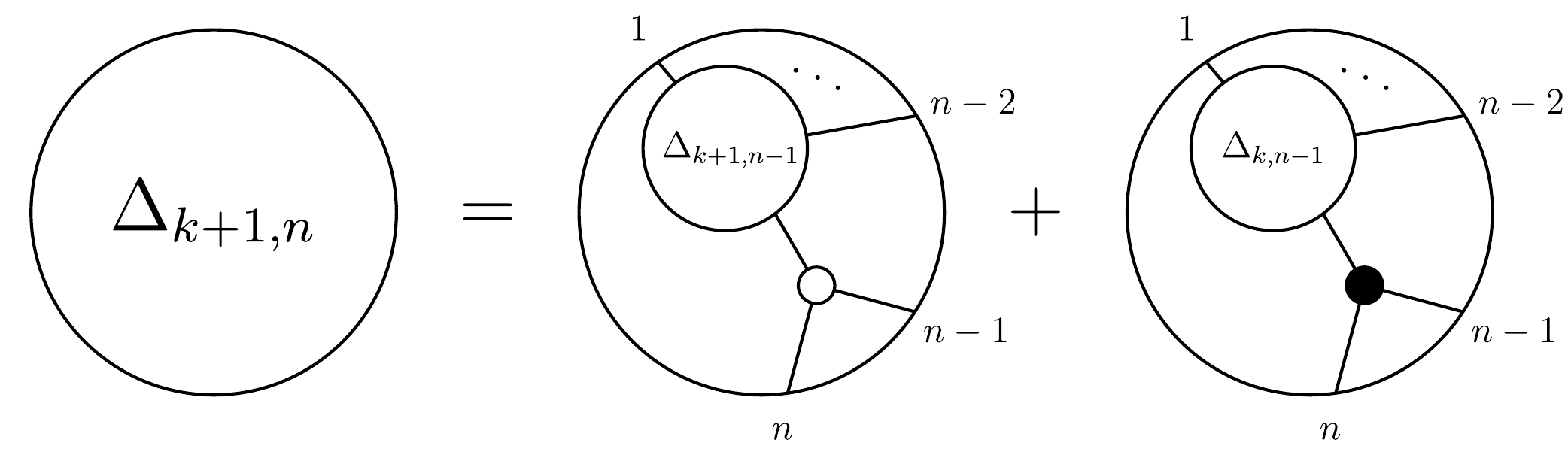}
	\caption{A BCFW-style 
	recursion for dissecting the hypersimplex.  There is 
	a parallel recursion obtained from this one by cyclically shifting
	all boundary vertices of the plabic graphs by $i$ (modulo $n$).}
	\label{fig:hyp}
\end{figure}

\begin{remark}\label{rem:cyc}
Because of the cyclic symmetry of 
the positive Grassmannian and the hypersimplex (see e.g. \cref{th:cyclicsymHS})
there are $n-1$ other versions of 
	\cref{thm:dishyper} (and \cref{fig:hyp}) in which all plabic graph labels
	get shifted by $i$  modulo $n$ (for $1 \leq i \leq n-1$).
\end{remark}

\begin{proof}
The hypersimplex $\Delta_{k+1,n}$ is cut out by the inequalities
$0 \leq x_i \leq 1$, as well as the equality $\sum_i x_i = k+1$.
We will show that \cref{fig:hyp} represents the partition of $\Delta_{k+1,n}$ into two pieces,
with the middle graph representing the piece cut out by $x_{n-1}+x_n \leq 1$,
and the rightmost graph representing the piece cut out by $x_{n-1}+x_n \geq 1$.

Towards this end, it follows from \cref{prop:perf} that if $G$ is a reduced plabic graph
	representing a cell of $Gr^{\geq 0}_{k+1,n-1}$, such that the positroid $M_G$
	has bases $\mathcal{B}$, then the bases of $M_{\ipre(G)}$
	are precisely $\mathcal{B} \sqcup\{(B\setminus \{n-1\}) 
	\cup \{n\}\ \vert \ B\in \mathcal{B}, n-1\in B\}$.
	In particular, each basis of $M_{\ipre(G)}$ may contain at most 
	one element of $\{n-1, n\}$.

	Meanwhile, it follows from \cref{prop:perf} that if $G$ is a reduced plabic graph
	representing a cell of $Gr^{\geq 0}_{k,n-1}$, such that the positroid $M_G$
	has bases $\mathcal{B}$, then the bases of $M_{\iinc(G)}$
	are precisely $\{B \cup \{n\} \ \vert \ B\in \mathcal{B}\} 
	\sqcup \{B \cup \{n-1\} 
	\ \vert \ B\in \mathcal{B}, n-1\notin B\}$. In particular, each basis of $M_{\iinc(G)}$
	must contain at least one element of $\{n-1,n\}$.

	It is now a straightforward exercise 
	(using e.g. \cite[Proposition 5.6]{ARW}) to determine that 
	if $\mathcal{C}_{k+1,n-1}$ 
	is a collection of cells in $Gr^{\geq 0}_{k+1,n-1}$  which dissects
	 $\Delta_{k+1,n-1}$ then 
	 $\ipre(\mathcal{C}_{k+1,n-1})$ dissects the 
	 subset of $\Delta_{k+1,n}$ cut out by the inequality
	 $x_{n-1}+x_n \leq 1$.
	 Similarly for $\iinc(\mathcal{C}_{k,n-1})$ and the 
	 subset of $\Delta_{k+1,n}$ cut out by 
	 $x_{n-1}+x_n \geq 1$.
\end{proof}

\begin{example}\label{ex:hyper}
Let $n=5$ and $k=2$.  
We will use \cref{thm:dishyper} to obtain a dissection of 
$\Delta_{k+1,n} = \Delta_{3,5}$.
We start with a dissection of $\Delta_{3,4}$ 
	coming from the plabic graph shown below (corresponding
	to the decorated permutation
	$(4,1,2,3)$),
	and a dissection of $\Delta_{2,4}$ (corresponding 
	to the permutations $(2,4,1,3)$ and $(3,1,4,2)$).
Applying the theorem leads to the three plabic graphs in the bottom
line, which correspond to the permutations 
	$(4,1,2,5,3), (2,5,1,3,4), (3,1,5,2,4)$.
\begin{center}
\begin{tabular}{c}
\raisebox{1.4cm}{$\Delta_{3,4}$:\,\,\,\,}$\includegraphics[scale=0.12]{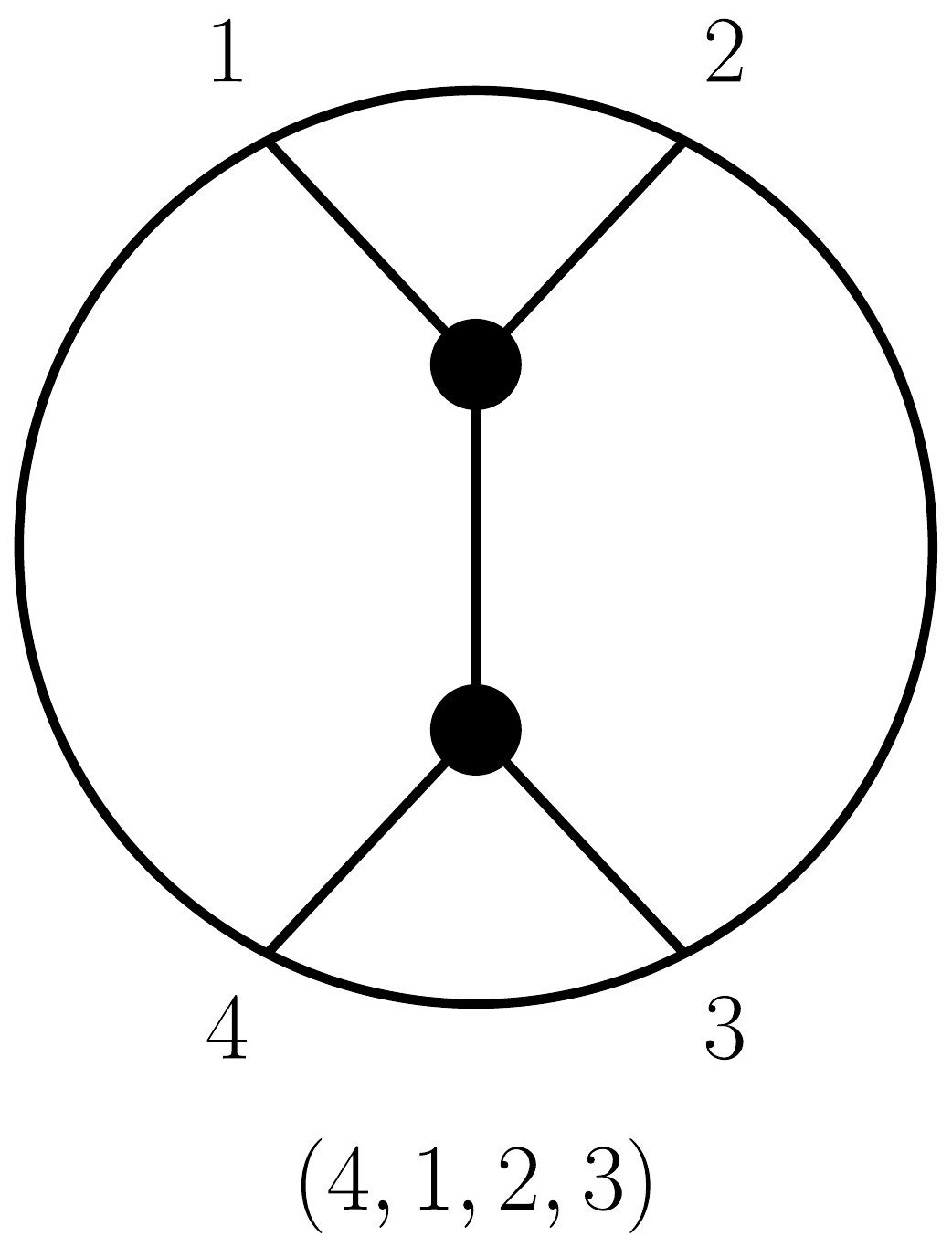}$\qquad
\raisebox{1.4cm}{ $\Delta_{2,4}$:\,\,\,\,}\includegraphics[scale=0.12]{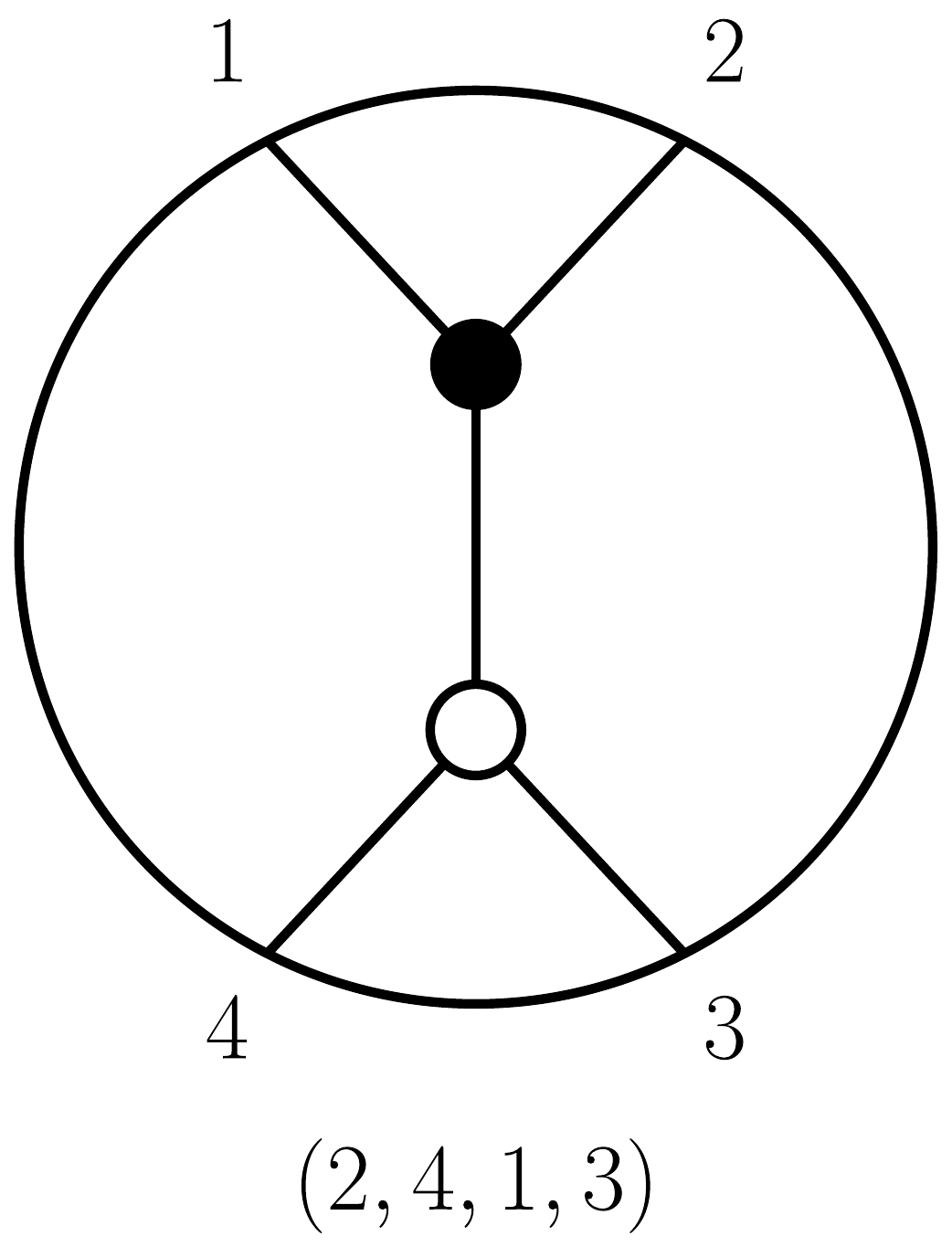}\quad\includegraphics[scale=0.12]{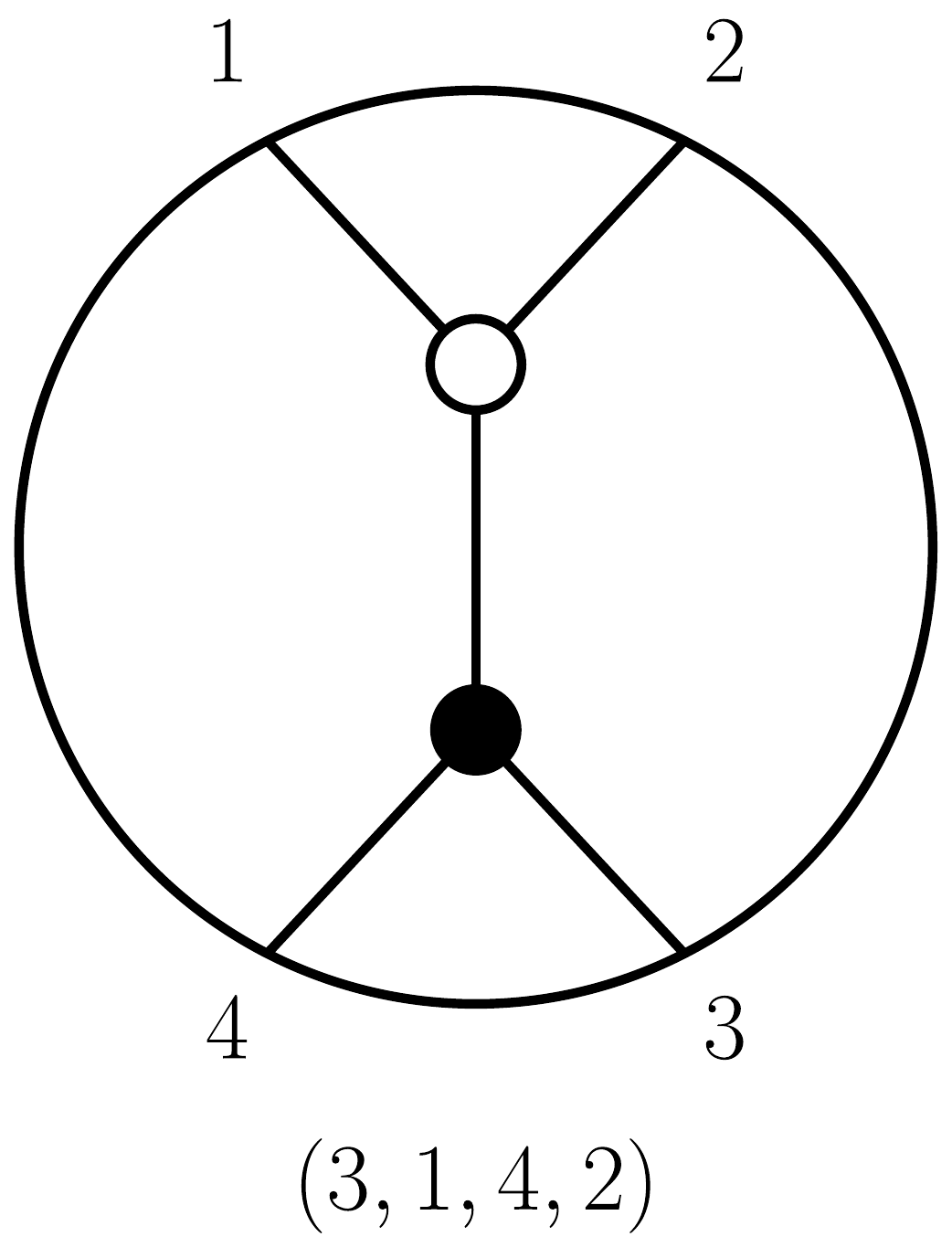}\\
\raisebox{1.4cm}{$\Delta_{3,5}$:}\,\,\,\,\,\includegraphics[scale=0.13]{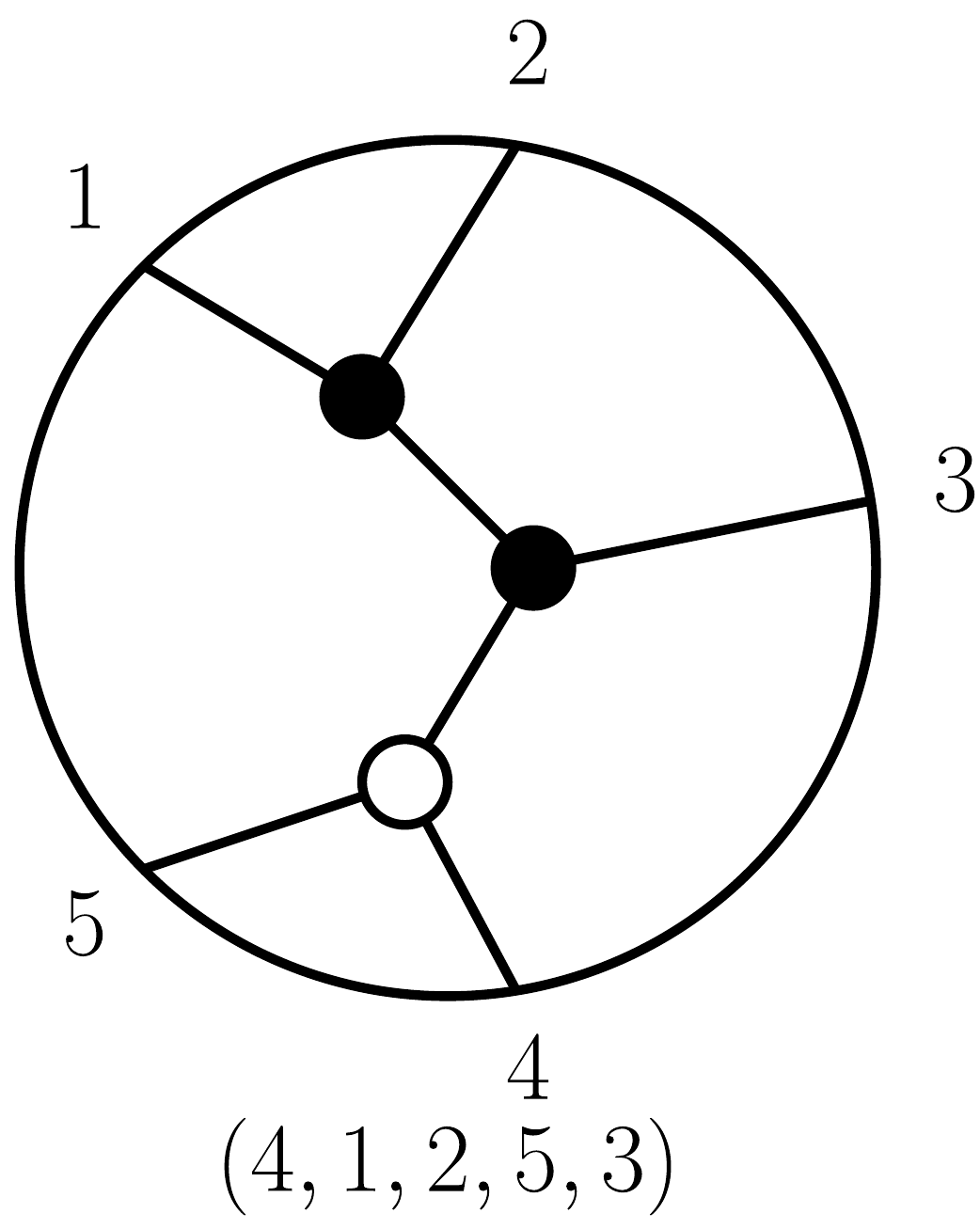}\quad\includegraphics[scale=0.13]{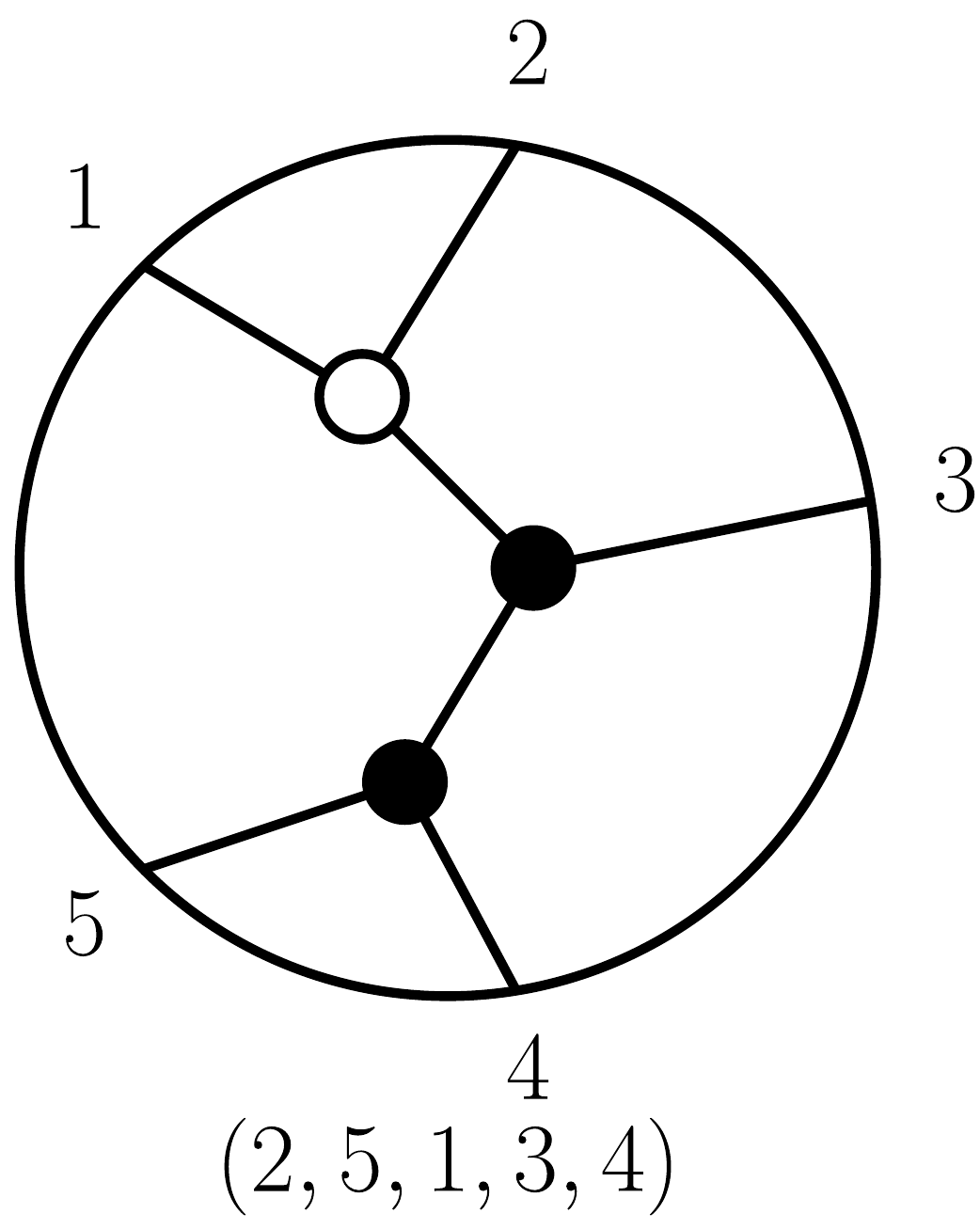}\quad\includegraphics[scale=0.13]{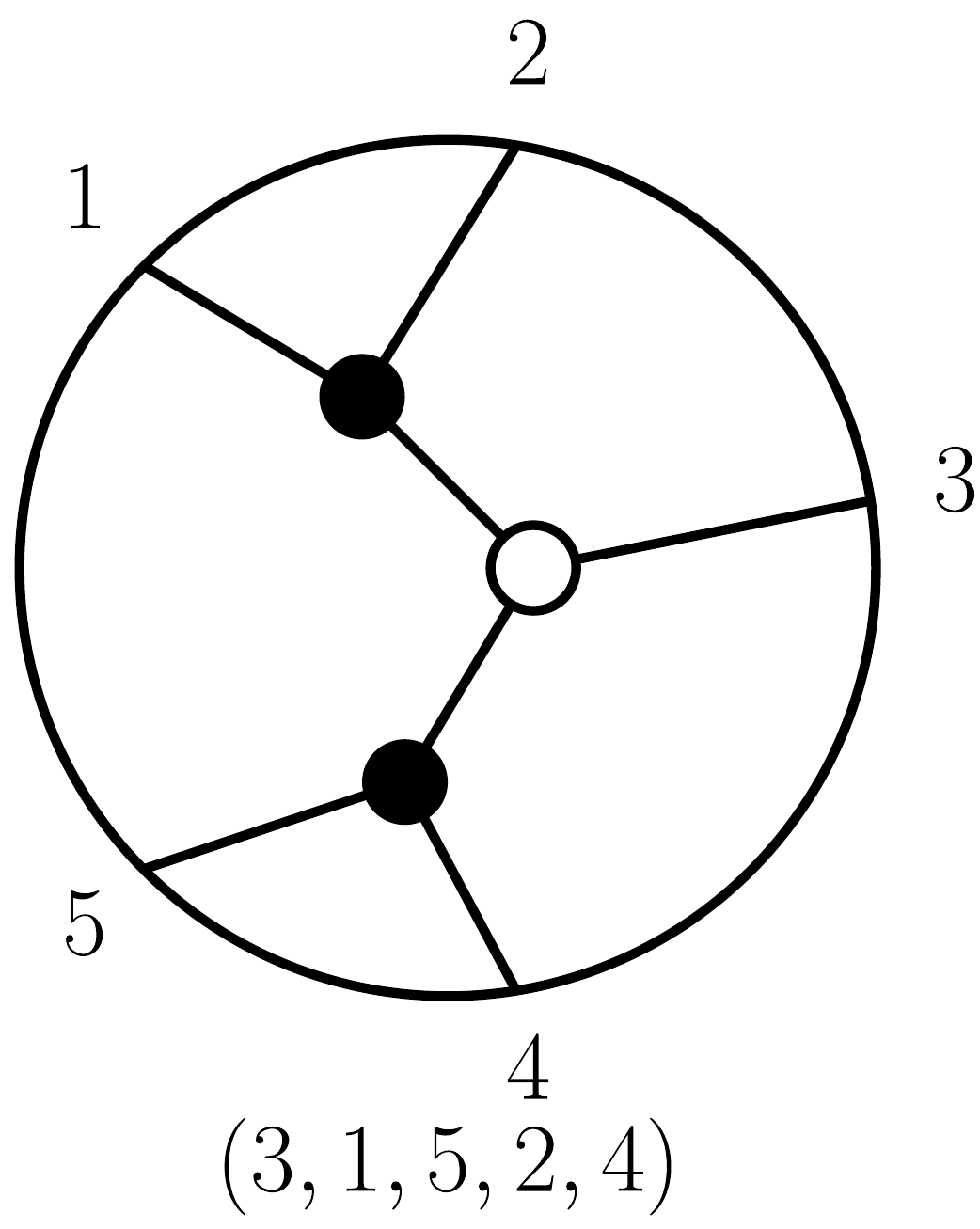}
\end{tabular}
\end{center}
\end{example}
\begin{remark}
It is worth pointing out that our BCFW-style recursion does not provide all possible dissections of the hypersimplex. This comes from the fact that in each step of the recursion we divide the hypersimplex into two pieces, while there are some dissections coming from 3-splits. The simplest example of a dissection which cannot be obtained from the recursion can be found already for $\Delta_{3,6}$ and is depicted in \cref{fig:hyp2}.

\begin{center}
\begin{figure}[h!]
\begin{tabular}{cccccc}
\includegraphics[scale=0.14]{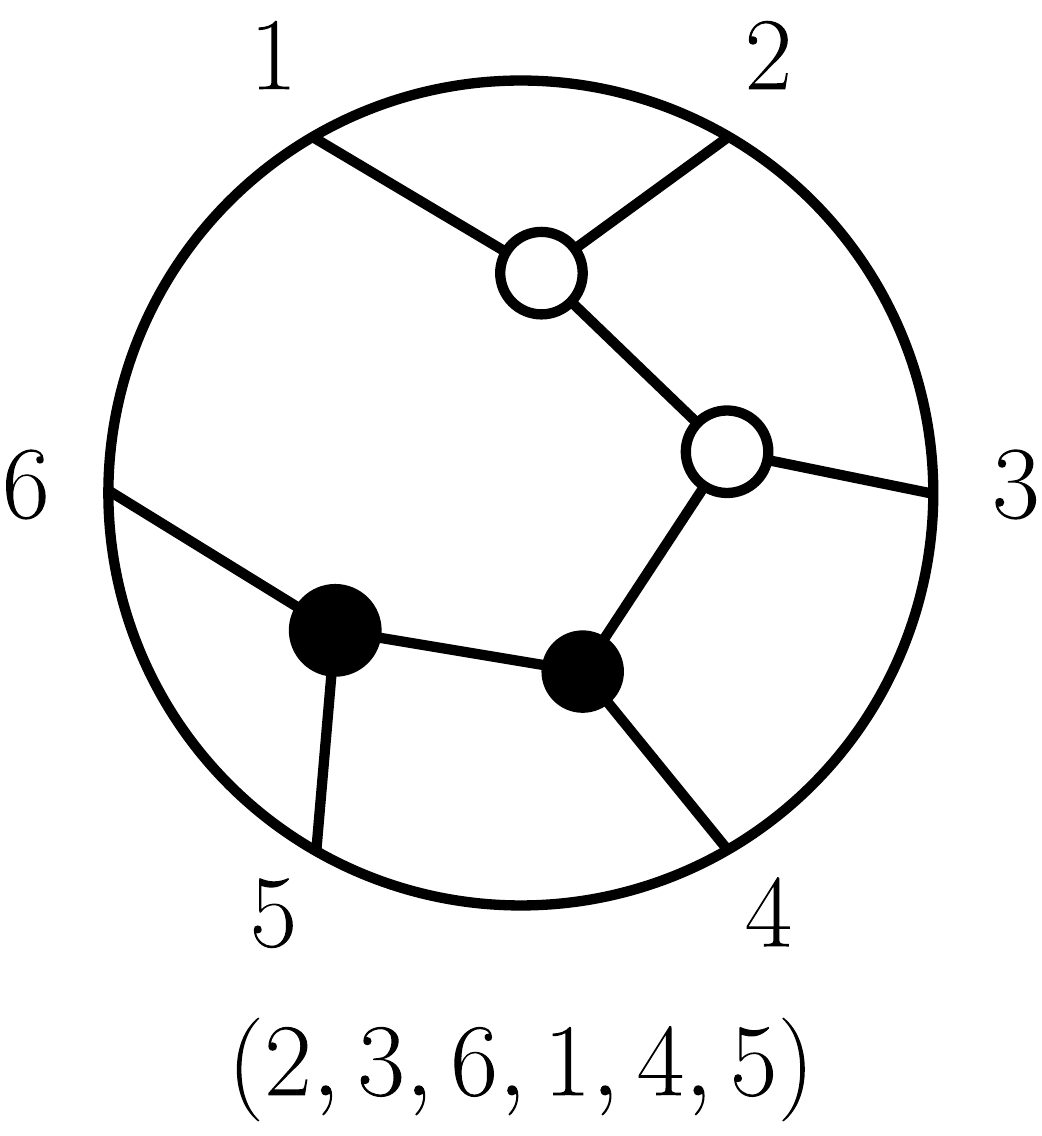}&
\includegraphics[scale=0.14]{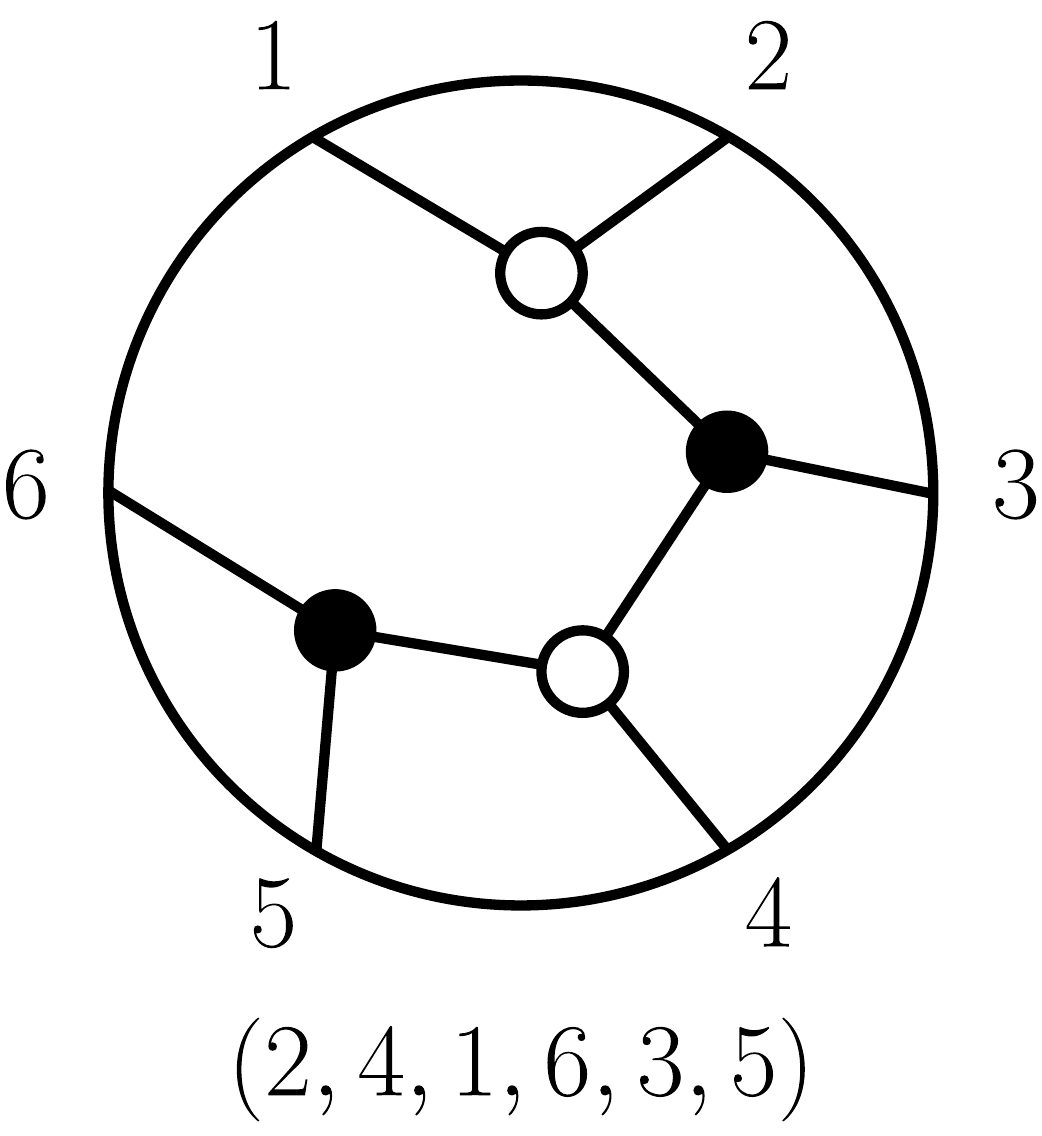}&
\includegraphics[scale=0.14]{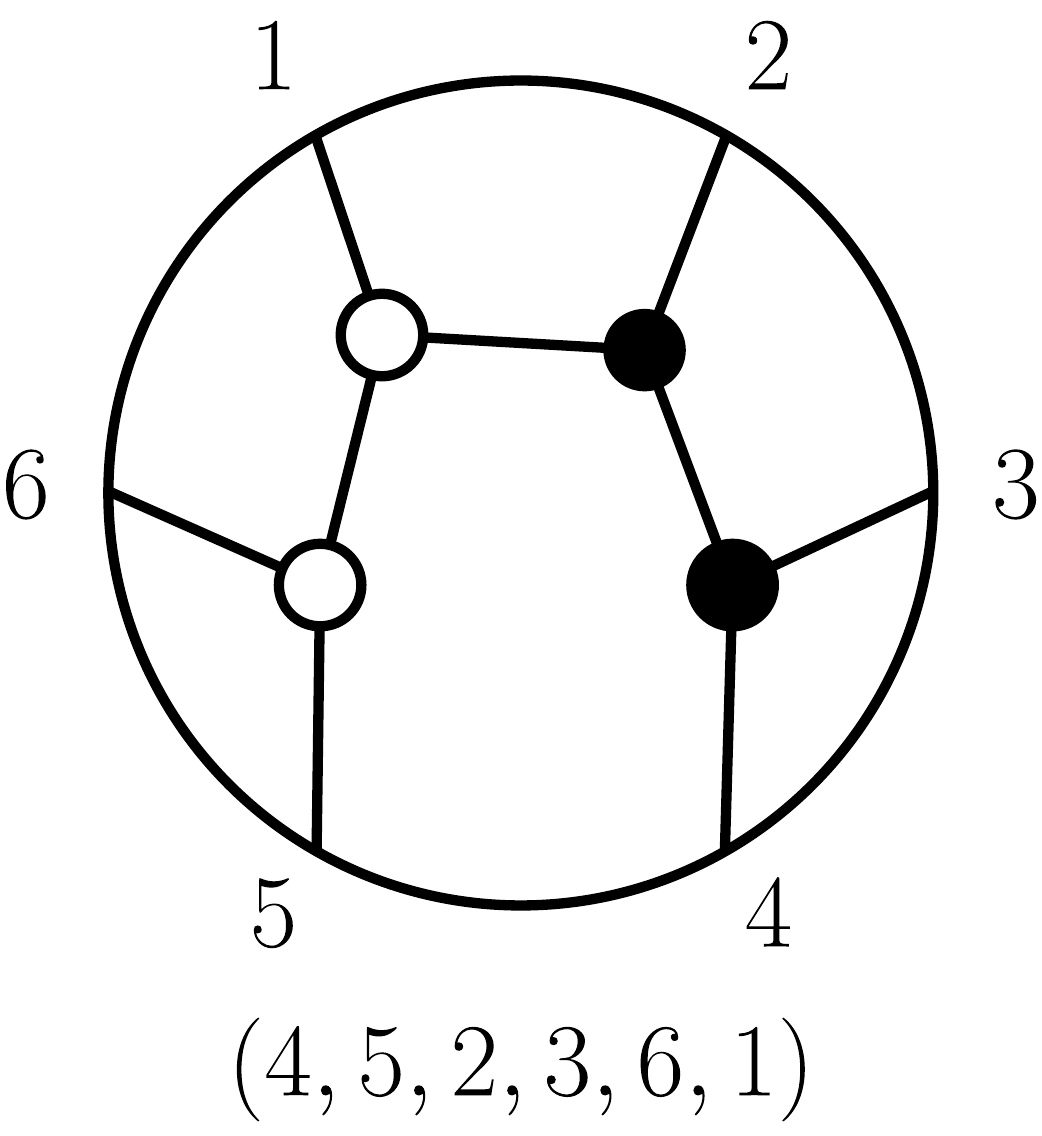}&
\includegraphics[scale=0.14]{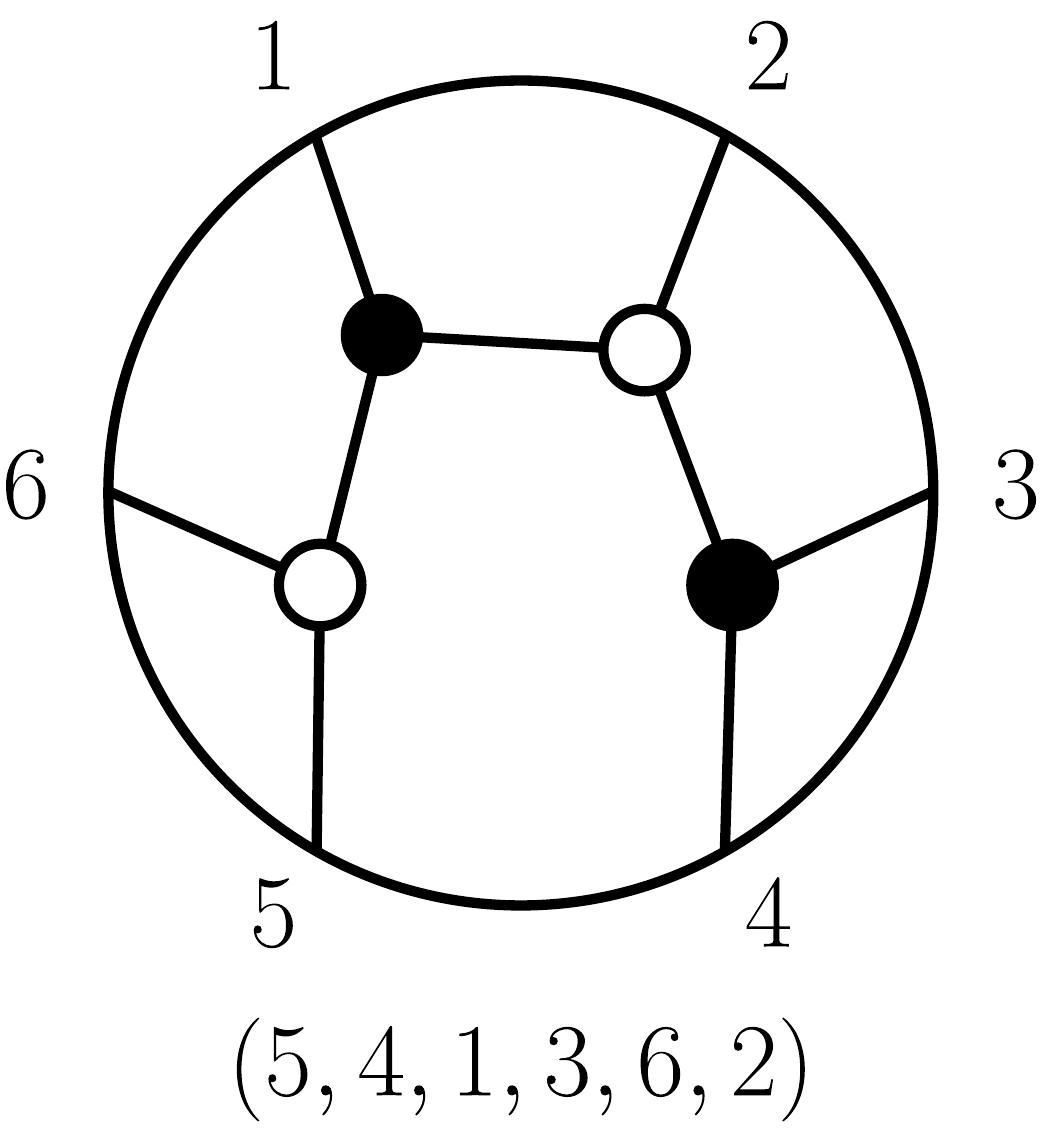}&
\includegraphics[scale=0.14]{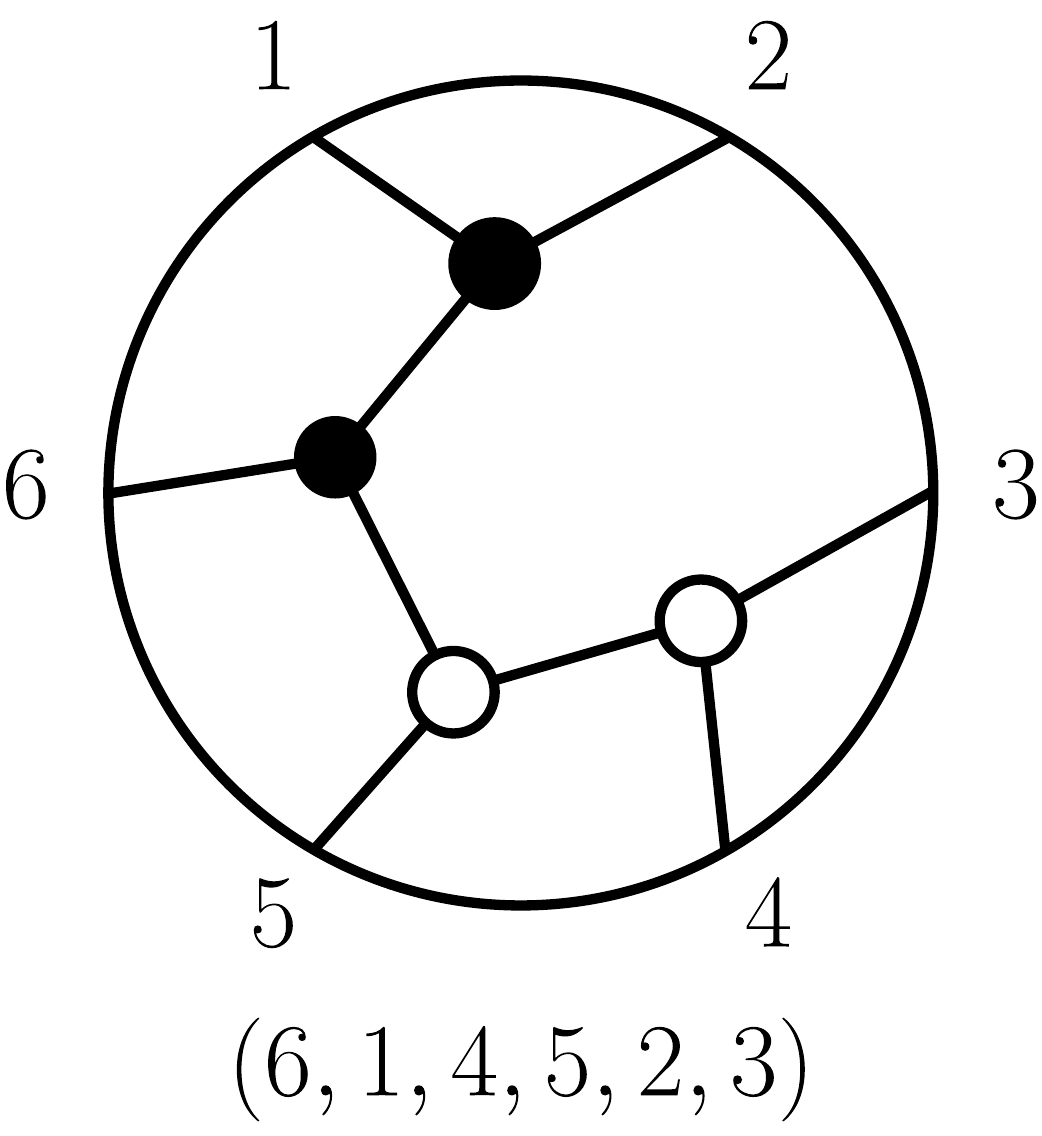}&
\includegraphics[scale=0.14]{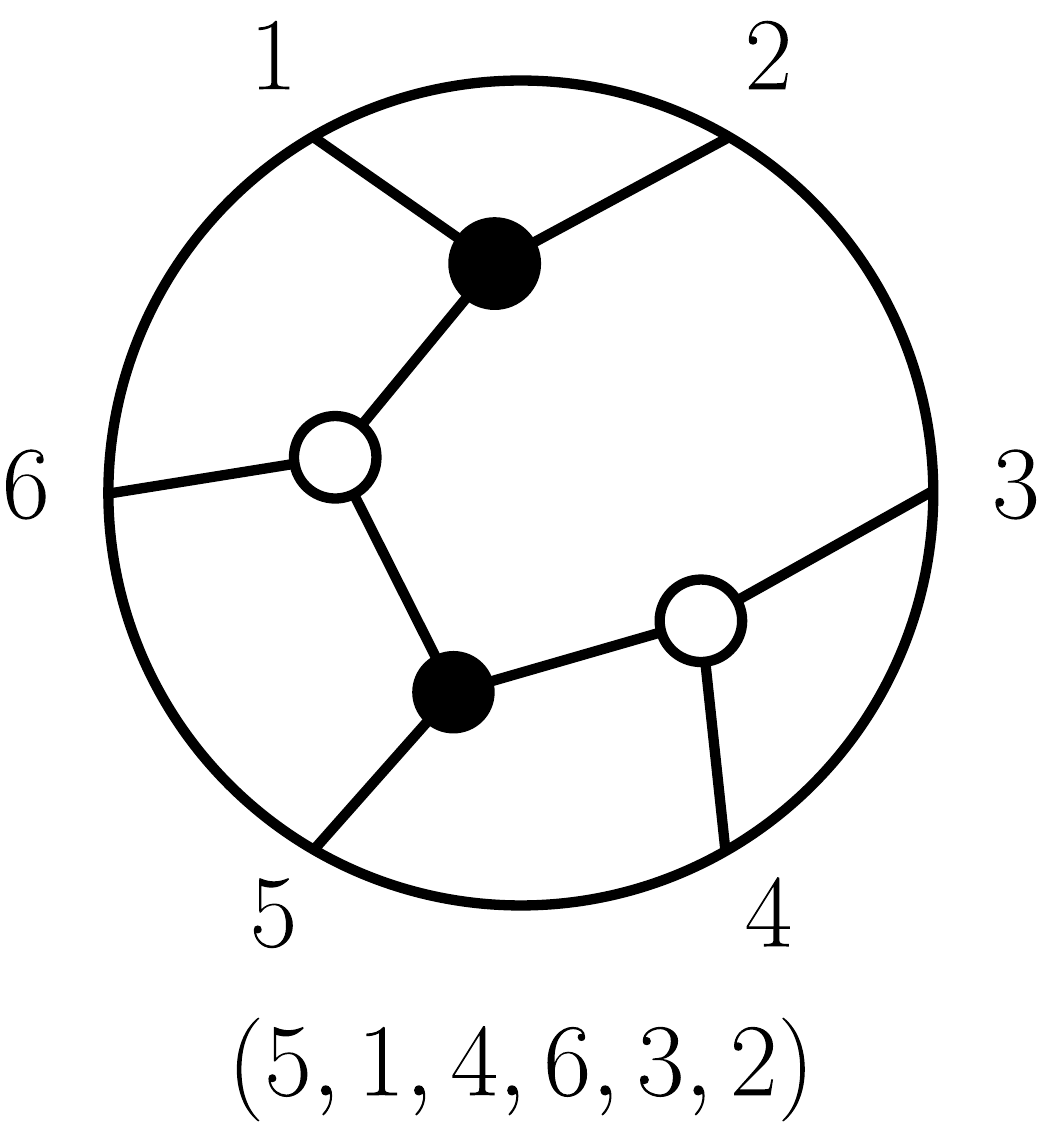}
\end{tabular}
\caption{An example of dissection of $\Delta_{3,6}$ that cannot be obtained from the BCFW-style recursion in \cref{thm:dishyper}.}
\label{fig:hyp2}
\end{figure}
\end{center}
\end{remark}

\subsection{BCFW dissections of the $m=2$ amplituhedron}

We now introduce some maps on plabic graphs, and 
recall a result of Bao and He \cite{BaoHe}.

\begin{definition}
Let $G$ be a reduced plabic graph
with $n-1$ boundary vertices, associated to 
a positroid cell of 
$Gr^{\geq 0}_{k,n-1}$. 
We define $\Ipre$ to be the map which takes $G$ 
 and adds a black lollipop at a new boundary vertex $n$,
as shown in 
 the middle graph of 
\cref{fig:amp}. 
Similarly, we define $\Iinc$ to be the map on a plabic graph $G'$ for $Gr^{\geq 0}_{k-1,n-1}$ which 
  modifies $G'$, changing the graph locally around vertices $1, n, n-1$, as shown at the right 
	of \cref{fig:amp}.
\end{definition}
\begin{remark}
The the resulting graph $\Ipre(G)$
is a reduced plabic graph for a cell of $Gr^{\geq 0}_{k,n}$.
It is not hard to show that, if $G'$ does not have white fixed points
	at vertices $1$ or $n-1$, then
	$\Iinc(G')$ is a reduced plabic graph for a cell of $Gr^{\geq 0}_{k,n}$.
\end{remark}

Abusing notation slightly, we also use 
$\Ipre$ and $\Iinc$ to denote the corresponding maps on 
positroid cells and positroid polytopes, decorated permutations,  etc.
Using \cref{def:rules},
one can also determine the effect of $\Ipre$ and $\Iinc$ on decorated permutations 
(and $\Le$-diagrams).
We leave the proof of the following lemma as an exercise.

\begin{lemma}
Let $\pi = (a_1, a_2,\dots,a_{n-1})$ be a decorated permutation on $n-1$ letters.
Then $\Ipre(\pi) = (a_1, a_2,\dots,a_{n-2},a_{n-1},n)$, where $n$ is a 
black fixed point.

Let $\pi = (a_1, a_2,\dots,a_{n-1})$ be a decorated permutation; assume that 
neither positions $1$ nor $n-1$ are white fixed points.
Let $h = \pi^{-1}(n-1)$.  Then 
$\Iinc(\pi)$ is the permutation such that
 $1 \mapsto n-1$, $h \mapsto n$,  $n \mapsto a_1$, and $j \mapsto a_j$
for all $j \neq 1, h, n$.
\end{lemma}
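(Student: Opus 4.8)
The plan is to verify both assertions by unwinding the definitions of $\Ipre$ and $\Iinc$ at the level of reduced plabic graphs, and then transporting the answers to decorated permutations and $\Le$-diagrams via the bijections recalled in \cref{app} (in particular the rules that read a decorated permutation off a reduced plabic graph, together with \cref{def:rules} and \cref{def:oplus}).

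For $\Ipre$: by construction $\Ipre(G)$ is $G$ with a black lollipop attached at a new boundary vertex $n$, and this lollipop is a connected component disjoint from the rest of $G$. Hence every trip of $G$ is unchanged in $\Ipre(G)$, so the associated permutation agrees with $\pi$ on $[n-1]$, while the trip that starts at $n$ turns around at the black vertex of the lollipop and returns immediately, producing the black fixed point $n \mapsto n$. Since a black fixed point is not an anti-excedance, the number of anti-excedances is still $k$, consistent with $\Ipre\colon Gr^+_{k,n-1} \to Gr^+_{k,n}$. For the $\Le$-diagram statement, one applies the $\Le$-diagram/permutation correspondence of \cref{app} and \cref{def:rules} to the permutation $(a_1,\dots,a_{n-2},a_{n-1},n)$: the extra fixed point at the maximal label $n$, decorated as a loop, translates into a new all-zero column on the left of $D$; this also matches \cref{def:oplus} applied to the direct sum $M \mapsto M \oplus U_{0,1}$, placing the single new loop last in the cyclic order.

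For $\Iinc$: here one traces the trips through the local surgery at vertices $1$, $n-1$, $n$ pictured on the right of \cref{fig:amp}. Away from those three vertices the graph is unchanged, so $j \mapsto a_j$ for all $j \neq 1, h, n$, where $h = \pi^{-1}(n-1)$. For the three affected trips, by following the zig-zag path through the new trivalent vertices, using the turning rules of \cref{app}, one sees that the trip formerly leaving $1$ is rerouted so as to exit at $n-1$, the trip that formerly arrived at $n-1$---namely the one beginning at $h$---is rerouted to exit at $n$, and the new vertex $n$ inherits the continuation that used to follow vertex $1$, giving $n \mapsto \pi(1) = a_1$. One then checks that the result is a genuine decorated permutation of $[n]$: it is a bijection (using $\pi(h)=n-1$ to track the images), and it has $k$ anti-excedances rather than $k-1$---the hypothesis that position $1$ is not a white fixed point forces $1$ to contribute no anti-excedance to $\pi$, while $n \mapsto a_1 < n$ supplies the extra one---consistent with $\Iinc\colon Gr^+_{k-1,n-1} \to Gr^+_{k,n}$. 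The hypotheses that positions $1$ and $n-1$ are not white fixed points are also precisely what make the surgery well-defined, since a white fixed point there is a white lollipop to which the local picture cannot be attached; one should additionally treat the degenerate case $h=1$ (i.e.\ $\pi(1)=n-1$), where the two reroutings at position $1$ must be composed.

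The routine-but-delicate step, and the only genuine obstacle, is the trip-tracing for $\Iinc$: one must pin down the exact black/white trivalent configuration in \cref{fig:amp}, apply the turning rules consistently, and keep careful track of the wrap-around among the boundary vertices $n-1$, $n$, $1$. Everything else is bookkeeping with the bijections of \cref{app}, which is why the authors are content to leave it as an exercise.
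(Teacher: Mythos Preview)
Your proposal is correct and follows exactly the approach the paper intends: the paper leaves this lemma as an exercise, explicitly pointing to \cref{def:rules} and \cref{def:oplus} as the tools for reading off the effect of $\Ipre$ and $\Iinc$ on permutations and $\Le$-diagrams, and that is precisely what you do. Your observation about the degenerate case $h=1$ (where the two rules at position $1$ must be composed, giving $1\mapsto n$) is a genuine refinement that the lemma statement glosses over.
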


 The construction below is closely related to 
 the recursion from \cite[Definition 4.4]{Karp:2017ouj}, which is a 
 sort of $m=2$ version of the BCFW recurrence.

\begin{theorem}[BCFW recursions for the $m=2$ amplituhedron] \label{thm:disamp}\cite[Theorem A]{BaoHe}
	Let $\mathcal{C}_{n-1,k,2}$ (respectively $\mathcal{C}_{n-1,k-1,2}$)
	be a collection of Grasstopes which dissects
	the $m=2$ amplituhedron $\mathcal{A}_{n-1,k,2}(Z')$ (resp. $\mathcal{A}_{n-1,k-1,2}(Z'')$).  
	Then $$\mathcal{C}_{n,k,2} = \Ipre(\mathcal{C}_{n-1,k,2}) \cup 
	    \Iinc(\mathcal{C}_{n-1,k-1,2})$$ dissects $\mathcal{A}_{n,k,2}(Z).$
\end{theorem}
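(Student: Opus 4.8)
The plan is to mimic the strategy of the proof of \cref{thm:dishyper}, replacing the moment map by the $\widetilde Z$-map and the hypersimplex-splitting inequality by an appropriate ``amplituhedron-splitting'' condition. Concretely, I would first fix generic initial data $Z \in \Mat_{k+2,n}^{>0}$, and exhibit a decomposition of $\mathcal{A}_{n,k,2}(Z)$ into two closed pieces $\mathcal{A}^{\pre}$ and $\mathcal{A}^{\inc}$ with disjoint interiors and dense union, such that $\Ipre(\mathcal{C}_{n-1,k,2})$ dissects $\mathcal{A}^{\pre}$ and $\Iinc(\mathcal{C}_{n-1,k-1,2})$ dissects $\mathcal{A}^{\inc}$. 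The natural candidate for this splitting comes from the sign-pattern description of $\mathcal{A}_{n,k,2}$ in terms of functionals $\langle Y\, Z_a Z_b\rangle$ (cf.\ \cite{Arkani-Hamed:2017vfh, Karp:2017ouj}): one inequality, say the sign of $\langle Y\, Z_{n-1} Z_n\rangle$, should carve $\mathcal{A}_{n,k,2}$ into the two pieces, exactly as $x_{n-1}+x_n \leq 1$ versus $\geq 1$ did for the hypersimplex.

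First I would analyze $\Ipre$ geometrically: adding a black lollipop at vertex $n$ means the $n$th column of every representative matrix is zero, so $\widetilde Z$ on $\Ipre(S_\pi)$ only sees the first $n-1$ columns of $Z$ — hence $\widetilde Z(\Ipre(S_\pi))$ is literally the amplituhedron image $\mathcal{A}_{n-1,k,2}(Z')$-cell for the truncated data $Z' = Z|_{[n-1]}$, sitting inside $\mathcal{A}_{n,k,2}(Z)$ as the sub-amplituhedron on $n-1$ points. So $\Ipre(\mathcal{C}_{n-1,k,2})$ dissects $\mathcal{A}_{n-1,k,2}(Z') \hookrightarrow \mathcal{A}_{n,k,2}(Z)$, and I must check this embedded copy is exactly the piece $\mathcal{A}^{\pre}$ cut out by the chosen sign condition; this is a standard fact about how $\mathcal{A}_{n-1,k,2}$ sits inside $\mathcal{A}_{n,k,2}$ as a ``boundary-type'' piece. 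Next I would analyze $\Iinc$: the local modification at vertices $1, n, n-1$ (the ``inverse soft factor'') should be shown to send cells to cells whose $\widetilde Z$-images fill out the complementary piece $\mathcal{A}^{\inc}$; here I would argue that $\Iinc$ is, on the level of $\widetilde Z$-images, the $m=2$ analogue of the BCFW-bridge/soft-factor operation and that its image is precisely the region on the other side of the sign hyperplane. Since $\Iinc$ builds cells of $Gr^+_{k,n}$ out of cells of $Gr^+_{k-1,n-1}$, I would track how the positroid and its $\widetilde Z$-image transform, using the plabic-graph description together with \cref{app}.

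Granting these two geometric identifications, the dimension condition $\dim Z_\pi = 2k$ for the new cells follows because $\Ipre$ and $\Iinc$ send top cells to top cells (checkable on plabic graphs / decorated permutations), and the disjointness-and-density conclusion then follows formally: the two pieces $\mathcal{A}^{\pre}, \mathcal{A}^{\inc}$ have disjoint interiors and dense union in $\mathcal{A}_{n,k,2}(Z)$, and within each piece the relevant image family is a dissection by hypothesis, so $\mathcal{C}_{n,k,2}$ is a dissection of $\mathcal{A}_{n,k,2}(Z)$; since this holds for all $Z$, we get a dissection of $\mathcal{A}_{n,k,2}$ in the sense of \cref{def:dissection2}. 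The main obstacle — and the crux of the proof — is establishing that $\Ipre(\mathcal{C}_{n-1,k,2})$ and $\Iinc(\mathcal{C}_{n-1,k-1,2})$ land in, and exhaust, complementary pieces of $\mathcal{A}_{n,k,2}(Z)$ that meet only along a measure-zero set. Unlike the hypersimplex case, where the splitting inequality $x_{n-1}+x_n \le 1$ is transparent and the effect of $\ipre, \iinc$ on the bases of the positroid is explicit via \cref{prop:perf}, here the $\widetilde Z$-image of a cell has no simple closed-form description, so I would need either the sign-pattern characterization of $\mathcal{A}_{n,k,2}$ from \cite{Arkani-Hamed:2017vfh} to pin down the two pieces, or a direct argument tracking how $\widetilde Z$ behaves under the soft-factor modification. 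Since this is precisely the content of \cite[Theorem A]{BaoHe}, I would at this point defer to their argument for the delicate step, having reduced the statement to it.
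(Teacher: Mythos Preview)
Your proposal is consistent with the paper, but you should know that the paper does not supply its own proof of \cref{thm:disamp}: the theorem is stated with citation to \cite[Theorem~A]{BaoHe} and no further argument is given (beyond the remark that Bao--He worked with triangulations whereas the statement holds for dissections). So the ``paper's proof'' is simply the citation, and your final sentence---deferring to \cite{BaoHe} for the delicate step---is exactly what the paper does.

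Your sketch of how the Bao--He argument ought to go (splitting $\mathcal{A}_{n,k,2}$ by a sign condition, identifying the $\Ipre$ piece with an embedded $\mathcal{A}_{n-1,k,2}$, and tracking the inverse soft factor for the $\Iinc$ piece) is a reasonable outline and indeed parallels the structure of the hypersimplex proof in \cref{thm:dishyper}, but none of that intermediate scaffolding appears in the present paper. If your aim is to match the paper, a one-line citation to \cite[Theorem~A]{BaoHe} suffices.
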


We use the term \emph{BCFW dissection}  
(respectively, \emph{BCFW tiling}) to refer to any dissection or 
tiling
that has the form $\mathcal{C}_{k,n}$ from \cref{thm:disamp}.

Diagrammatically, \cref{thm:disamp} reads as follows:
\begin{figure}[h]
\includegraphics[scale=0.35]{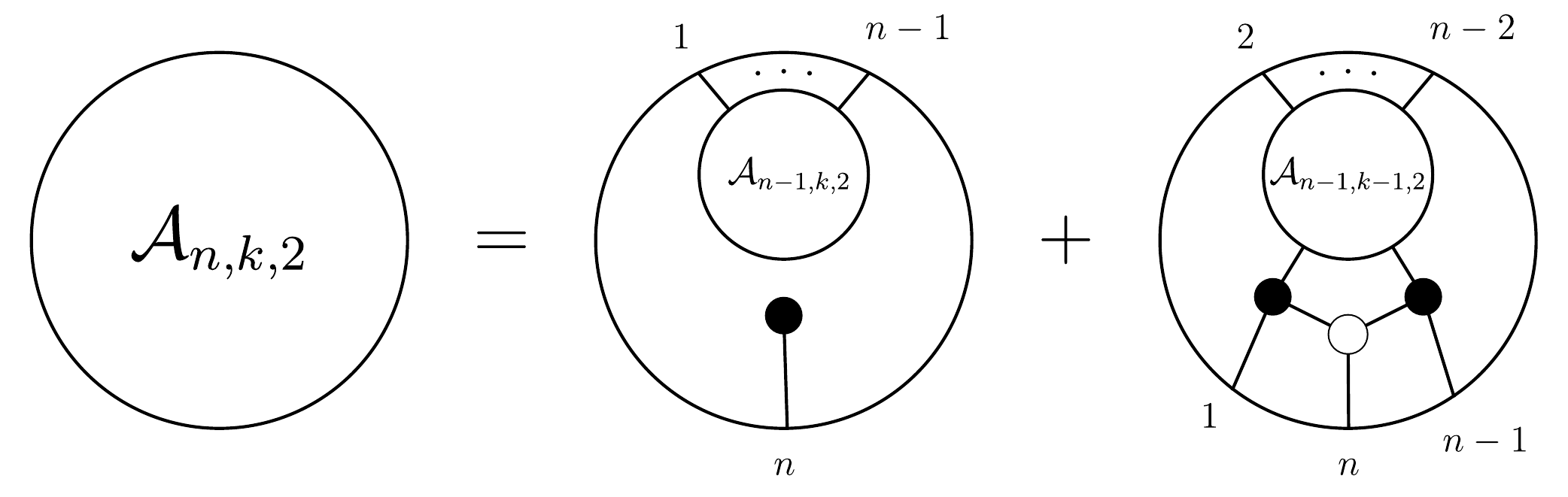}
	\caption{A BCFW-style 
	recursion for dissecting the amplituhedron.  There is a 
	 parallel recursion obtained from this one by cyclically shifting
	all boundary vertices of the plabic graphs by $i$ (modulo $n$).}
	\label{fig:amp}
\end{figure}

\begin{remark}\label{rem:cyc2}
Because of the cyclic symmetry of 
the positive Grassmannian and the amplituhedron 
	(see e.g. \cref{th:cyclicsym})
there are $n-1$ other versions of 
	\cref{thm:dishyper} (and \cref{fig:hyp}) in which all plabic graph labels
	get shifted by $i$  modulo $n$ (for $1 \leq i \leq n-1$).
\end{remark}

Note that \cite{BaoHe} worked in the setting of \emph{positroid tilings} -- i.e. they were only 
considering collections of cells that map injectively from the positive Grassmannian to the 
amplituhedron -- but \cref{thm:disamp} holds in the more general setting of dissections.

\begin{example}\label{ex:amp}
Let $n=5$ and $k=2$.  
We will use \cref{thm:disamp} to obtain a dissection of 
	$\mathcal{A}_{n,k,2}(Z) = \mathcal{A}_{5,2,2}$.
	We start with a dissection of $\mathcal{A}_{4,2,2}$ 
	coming from the plabic graph shown below (corresponding
	to the decorated permutation
	$(3,4,1,2)$),
	and a dissection of $\mathcal{A}_{4,1,2}$ (corresponding 
	to the permutations $(3,\underline{2},4,1)$ and $(2,3,1,\underline{4})$).
Applying the theorem leads to the three plabic graphs in the bottom
line, which correspond to the permutations 
	$(3,4,1,2,\underline{5}), (4,\underline{2},5,1,3), (4, 3,1,5,2)$.
\end{example}
\begin{center}
\begin{tabular}{c}
\raisebox{1.9cm}{$\mathcal{A}_{4,2,2}$:\,\,\,\,\,}\includegraphics[scale=0.15]{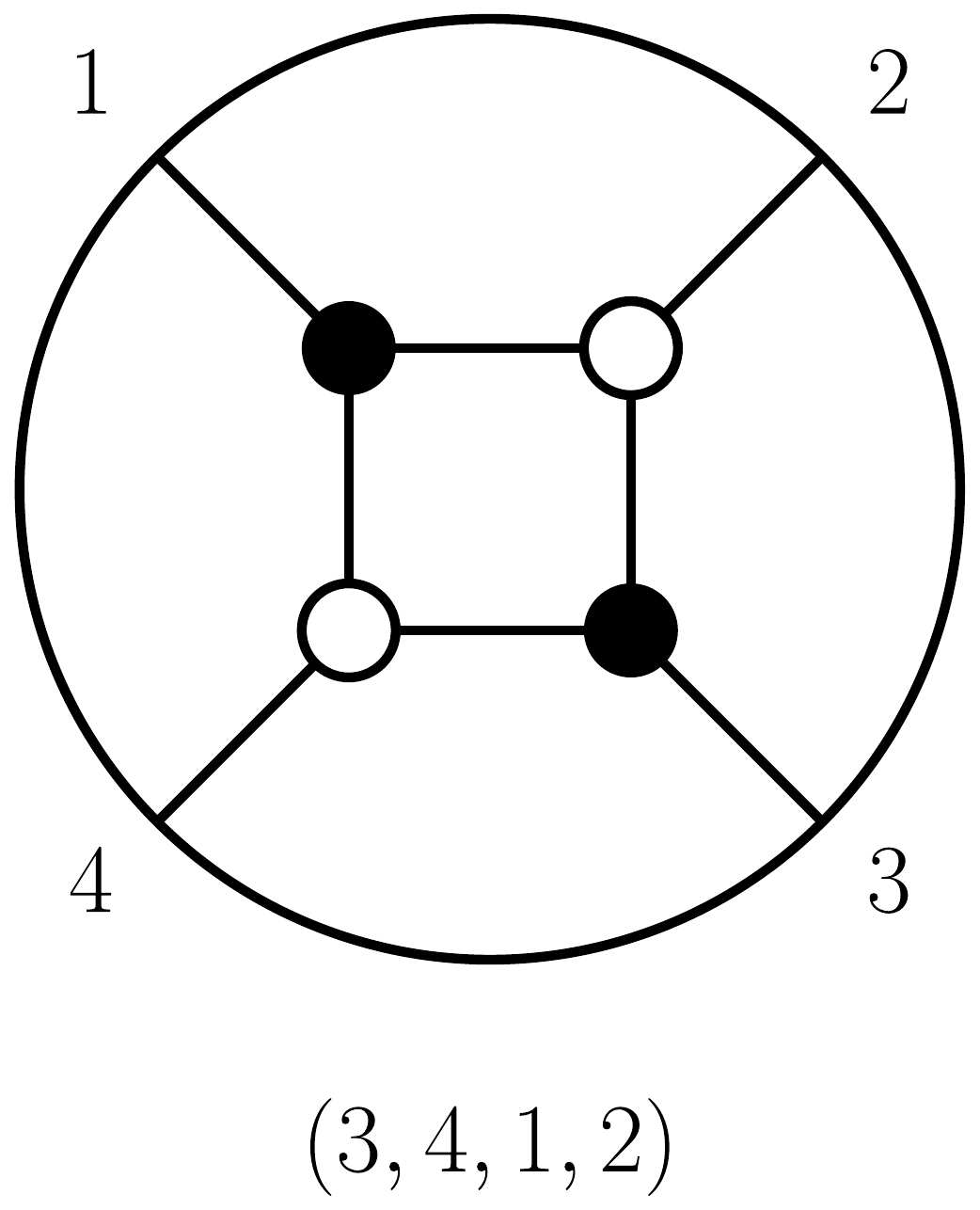}\qquad\qquad
\raisebox{1.9cm}{$\mathcal{A}_{4,1,2}$:\,\,\,\,\,\,}\includegraphics[scale=0.15]{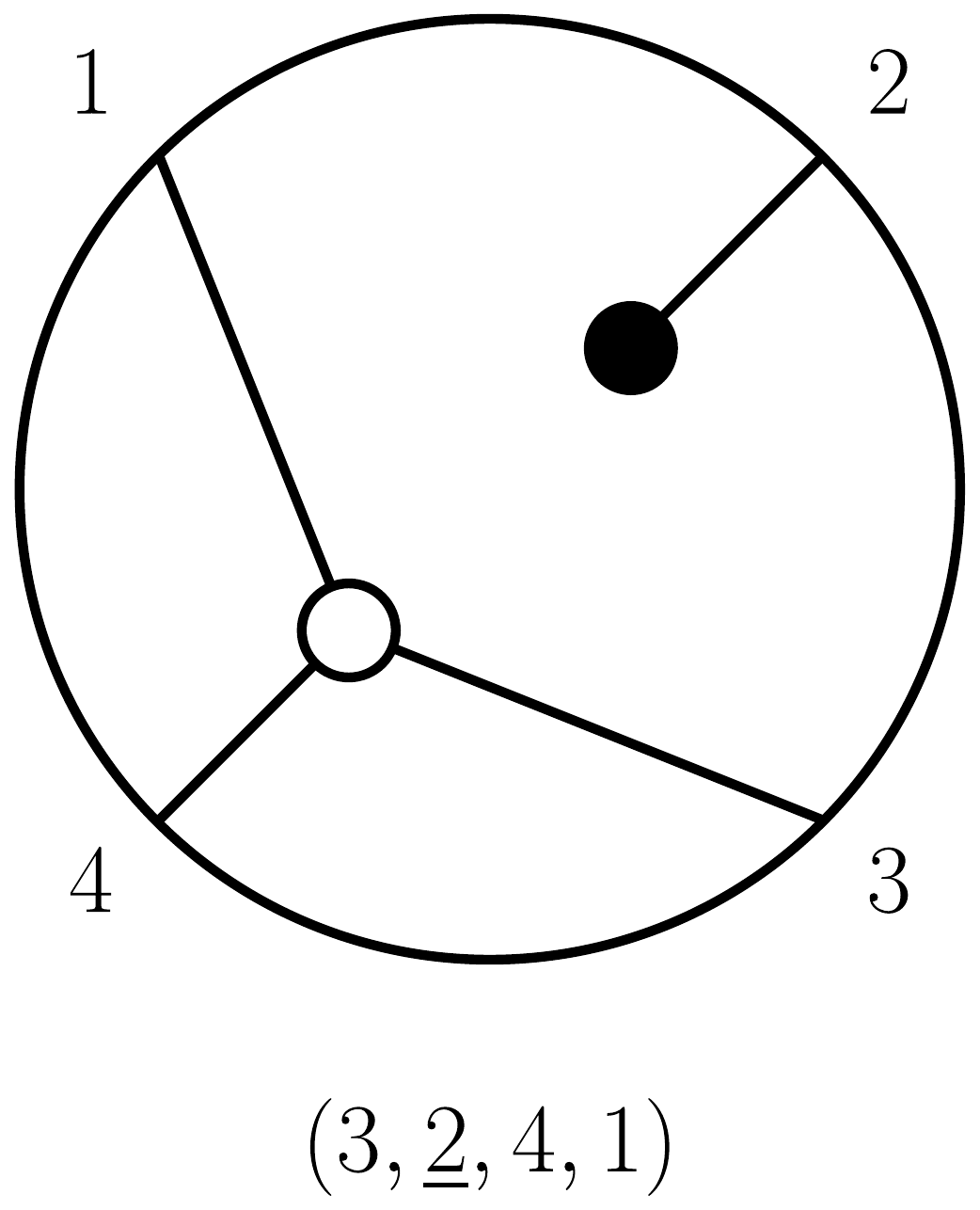}\quad\includegraphics[scale=0.15]{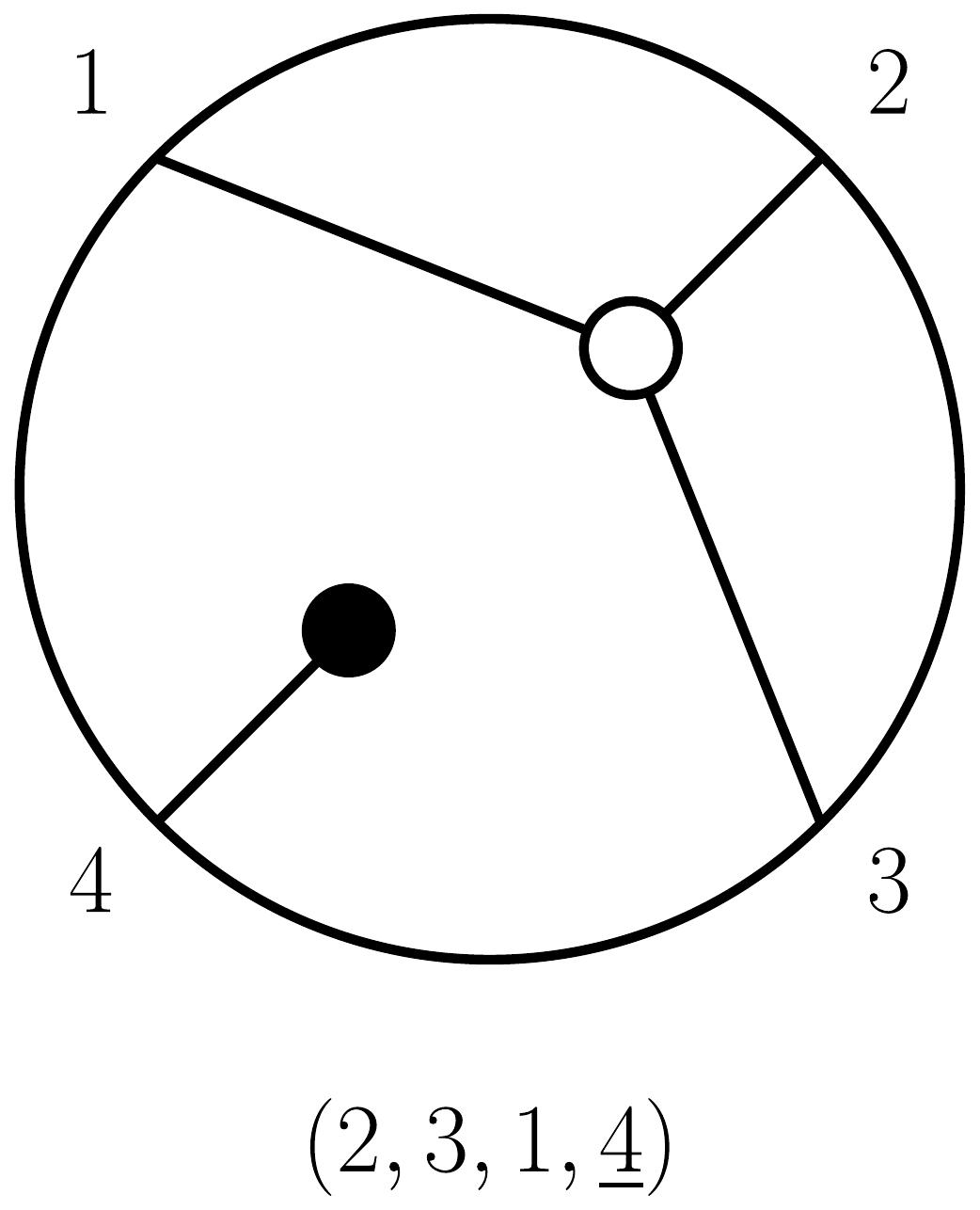}\\
\raisebox{1.9cm}{$\mathcal{A}_{5,2,2}$:\,\,\,\,\,\,}\includegraphics[scale=0.17]{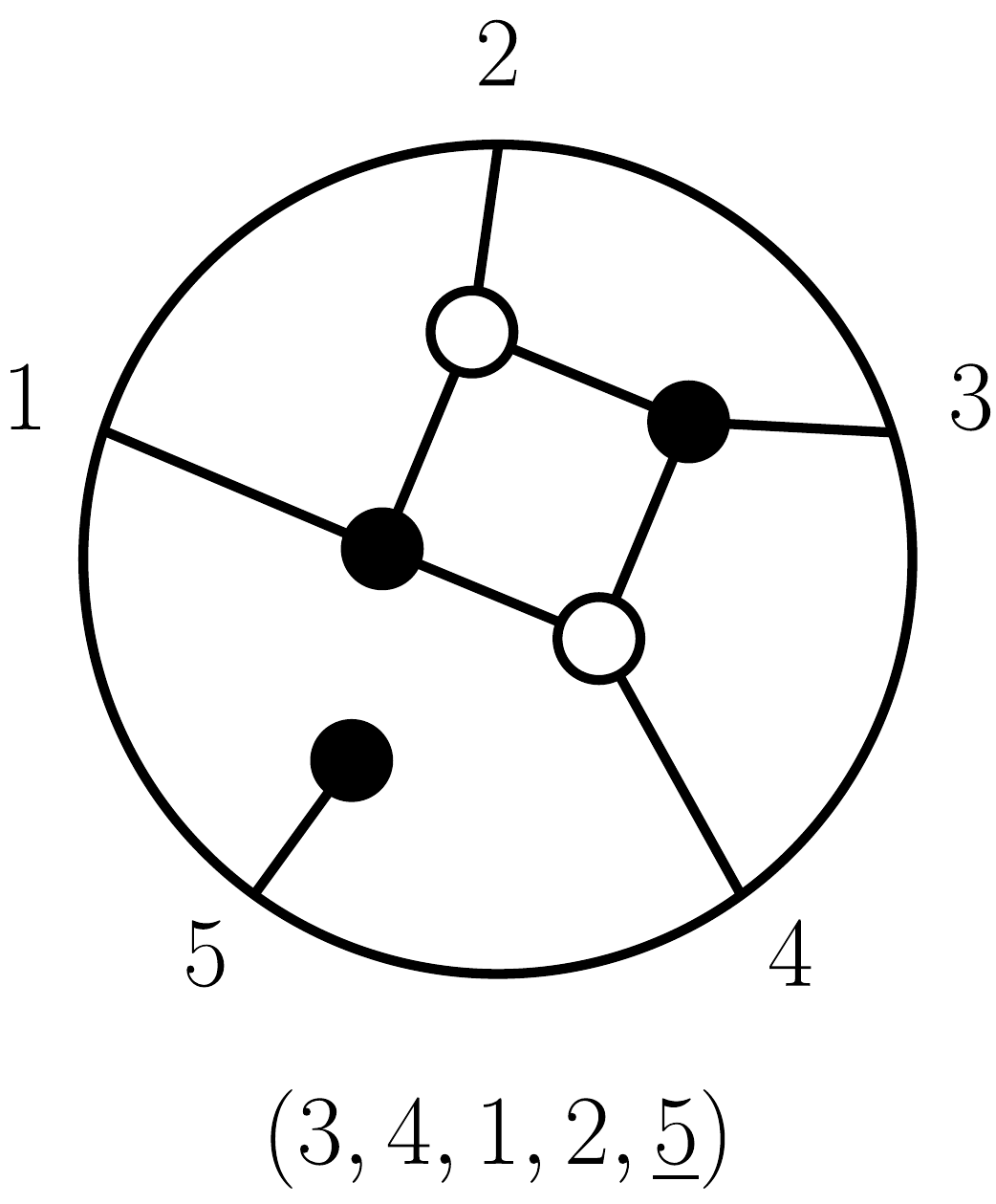}\quad\includegraphics[scale=0.17]{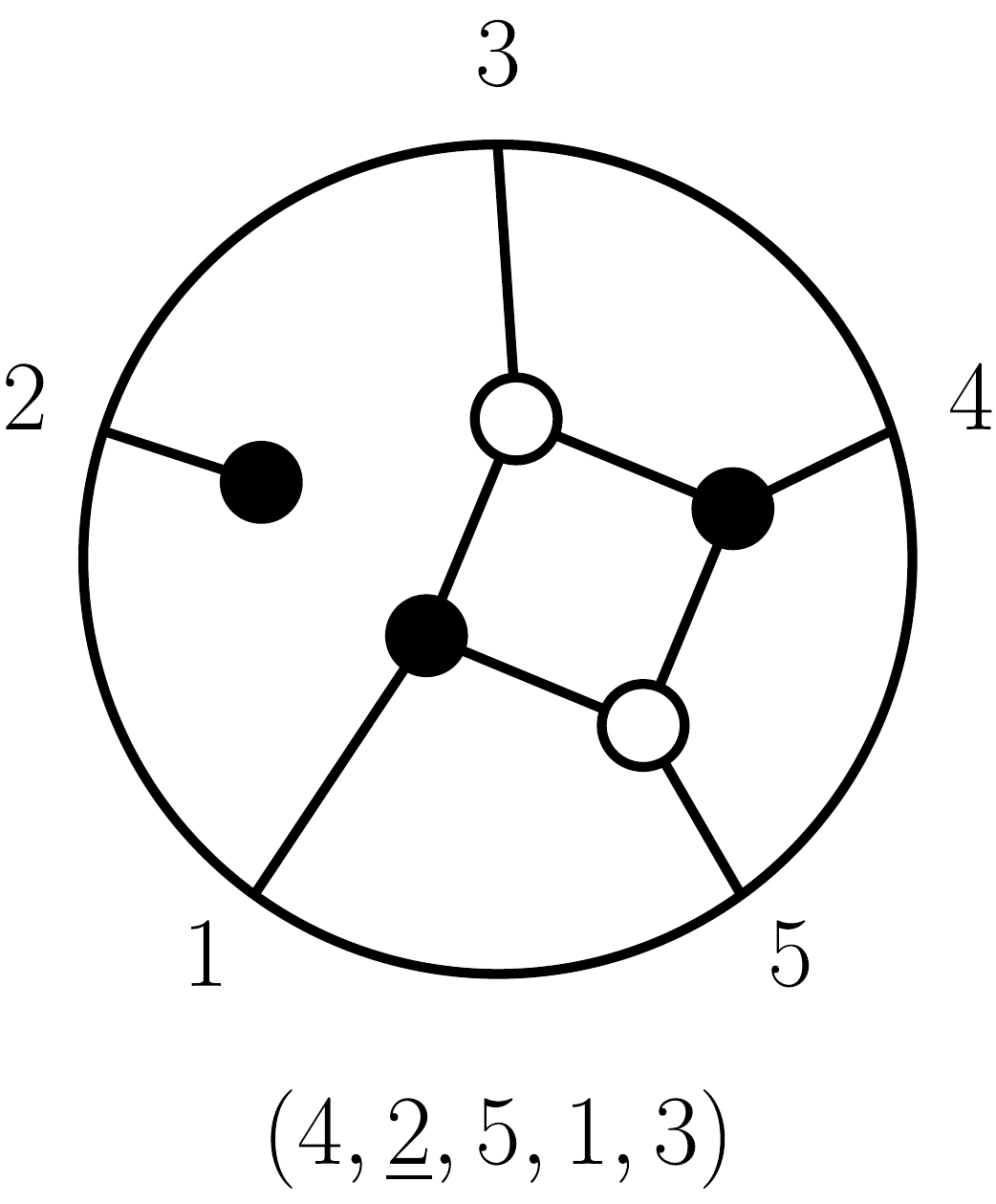}\quad\includegraphics[scale=0.17]{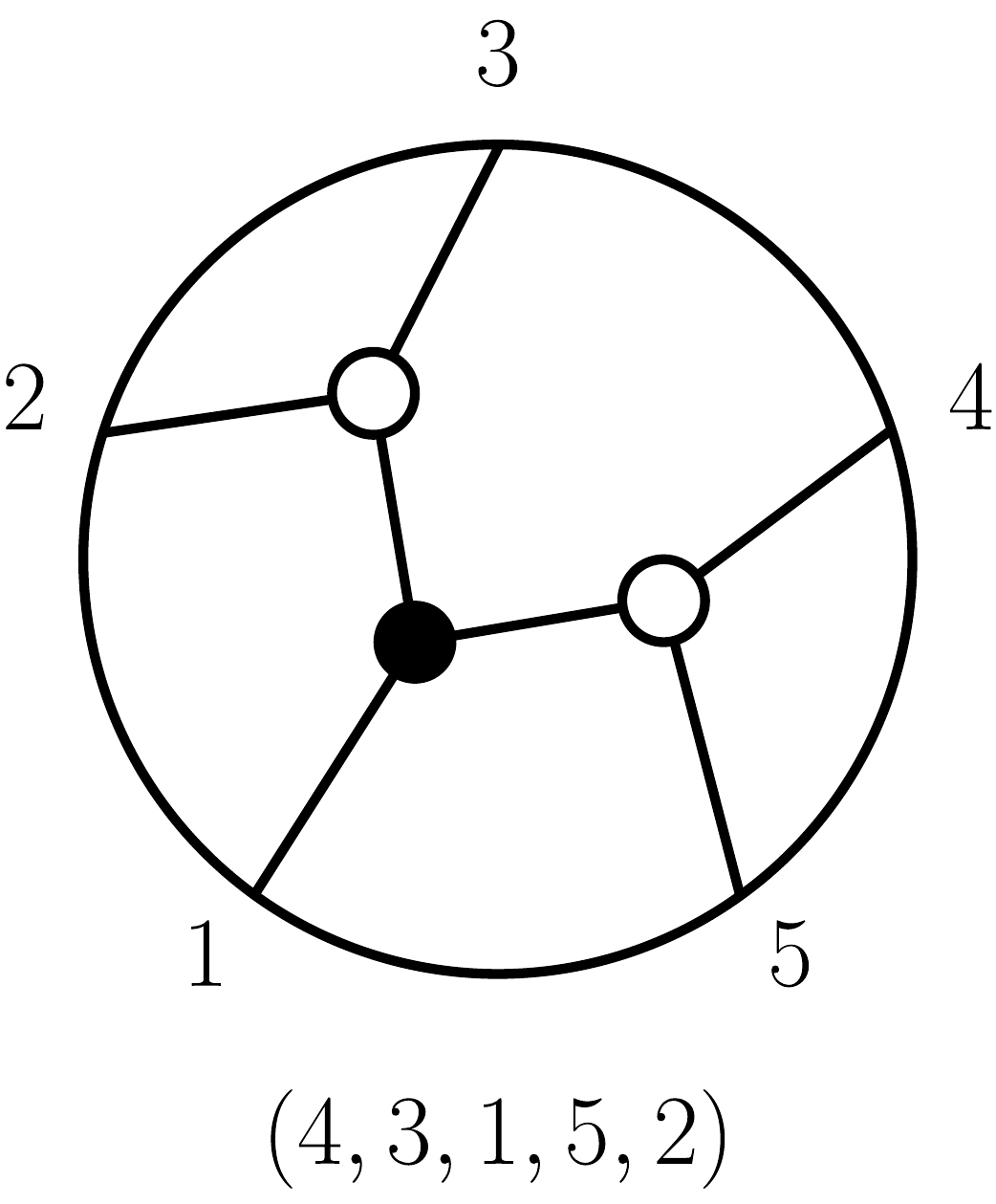}
\end{tabular}
\end{center}

\section{The T-duality map}
\label{T-duality}

In this section we define the \emph{T-duality map} 
(previously defined in \cite[Definition 4.5]{Karp:2017ouj}),
from certain positroid
cells of $Gr^{\geq 0}_{k+1,n}$ to positroid cells of $Gr^{\geq 0}_{k,n}$, and we prove
many remarkable properties of it.
We will subsequently explain, in \cref{thm:shift}, how 
the T-duality map gives a correspondence
between tilings (and more generally dissections)
of the hypersimplex $\Delta_{k+1,n}$ and 
 the amplituhedron $\mathcal{A}_{n,k,2}(Z)$.

To get a preview of the phenomenon we will illustrate, compare the 
decorated permutations labelling the plabic graphs in 
\cref{ex:hyper} and \cref{ex:amp}; can you spot the correspondence?
(This correspondence will be explained in \cref{thm:shift}.)

\subsection{T-duality as a map on permutations}

\begin{definition}\label{hatmap}
	We define the \emph{T-duality} 
	map from loopless decorated permutations on $[n]$
to coloopless decorated permutations on $[n]$ as follows.
Given a loopless decorated permutation $\pi=(a_1,a_2,\dots,a_n)$ (written
in list notation) on $[n]$,
	we define the decorated permutation $\hat{\pi}$ by 
	$\hat{\pi}(i) = \pi(i-1)$, so that 
	$\hat{\pi} = (a_n, a_1, a_2,\dots,a_{n-1})$,
where any fixed points in $\hat{\pi}$ are declared to be loops. 
	Equivalently, $\hat{\pi}$ is obtained from $\pi$ by composing $\pi$ with the 
	permutation $\pi_0=(n,  1,  2 ,\dots ,  n-1)$ in the symmetric group, $\hat\pi=\pi_0\circ \pi$. 
\end{definition}

Recall that an anti-excedance of a decorated permutation is a position
$i$ such that $\pi(i) < i$, or $\pi(i)=i$ and $i$ is a coloop.
Our first result shows that T-duality is a bijection between
loopless cells of 
 $Gr^{\geq 0}_{k+1,n}$
and coloopless cells of $Gr^{\geq 0}_{k,n}$.

\begin{lemma}\label{lem:hat}
The T-duality map $\pi \mapsto \hat{\pi}$ is  a bijection between the 
loopless permutations on $[n]$ with $k+1$ anti-excedances,
and the coloopless permutations on $[n]$ with $k$ anti-excedances.
Equivalently, the T-duality map is bijection between
loopless positroid cells of 
 $Gr^{\geq 0}_{k+1,n}$
and coloopless positroid cells of $Gr^{\geq 0}_{k,n}$.
\end{lemma}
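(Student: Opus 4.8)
The statement splits into two parts: first that $\pi \mapsto \hat\pi$ gives a well-defined map from loopless decorated permutations to coloopless decorated permutations, preserving the relevant statistic ($k+1$ anti-excedances going to $k$ anti-excedances), and second that it is a bijection. The second part is essentially free once the first is established, because composition with the fixed cycle $\pi_0 = (n, 1, 2, \dots, n-1)$ is manifestly invertible on the level of underlying permutations: the inverse sends $\sigma$ to $\pi_0^{-1} \circ \sigma$, i.e. $\sigma = (b_1, \dots, b_n) \mapsto (b_2, b_3, \dots, b_n, b_1)$. So the real content is tracking the decoration (which fixed points are loops vs. coloops) and the anti-excedance count.

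First I would unwind the definitions. A position $i$ is an anti-excedance of $\pi$ if $\pi(i) < i$, or $\pi(i) = i$ with $i$ a coloop; loopless means $\pi$ has no fixed point decorated as a loop, and dually coloopless means $\hat\pi$ has no fixed point decorated as a coloop. By the decoration rule in \cref{hatmap}, every fixed point of $\hat\pi$ is declared a loop, so $\hat\pi$ is automatically coloopless — that half is immediate. For the anti-excedance count, I would compare positions directly: $\hat\pi(i) = \pi(i-1)$ (indices mod $n$), so I want to match up anti-excedances of $\pi$ at position $i-1$ with anti-excedances of $\hat\pi$ at position $i$. The plan is to do a case analysis on whether $\pi(i-1) < i-1$, $\pi(i-1) = i-1$, or $\pi(i-1) > i-1$, and in each case determine whether position $i$ is an anti-excedance of $\hat\pi$. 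The subtle cases are the "boundary" ones: when $\pi(i-1) = i$ (so $\hat\pi(i) = i$ is a fixed point, hence a loop, hence \emph{not} an anti-excedance of $\hat\pi$, whereas position $i-1$ was an excedance of $\pi$ since $\pi(i-1) = i > i-1$ — consistent, no change), and when $\pi(i-1) = i-1$ (a fixed point of $\pi$; since $\pi$ is loopless this is a coloop, hence an anti-excedance of $\pi$ at position $i-1$; and $\hat\pi(i) = i-1 < i$ so position $i$ is an anti-excedance of $\hat\pi$ — consistent). The wrap-around at $i=1$, where $\hat\pi(1) = \pi(n)$, needs to be checked too: $\pi(n) \le n$ always, and $\pi(n) = n$ would be a coloop of $\pi$ (anti-excedance at $n$) mapping to $\hat\pi(1) = n$, a fixed point of $\hat\pi$ declared a loop, \emph{not} an anti-excedance — but here the total count is unaffected because we also gain: I need to be careful that the bookkeeping balances globally even if individual positions shift.

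The cleanest way to make the count rigorous is probably to establish a position-by-position bijection between the anti-excedances of $\pi$ and those of $\hat\pi$: namely, $i-1$ is an anti-excedance of $\pi$ if and only if... — no, that's not quite a bijection position-wise. A safer approach: count excedances instead. Position $i$ is a (strict) excedance of $\hat\pi$ iff $\hat\pi(i) > i$ iff $\pi(i-1) > i$ iff $\pi(i-1) \ge i+1$, i.e. $\pi(i-1) > i-1$ and $\pi(i-1) \ne i$. Meanwhile $i-1$ is a strict excedance of $\pi$ iff $\pi(i-1) > i-1$. So the strict excedances of $\hat\pi$ correspond to strict excedances $i-1$ of $\pi$ with $\pi(i-1) \ne i$. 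Now: positions split into three types for any decorated permutation — strict excedances, strict anti-excedances ($\pi(i) < i$), and fixed points (loops or coloops); loops are excedance-like (not anti-excedances), coloops are anti-excedance-like. Counting all $n$ positions: (number of strict excedances) + (number of loops) + (number of anti-excedances) $= n$, where "anti-excedances" here is the decorated count $k$ or $k+1$. Since $\pi$ is loopless it has no loops, so (strict excedances of $\pi$) $= n - (k+1)$. For $\hat\pi$: I'd count its loops (= fixed points of $\hat\pi$ = positions $i$ with $\pi(i-1) = i$, call this number $\ell$) and its strict excedances ($= (n - (k+1)) - \ell$ by the correspondence above, since exactly $\ell$ of the $n-(k+1)$ strict excedances $i-1$ of $\pi$ have $\pi(i-1) = i$). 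Then (anti-excedances of $\hat\pi$) $= n - (\text{strict exc of } \hat\pi) - \ell = n - (n - k - 1 - \ell) - \ell = k+1 - \ell$.

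Hmm — that gives $k+1-\ell$, not $k$. So I must also account for the fixed points of $\pi$: a coloop $i-1$ of $\pi$ (there are, say, $c$ of them) is an anti-excedance of $\pi$ but satisfies $\pi(i-1) = i-1 < i$ unless... wait $\pi(i-1) = i-1$ so $\hat\pi(i) = i-1 < i$, an anti-excedance of $\hat\pi$, and $i$ is not a fixed point of $\hat\pi$. These are not loops of $\hat\pi$. So I've miscounted the loops: fixed points of $\hat\pi$ are positions $i$ with $\pi(i-1) = i$, and since $\pi$ is loopless, $\pi(i-1) = i$ with $i-1 \ne i$ always (no issue). The discrepancy $k+1-\ell$ vs desired $k$ means $\ell$ is forced to equal $1$? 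That can't be right in general. I think the resolution is that I also need to move one unit from the excedance side: note $\pi$ loopless means in particular $\pi(n) \ne n$ OR — no. Let me reconsider: the honest accounting will show $\ell = 1$ is NOT forced, so I've made a sign/index error somewhere, and \textbf{the main obstacle is exactly this careful global count of anti-excedances through the cyclic shift, keeping the loop/coloop decoration straight}. The plan is to redo it slowly with an explicit small example ($n=3$, $\pi = (3,1,2)$, which is loopless with 1 anti-excedance — position 2, since $\pi(2)=1<2$; then $\hat\pi = (2,3,1)$, anti-excedances: position 3 since $\pi(3)=1<3$, so 1 anti-excedance, matching $k=1$, $k+1=2$? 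No: $k+1 = 2$ would need $\pi$ to have 2 anti-excedances). This shows I need to recompute; the safe route is the bijective one. I expect that with one more careful pass — pairing anti-excedance position $i$ of $\hat\pi$ with a canonically chosen position of $\pi$, treating the coloop case and the $\pi(i-1)=i$ case as the two "exchange" cases — everything balances to exactly $k+1 \to k$, and the bijectivity then follows immediately from invertibility of left-composition by $\pi_0$ together with the already-checked fact that loopless $\leftrightarrow$ coloopless is exactly the condition making $\hat\pi$ (resp. its inverse) a valid decorated permutation in the target.
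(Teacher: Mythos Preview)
Your overall plan matches the paper's approach exactly: check directly that the anti-excedance count drops by one, with the bijectivity following trivially from invertibility of composition with $\pi_0$. But your execution has two concrete errors that you should fix.

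First, in your excedance count you dropped the wraparound term at $i=1$. You claimed the strict excedances of $\hat\pi$ are in bijection with strict excedances $j$ of $\pi$ satisfying $\pi(j)\neq j+1$, but this only holds for $i\in\{2,\dots,n\}$. At $i=1$ you have $\hat\pi(1)=\pi(n)$, and position $n$ of $\pi$ is \emph{never} a strict excedance (looplessness forces it to be an anti-excedance), whereas position $1$ of $\hat\pi$ is a strict excedance whenever $\pi(n)>1$. If you set $\epsilon=1$ when $\pi(n)=1$ and $\epsilon=0$ otherwise, and $\ell'$ the number of $j\in[n-1]$ with $\pi(j)=j+1$, then $\ell=\ell'+\epsilon$ and the strict excedances of $\hat\pi$ number $(n-(k+1))-\ell'+(1-\epsilon)=n-k-\ell$, giving anti-excedances $n-(n-k-\ell)-\ell=k$ as desired.

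Second, your sanity-check example is miscounted: for $\pi=(3,1,2)$ on $[3]$, position $3$ is also an anti-excedance (since $\pi(3)=2<3$), so $\pi$ has $k+1=2$ anti-excedances and $\hat\pi=(2,3,1)$ has $k=1$, consistent with the lemma.

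The paper's write-up avoids the excedance detour entirely and is cleaner: for $i\in[n-1]$ it checks directly that $i$ is an anti-excedance of $\pi$ if and only if $i+1$ is an anti-excedance of $\hat\pi$ (using looplessness of $\pi$ for the case $\pi(i)>i$, and the loop-decoration rule for $\hat\pi$ when $\pi(i)=i+1$). Then it observes separately that position $n$ of $\pi$ is always an anti-excedance (looplessness) while position $1$ of $\hat\pi$ is never one (colooplessness), which accounts for the drop by exactly one. I recommend you rewrite along these lines rather than routing through excedances.
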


\begin{proof}
	The second statement follows from the first by \cref{app}, so 
	it suffices to prove the first statement.
	Let $\pi=(a_1,\dots,a_n)$ be a loopless permutation on $[n]$ 
	with $k+1$ anti-excedances; 
	then $\hat{\pi}=(a_n,a_1,\dots,a_{n-1})$. 
Consider any $i$ such that $1 \leq i \leq n-1$.  Suppose $i$ is a position of a 
	anti-excedance, i.e. either $a_i<i$ or $a_i = \overline{i}$.
	Then the letter $a_i$ appears in the $(i+1)$st position in $\hat{\pi}$,
	and since $a_i<i+1$, we again have an anti-excedance.
	On the other hand, if $i$ is \emph{not} a position of an anti-excedance, i.e. 
	$a_i>i$ (recall that $\pi$ is loopless), then 
	in the $(i+1)$st position of $\hat{\pi}$ we have $a_i \geq i+1$.
	By \cref{hatmap} if we have a fixed point in position $i+1$ (i.e. $a_i = i+1$)
	this is a loop, and so position $i+1$ of $\hat{\pi}$ will not be 
	a anti-excedance.
	Therefore if $I \subset [n-1]$ is the positions of the anti-excedances
	located in the first $n-1$ positions of $\pi$, then $I+1$ is the positions of the anti-excedances
located in positions $\{2,3,\dots,n\}$ in $\hat{\pi}$.

Now consider position $n$ of $\pi$.  Because $\pi$ is loopless, $n$ will be 
	the position of a anti-excedance in $\pi$.  And because $\hat{\pi}$
	is defined to be coloopless, $1$ will never be the position of a 
	anti-excedance in $\hat{\pi}$.
	Therefore the number of anti-excedances of $\hat{\pi}$ will be
	precisely one less than the number of anti-excedances of $\pi$.

It is easy to reverse this map so it is a bijection.
\end{proof}

\begin{remark}
	Since by \cref{lem:hat} the map $\pi \mapsto \hat{\pi}$ is a
	bijection, we can also talk about the inverse map from 
	coloopless permutations on $[n]$ with $k$ anti-excedances
	to loopless permutations on $[n]$ with $k+1$ anti-excedances.
	We denote this inverse map by $\pi \mapsto \check{\pi}$.
\end{remark}

\begin{remark}
Our map $\pi \mapsto \hat{\pi}$ is in fact a special case of the map
$\rho_A$ introduced by Benedetti-Chavez-Tamayo 
	in \cite[Definition 23]{posquotients} (in the case where 
	$A = \emptyset$).  
\end{remark}

\subsection{T-duality as a map on cells}\label{sec:Tcells}
While we have defined the T-duality map as a map $\pi \mapsto \hat{\pi}$ on the 
permutations labelling positroid cells, 
it can be shown that it is induced from a
map on the corresponding cells. 
We will follow here the derivation in \cite{abcgpt} and define a $Q$-map which 
maps elements of 
the positroid cell $S_{\pi}$ of $Gr^{\geq 0}_{k+1,n}$
 to the positroid cell $S_{\hat{\pi}}$ of $Gr^{\geq 0}_{k,n}$. 
Note that in much of this section we allow $m$ to be any positive even integer.

\begin{definition}
Let $\lambda \in Gr_{\frac{m}{2},n}$. We say that $\lambda$ is  {\it generic} if $p_I(\lambda) \neq 0$ for all $I \in \binom{[n]}{ \frac{m}{2}}$.
\end{definition}

For $m=2$, $\lambda=(\lambda_1,\lambda_2,\ldots,\lambda_n)\in\mathbb{R}^n$ is generic in $\mathbb{R}^n$ if $\lambda_i\neq0$ for all $i=1,\ldots,n$.

\begin{lemma}\label{lem:rankgeneric}
Given $C=(c_1,c_2,\ldots,c_n)$ representing an element of $Gr_{k+\frac{m}{2},n}$ where $c_i$ are columns of $C$, then $C$ contains a generic $\frac{m}{2}$-plane if and only if $\mbox{rank} \left(\{c_i\}_{i \in I}\right)=\frac{m}{2}$ for all $I \in {[n] \choose \frac{m}{2}}$.
\end{lemma}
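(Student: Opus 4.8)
The plan is to reduce both implications to elementary statements about coordinate projections, after fixing a matrix representative. I would represent $C$ by a full-rank $(k+\tfrac m2)\times n$ matrix $A$ whose row span is $C$; then the column vectors of $A$ are exactly $c_1,\dots,c_n\in\R^{k+m/2}$ (this choice is only up to row operations, which affects neither ranks nor the Pl\"ucker conditions below). For $I\in\binom{[n]}{m/2}$, write $\pi_I\colon\R^n\to\R^{I}$ for the coordinate projection and $A_I$ for the $(k+\tfrac m2)\times\tfrac m2$ submatrix of $A$ on the columns indexed by $I$. The two basic facts I would record are: (i) $\pi_I(C)$ is the row span of $A_I$, so $\pi_I|_C$ is surjective if and only if $\rank A_I=\tfrac m2$, which happens exactly when $\{c_i\}_{i\in I}$ is linearly independent (these $\tfrac m2$ vectors being precisely the columns of $A_I$); and (ii) for any $\tfrac m2$-plane $\lambda$, one has $p_I(\lambda)\neq0$ if and only if $\pi_I|_\lambda\colon\lambda\to\R^I$ is an isomorphism, since a Pl\"ucker coordinate is exactly the determinant of the corresponding maximal minor.

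For the forward direction I would argue directly: if $\lambda\subseteq C$ is a generic $\tfrac m2$-plane, then by (ii) the map $\pi_I|_\lambda$ is an isomorphism for every $I$, hence $\pi_I|_C$ is surjective, hence by (i) $\rank\{c_i\}_{i\in I}=\tfrac m2$, as desired.

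For the reverse direction I would run a genericity argument inside the Grassmannian $Gr(\tfrac m2,C)$ of $\tfrac m2$-dimensional subspaces of $C$, which is isomorphic to $Gr_{m/2,\,k+m/2}$. Assuming $\rank\{c_i\}_{i\in I}=\tfrac m2$ for all $I$, fact (i) gives $\dim\ker(\pi_I|_C)=k$; call this subspace $W_I\subseteq C$. By (ii), for $\lambda\in Gr(\tfrac m2,C)$ we have $p_I(\lambda)=0$ iff $\pi_I|_\lambda$ is not injective iff $\lambda\cap W_I\neq\{0\}$ (here using $\lambda\subseteq C$). Since $\dim\lambda+\dim W_I=\tfrac m2+k=\dim C$, the locus $\{\lambda:\lambda\cap W_I\neq\{0\}\}$ is a Zariski-closed subset of $Gr(\tfrac m2,C)$, and it is \emph{proper} because $W_I$ admits at least one $\tfrac m2$-dimensional complement in $C$. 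As $Gr(\tfrac m2,C)$ is irreducible and there are only finitely many $I$, the complement $U$ of the union of these bad loci is a nonempty Zariski-open set; since the real Grassmannian $Gr_{m/2,\,k+m/2}(\R)$ is Zariski-dense in the complex Grassmannian, $U$ contains a real point $\lambda$, which by construction is a generic $\tfrac m2$-plane contained in $C$.

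I expect the only genuinely delicate point to be that last step — producing an actual real point of the nonempty Zariski-open set $U$ — and I would either invoke Zariski-density of the real points of the Grassmannian, or, to keep the argument self-contained, observe that each bad locus $\{\lambda:\lambda\cap W_I\neq\{0\}\}$ is a proper algebraic, hence Lebesgue-null, subset of the connected real manifold $Gr_{m/2,\,k+m/2}(\R)$, so a finite union of them cannot cover it. Everything else is routine linear algebra.
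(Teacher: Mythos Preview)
Your proof is correct and close in spirit to the paper's, but packaged differently. The paper argues entirely in coordinates: representing an $\tfrac m2$-plane inside $C$ as $\lambda=hC$ for an $\tfrac m2\times(k+\tfrac m2)$ matrix $h$, it uses Cauchy--Binet to write $p_I(\lambda)=\sum_J p_J(h)\,C^I_J$, observes that this is a nonzero polynomial in the entries of $h$ precisely when some $C^I_J\neq 0$ (i.e.\ when $\rank\{c_i\}_{i\in I}=\tfrac m2$), and concludes that a generic $h$ works for all $I$ simultaneously. Your argument is the intrinsic version of the same genericity step: you work directly in $Gr(\tfrac m2,C)$, translate $p_I(\lambda)=0$ into the Schubert-type incidence $\lambda\cap W_I\neq\{0\}$, and avoid the finite union of these proper closed loci. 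Both approaches are short; the paper's is more elementary (just polynomial non-vanishing, no irreducibility or real-points density needed), while yours gives a cleaner geometric picture and sidesteps the slightly imprecise phrasing in the paper (``choose $h$ with $p_{J_I}(h)\neq 0$'' does not by itself preclude cancellation in the Cauchy--Binet sum---the honest statement is that the sum is a nonzero polynomial in $h$). Your handling of the real-points issue via measure zero on the connected manifold $Gr_{m/2,k+m/2}(\R)$ is fine and self-contained.
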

\begin{proof}
If a generic $\frac{m}{2}$-plane $\lambda \in M(\frac{m}{2},n)$ is contained in $C$, then there is a matrix $h \in M(\frac{m}{2},k+\frac{m}{2})$ such that $\lambda=h \cdot C$. Then  $p_I(\lambda)=\sum_{J \in {[k+\frac{m}{2}] \choose \frac{m}{2}}} p_J(h)C_J^I$, with $I \in {[n] \choose \frac{m}{2}}$. If $\mbox{rank} \left(\{c_i\}_{i \in I}\right)=\frac{m}{2}$ then there exist $J_I \in {[k+\frac{m}{2}] \choose \frac{m}{2}}$ such that $C_{J_I}^I \not =0$, therefore it is enough to choose $h$ such that $p_{J_I}(h) \not =0$ in order to guarantee $\lambda=h \cdot C$ is generic. Vice-versa if we assume $\mbox{rank} \left(\{c_i\}_{i \in I}\right)<\frac{m}{2}$ then $C^I_J=0$ for all $J \in {[k+\frac{m}{2}] \choose \frac{m}{2}}$ and this would imply $p_I(\lambda)=0$.
\end{proof}

If we specialize to the $m=2$ case, we have the following:
\begin{lemma}\label{lem:m2generic}
Let $S_{\pi}$ be a positroid cell in $Gr^{\geq 0}_{k+1,n}$. Then $S_{\pi}$ is loopless 
if and only if every vector space $V \in S_{\pi}$ contains a generic vector.
\end{lemma}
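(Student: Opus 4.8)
The plan is to deduce this from the $m=2$ case of the preceding lemma, together with the standard dictionary between the loops of a realized matroid and the zero columns of a matrix representative. Recall that if $V \in Gr_{k+1,n}$ is represented by a $(k+1) \times n$ matrix $C$ with columns $c_1, \dots, c_n$, then an index $i \in [n]$ is a loop of $M(C)$ if and only if $c_i = 0$: if $c_i = 0$ then $i$ lies in no basis, while if $c_i \neq 0$ then $\{c_i\}$ is independent and extends to a basis of $M(C)$. Since every $V \in S_{\pi}$ has matroid $M_{\pi}$, and the set of loops does not depend on the chosen representative, the hypothesis ``$S_{\pi}$ is loopless'' is equivalent to the statement that for every $V \in S_{\pi}$ and every matrix representative $C$ of $V$, all of the columns $c_1, \dots, c_n$ are nonzero.

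For the forward direction I would argue as follows. Assume $S_{\pi}$ is loopless and fix $V \in S_{\pi}$ with a representative $C$. By the reduction above, $\rank(c_i) = 1 = \frac{m}{2}$ for every $i \in [n]$, so the preceding lemma, applied with $m=2$, shows directly that $C$ contains a generic vector; that is, there is a vector $\lambda$ in the row span of $C$ (equivalently, $\lambda \in V$) with $\lambda_i \neq 0$ for all $i$. If one prefers an explicit construction, one can take $\lambda = h C$ for a row vector $h \in \R^{k+1}$ lying outside the finitely many hyperplanes $\{h : h \cdot c_i = 0\}$, which exists because a vector space over an infinite field is never a finite union of proper subspaces; alternatively, as noted just before the statement, summing the rows of a network parametrization matrix for $S_{\pi}$ produces such a $\lambda$.

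For the converse I would prove the contrapositive. If $S_{\pi}$ has a loop $i$, then for every $V \in S_{\pi}$ and every representative $C$ we have $c_i = 0$, hence every vector $\lambda = h C \in V$ satisfies $\lambda_i = 0$ and is therefore not generic; so no $V \in S_{\pi}$ contains a generic vector, and since $S_{\pi} \neq \emptyset$ this negates the right-hand side of the equivalence. I do not expect any genuine obstacle in this argument: the only points that need care are that ``contains a generic vector'' refers to a vector lying inside the subspace $V$ (equivalently, in the row span of any representative), and that it is precisely the loop/zero-column correspondence that links the combinatorial hypothesis to the geometric conclusion.
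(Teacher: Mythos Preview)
Your proof is correct and follows essentially the same route the paper indicates: the lemma is the $m=2$ specialization of the preceding lemma about generic $\tfrac{m}{2}$-planes, combined with the loop $\Leftrightarrow$ zero-column dictionary, and the paper's only additional remark is the hint (which you also mention) that summing the rows of the network parametrization yields such a generic vector. Your hyperplane-avoidance argument is in fact more careful than anything the paper writes out.
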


\begin{lemma} \label{lem:condgenm}
Let $S_{\pi}$ be a positroid cell.
If every vector space $V \in S_{\pi}$ 
contains a generic $\frac{m}{2}$-plane then $\pi(i) \geq i+\frac{m}{2}$ (as an affine permutation, see \cref{defn:affperm}) for all $i$.
\end{lemma}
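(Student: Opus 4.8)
The plan is to unwind the hypothesis into a purely matroid-theoretic statement about $M_\pi$, and then read off the inequalities $\pi(i)\ge i+\frac{m}{2}$ directly from the standard description of the affine permutation attached to a positroid.

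First I would record what the hypothesis says combinatorially. By the lemma immediately preceding this one, a point $V\in S_\pi\subset Gr^+_{k+\frac{m}{2},n}$ with column vectors $c_1,\dots,c_n$ contains a generic $\frac{m}{2}$-plane if and only if $\operatorname{rank}(\{c_i\}_{i\in I})=\frac{m}{2}$ for every $I\in\binom{[n]}{m/2}$, i.e.\ every $\frac{m}{2}$-element subset of $[n]$ is independent in the matroid $M(V)$. Since $M(V)=M_\pi$ for all $V\in S_\pi$ (the matroid depends only on which Pl\"ucker coordinates vanish), the hypothesis ``every $V\in S_\pi$ contains a generic $\frac{m}{2}$-plane'' is equivalent to: every $\frac{m}{2}$-element subset of $[n]$ is independent in $M_\pi$. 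Fix one representative $V\in S_\pi$ with columns $c_1,\dots,c_n$, extended cyclically by $c_{i+n}:=c_i$.

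Next I would recall the description of the affine permutation $\pi$ of $S_\pi$ (\cref{defn:affperm}): for each $i$, $\pi(i)$ is the least index $j\ge i$ such that $c_i\in\operatorname{span}(c_{i+1},\dots,c_j)$ (cyclically, with $\operatorname{span}(\emptyset)=\{0\}$ and $j$ ranging over $\{i,i+1,\dots,i+n\}$); this does not depend on the choice of $V\in S_\pi$. Unwinding this, for any integer $t\ge 1$ one has $\pi(i)\ge i+t$ if and only if $c_i\notin\operatorname{span}(c_{i+1},\dots,c_{i+t-1})$ (if $c_i$ is not in this largest span it is in none of the smaller ones, and in particular $c_i\neq 0$). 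Now fix $i$ and consider the cyclic interval $I_i:=\{i,i+1,\dots,i+\frac{m}{2}-1\}$, which has exactly $\frac{m}{2}$ elements (note $\frac{m}{2}\le k+\frac{m}{2}\le n$). By the previous paragraph the columns $\{c_j\}_{j\in I_i}$ are linearly independent, so in particular $c_i\notin\operatorname{span}(c_{i+1},\dots,c_{i+\frac{m}{2}-1})$, and hence $\pi(i)\ge i+\frac{m}{2}$. Since $i$ was arbitrary and $\pi(i+n)=\pi(i)+n$, this holds for all $i\in\Z$.

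I do not expect a genuine obstacle here: the whole argument is the preceding lemma together with the definition of the affine permutation. The only point requiring care is matching conventions, namely confirming that the affine permutation of \cref{defn:affperm} is the one characterized by ``$\pi(i)=$ least $j\ge i$ with $c_i\in\operatorname{span}(c_{i+1},\dots,c_j)$'', so that the statement $\pi(i)\ge i+\frac{m}{2}$ is literally the assertion that the $\frac{m}{2}-1$ columns immediately following $c_i$ do not span $c_i$; once this is pinned down the conclusion is immediate. (As a sanity check, for $m=2$ this recovers exactly the fact that the hypothesis forces $M_\pi$ to be loopless.)
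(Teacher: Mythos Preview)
Your proposal is correct and takes essentially the same approach as the paper: both arguments combine the preceding lemma (that a generic $\frac{m}{2}$-plane exists iff every $\frac{m}{2}$-subset of columns has full rank $\frac{m}{2}$) with the span characterization of the affine permutation, to conclude that the cyclic interval $\{i,\dots,i+\frac{m}{2}-1\}$ being independent forces $\pi(i)\ge i+\frac{m}{2}$. The paper phrases this as a one-line contradiction while you write it out directly, but the content is identical; your caveat about pinning down the span characterization of $\pi$ is apt, since \cref{defn:affperm} only records the abstract definition, but the paper's own proof relies on exactly the same characterization.
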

\begin{proof}
Let $C=(c_1,c_2,\ldots,c_n)$ be a matrix representing $V$, listed as a sequence of column
vectors.  
Let us assume that there
exists $a$ such that $\pi(a) \leq a+\frac{m}{2}-1$.  Then $c_a \in \mbox{span}\{c_{a+1},\ldots, c_{a+\frac{m}{2}-1}\}$ and, in particular, $r[a;a+\frac{m}{2}-1]<\frac{m}{2}$. The proof follows immediately from 
\cref{lem:rankgeneric}.
\end{proof}

\begin{definition}
For a positroid cell $S_\pi\subset Gr^{\geq 0}_{k+\frac{m}{2},n}$ and  $\lambda\in Gr_{\frac{m}{2},n}$ a generic vector of an element $V \in S_\pi$, we define $$S_\pi^{(\lambda)}:=\{W\in S_\pi:\lambda\subset W\}.$$
\end{definition}

Let $C^{(\lambda)}_\pi$ be matrix representatives for elements in $S_\pi^{(\lambda)}$. It is always possible to find an invertible row transformation which bring $C_\pi^{(\lambda)}$ into the form
\begin{equation}
C_\pi^{(\lambda)}=\left(\begin{matrix}
\lambda_{1\,1}&\lambda_{1\,2}&\ldots&\lambda_{1\,n}\\
\vdots&\vdots&\ddots&\vdots\\
\lambda_{\frac{m}{2} \,1}&\lambda_{\frac{m}{2}\,2}&\ldots&\lambda_{\frac{m}{2}\,n}\\
c_{\frac{m}{2}+1\,1}&c_{\frac{m}{2}+1\,2}&\ldots&c_{\frac{m}{2}+1\,n}\\
\vdots&\vdots&\ddots&\vdots\\
c_{\frac{m}{2}+k\,1}&c_{\frac{m}{2}+k\, 2}&\ldots&c_{\frac{m}{2}+k\, n}
\end{matrix}\right)
\end{equation}
Let us define a linear transformation $Q^{(\lambda)}: \mathbb{R}^n \mapsto \mathbb{R}^n$ represented by the $n\times n$ matrix $Q^{(\lambda)}$ with elements\footnote{Notice that our definition differs from the one found in \cite{abcgpt} for $m=4$. They are however related to each other by a cyclic shift and rescaling each column of $Q^{(\lambda)}$.}
\begin{equation}
Q^{(\lambda)}_{ab}=\sum_{i=0}^{\frac{m}{2}} (-1)^i \, \delta_{a,b-\frac{m}{2}+i} \, \, p_{ b-\frac{m}{2}, \ldots, b-\frac{m}{2}+i-1, \frac{m}{2}+i+1,\ldots, b }\left(\lambda\right), \quad a,b, \in [n].
\end{equation}
Here we used the notation where $\delta_{ab}=1$ when $a=b$ and $\delta_{ab}=0$ otherwise. 

It is easy to show that $\lambda Q^{(\lambda)}=0$ and that $Q^{(\lambda)}$ has rank $n-\frac{m}{2}$. Let us define $\hat{C}_\pi^{(\lambda)}=C_\pi^{(\lambda)}\cdot Q^{(\lambda)}$, then
\begin{equation}\label{eq:C}
\hat{C}_{\pi}^{(\lambda)}=\left(\begin{matrix}
0&0&\ldots&0\\
\vdots&\vdots&\ddots&\vdots\\
0&0&\ldots&0\\
\hat{c}_{\frac{m}{2}+1\,1}&\hat{c}_{\frac{m}{2}+1\,2}&\ldots&\hat{c}_{\frac{m}{2}+1\,n}\\
\vdots&\vdots&\ddots&\vdots\\
\hat{c}_{\frac{m}{2}+k\,1}&\hat{c}_{\frac{m}{2}+k\, 2}&\ldots&\hat{c}_{\frac{m}{2}+k\, n}
\end{matrix}\right).
\end{equation}
It is easy to check that $\mbox{span}\{\hat{c}_{a},\hat{c}_{a+1},\ldots,\hat{c}_b\}\subset \mbox{span}\{c_{a-\frac{m}{2}},c_{a-\frac{m}{2}+1},\ldots,c_b\}$ and moreover that for consecutive maximal minors we have: $p_{a-\frac{m}{2},\ldots a,\ldots a+k-1}(C)$ is proportional to $p_{a, \ldots, a+k-1}(\hat{C})$.
Then, the matrix $Q^{(\lambda)}$  projects  elements of $S_\pi^{(\lambda)}$ into $S_{\hat{\pi}}$, with 
\begin{equation}\label{Tduality_gen}
\hat{\pi}(i)=\pi(i-\frac{m}{2}).
\end{equation} The proof of this fact closely follows the one found in \cite[page 75]{abcgpt}. 

For $m=2$ we get the explicit form of $Q^{(\lambda)}$ is:
\begin{equation}
Q^{(\lambda)}_{ab}=\delta_{a,b-1}\lambda_b-\delta_{a,b}\lambda_{b-1}\,, \quad a,b \in [n].
\end{equation}
Moreover, we have the following relation between consecutive minors 
\begin{equation}
p_{a,a+1,\ldots, a+k-1}(\hat{C})=(-1)^k {\lambda_{a} \ldots \lambda_{a+k-2}}\, p_{a-1,a,\ldots, a+k-1}(C).
\end{equation}

\begin{remark}
In order for the T-duality map to be a well-defined  (on affine permutations), we require that both $i \leq \pi(i) \leq n+i$ and $i \leq \hat{\pi}(i) \leq n+i$ are satisfied. Given that $\hat{\pi}(i)=\pi(i-\frac{m}{2})$, this implies extra conditions on allowed permutations, i.e. $\pi(i)\geq i+\frac{m}{2}$ and $\hat{\pi}(i) \leq i+n-\frac{m}{2}$. We observe that the operation in \eqref{Tduality_gen} is then well-defined for the cells $S_\pi^\lambda$, by \cref{lem:condgenm}. Finally, for $m=2$ these conditions correspond to lack of loops (resp. coloops) for $\pi$ (reps. $\hat{\pi}$).
\end{remark}

\begin{proposition}[How T-duality affects dimensions of cells]\label{prop:2}
	Let $S_{\pi}$ be a loopless 
	cell of $Gr_{k+1,n}^{\geq 0}$.
	Then $S_{\hat{\pi}}$ is a 
	coloopless cell of $Gr_{k,n}^{\geq 0}$, and 
		$\dim(S_{\hat{\pi}}) -2k = \dim(S_\pi) - (n-1).$
	In particular, if $\dim S_{\pi} = n-1$,
	then $\dim S_{\hat{\pi}} = 2k$. 
\end{proposition}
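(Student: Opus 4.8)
The plan is to compute the dimension of a positroid cell directly from its decorated permutation and then track how this quantity changes under the T-duality map $\pi \mapsto \hat\pi$. Recall from Postnikov's theory (reviewed in \cref{app}) that the dimension of the positroid cell $S_\pi$ of $Gr^+_{k,n}$ equals the number of alignments (or equivalently, can be computed from the length of $\pi$ as an affine permutation, minus a correction); more precisely, $\dim S_\pi = k(n-k) - \binom{k}{2} + \mathrm{length}(\pi)$-type formulas are available, but the cleanest route here is to use the formula in terms of \emph{crossings} and \emph{alignments} of the decorated permutation, $\dim S_\pi = n + \#\{\text{crossings}\} - \#\{\text{alignments}\}$ normalized appropriately, or the characterization via $\Le$-diagrams (the dimension is the number of boxes). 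I would pick whichever of these is actually stated or cited in \cref{app} and use it as a black box.

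First I would record that $\hat\pi = \pi_0 \circ \pi$ where $\pi_0 = (n,1,2,\dots,n-1)$ is the long cycle; equivalently $\hat\pi(i) = \pi(i-1)$ as stated in \cref{hatmap}, or $\hat\pi(i) = \pi(i - \frac{m}{2})$ with $m=2$ as in \eqref{Tduality_gen}. The key combinatorial observation, essentially proved already inside the proof of \cref{lem:hat}, is that for $1 \le i \le n-1$, position $i$ is an anti-excedance of $\pi$ if and only if position $i+1$ is an anti-excedance of $\hat\pi$, while position $n$ (always an anti-excedance of the loopless $\pi$) does not give an anti-excedance at position $1$ of $\hat\pi$ (coloopless). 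This already gives that $\hat\pi$ has $k$ anti-excedances, so $S_{\hat\pi} \subset Gr^+_{k,n}$, which is the first assertion. Next I would carry out the analogous bookkeeping for whichever statistic computes the dimension: compose with the shift $\pi_0$ and check, position by position, how the relevant pairs (crossings/alignments, or the pattern of the associated $\Le$-diagram) transform. The shift by one position is ``almost'' a bijection on these pairs; the only discrepancies occur at the wrap-around, i.e. involving position $n$ of $\pi$ versus position $1$ of $\hat\pi$, and these are controlled by the looplessness of $\pi$ and cooloplessness of $\hat\pi$.

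The main obstacle — and the step I would spend the most care on — is accounting precisely for the boundary contributions at the wrap-around, since that is exactly where the ``$-(n-1)$'' on one side becomes ``$-2k$'' on the other. Concretely, conjugation/composition by the long cycle $\pi_0$ changes the number of inversions (or the affine length) by a controlled amount that depends only on $k$ and $n$, not on the particular $\pi$, \emph{provided} $\pi$ is loopless so that $n \mapsto \pi(n)$ is a genuine anti-excedance. I would isolate this as a short lemma: for loopless $\pi$ with $k+1$ anti-excedances, $\dim S_\pi - \dim S_{\hat\pi} = (n-1) - 2k$. Granting that lemma, the proposition is immediate, since it rearranges to $\dim S_{\hat\pi} - 2k = \dim S_\pi - (n-1)$, and the ``in particular'' clause is the special case $\dim S_\pi = n-1$. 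An alternative, possibly slicker, route: verify the dimension identity on the generators of the recursion in \cref{thm:dishyper}/\cref{thm:disamp} — the maps $\ipre, \iinc$ versus $\Ipre, \Iinc$ — since T-duality intertwines these (as foreshadowed by comparing \cref{ex:hyper} and \cref{ex:amp}), and both the base case and the inductive step of the dimension count are then one-line checks. I would likely present the direct permutation-statistic computation as the main proof and mention the recursive cross-check as a remark.
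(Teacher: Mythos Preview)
Your approach is genuinely different from the paper's, and as written it has a gap.

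The paper does not use crossings/alignments or $\Le$-diagram statistics at all. Instead, it invokes the rank formula
\[
\dim(S_\pi)=\sum_{a=1}^n r[a,\pi(a)]-k^2,
\]
where $r[a,b]$ is the rank of the span of columns $c_a,\dots,c_b$ of a matrix representative, and then uses a property established in the preceding subsection on the $Q$-map: the columns of $\hat C$ satisfy $\spn\{\hat c_a,\dots,\hat c_b\}\subset\spn\{c_{a-1},\dots,c_b\}$. From this one checks that each chain rank $\hat r[a+1,\hat\pi(a+1)]$ drops by exactly one compared to $r[a,\pi(a)]$. Summing over $a$ and correcting $(k+1)^2$ to $k^2$ yields $\dim S_{\hat\pi}=\dim S_\pi-n+2k+1$ in one line. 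The point is that the linear-algebraic construction of $\hat C$ from $C$ in \cref{sec:Tcells} already encodes the dimension drop uniformly, so no wrap-around case analysis is needed.

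Your proposal, by contrast, defers the entire content to an unproven lemma: you say you would ``isolate as a short lemma'' the statement that for loopless $\pi$ with $k+1$ anti-excedances, $\dim S_\pi-\dim S_{\hat\pi}=(n-1)-2k$, but this is exactly the proposition to be proved. The surrounding discussion of how conjugation by $\pi_0$ affects alignments or affine length is plausible but not carried out; the boundary bookkeeping you flag as ``the main obstacle'' is never done. A purely combinatorial proof along these lines is possible, but it requires actually tracking which alignment pairs are created or destroyed at the wrap-around, and that computation is absent here.

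Your alternative recursive route is also incomplete: the recursions in \cref{thm:dishyper} and \cref{thm:disamp} only produce certain cells (those appearing in recursively constructed dissections), not every loopless cell of $Gr^+_{k+1,n}$, so an induction on those recursions would not establish the proposition in the stated generality.
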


\begin{proof}
Let us translate \cref{hatmap} into the language of affine permutations. Then T-duality maps a $(k+1,n)$-bounded affine permutation $\pi_a$ into a $(k,n)$-bounded affine permutation $\hat{\pi}_a=\pi_a \circ t$, with $t:\mathbb{Z} \rightarrow \mathbb{Z}$ the map $i \mapsto i-1$.
By \cite[Proposition~17.10]{postnikov} and \cref{app}, the codimension of the positroid cell $S_{\nu_a}$ equals the length $\ell(\nu_a)$ of the associated affine permutation $\nu_a$. Clearly the map $t$ preserves the set of inversions, and hence the length, of affine bounded permutations, i.e. $\ell(\pi_a)=\ell(\hat{\pi}_a)$. Therefore the codimensions of $S_{\pi_a} \subseteq Gr^{\geq 0}_{k+1,n}$ and $S_{\hat{\pi}_a} \subseteq Gr^{\geq 0}_{k,n}$ are equal:
\begin{equation}
(k+1)(n-k-1)-\mbox{dim}(S_{\pi_a})=k(n-k)-\mbox{dim}(S_{\hat{\pi}_a}),
\end{equation}
from which the claim of the proposition follows immediately.
\end{proof}

\begin{remark} 
Alternatively, one may prove the above result by mimicking an argument of 
	a similar statement given in 
	\cite[pages 75-76]{abcgpt}.
\end{remark}

\section{T-duality relates tiles, tilings, and dissections}\label{sec:Ttriangle}

In this section we will compare the 
positroid tiles and tilings (and more generally, dissections)
of 
the hypersimplex $\Delta_{k+1,n}$ with those of the 
amplituhedron $\mathcal{A}_{n,k,2}(Z)$.  
Again, we will see that T-duality connects them!
Our main result of this section is \cref{thm:shift}, which says that 
T-duality provides a bijection between the BCFW tilings/dissections
of the hypersimplex $\Delta_{k+1,n}$, and the BCFW tilings/dissections
of the amplituhedron $\mathcal{A}_{n,k,2}(Z)$.

The $2k$-dimensional cells of $Gr^{\geq 0}_{k,n}$ 
which have full-dimensional
image in $\mathcal{A}_{n,k,2}(Z)$ were studied in 
\cite{Lukowski:2019sxw} and called \emph{generalized triangles}.
In this paper we will refer to the above objects as \emph{positroid tiles} defined as follows.
\begin{definition}[Positroid tiles of $\mathcal{A}_{n,k,2}$]\label{def:gentriangle}
Let $S_{\pi}$ be a $2k$-dimensional cell of $Gr^{\geq 0}_{k,n}$ such that 
$\dim Z_{\pi} = \dim S_{\pi}$, and the restriction of the amplituhedron map $\widetilde{Z}$ to $S_{\pi}$ is an injection. Then we call $Z_{\pi}$ 
a \emph{positroid tile}
of $\mathcal{A}_{n,k,2}(Z)$. 
\end{definition}

A conjectural description of positroid tiles
was given in \cite{Lukowski:2019sxw}: 
\begin{definition}
We say that a collection of convex polygons (which have
$p_1,\dots,p_r$ vertices) inscribed in a given $n$-gon 
is a collection of \emph{$k$ non-intersecting triangles in an $n$-gon}
if each pair of such polygons intersects in at most a vertex 
and if the total number of triangles needed to triangulate all polygons in the collection is $k$, i.e. $(p_1-2)+\ldots+(p_r-2)=k$. 
\end{definition}
It was conjectured and experimentally checked in \cite{Lukowski:2019sxw} that positroid tiles in $\mathcal{A}_{n,k,2}(Z)$ are in bijection with 
 collections of `$k$ non-intersecting triangles in a $n$-gon'.
Moreover, one can read off the cell $S_{\pi}$ of $Gr^{\geq 0}_{k,n}$
corresponding to a positroid tile of $\mathcal{A}_{n,k,2}(Z)$ using
the combinatorics of the collection of $k$ non-intersecting triangles 
in an $n$-gon, see \cite[Section 2.4]{Lukowski:2019sxw}. The basic idea is to associate a row vector to each of the non-intersecting triangles, with generic entries at the positions of the triangle vertices (and zeros everywhere else). This way one constructs a $k\times n$ matrix whose matroid is the matroid for $S_\pi$.

Borrowing the terminology of \cref{def:gentriangle},
we make the following definition.
\begin{definition}[Positroid tiles of $\Delta_{k+1,n}$] \label{def:gentriangle2}
Let $S_{\pi}$ be an $(n-1)$-dimensional cell of $Gr^{\geq 0}_{k+1,n}$ such that 
	the moment map $\mu$ is an injection on $S_{\pi}$.  
Then we say 
	the image $\Gamma_{\pi}:= \overline{\mu(S_{\pi})}$ in $\Delta_{k+1,n}$ is a \emph{positroid tile}
in $\Delta_{k+1,n}$.
\end{definition}

We have already studied the positroid tiles
in $\Delta_{k+1,n}$ in 
 \cref{prop:homeo2}: they  
come from $(n-1)$-dimensional positroid cells whose matroid is 
connected, or equivalently, they come from the 
positroid cells whose reduced plabic graphs are trees.
And since these are positroid cells in $Gr^{\geq 0}_{k+1,n}$, 
each such plabic graph, when
drawn as a trivalent graph, is a tree with $n$ leaves with
 precisely $k$ internal black vertices. By simply taking the planar dual of these tree, we get the following: 

\begin{proposition}\label{labels.map}
There is a bijective map between positroid tiles
	in $\Delta_{k+1,n}$ 
	and collections of $k$ non-intersecting triangles in an $n$-gon.
\end{proposition}

\begin{proof}
Consider a collection of non-intersecting polygons inside an $n$-gon $\mathcal{P}=(P_1,\ldots,P_r)$ and its complement $\overline{\mathcal{P}}=(\overline{P}_1,\ldots \overline{ P}_{\bar{r}})$. Let us choose a triangulation of all polygons into triangles  $\mathcal{P}\to \mathcal{T}=(T_1,\ldots ,T_k)$ and $\overline{\mathcal{P}}\to \overline{\mathcal{T}}=(\overline{T}_1,\ldots,\overline{T}_{n-k-2})$. Associate a black vertex to the middle of each triangle $T$ and a white vertex with to middle of each triangle $\overline{T}$. Finally, connect each pair of vertices corresponding to triangles sharing an edge and draw an edge through each boundary of the $n$-gon. This way we get a tree graph with exactly $k$ black and $n-k-2$ white vertices. Hence it is a plabic graph for the cell $S_{\pi}\subset G_{k+1,n}^{\geq 0}$ corresponding to a plabic tile of $\Delta_{k+1,n}$. 
\end{proof}

\begin{figure}
\begin{tabular}{cccc}\includegraphics[scale=0.5]{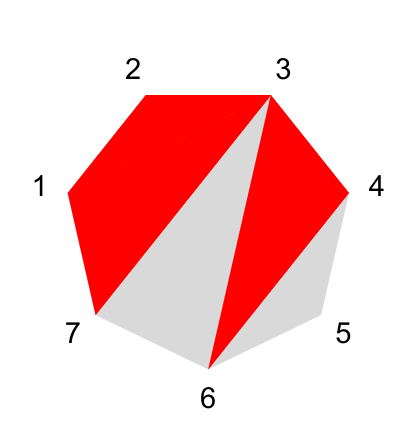}&\includegraphics[scale=0.5]{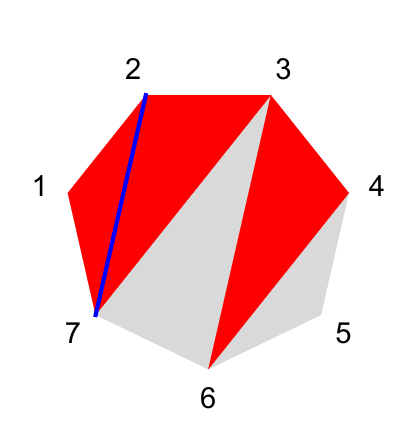}&\includegraphics[scale=0.5]{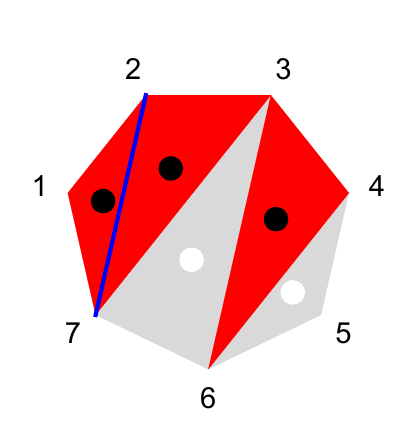}&\includegraphics[scale=0.5]{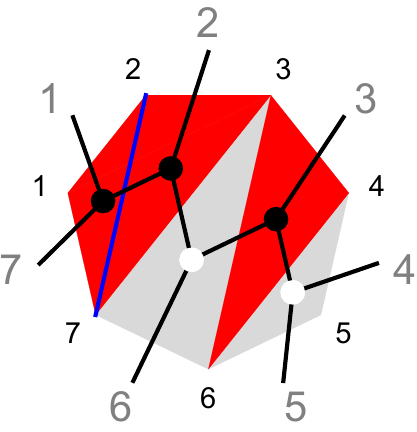}\\(a)&(b)&(c)&(d)
\end{tabular}
\caption{The map in \cref{labels.map} for $\pi=\{4,7,1,6,\underline{5},3,2\}\in Gr_{3,7}^{\geq 0}$:\\ (a) positroid tile label, (b) A triangulation of collections $\mathcal{P}$ and $\bar{\mathcal{P}}$, \\(c) Assigning vertices, (d) Plabic graph of $\check{\pi}=\{7,1,6,5,3,2,4\}\in Gr_{4,7}^{\geq 0}$ }
\end{figure}

In the following theorem we show that T-duality relates 
BCFW tilings and 
dissections of the hypersimplex and amplituhedron.

\begin{theorem}[BCFW tilings of $\Delta_{k+1,n}$ and $\mathcal{A}_{n,k,2}(Z)$ are T-dual]\label{thm:shift}
The T-duality map provides a bijection between 
	the BCFW tilings of the hypersimplex $\Delta_{k+1,n}$
	and the BCFW tilings of the amplituhedron
$\mathcal{A}_{n,k,2}(Z)$.
That is, the collection $\{\Gamma_{\pi}\}$ of positroid polytopes constructed in 
\cref{thm:disamp} is a positroid tiling of $\Delta_{k+1,n}$ if and only if 
 the T-dual collection $\{Z_{\hat{\pi}}\}$ of Grasstopes is a positroid tiling of $\mathcal{A}_{n,k,2}(Z)$. 
The same statement holds if we replace the word ``tiling''
 with ``dissection.''
\end{theorem}

\begin{proof}
We prove this by induction on $k+n$, using 
	\cref{thm:dishyper} and \cref{thm:disamp}.
	It suffices to show:
	\begin{itemize}
		\item 
	if $\{\Gamma_{\pi}\}_{\pi \in \mathcal{C}}$ dissects
	$\Delta_{k+1,n-1}$ and 
	$\{Z'_{\hat{\pi}}\}_{\pi \in \hat{\mathcal{C}}}$ dissects
	$\mathcal{A}_{n-1,k,2}(Z')$ then 
	for any $\pi \in \mathcal{C}$,
	$\widehat{\ipre(\pi)} = \Ipre(\hat{\pi})$.
		\item 
	if $\{\Gamma_{\pi}\}_{\pi \in \mathcal{C}}$ dissects
	$\Delta_{k,n-1}$ and 
	$\{Z''_{\hat{\pi}}\}_{\pi \in \hat{\mathcal{C}}}$ dissects
	$\mathcal{A}_{n-1,k-1,2}(Z'')$ then 
	for any $\pi \in \mathcal{C}$,
	$\widehat{\iinc(\pi)} = \Iinc(\hat{\pi})$.
	\end{itemize}

	Let $\pi = (a_1,\dots,a_{n-1})$
	be a decorated permutation.
We first verify the first statement.  
Then $\ipre(\pi) = (a_1, a_2,\dots,a_{n-2},n,a_{n-1})$,
	so $\widehat{\ipre(\pi)} = (a_{n-1},a_1,a_2,\dots, a_{n-2},n)$,
	where $n$ is a black fixed point.
	Meanwhile, $\hat{\pi} = (a_{n-1}, a_1, a_2,\dots, a_{n-2})$,
	so $\Ipre(\hat{\pi}) = (a_{n-1}, a_1,a_2,\dots,a_{n-2}, n)$, where
	$n$ is a black fixed point.

We now verify the second statement. 
Let $j =\pi^{-1}(n-1)$.
Then we have that $\iinc(\pi) = 
(a_1,a_2,\dots, a_{j-1}, n, a_{j+1},\dots,a_{n-1},n-1)$,
	and 
	$\widehat{\iinc(\pi)} = (n-1, a_1, a_2,\dots, a_{j-1}, n, a_{j+1},\dots, a_{n-1}).$
	Meanwhile $\hat{\pi} = (a_{n-1},a_1, a_2,\dots, a_{n-2})$.  Then  
	it is straightforward to verify that 
	$\Iinc(\hat{\pi})$ is exactly the 
	permutation 
	$\widehat{\iinc(\pi)} = (n-1, a_1, a_2,\dots, a_{j-1}, n, a_{j+1},\dots, a_{n-1}),$ as desired.
\end{proof}

We now see that T-duality relates 
positroid tiles of the hypersimplex and the amplituhedron.
\begin{proposition}\label{prop:injective}
Suppose the positroid polytope $\Gamma_{\pi}$ is a positroid tile of the hypersimplex $\Delta_{k+1,n}$. Then the T-dual Grasstope $Z_{\hat{\pi}}$ is a positroid tile of the amplituhedron $\mathcal{A}_{n,k,2}(Z)$ for all $Z \in \Mat^{>0}_{n,k+2}$.
\end{proposition}
\begin{proof}
By \cref{prop:homeo2}, the fact that $\mu$ is injective implies that a (any) reduced
plabic graph $G$ representing $S_{\pi}$ must be a (planar) tree.  But then by 
\cref{thm:dishyper} (see \cref{fig:hyp}), $G$ has  a black or white
vertex which is incident to two adjacent boundary vertices $i$ and $i+1$ (modulo $n$),
and hence appears in some tiling of the hypersimplex
(and specifically on the right-hand side of 
\cref{fig:hyp}).  

Applying \cref{thm:shift},
we see that $\hat{\pi}$ appears in some tiling of the amplituhedron
$\mathcal{A}_{n,k,2}(Z)$.  It follows that 
$\widetilde{Z}$ is injective on $S_{\hat{\pi}}$.
\end{proof}

By \cref{prop:injective} and \cref{labels.map}, collections of $k$ non-intersecting triangles in an $n$-gon label both positroid tiles of $\Delta_{k+1,n}$ and, via T-duality, positroid tiles of $\mathcal{A}_{n,k,2}(Z)$. We conjecture that this labelling is compatible with the way \cite{Lukowski:2019sxw} associates collections of $k$ non-intersecting triangles in an $n$-gon with positroid tiles of $\mathcal{A}_{n,k,2}(Z)$. 

Using 
\cref{prop:injective},
\cref{prop:conn} and \cref{prop:homeo}, we obtain the following.
\begin{corollary} \label{cor:injective}
The $\widetilde{Z}$-map is an injection on all $2k$-dimensional cells 
of the form $S_{\hat{\pi}} \subset Gr^{\geq 0}_{k,n}$, where 
$\pi$ is a SIF permutation and $\dim S_{\pi} = n-1$.
\end{corollary}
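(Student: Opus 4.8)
The plan is to deduce this directly from \cref{prop:injective}: once we know that the moment map $\mu$ is injective on the $(n-1)$-dimensional cell $S_\pi \subset Gr^+_{k+1,n}$, that proposition immediately yields that $\widetilde{Z}$ is injective on the $2k$-dimensional cell $S_{\hat\pi} \subset Gr^+_{k,n}$. So the entire content of the proof is to verify that the hypothesis of \cref{prop:injective} is satisfied. Note first that $\hat\pi$ is even defined: since $\pi$ is SIF, applying the SIF condition to the one-element intervals $\{i\}$ shows that $\pi$ has no fixed points at all, so $\pi$ is loopless and the T-duality map is defined on it.

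Next I would invoke \cref{prop:conn}: because $\pi$ is SIF, the positroid $M_\pi$ is connected, so it has $c = 1$ connected component. By \cref{prop:dim} this gives $\dim \Gamma_\pi = n - c = n - 1$. Combining this with the hypothesis $\dim S_\pi = n - 1$, we obtain $\dim S_\pi = \dim \Gamma_\pi = n - c$, which is precisely condition (3) in \cref{prop:homeo}. Hence condition (1) of \cref{prop:homeo} holds, i.e.\ $\mu$ restricts to an injection on $S_\pi$. At this point \cref{prop:injective} applies verbatim and finishes the argument, since $\pi$ ranges over all SIF permutations with $\dim S_\pi = n-1$.

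There is essentially no obstacle here: the statement is a bookkeeping corollary of the three cited results, and the only point needing a moment's care is confirming that ``$\dim S_\pi = n-1$ together with $M_\pi$ connected'' places us in case (3) of \cref{prop:homeo} rather than requiring extra input, which is immediate from \cref{prop:dim}. For context one could also observe, via \cref{rem:cycle}, that these hypotheses actually force $\pi$ to be a single $n$-cycle, but this refinement is not needed for the proof.
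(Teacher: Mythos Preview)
The proposal is correct and follows exactly the approach indicated in the paper, which simply cites \cref{prop:injective}, \cref{prop:conn}, and \cref{prop:homeo} without spelling out the steps. Your write-up fills in precisely the intended logic (SIF $\Rightarrow$ connected $\Rightarrow$ $c=1$ $\Rightarrow$ condition (3) of \cref{prop:homeo} $\Rightarrow$ $\mu$ injective $\Rightarrow$ apply \cref{prop:injective}), and the extra care about looplessness and the aside on \cref{rem:cycle} are fine but inessential.
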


We know from \cref{prop:homeo} that the moment map is an injection
on the cell
${S_{\pi}}$ 
 of $Gr^{\geq 0}_{k,n}$ precisely when 
$\dim S_{\pi} = n-c$, 
 where $c$ is the number of connected
components of the positroid of $\pi$.  We have 
experimentally checked the following statement for these cells.

\begin{conj}\label{prop_cell_relation2}
	Let $S_{\pi}$ be a loopless $(n-c)$-dimensional cell of $Gr_{k+1,n}^{\geq 0}$ with $c$ connected components (for $c$ 
	a positive integer).  	Then 
	$S_{\hat{\pi}}$ is a coloopless 
	$(2k+1-c)$-dimensional cell of $Gr_{k,n}^{\geq 0}$ on which 
	$\widetilde{Z}$ is injective.
\end{conj}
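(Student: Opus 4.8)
By \cref{prop:homeo} and \cref{prop:conn}, the hypothesis that $S_\pi$ is a loopless $(n-c)$-dimensional cell of $Gr^+_{k+1,n}$ with $c$ connected components is exactly the statement that the moment map is injective on $S_\pi$. The dimension and coloopless assertions are then immediate from \cref{prop:2}: substituting $\dim S_\pi = n-c$ into $\dim S_{\hat\pi} - 2k = \dim S_\pi - (n-1)$ gives $\dim S_{\hat\pi} = 2k+1-c$, and \cref{prop:2} also records that $S_{\hat\pi}$ is coloopless. So two things remain: (i) that $\widetilde Z$ is injective on $S_{\hat\pi}$, and (ii) that every codimension-one boundary cell of a generalized triangle of $\mathcal A_{n,k,2}$ is of this form (necessarily with $c=2$).

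\textbf{Injectivity.} I would reduce (i) to the top-dimensional case already handled in \cref{prop:injective} and \cref{cor:injective}. By \cref{prop:homeo2} a reduced plabic graph $G$ for $S_\pi$ is a planar forest with $c$ components, whose components partition the boundary vertices into a noncrossing partition; so whenever $c\ge 2$ there are two components separated by a pair of consecutive boundary leaves. Join such a pair by a new edge, subdividing it once on each side so the graph stays trivalent, and choose the colours of the two new internal vertices so that the result still represents a cell of $Gr^+_{k+1,n}$. After $c-1$ such moves one reaches a planar plabic \emph{tree} $G'$, representing an $(n-1)$-dimensional cell $S_{\pi'}$ with $\mu$ injective on it and with $S_\pi \subseteq \overline{S_{\pi'}}$ (equivalently $\B(M_\pi)\subseteq\B(M_{\pi'})$). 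One then wants to check that T-duality carries this containment to $S_{\hat\pi}\subseteq \overline{S_{\hat{\pi'}}}$; granting that, \cref{prop:injective} gives that $\widetilde Z$ is injective on $S_{\hat{\pi'}}$, and — using that a generalized triangle of $\mathcal A_{n,k,2}$ is carried homeomorphically onto its image on the \emph{closed} cell $\overline{S_{\hat{\pi'}}}$ (provable along the lines of \cite{BaoHe}) — injectivity descends to the subcell $S_{\hat\pi}$.

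\textbf{Codimension-one boundaries, and the obstructions.} For (ii): let $Z_{\hat{\pi'}}$ be a generalized triangle with $\pi'$ a SIF permutation and $\dim S_{\pi'}=n-1$ (so $M_{\pi'}$ is connected), and let $F$ be a codimension-one boundary piece of it, which under the homeomorphism above is $\widetilde Z(\overline{S_\sigma})$ for a $(2k-1)$-dimensional positroid cell $S_\sigma \subseteq \overline{S_{\hat{\pi'}}}$. Using \cref{thm:faces} and \cref{prop:facets} one checks that $\sigma$ is coloopless with $k$ anti-excedances, hence in the image of T-duality; setting $\pi=\check\sigma$, the inverse of \cref{prop:2} shows $\pi$ is a loopless $(n-2)$-dimensional cell of $Gr^+_{k+1,n}$ with two connected components, which gives (ii). The main obstacle throughout is the claim, used in the second paragraph, that T-duality respects the closure order on loopless positroid cells. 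Since $\hat\pi = \pi_0\circ\pi$ is \emph{one-sided} composition with the long cycle $\pi_0$, and the closure order is an affine Bruhat-type order — for which one-sided translation is not monotone in general — I expect this to require genuinely using looplessness, most likely by transporting the inclusion $\B(M_\pi)\subseteq \B(M_{\pi'})$ directly through the explicit action of T-duality on bases, in the spirit of the basis computations in the proof of \cref{thm:dishyper}. A secondary difficulty is promoting injectivity of $\widetilde Z$ from open generalized-triangle cells to their closures: unlike the moment-map side, where \cref{prop:homeo2} yields a homeomorphism on $\overline{S_\pi}$ directly from the toric picture, the amplituhedron has no such model and this step needs its own argument.
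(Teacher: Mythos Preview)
This statement is a \emph{conjecture} in the paper, not a theorem; there is no proof in the paper to compare against. Immediately after stating it, the authors only remark that the coloopless and dimension assertions follow from \cref{lem:hat} and \cref{prop:2}, and that the $c=1$ case is \cref{prop:injective}. Everything else---injectivity of $\widetilde Z$ for $c\ge 2$, and the ``Moreover'' clause about codimension-one boundaries---is supported only by the experimental checks mentioned just before the conjecture. Your opening paragraph correctly recovers exactly the part the paper does establish.

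Your strategy for the remaining parts is sensible, and you are candid about its gaps; but those gaps are genuine obstructions, so what you have is an outline, not a proof. For part (i), the step ``$S_\pi \subseteq \overline{S_{\pi'}}$ implies $S_{\hat\pi} \subseteq \overline{S_{\hat{\pi'}}}$'' is precisely the question of whether $\pi\mapsto\hat\pi$ is order-preserving for the closure order on loopless cells, which is open, and your plan to transport basis inclusions through T-duality would need an explicit description of $\mathcal{B}(M_{\hat\pi})$ in terms of $\mathcal{B}(M_\pi)$ that the paper does not supply. Likewise, extending injectivity of $\widetilde Z$ from the open cell $S_{\hat{\pi'}}$ to its closure is exactly what one cannot extract from \cref{prop:injective} alone. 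For part (ii) there is an additional unjustified step: from a coloopless $\sigma$ with $\dim S_\sigma = 2k-1$ you get via \cref{prop:2} that $\check\sigma=\pi$ is loopless with $\dim S_\pi = n-2$, but you then assert $M_\pi$ has two connected components. From \cref{prop:dim} one only gets $n-c=\dim\Gamma_\pi\le\dim S_\pi=n-2$, i.e.\ $c\ge 2$; ruling out $c\ge 3$ is equivalent to $\dim S_\pi=\dim\Gamma_\pi$, i.e.\ to $\mu$ being injective on $S_\pi$, which is precisely the hypothesis-form the ``Moreover'' clause asks you to establish. (Loopless $(n-2)$-dimensional cells with $c\ge 3$ do exist: e.g.\ the direct sum of the top cell of $Gr^+_{2,4}$ with two coloops in $Gr^+_{4,6}$.) Finally, \cref{thm:faces} and \cref{prop:facets} are statements about positroid polytopes in the hypersimplex and do not by themselves certify that a boundary cell $S_\sigma$ on the amplituhedron side is coloopless; that is the content of the separate \cref{prop_amp1}, which is itself conjectural.
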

Note that the statement that 
	$S_{\hat{\pi}}$ is  coloopless of dimension $(2k+1-c)$
	follows from 
	\cref{lem:hat} and
	\cref{prop:2}.
Moreover the $c=1$ case of the conjecture is \cref{prop:injective}.

While \cref{thm:shift} shows that T-duality relates 
the large class of BCFW
tilings/dissections of $\Delta_{k+1,n}$
to the corresponding large class of 
BCFW tilings/dissections of $\mathcal{A}_{n,k,2}(Z)$, not 
all tilings/dissections arise from a BCFW-style recursion.
Nevertheless, we conjecture the following.

\begin{conj}[Tilings and dissections of $\Delta_{k+1,n}$ and $\mathcal{A}_{n,k,2}(Z)$ are T-dual]\label{conj:dis}
A collection of positroid polytopes $\{\Gamma_{\pi}\}$ is
a tiling (respectively, dissection) of $\Delta_{k+1,n}$ if and only if for all $Z \in \Mat^{>0}_{n,k+2}$
the collection of T-dual Grasstopes $\{Z_{\hat{\pi}}\}$ is a tiling (respectively, dissection) of $\mathcal{A}_{n,k,2}(Z)$.
\end{conj}
This conjecture is supported by \cref{thm:shift}, \cref{prop:largeclass} and results of \cref{sec:parity} (which relates 
parity duality and T-duality), and 
will be explored in a subsequent work\footnote{Since our paper appeared on arXiv, \cref{conj:dis} has been proved for tilings in \cite{Parisi:2021oql}.}.
We have also checked the conjecture using Mathematica,
see \cref{sec:regularA}.

\section{T-duality, cyclic symmetry and parity duality}\label{sec:parity}

In this section we discuss the relation of T-duality to 
 \emph{parity duality}, which relates dissections of the amplituhedron $\mathcal{A}_{n,k,m}(Z)$ with dissections of $\mathcal{A}_{n,n-m-k,m}(Z')$. The definition of parity duality was originally inspired by the physical operation of parity conjugation in quantum field theory -- more specifically, in the context of scattering amplitudes in $\mathcal{N}=4$ Super-Yang-Mills, where amplitudes can 
 be computed from the geometry of $\mathcal{A}_{n,k,4}(Z)$ 
 \cite{arkani-hamed_trnka}. Furthermore, the conjectural formula of Karp, Williams, and Zhang \cite{Karp:2017ouj} for the number of cells in each tiling of the amplituhedron is invariant under the operation of swapping the parameters $k$ and $n-m-k$ and hence is consistent with parity duality: this motivated further works, see 
 \cite[Section 2.4]{Ferro:2018vpf} and \cite{Galashin:2018fri}.  
 In particular, \cite{Galashin:2018fri} gave an explicit bijection 
 between dissections of  $\mathcal{A}_{n,k,m}(Z)$ and 
 dissections of $\mathcal{A}_{n,n-m-k,m}(Z')$, see 
 \cref{th:GLduality}.

In \cref{thm:pardual}, we will explain how parity duality for $m=2$ amplituhedra is naturally induced by a composition of the usual duality for Grassmannians 
($Gr_{k,n} \simeq Gr_{n-k,n}$) and the T-duality map (between loopless
cells of $Gr^{\geq 0}_{k+1,n}$ and coloopless cells of $Gr^{\geq 0}_{k,n}$).
The usual Grassmannian duality gives rise to a bijection between
dissections of the hypersimplex $\Delta_{k+1,n}$ and dissections of the 
hypersimplex $\Delta_{n-k-1,n}$. By composing this Grassmannian duality
with the T-duality map (on both sides), we obtain the parity duality between dissections of $\mathcal{A}_{n,k,2}(Z)$ and $\mathcal{A}_{n,n-k-2,2}(Z')$!

Recall that our convention on dissections is that the 
images of all positroid cells are of full dimension $n-1$.
Therefore all positroids involved in a dissection must be connected,
and the corresponding decorated permutations will be fixed-point-free.

\begin{thm}[Grassmannian duality for dissections of the hypersimplex]\label{thm:Gr} Let $\lbrace \Gamma_{\pi} \rbrace$ be a collection of positroid polytopes which dissects the hypersimplex $\Delta_{k+1,n}$. Then the collection of positroid polytopes $\lbrace \Gamma_{\pi^{-1}}\rbrace$ dissects the hypersimplex $\Delta_{n-k-1,n}$.  
\end{thm}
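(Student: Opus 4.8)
The plan is to realize the operation $\{S_\pi\}\rightsquigarrow\{S_{\pi^{-1}}\}$ geometrically, by the classical Grassmannian duality $Gr_{k+1,n}\simeq Gr_{n-k-1,n}$, and to show that the moment map intertwines this duality with the affine involution $r\colon\R^n\to\R^n$, $r(x)=\mathbf 1-x$, where $\mathbf 1 = e_{[n]} = \sum_{i=1}^n e_i$. The first observation is that $r$ is an affine isomorphism carrying $\Delta_{k+1,n}$ bijectively onto $\Delta_{n-k-1,n}$, since it permutes the vertices via $e_I\mapsto e_{[n]\setminus I}$. So it suffices to exhibit a bijection on cells, compatible with $\pi\mapsto\pi^{-1}$, that the moment map converts into $r$.

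For the duality itself I would use the sign-twisted orthogonal complement $\phi\colon Gr^+_{k+1,n}\xrightarrow{\ \sim\ }Gr^+_{n-k-1,n}$ of \cite{postnikov}: $\phi$ is a homeomorphism, and on positroid cells it sends $S_\pi$ to $S_{\pi^{-1}}$. Here one uses that $\phi$ sends the positroid $M_\pi$ to its dual (whose bases are the complements of the bases of $M_\pi$), that the dual of the positroid with decorated permutation $\pi$ has decorated permutation $\pi^{-1}$, and — since by the convention recalled just before the theorem every permutation occurring in a dissection is fixed-point-free — that there is no subtlety with the decoration. One also checks that $\pi\mapsto\pi^{-1}$ turns the $k+1$ anti-excedances of $\pi$ into exactly $n-k-1$ anti-excedances of $\pi^{-1}$, so that $S_{\pi^{-1}}$ is genuinely a cell of $Gr^+_{n-k-1,n}$.

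The one real computation is the compatibility $\mu\circ\phi = r\circ\mu$. This follows from the standard Plücker identity $p_I(\phi(V)) = \pm\, p_{[n]\setminus I}(V)$, which gives $|p_I(\phi(V))|^2 = |p_{[n]\setminus I}(V)|^2$; substituting into the definition of $\mu$ and using $e_{[n]\setminus I} = \mathbf 1 - e_I$ yields $\mu(\phi(V)) = \mathbf 1-\mu(V) = r(\mu(V))$. Consequently $\Gamma^{\circ}_{\pi^{-1}} = \mu(S_{\pi^{-1}}) = \mu(\phi(S_\pi)) = r(\Gamma^{\circ}_{\pi})$, and since $r$ is a homeomorphism of $\R^n$ we also get $\Gamma_{\pi^{-1}} = \overline{\mu(S_{\pi^{-1}})} = r(\overline{\mu(S_\pi)}) = r(\Gamma_\pi)$. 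With this in hand the three conditions of \cref{def:dissection1} transfer at once: $\dim\Gamma_{\pi^{-1}} = \dim\Gamma_\pi = n-1$ because $r$ is an affine isomorphism; if $S_\pi\neq S_{\pi'}$ are in the collection then $\pi^{-1}\neq(\pi')^{-1}$ and $\Gamma^{\circ}_{\pi^{-1}}\cap\Gamma^{\circ}_{(\pi')^{-1}} = r\bigl(\Gamma^{\circ}_{\pi}\cap\Gamma^{\circ}_{\pi'}\bigr)=\emptyset$; and $\bigcup_\pi\Gamma_{\pi^{-1}} = r\bigl(\bigcup_\pi\Gamma_\pi\bigr) = r(\Delta_{k+1,n}) = \Delta_{n-k-1,n}$. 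Hence $\{S_{\pi^{-1}}\}$ dissects $\Delta_{n-k-1,n}$.

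The step I expect to be the main obstacle is the second paragraph: pinning down the cell-level statement $\phi(S_\pi)=S_{\pi^{-1}}$ with the correct normalization of $\phi$ (the sign twist that keeps us inside $Gr^+$) and the correct identification of the decorated permutation of the dual positroid. I expect this to be routine given \cite{postnikov}, and the fixed-point-free hypothesis, which is automatic for dissections, removes the only genuine ambiguity — the colors of fixed points under duality — so that everything downstream of it is purely formal.
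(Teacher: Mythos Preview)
Your proof is correct and follows essentially the same approach as the paper: both rely on the affine involution $r(x)=\mathbf 1-x$ together with the fact that $\pi^{-1}$ indexes the dual positroid, so that $\Gamma_{\pi^{-1}}=r(\Gamma_\pi)$. Your version is a bit more explicit---you establish the intertwining $\mu\circ\phi=r\circ\mu$ at the Grassmannian level and then verify the three conditions of \cref{def:dissection1} directly---whereas the paper argues more tersely at the level of matroid polytopes; but the core idea is the same.
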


\begin{proof}
	If $G$ is a plabic graph representing the positroid cell $S_{\pi}$,
 and if we swap the colors of the black and white
vertices of $G$, we obtain a graph $G'$ representing the positroid
	$S_{\pi^{-1}}.$  It is not hard to see from \cite{ARW}
	that $G'$ and $\pi^{-1}$ represent the \emph{dual positroid} to 
	$G$ and $\pi$.  
	But now the matroid polytopes $\Gamma_{\pi}$ and $\Gamma_{\pi^{-1}}$
	are isomorphic via the map $\dual: \R^n \to \R^n$
	sending $(x_1,\dots,x_n) \mapsto (1-x_1,\dots,1-x_n)$.
	This maps relates the two dissections in the statement of the 
	theorem.
\end{proof}
By composing the inverse map on decorated permutations 
$\pi \mapsto \pi^{-1}$ (which represents the Grassmannian
duality of 
 \cref{thm:Gr})
with T-duality, we obtain the following map.

\begin{definition} \label{def:dualmapm2}
We define $\U$ to be the map between coloopless permutations on $[n]$  with $k$ anti-excedances and coloopless permutations on $[n]$  with $n-k-2$ anti-excedances such that 
	$\U \hat{\pi}=\widehat{\pi^{-1}}$.
Equivalently, we have 
	$(\U\pi)(i)=\pi^{-1}(i-1)-1$, 
	  where 
	 values of the permutation are considered modulo $n$,
	 and any 
	 fixed points which are created are designated to be loops.
\end{definition}

\begin{thm}[Parity duality from T-duality and Grassmannian duality]\label{thm:pardual} Let $\lbrace Z_{\pi} \rbrace$ be a collection of Grasstopes which dissects the amplituhedron $\mathcal{A}_{n,k,2}(Z)$. Then the collection of Grasstopes $\lbrace Z_{\U\pi}\rbrace$ dissects the amplituhedron $\mathcal{A}_{n,n-k-2,2}(Z')$. 
\end{thm}
We will prove \cref{thm:pardual} by using the cyclic symmetry of the 
positive Grassmannian and the amplituhedron,
 and showing (see \cref{lem:shiftlam}) that up to a cyclic shift, 
our map $\U$ agrees with the parity duality map of 
\cite{Galashin:2018fri}.

The totally nonnegative Grassmannian exhibits
a beautiful \emph{cyclic symmetry} \cite{postnikov}.
Let us represent an element of $Gr^{\geq 0}_{k,n}$ by a $k \times n$ matrix,
encoded by the sequence of $n$ columns
$\langle v_1,\ldots, v_n\rangle$. 
We define  the \emph{(left) cyclic shift map} $\sigma$
to be the map which sends
$\langle v_1,\ldots, v_n\rangle$ 
to the point $\langle v_2,\ldots, v_n,(-1)^{k-1} v_1\rangle$, which 
one can easily verify lies in $Gr^{\geq 0}_{k,n}.$ 
Since the cyclic shift maps positroid cells to positroid cells,
for $\pi$ a decorated permutation, we define $\sigma \pi$ to be
the decorated permutation such that 
 $S_{\sigma\pi} = \sigma(S_\pi)$. 
 It is easy to see that $\sigma\pi(i)=\pi(i+1)-1$.
 (Note that under the cyclic shift, a fixed point of $\pi$ at position $i+1$
 gets sent to a fixed point of $\sigma \pi$ at position $i$; we 
 color fixed points accordingly.)
 Meanwhile the inverse operation, the \emph{right cyclic shift}
 $\sigma^{-1}$ satisfies $(\sigma^{-1}\pi)(i) = \pi(i-1)+1$.
We use $\sigma^t$ (respectively, $\sigma^{-t}$) 
to denote the repeated application of $\sigma$  (resp. $\sigma^{-1}$) $t$ 
times,
so that $(\sigma^t\pi)(i):=\pi(i+t)-t$ and  
 $(\sigma^{-t}\pi)(i):=\pi(i-t)+t$.

The next result follows easily from the definitions.
\begin{theorem}[Cyclic symmetry for dissections of 
	the hypersimplex] \label{th:cyclicsymHS}
Let $\lbrace \Gamma_{\pi} \rbrace$ be a collection of positroid polytopes which dissects the hypersimplex $\Delta_{k+1,n}$. Then the collection of positroid polytopes $\lbrace \Gamma_{\sigma\pi}\rbrace$ dissects $\Delta_{k+1,n}$.
\end{theorem}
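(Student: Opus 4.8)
The plan is to realize the cyclic shift $\sigma$ on $Gr^+_{k+1,n}$, via the moment map, as an honest linear symmetry of the hypersimplex, and then transport the given dissection through that symmetry. First I would recall that $\sigma$ is a homeomorphism of $Gr^+_{k+1,n}$ onto itself which carries the positroid cell $S_\pi$ to the positroid cell $S_{\sigma\pi}$ (this is the very definition of $\sigma\pi$). Next I would introduce the linear automorphism $\tau\colon\R^n\to\R^n$ cyclically permuting coordinates, $\tau(x_1,\dots,x_n)=(x_2,x_3,\dots,x_n,x_1)$, and establish the intertwining identity $\mu\circ\sigma=\tau\circ\mu$ on $Gr^+_{k+1,n}$. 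Writing $V$ as a matrix $A$ with columns $v_1,\dots,v_n$, the representative $\langle v_2,\dots,v_n,(-1)^{k}v_1\rangle$ of $\sigma(V)$ has Pl\"ucker coordinates $p_I(\sigma(A))=\pm\, p_{I+1}(A)$, where $I+1$ denotes the cyclic shift of $I$; since $\mu$ depends only on the $|p_I|^2$, the sign is irrelevant, and after reindexing $\sum_I |p_{I+1}(A)|^2 e_I = \tau\!\left(\sum_J |p_J(A)|^2 e_J\right)$, which yields the claim (up to fixing whether the relevant shift is $\tau$ or $\tau^{-1}$, a matter of convention).

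Then I would observe that $\tau$ sends each vertex $e_I$ of $\Delta_{k+1,n}$ to the vertex $e_{I-1}$, hence restricts to an affine isomorphism, in particular a homeomorphism, of $\Delta_{k+1,n}$ onto itself. Combining these two facts gives $\Gamma^{\circ}_{\sigma\pi}=\mu(\sigma(S_\pi))=\tau(\mu(S_\pi))=\tau(\Gamma^{\circ}_\pi)$ and, taking closures, $\Gamma_{\sigma\pi}=\tau(\Gamma_\pi)$. Finally I would apply the homeomorphism $\tau$ of $\Delta_{k+1,n}$ to the three defining conditions of a dissection in \cref{def:dissection1} for the collection $\{S_\pi\}$: since $\tau$ preserves dimensions we get $\dim\Gamma_{\sigma\pi}=n-1$; since $\tau$ is injective the images $\Gamma^{\circ}_{\sigma\pi}$ and $\Gamma^{\circ}_{\sigma\pi'}$ of distinct cells remain disjoint; and $\bigcup_\pi\Gamma_{\sigma\pi}=\tau(\bigcup_\pi\Gamma_\pi)=\tau(\Delta_{k+1,n})=\Delta_{k+1,n}$, so the union is again dense. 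Hence $\{S_{\sigma\pi}\}$ dissects $\Delta_{k+1,n}$.

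There is no essential obstacle in this argument; the only care needed is the bookkeeping of the cyclic-shift convention (whether $\sigma$ corresponds to $\tau$ or to $\tau^{-1}$) and the harmless sign $(-1)^{k}$ appearing in the definition of $\sigma$, which disappears precisely because the moment map is built from squared moduli of Pl\"ucker coordinates. The iterated version, with $\sigma$ replaced by $\sigma^{t}$ and $\tau$ by $\tau^{t}$, then follows immediately by induction, and the same strategy (suitably modified) underlies the analogous cyclic-symmetry statement for the amplituhedron used elsewhere in the paper.
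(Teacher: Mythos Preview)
Your argument is correct and follows essentially the same approach as the paper: both introduce the cyclic-coordinate-shift linear automorphism of $\R^n$ (the paper calls it $\sigma_{\R}$, you call it $\tau$), observe that it restricts to a self-homeomorphism of $\Delta_{k+1,n}$, and use the key identity $\Gamma_{\sigma\pi}=\sigma_{\R}(\Gamma_\pi)$ to transport the dissection. Your version is slightly more explicit, deriving the intertwining $\mu\circ\sigma=\tau\circ\mu$ from the Pl\"ucker-coordinate formula for $\mu$, whereas the paper simply notes that $\sigma$ cyclically shifts the bases of the positroid and hence the vertices of its polytope; both justifications are valid and lead to the same conclusion.
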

\begin{proof}
Let $\sigma_\R: \R^n \to \R^n$ be defined by 
$(x_1,\dots,x_n) \mapsto (x_2,\dots,x_n, x_1)$.  
Clearly $\sigma_\R$ is an isomorphism mapping the hypersimplex 
$\Delta_{k+1,n}$ back to itself.
Moreover, 
applying the cyclic shift $\sigma$ to a positroid has the effect
of simply shifting all its bases, so the matroid polytope 
of $\sigma \pi$ satisfies 
$\Gamma_{\sigma \pi} = \sigma_{\R}(\Gamma_{\pi})$.
The result now follows.
\end{proof}

The above cyclic symmetry for dissections of the hypersimplex
also has an analogue for the amplituhedron.

\begin{theorem}[Cyclic symmetry for dissections of 
	the amplituhedron]\cite[Corollary 3.2]{BaoHe}\label{th:cyclicsym}
Let $\lbrace \Gamma_{\pi} \rbrace$ be a collection of Grasstopes which dissects the amplituhedron $\mathcal{A}_{n,k,m}(Z)$, with $m$ even. Then 
	 the collection of Grasstopes $\lbrace Z_{\sigma\pi}\rbrace$ also dissects $\mathcal{A}_{n,k,m}(Z)$.
\end{theorem}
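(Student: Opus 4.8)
The plan is to deduce this from the intrinsic cyclic symmetry of the amplituhedron, by transporting the cyclic shift $\sigma$ on $Gr^+_{k,n}$ through the $\widetilde{Z}$-map. Concretely, I would show that $\widetilde{Z}\circ\sigma=\widetilde{Z'}$ for a cyclically shifted matrix $Z'$, that $Z'$ again lies in $\Mat^{>0}_{k+m,n}$ \emph{precisely because $m$ is even}, and then apply the hypothesis with $Z'$ as initial data — which is legitimate, since by \cref{def:dissection2} a dissection of $\mathcal{A}_{n,k,m}$ is demanded for \emph{every} choice of initial data.

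First I would set up the transport identity. If $V\in Gr^+_{k,n}$ is the row span of a $k\times n$ matrix $C$, then $\widetilde{Z}(V)$ is the row span of $CZ^{T}$. Writing $z_1,\dots,z_n$ for the columns of $Z$, the cyclic shift $\sigma(V)$ is the row span of $CP$, where $P$ is the $n\times n$ matrix implementing the column operation $(c_1,\dots,c_n)\mapsto(c_2,\dots,c_n,(-1)^{k-1}c_1)$. Hence $\widetilde{Z}(\sigma(V))$ is the row span of $CPZ^{T}=C(ZP^{T})^{T}$, so $\widetilde{Z}\circ\sigma=\widetilde{Z'}$, where $Z':=ZP^{T}$ is the matrix whose columns are $(-1)^{k-1}z_n,\ z_1,\ z_2,\dots,z_{n-1}$.

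Next comes the only step where the parity of $m$ is used: checking $Z'\in\Mat^{>0}_{k+m,n}$. The matrix $Z'$ is obtained from $Z$ by a right cyclic shift of its columns, but with the sign $(-1)^{k-1}$ on the new first column rather than the sign $(-1)^{k+m-1}$ that makes the cyclic shift an automorphism of $Gr^+_{k+m,n}$ (see \cite{postnikov}). Expanding a maximal minor of $Z'$ gives $p_{i_1,\dots,i_{k+m}}(Z')=p_{i_1-1,\dots,i_{k+m}-1}(Z)$ when $i_1\ge 2$, and for $i_1=1$, moving $z_n$ past the remaining $k+m-1$ columns gives $p_{1,i_2,\dots,i_{k+m}}(Z')=(-1)^{m}\,p_{i_2-1,\dots,i_{k+m}-1,n}(Z)$. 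So all maximal minors of $Z'$ are positive if and only if $m$ is even; this is exactly where (and why) the hypothesis enters.

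Finally I would assemble the pieces. Fix arbitrary $Z\in\Mat^{>0}_{k+m,n}$ and form $Z'$ as above; by the previous step $Z'$ is valid initial data. Since $\{S_\pi\}$ dissects $\mathcal{A}_{n,k,m}$, it dissects $\mathcal{A}_{n,k,m}(Z')$: the sets $\widetilde{Z'}(S_\pi)$ have closures of dimension $mk$, are pairwise disjoint, and have union dense in $\widetilde{Z'}(Gr^+_{k,n})$. Using $S_{\sigma\pi}=\sigma(S_\pi)$ and the transport identity, $\widetilde{Z'}(S_\pi)=\widetilde{Z}(S_{\sigma\pi})$; and since $\sigma$ is an automorphism of $Gr^+_{k,n}$, $\widetilde{Z'}(Gr^+_{k,n})=\widetilde{Z}(\sigma(Gr^+_{k,n}))=\widetilde{Z}(Gr^+_{k,n})=\mathcal{A}_{n,k,m}(Z)$. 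Thus $\{S_{\sigma\pi}\}$ satisfies all three clauses of \cref{def:dissection2} for this $Z$ (noting that $\pi\mapsto\sigma\pi$ is a bijection on decorated permutations, so distinct cells stay distinct); as $Z$ was arbitrary, $\{S_{\sigma\pi}\}$ dissects $\mathcal{A}_{n,k,m}$. The main obstacle is simply the sign bookkeeping of the two preceding steps — reconciling the sign $(-1)^{k-1}$ coming from $\sigma$ on $Gr^+_{k,n}$ with the sign $(-1)^{k+m-1}$ governing total positivity on $Gr^+_{k+m,n}$ — together with keeping in mind that a dissection is quantified over all $Z$, which is what permits feeding $Z'$ back into the hypothesis.
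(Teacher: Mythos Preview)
Your argument is correct. The paper does not supply its own proof of this statement; it is quoted as \cite[Corollary~3.2]{BaoHe} and used as a black box. Your approach---realizing $\widetilde{Z}\circ\sigma$ as $\widetilde{Z'}$ for a cyclically shifted $Z'$, checking via the sign computation $(-1)^{k-1}(-1)^{k+m-1}=(-1)^m$ that $Z'\in\Mat^{>0}_{k+m,n}$ exactly when $m$ is even, and then exploiting that \cref{def:dissection2} quantifies over \emph{all} positive $Z$---is the natural one and matches the standard proof of this fact. There is nothing further to compare against in this paper.
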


In order to make contact with \cite{Galashin:2018fri}, 
we introduce a map $U_{k,n}$ on (coloopless) decorated permutations as follows.
\begin{definition}
	We define $U_{k,n}$ to be the map from
	coloopless permutations on $[n]$  with $k$ anti-excedances to coloopless permutations on $[n]$  with $n-k-2$ anti-excedances such that 
	$(U_{k,n}\pi)(i) = \pi^{-1}(i+k)+(n-k-2)$,
	  where 
	 values of the permutation are considered modulo $n$,
	 and any 
	 fixed points which are created are designated to be loops.
\end{definition}
It is not hard to see that  this map is equivalent to the parity duality from \cite{Galashin:2018fri} for $m=2$. In particular we have the following theorem:

\begin{theorem}{\cite[Theorem~7.2]{Galashin:2018fri}}\label{th:GLduality}
Let $\lbrace Z_{\pi} \rbrace$ be a collection of Grasstopes which dissects the amplituhedron $\mathcal{A}_{n,k,2}(Z)$. Then the collection of Grasstopes $\lbrace Z_{U_{k,n}\pi}\rbrace$ dissects the amplituhedron $\mathcal{A}_{n,n-k-2,2}(Z')$.
\end{theorem}

\begin{lemma}\label{lem:shiftlam} For fixed $n$ and $k$, the maps $\U$ and $U_{k,n}$ are related by the cyclic shift map
	\begin{equation}
		\U=\sigma^{-(k+1)} \circ U_{k,n}.
\end{equation}
\end{lemma}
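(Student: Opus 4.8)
The plan is to verify the identity $\U = \sigma^{-(k+1)} \circ U_{k,n}$ directly at the level of decorated permutations, by comparing the effect of each side on an arbitrary coloopless decorated permutation $\pi$ on $[n]$ with $k$ anti-excedances, using the explicit formulas already recorded in the excerpt. The two key inputs are: (i) the formula $(\U\pi)(i) = \pi^{-1}(i-1)-1$ from \cref{def:dualmapm2}, where values are taken modulo $n$ and any newly created fixed points are declared loops; (ii) the formula $(U_{k,n}\pi)(i) = \pi^{-1}(i+k)+(n-k-2)$ from the definition preceding \cref{th:lamduality}; and (iii) the iterated cyclic-shift formula $(\sigma^{-t}\pi)(i) = \pi(i-t)+t$ from the discussion after \cref{thm:pardual}.

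First I would compute the composite $(\sigma^{-(k+1)} \circ U_{k,n})(\pi)$ evaluated at a position $i$. Applying the cyclic-shift formula with $t = k+1$ to the permutation $U_{k,n}\pi$ gives
\[
\bigl(\sigma^{-(k+1)}(U_{k,n}\pi)\bigr)(i) = (U_{k,n}\pi)(i-(k+1)) + (k+1).
\]
Now substitute the definition of $U_{k,n}\pi$ at the argument $i-(k+1)$:
\[
(U_{k,n}\pi)(i-(k+1)) = \pi^{-1}\bigl((i-(k+1))+k\bigr) + (n-k-2) = \pi^{-1}(i-1) + (n-k-2).
\]
Combining,
\[
\bigl(\sigma^{-(k+1)}(U_{k,n}\pi)\bigr)(i) = \pi^{-1}(i-1) + (n-k-2) + (k+1) = \pi^{-1}(i-1) + (n-1) \equiv \pi^{-1}(i-1) - 1 \pmod n,
\]
which is exactly $(\U\pi)(i)$ from \cref{def:dualmapm2}. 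This establishes the underlying permutation identity.

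The remaining step is bookkeeping about the decoration (the coloring of fixed points) and the anti-excedance count, to confirm that both sides produce genuinely the same \emph{decorated} permutation and land in the stated domain. Here I would note that $\sigma^{-1}$ transports a fixed point of a permutation at position $i$ to a fixed point at position $i+1$ and preserves its color (as recorded in the discussion of the cyclic shift after \cref{thm:pardual}), while both $\U$ and $U_{k,n}$ are defined to designate every newly created fixed point as a loop. Since a fixed point of $\sigma^{-(k+1)}(U_{k,n}\pi)$ corresponds under the underlying-permutation identity to a fixed point of $\U\pi$, and in both constructions such fixed points are exactly the ones arising ``for free'' from the inversion-plus-shift procedure (the input $\pi$ being coloopless, so it has no coloop fixed points to carry along), the colorings agree. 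The anti-excedance count is handled by \cref{th:lamduality} and \cref{thm:pardual}: $U_{k,n}$ and $\U$ both send coloopless permutations with $k$ anti-excedances to coloopless permutations with $n-k-2$ anti-excedances, and cyclic shift preserves the number of anti-excedances (it conjugates the positive Grassmannian $Gr^+_{n-k-2,n}$ to itself), so the domains and codomains match. The only mild subtlety — and the step I expect to require the most care — is checking that no unwanted fixed points are created or destroyed by the intermediate cyclic shift in a way that would conflict with the ``declare new fixed points to be loops'' conventions; this is purely a matter of tracking the modular arithmetic carefully, but it is where an off-by-one or a mis-colored lollipop could slip in.
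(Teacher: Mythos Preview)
Your proposal is correct and takes essentially the same approach as the paper: a direct computation using the explicit formulas for $\U$, $U_{k,n}$, and $\sigma^{-t}$, arriving at the identity $\pi^{-1}(i-1)+n-1 \equiv \pi^{-1}(i-1)-1 \pmod n$. The paper's proof is in fact terser than yours---it stops at the mod~$n$ identity without the additional bookkeeping on fixed-point colorings---so your extra care there is harmless and arguably more complete.
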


\begin{proof}
Since $(U_{k,n}\pi)(i) = \pi^{-1}(i+k)+(n-k-2)$, we have that
	$(\sigma^{-(k+1)}\circ U_{k,n}\pi)(i)=\pi^{-1}((i+k)-(k+1))+(n-k-2)+(k+1)=\pi^{-1}(i-1)+n-1$, which is exactly $\U$ (mod $n$). 
\end{proof}

We now prove \cref{thm:pardual}.
\begin{proof}
This result follows immediately from \cref{th:cyclicsym},      
 \cref{th:GLduality},  
 and \cref{lem:shiftlam}.  
\end{proof}

\begin{remark}
From 
	\cref{th:cyclicsymHS} 
	and \cref{th:cyclicsym} 
	it is clear that if we redefine the T-duality map in \cref{hatmap} by composing it with any cyclic shift $\sigma^a$ (for $a$ an integer),
	the main properties of the map will be preserved. 
	In particular, any statement about dissections of the hypersimplex versus the corresponding ones of the amplituhedron will continue to hold,
	along with the parity duality.   
\end{remark}

\begin{remark} Parity duality has a nice graphical interpretation when we represent positroid tiles of $\mathcal{A}_{n,k,2}(Z)$ as collection of $k$ non-intersecting triangles in an $n$-gon. The Grassmannian duality of $Gr^{\geq 0}_{k+1,n}$ amounts to swapping black and white vertices in the plabic graphs, and when we compose it with the T-duality map, by \cref{labels.map}, results in taking the complementary polygons inside the $n$-gon. We end up with a collection of $n-k-2$ non-intersecting triangles in the $n$-gon.
\end{remark}

\section{Good and bad dissections of the hypersimplex and the amplituhedron}\label{sec:good}

Among all possible positroid dissections, there are some with particularly nice features, which we will call ``good'', as well as others with rather unpleasant properties. We show below examples of 
both a good and a bad dissection.
\begin{example}
Let us study the following tiling of $\mathcal{A}_{6,2,2}(Z)$:
\begin{equation*}
\mathcal{C}_1=\left\{S_{\pi^{(1)}},S_{\pi^{(2)}},S_{\pi^{(3)}},S_{\pi^{(4)}},S_{\pi^{(5)}},S_{\pi^{(6)}},\right\}
\end{equation*}
with
\begin{align*}
\pi^{(1)}=(\underline{1},\underline{2},5,6,3,4),\quad\pi^{(2)}=(\underline{1},3,6,5,2,4),\quad\pi^{(3)}=(\underline{1},4,6,2,\underline{5},3)\,,\\
\pi^{(4)}=(2,6,\underline{3},5,1,4),\quad\pi^{(5)}=(2,6,4,1,\underline{5},3),\quad\pi^{(6)}=(3,6,1,4,\underline{5},2)\,.
\end{align*}
All elements of $\mathcal{C}_1$ are positroid tiles and their images under
	$\tilde Z$ are 4-dimensional.
The tiling $\mathcal{C}_1$ is a refinement of the following dissection
\begin{equation*}
\mathcal{C}_2=\left\{S_{\pi^{(1)}},S_{\pi^{(7)}},S_{\pi^{(8)}},S_{\pi^{(6)}}\right\}
\end{equation*}
with
\begin{equation*}
\pi^{(7)}=(1,4,6,5,2,3)\,,\qquad \pi^{(8)}=(2,6,4,5,1,3)\,.
\end{equation*}
The dissection $\mathcal{C}_2$ has the property that if a pair of cell images under $\tilde Z$-map intersect along a $3$-dimensional surface then this surface is an image of another positroid cell in $Gr^{\geq 0}_{2,6}$:
\begin{align*}
 Z_{\pi^{(1)}}\cap  Z_{\pi^{(7)}}=Z_{(\underline{1},\underline{2},6,5,3,4)}\\
 Z_{\pi^{(7)}}\cap  Z_{\pi^{(8)}}=  Z_{(\underline{1},6,4,5,2,3)}\\
  Z_{\pi^{(8)}}\cap  Z_{\pi^{(6)}}=  Z_{(2,6,1,\underline{4},\underline{5},3)}
\end{align*}
and all remaining pairs of images intersect along lower dimensional surfaces. We consider the dissection $\mathcal{C}_2$ ``good'' because all its elements have compatible codimension one boundaries. However, the dissection $\mathcal{C}_1$ does not have this property. Let us observe that 
\begin{align*}
Z_{\pi^{(2)}}\cup Z_{\pi^{(3)}}=Z_{\pi^{(7)}}\\
 Z_{\pi^{(4)}}\cup  Z_{\pi^{(5)}}= Z_{\pi^{(8)}}
\end{align*}
We expect that, after we subdivide  $Z_{\pi^{(7)}}$ and $ Z_{\pi^{(8)}}$, the boundary $Z_{(\underline{1},6,4,5,2,3)}$ which they share will also get subdivided. This however happens in two different ways and we do not get compatible codimension one faces for the dissection $\mathcal{C}_1$. It is a similar picture to the one we get when we consider polyhedral subdivisions of a double square pyramid: it is possible to subdivide it into two pieces along its equator, and then further subdivide each pyramid into two simplices. However, in order to get a polyhedral triangulation of the double square pyramid, we need to do it in a compatible way, along the same diagonal of the equatorial square.
\end{example}
Therefore, we prefer to work with dissections where the boundaries of the strata
interact nicely.  Towards this end, we introduce the following notion of 
\emph{good dissection}. 

\begin{definition}\label{def:good_dissection}
Let $\mathcal{C}=\{\Gamma_{\pi^{(1)}}, \dots, \Gamma_{\pi^{(\ell)}}\}$ 
be a dissection of $\Delta_{k+1,n}$.
We say that $\mathcal{C}$
is a \emph{good dissection} of $\Delta_{k+1,n}$ if the following condition is satisfied: for $i\neq j$, if
			$\Gamma_{\pi^{(i)}} \cap 
			\Gamma_{\pi^{(j)}}$ has
			codimension one,
			 then 
			$\Gamma_{\pi^{(i)}} \cap 
			\Gamma_{\pi^{(j)}}$ 
			equals 
			$\Gamma_{\pi}$, 
			where 
			$\Gamma_{\pi}$ is a facet of both 
			$\Gamma_{\pi^{(i)}}$ and 
			$\Gamma_{\pi^{(j)}}$.
\end{definition}

Note that the above condition is equivalent to requiring that 
$\mathcal{C}$ is a \emph{polyhedral subdivision} of $\Delta_{k+1,n}$.
To make the analogous notion for amplituhedron, we need to define 
facets.
\begin{definition}
Let $Z_{\pi}
\subset \mathcal{A}_{n,k,m}(Z)$ 
	 be a Grasstope. 
We say that $Z_{\pi'}$ is a \emph{facet} of $Z_{\pi}$ if 
it is maximal by inclusion among the Grasstopes satisfying
	the following properties:  the cell $S_{\pi'}$ is contained
	in $\overline{S_{\pi}}$; 
	$Z_{\pi'}$ is contained in the boundary of $Z_{\pi}$; 
	$Z_{\pi'}$ has codimension $1$ in $Z_{\pi}$.
\end{definition}
\begin{definition}\label{def:good_dissection_amp}
	Let $\mathcal{C}=\{Z_{\pi^{(1)}}, \dots, Z_{\pi^{(\ell)}}\}$ 
	be a collection of Grasstopes of $\mathcal{A}_{n,k,2}(Z)$.
	We say that $\mathcal{C}$
	is a \emph{good dissection} of $\mathcal{A}$ if the following condition is satisfied: for $i\neq j$, if
			$Z_{\pi^{(i)}} \cap 
			Z_{\pi^{(j)}}$ has
			codimension one,
			 then 
			$Z_{\pi^{(i)}} \cap 
			Z_{\pi^{(j)}}$ 
			equals 
			$Z_{\pi}$, 
			where $Z_{\pi}$ is a facet of both 
			$Z_{\pi^{(i)}}$ and 
			$Z_{\pi^{(j)}}$.
\end{definition}

In the following, we will conjecture that good dissections of the
hypersimplex are in one-to-one correspondence with good dissections of the amplituhedron. Towards this goal, we start by providing a characterization of good intersections of positroid polytopes.
\begin{proposition}\label{prop_hyp1}
Let $\Gamma_{\pi^{(1)}}$ and $\Gamma_{\pi^{(2)}}$ be two $(n-1)$-dimensional
positroid polytopes whose 
intersection 
$\Gamma_{\pi^{(1)}}\cap\Gamma_{\pi^{(2)}}$ is a polytope of 
dimension $n-2$.  Then 
$\Gamma_{\pi^{(1)}}\cap\Gamma_{\pi^{(2)}}$ is a positroid
polytope of the form  $\Gamma_{\pi^{(3)}}$,
where 
$\pi^{(3)}$ is a loopless permutation.
\end{proposition}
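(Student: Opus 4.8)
The plan is to combine a separating--hyperplane argument with the facet description of positroid polytopes (\cref{prop:facets}), the behaviour of faces (\cref{thm:faces}), and the two--dimensional--face criterion (\cref{thm:char}).

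Write $\mathcal H=\{x\in\R^n:x_1+\dots+x_n=k+1\}$ and let $M_i:=M_{\pi^{(i)}}$. Both $\Gamma_{\pi^{(i)}}$ lie in $\mathcal H$ and are full--dimensional there, so by \cref{prop:dim} the $M_i$ are connected and hence have no loops and no coloops. Since $\dim(\Gamma_{\pi^{(1)}}\cap\Gamma_{\pi^{(2)}})=n-2<n-1$, the relative interiors (inside $\mathcal H$) of the two polytopes are disjoint, so there is a hyperplane $H\subset\mathcal H$ with $\Gamma_{\pi^{(1)}}$ and $\Gamma_{\pi^{(2)}}$ in opposite closed half--spaces bounded by $H$ and $\Gamma_{\pi^{(1)}}\cap\Gamma_{\pi^{(2)}}\subseteq H$; as this intersection is $(n-2)$--dimensional it spans $H$, so $\operatorname{aff}(\Gamma_{\pi^{(1)}}\cap\Gamma_{\pi^{(2)}})=H$ and $\Gamma_{\pi^{(1)}}\cap\Gamma_{\pi^{(2)}}=F_1\cap F_2$, where $F_i:=\Gamma_{\pi^{(i)}}\cap H$ is a facet of $\Gamma_{\pi^{(i)}}$ and hence (by \cref{thm:faces}) a positroid polytope. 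If $H$ were a coordinate hyperplane $\{x_j=0\}\cap\mathcal H$ or $\{x_j=1\}\cap\mathcal H$, then one of $\Gamma_{\pi^{(1)}},\Gamma_{\pi^{(2)}}$ -- being a $\{0,1\}$--polytope contained in $\{x_j\le 0\}$ or $\{x_j\ge 1\}$ -- would lie in that coordinate hyperplane, forcing a loop or coloop of $M_1$ or $M_2$, a contradiction. Hence $H$ is not a coordinate hyperplane, so once we know $\Gamma_{\pi^{(1)}}\cap\Gamma_{\pi^{(2)}}=\Gamma_{\pi^{(3)}}$ is a positroid polytope it lies in no $\{x_j=0\}$, i.e.\ $\pi^{(3)}$ is loopless (in fact also coloopless).

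Next I would use \cref{prop:facets} to describe $F_1,F_2$ and reduce to a product. By \cref{prop:facets}, $F_i=\Gamma_{\pi^{(i)}}\cap\{\sum_{\ell\in A_i}x_\ell=r_{M_i}(A_i)\}$ for a cyclic interval $A_i$; since $\Gamma_{\pi^{(1)}}$ and $\Gamma_{\pi^{(2)}}$ lie on opposite sides of $H$, comparing the two supporting functionals modulo $\sum_\ell x_\ell-(k+1)$ forces $\{A_1,A_2\}$ to be a partition of $[n]$ into two cyclic intervals (hence a noncrossing partition), with $r_{M_1}(A_1)+r_{M_2}(A_2)=k+1$. Writing $X=A_1$, $Y=A_2$ and $\R^n=\R^X\times\R^Y$, the facet $F_i$ is the matroid polytope of $M_i|X\oplus M_i/X$, so
$$\Gamma_{\pi^{(1)}}\cap\Gamma_{\pi^{(2)}}\;=\;\bigl(\Gamma_{M_1|X}\cap\Gamma_{M_2/Y}\bigr)\times\bigl(\Gamma_{M_1/X}\cap\Gamma_{M_2|Y}\bigr),$$
a product of intersections of pairs of positroid polytopes of equal rank on the smaller cyclically ordered sets $X$ and $Y$ (restrictions and contractions of positroids to and by cyclic intervals are positroids). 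A dimension count -- the left side has dimension $n-2=(|X|-1)+(|Y|-1)$ and each factor has dimension at most (block size $-1$) -- forces each factor to be full--dimensional, and in particular each of the four restrictions/contractions to be connected. Since a product of positroid polytopes over a noncrossing partition is again a positroid polytope (and loopless iff each factor's decorated permutation is), it remains to show that each factor is a positroid polytope.

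I would finish in two steps, both by induction on the ground set size. First, that each factor is a \emph{matroid} polytope: by \cref{r:GS} it suffices that every edge is a translate of some $e_i-e_j$, and since edges of a product are edges of one factor crossed with a vertex of the other, this reduces to the factors, where one re--runs the same reduction (if the two positroids of a factor coincide there is nothing to prove, and otherwise a defining cyclic--interval inequality of one genuinely cuts the other, yielding a proper face on which one recurses, using \cref{thm:faces} and \cref{prop:facets}); the base cases are small (on at most three elements every matroid is a positroid, and on a cyclically ordered four--element set the only connected rank--$2$ positroids are $U_{2,4}$ and those with a cyclically adjacent parallel pair). Second, once $\Gamma_{\pi^{(1)}}\cap\Gamma_{\pi^{(2)}}$ is known to be a matroid polytope, I would invoke \cref{thm:char}: none of its two--dimensional faces can be a ``bad square'' $\{e_{Sab},e_{Sad},e_{Sbc},e_{Scd}\}$, because the smallest face of $\Gamma_{\pi^{(1)}}$ -- and of $\Gamma_{\pi^{(2)}}$ -- containing such a square is a positroid polytope lying in the $3$--dimensional slice $\{x_\ell=1\ (\ell\in S),\ x_\ell=0\ (\ell\notin S\cup\{a,b,c,d\})\}$ and containing the square's four vertices; being a positroid polytope it must be the octahedron $\Gamma_{U_{2,\{a,b,c,d\}}}$ rather than the square itself (the square is not a positroid polytope), and the same argument on the other side produces the same octahedron, so $\Gamma_{\pi^{(1)}}\cap\Gamma_{\pi^{(2)}}$ contains that octahedron; but the octahedron straddles the plane spanned by the square, so the square cannot be a face of the intersection. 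Thus all two--dimensional faces are positroid polytopes, and \cref{thm:char} yields the claim.

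The main obstacle is exactly the first step above: \emph{a priori the intersection of two positroid polytopes need not even be a matroid polytope} -- for instance $\Gamma_M\cap\Gamma_{M'}$, for the two connected rank--$2$ positroids on $\{1,2,3,4\}$ with parallel pairs $\{1,2\}$ and $\{2,3\}$, is a tetrahedron with an edge not parallel to any $e_i-e_j$ -- so the argument cannot use only the ranks and connectivity of the pieces; it must genuinely exploit that they descend from positroids, the point being that the offending flats are forced to violate the cyclic--interval condition of \cref{prop:facets}. Organising this carefully (via the recursion and the octahedron argument above) is where the real work lies.
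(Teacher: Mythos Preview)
Your instinct about the ``main obstacle'' is exactly right, and in fact the obstacle cannot be overcome: the proposition as stated is \emph{false}. Take $n=6$, $k+1=3$, and the connected rank-$3$ positroids on $[6]$ with decorated permutations $\pi^{(1)}=(2,4,6,1,3,5)$ and $\pi^{(2)}=(5,3,1,2,6,4)$ (both SIF). Using \cref{prop:facets} one checks
\[
\Gamma_{\pi^{(1)}}=\Delta_{3,6}\cap\{x_1{+}x_2\le 1\}\cap\{x_1{+}x_2{+}x_3{+}x_4\le 2\},\qquad
\Gamma_{\pi^{(2)}}=\Delta_{3,6}\cap\{x_2{+}x_3\le 1\}\cap\{x_5{+}x_6\le 1\}.
\]
On the intersection the inequalities $x_1{+}\cdots{+}x_4\le 2$ and $x_5{+}x_6\le 1$ are forced to equalities, so
\[
\Gamma_{\pi^{(1)}}\cap\Gamma_{\pi^{(2)}}=\bigl(\Delta_{2,\{1,2,3,4\}}\cap\{x_1{+}x_2\le 1\}\cap\{x_2{+}x_3\le 1\}\bigr)\times\Delta_{1,\{5,6\}},
\]
a polytope of dimension $4=n-2$. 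The first factor is precisely the tetrahedron $\operatorname{conv}\{e_{13},e_{14},e_{24},e_{34}\}$ from your own cautionary example; hence $e_{135}e_{245}$ is an edge of the intersection with direction $(1,-1,1,-1,0,0)$, and by \cref{r:GS} the intersection is not even a matroid polytope. Your recursion dies exactly here: after one separating-hyperplane step you are handed the pair $(M_1|_{[1,4]},\,M_2/\{5,6\})$, namely the two rank-$2$ positroids on $\{1,2,3,4\}$ with parallel classes $\{1,2\}$ and $\{2,3\}$, whose polytopes overlap in full dimension---so there is no further hyperplane to peel off, and the proposed fallback (``a defining inequality of one cuts the other, yielding a proper face'') produces a slice, not a face.

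For comparison, the paper's argument is much shorter: it simply cites \cref{thm:faces} to assert that $\Gamma_{\pi^{(1)}}\cap\Gamma_{\pi^{(2)}}$ is a positroid polytope, and then gives a clean geometric proof of looplessness (a codimension-one intersection of two full-dimensional subpolytopes of $\Delta_{k+1,n}$ must meet the open hypersimplex, hence lies in no facet $\{x_i=0\}$). The looplessness step is fine and essentially the same as yours; but \cref{thm:faces} concerns \emph{faces} of a positroid polytope, not intersections of two of them, and the example above shows that no citation can repair that step under the stated hypotheses.
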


\begin{proof}
By \cref{thm:faces}, 
$\Gamma_{\pi^{(1)}}\cap\Gamma_{\pi^{(2)}}$ is a positroid polytope and hence has
the form  $\Gamma_{\pi^{(3)}}$, for some decorated permutation
	$\pi^{(3)}$.  (Using \cref{prop:dim}, the fact that 
$\dim(\Gamma_{\pi^{(3)}})=n-2$ implies that the positroid
	associated to $\pi^{(3)}$ has precisely two connected components.)

Now we claim that the positroid associated to $\pi^{(3)}$ is loopless.
In general there is an easy geometric way of recognizing when a matroid $M$
is loopless from the polytope $\Gamma_M$: $M$ is loopless if and only
if $\Gamma_M$ is not contained in any of the $n$ facets of the 
hypersimplex of the type $x_i = 0$ for $1 \leq i \leq n$.
	Since $\Gamma_{\pi^{(3)}}$ arises as the
	codimension $1$ intersection of two full-dimensional
	matroid polytopes contained in $\Delta_{k+1,n}$ it necessarily
	meets the interior of the hypersimplex and hence 
	the matroid must be loopless.
\end{proof}

\begin{remark}
	Recall that the T-duality map is well-defined on positroid
	cells whose matroid is connected, and more generally, loopless.  
	\cref{prop_hyp1} implies that if we consider two 
	cells 
	$S_{\pi^{(1)}}$ and $S_{\pi^{(2)}}$
	of $Gr^{\geq 0}_{k+1,n}$ whose matroid is connected
	and whose moment map images (necessarily top-dimensional) intersect
	in a common facet, then that facet is the moment map 
	image of a loopless cell $S_{\pi^{(3)}}$.  Therefore  
	we can apply the 
	T-duality map to all three cells 
	$S_{\pi^{(1)}}$, $S_{\pi^{(2)}}$,
	and $S_{\pi^{(3)}}$.
\end{remark}

\begin{conj}\label{prop_amp1}
Let $S_{\pi^{(1)}}$ and $S_{\pi^{(2)}}$ be two positroid cells in $Gr^{\geq 0}_{k,n}$ corresponding to coloopless permutations $\pi^{(1)}$ and $\pi^{(2)}$. Let $\dim Z^\circ_{\pi^{(1)}}=\dim Z^\circ_{\pi^{(2)}}=2k$ with $Z_{\pi^{(1)}}\cap Z_{\pi^{(2)}}= Z_{\pi^{(3)}}$, where $S_{\pi^{(3)}}\subset G^{\geq 0}_{k,n}$ is such that $\dim Z^\circ_{\pi^{(3)}}=2k-1$. Then $\pi^{(3)}$ is a coloopless permutation.
\end{conj}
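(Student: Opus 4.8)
The plan is to prove \cref{prop_amp1} by transporting the argument of \cref{prop_hyp1} to the amplituhedron side. There, the key fact was that a loopless positroid polytope cannot be contained in a facet $\{x_i = 0\}$ of the hypersimplex; here the analogous fact should be that the $\widetilde{Z}$-image of a positroid cell with a coloop lies in a codimension-two Schubert locus of $Gr_{k,k+2}$. So I would argue by contradiction: assume $\pi^{(3)}$ has a coloop, and derive $\dim Z^{\circ}_{\pi^{(3)}} \le 2k-2$, contradicting the hypothesis $\dim Z^{\circ}_{\pi^{(3)}} = 2k-1$.

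First I would establish the elementary criterion that the positroid $M_{\pi^{(3)}}$ has a coloop at a position $i$ if and only if every $V \in S_{\pi^{(3)}}$ contains the standard basis vector $e_i$ of $\R^n$. For the direction that is needed: if $i$ is a coloop, then $p_I(V)=0$ for every $I\in\binom{[n]}{k}$ with $i\notin I$, so for any $k\times n$ matrix $C$ representing $V$ the submatrix $C'$ obtained by deleting column $i$ has rank $k-1$; its left kernel is spanned by a single nonzero vector $y$, and since $\rank C = k$ we have $y^{\top}C\neq 0$, which forces $y^{\top}C_i\neq 0$, so after rescaling $y^{\top}C = e_i^{\top}$, i.e.\ $e_i\in V$. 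This is the $Gr_{k,n}$-side analogue of the ``the polytope meets the interior of the hypersimplex'' step in the proof of \cref{prop_hyp1}; I expect it to be the crux of the argument, although it is routine.

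Granting the criterion, the rest is a dimension count. If $\pi^{(3)}$ had a coloop at $i$, then $e_i\in V$ for all $V\in S_{\pi^{(3)}}$, hence $z_i := Ze_i$ (the $i$-th column of $Z\in\Mat_{k+2,n}^{>0}$) lies in $\widetilde{Z}(V)$ for all such $V$. Therefore $Z^{\circ}_{\pi^{(3)}} = \widetilde{Z}(S_{\pi^{(3)}})$, and hence also its closure $Z_{\pi^{(3)}}$, is contained in the closed locus $\mathcal{S}_i := \{Y\in Gr_{k,k+2} : z_i\in Y\}$. Since $z_i\neq 0$, the assignment $Y\mapsto Y/\langle z_i\rangle$ identifies $\mathcal{S}_i$ with $Gr_{k-1,k+1}$, so $\dim\mathcal{S}_i = 2(k-1) = 2k-2$; thus $\dim Z^{\circ}_{\pi^{(3)}}\le 2k-2 < 2k-1$, a contradiction. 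Hence $\pi^{(3)}$ is coloopless.

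In this argument the only use of the hypothesis that $Z_{\pi^{(1)}}\cap Z_{\pi^{(2)}} = Z_{\pi^{(3)}}$ is a codimension-one intersection is to guarantee the input $\dim Z^{\circ}_{\pi^{(3)}} = 2k-1$; given that, the coloop criterion finishes the proof. The genuinely difficult point in the surrounding circle of ideas — establishing that such an intersection is always equal to a single $Z_{\pi^{(3)}}$, i.e.\ the $\widetilde{Z}$-analogue of the convexity argument used for positroid polytopes in \cref{prop_hyp1} — is what would remain open, but it is not required for \cref{prop_amp1} as stated.
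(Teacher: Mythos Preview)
The paper states \cref{prop_amp1} as a \emph{conjecture} and supplies no proof; there is nothing on the paper's side to compare against. Your argument, however, is correct and actually settles the statement. The key step---that a coloop at position $i$ forces $e_i\in V$ for every $V\in S_{\pi^{(3)}}$, so that $\widetilde Z(V)$ always contains the nonzero column $z_i=Ze_i$---confines $Z^{\circ}_{\pi^{(3)}}$ to the locus $\mathcal{S}_i=\{Y\in Gr_{k,k+2}:z_i\in Y\}\cong Gr_{k-1,k+1}$, which has dimension $2k-2$. This directly contradicts $\dim Z^{\circ}_{\pi^{(3)}}=2k-1$. The rank argument for $e_i\in V$ is standard (and in fact works even without knowing $\rank C'=k-1$ exactly: any nonzero $y$ in the left kernel of $C'$ yields $y^\top C$ a nonzero multiple of $e_i^\top$).

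As you observe, the only hypothesis you use is $\dim Z^{\circ}_{\pi^{(3)}}=2k-1$; the assumptions on $\pi^{(1)},\pi^{(2)}$ and on the intersection play no role. So you have proved the stronger unconditional statement: any positroid cell $S_\pi\subset Gr^+_{k,n}$ whose $\widetilde Z$-image has dimension at least $2k-1$ is coloopless. The genuinely hard part of the surrounding picture---that a codimension-one intersection $Z_{\pi^{(1)}}\cap Z_{\pi^{(2)}}$ is always of the form $Z_{\pi^{(3)}}$ for a single positroid cell---is built into the hypotheses and is not addressed by either your argument or the paper.
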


\begin{remark}
 \cref{prop_amp1} guarantees that if we consider two positroid cells with top-dimensional images in the amplituhedron $\mathcal{A}_{n,k,2}(Z)$, which have a facet in common, then the positroid cell corresponding to this facet is coloopless and therefore we can apply the T-duality map to it.
\end{remark}

Finally we arrive at a conjecture connecting good dissections of hypersimplex and amplituhedron, which we confirmed experimentally. 
\begin{conj}\label{prop:subd}
The collection of positroid polytopes $\{\Gamma_{\pi}\}$ is a good tiling 
	(respectively, good dissection) of $\Delta_{k+1,n}$ if and only if, for all $Z \in \Mat^{>0}_{n,k+2}$,
the collection of T-dual Grasstopes $\{Z_{\hat{\pi}}\}$ is a good
	tiling (respectively, good dissection) of $\mathcal{A}_{n,k,2}(Z)$.
\end{conj}

\section{The positive tropical Grassmannian and positroid subdivisions}\label{sec:tropical}

The goal of this section is to use the positive tropical Grassmannian to 
understand the regular positroid
subdivisions of the hypersimplex. 
In \cref{sec:regularA}, we will apply the T-duality map to these regular
positroid subdivisions of the hypersimplex, to obtain subdivisions
of the amplituhedron which have very nice properties.

The \emph{tropical Grassmannian} -- or rather, an outer approximation
of it called the \emph{Dressian} -- 
 controls the regular matroidal subdivisions of the hypersimplex
\cite{Kapranov}, 
 \cite[Proposition 2.2]{Speyer}.
There is a positive subset of the tropical Grassmannian, 
 called the \emph{positive tropical Grassmannian}, which was 
introduced by Speyer and the third author in \cite{troppos}.
The positive tropical Grassmannian equals the positive Dressian,
and as we will show in \cref{prop:positroid},
it controls the regular \emph{positroid} subdivisions of the  
hypersimplex.

\begin{remark} We've learned since circulating 
the first draft of this paper that some of our results
in this section regarding positroid subdivisions of the hypersimplex and 
the positive tropical Grassmannian, though not previously in the literature, 
were known or anticipated by various other experts including 
David Speyer, Nima Arkani-Hamed, Thomas Lam, Marcus Spradlin, Nick Early, Felipe Rincon, Jorge Olarte.
There is some related work in \cite{Early} and the upcoming \cite{AHLS}.
\end{remark}

\subsection{The tropical Grassmannian, the Dressian, and their positive
analogues}

\begin{definition}\label{def:trophyper}
	Given $e=(e_1,\dots,e_N) \in \Z^N_{\geq 0}$, we let 
	$\mathbf{x}^e$ denote $x_1^{e_1} \dots x_N^{e_N}$.  
	Let  $E \subset \Z^N_{\geq 0}$.
	For $f = \sum_{e\in E} f_e \x^e$ a nonzero polynomial, 
	we denote by 
	$\Trop(f) \subset \R^N$ the set of all points 
	$(X_1,\dots, X_N)$ such that, if we form the collection of 
	numbers $\sum_{i=1}^N e_i X_i$ for $e$ ranging over $E$, then
	the minimum of this collection is not unique.
	We say that $\Trop(f)$ is the \emph{tropical hypersurface associated to $f$}.
\end{definition}

In our examples, we always consider polynomials $f$ with real coefficients.
We also have a positive version of \cref{def:trophyper}.

\begin{definition}
	Let  $E=E^+ \sqcup E^- \subset \Z^N_{\geq 0}$, and let
	 $f$ be a nonzero polynomial with real
	coefficients which we write as 
	$f = \sum_{e\in E^+} f_e \x^e - \sum_{e\in E^-} f_e \x^e$,
	where all of the coefficients $f_e$ are nonnegative real numbers.
	We  denote by 
	$\Trop^+(f) \subset \R^N$ the set of all points 
	$(X_1,\dots, X_N)$ such that, if we form the collection of 
	numbers $\sum_{i=1}^N e_i X_i$ for $e$ ranging over $E$, then
	the minimum of this collection is not unique and furthermore
	is achieved for some $e\in E^+$ and some $e\in E^-$.
	We say that $\Trop^+(f)$ is the \emph{positive part of 
	$\Trop(f)$.}
\end{definition}

The Grassmannian $Gr_{k,n}$ is a projective variety which can be embedded
in projective space $\PP^{\binom{[n]}{k}-1}$, and is cut out by the 
\emph{Pl\"ucker ideal}, that is, the ideal of relations satisfied by
the Pl\"ucker coordinates of a generic $k \times n$ matrix.
These relations include
the \emph{three-term Pl\"ucker relations} defined
below.

\begin{definition}\label{def:3}
Let $1<a<b<c<d\leq n$ 
and choose a subset $S \in \binom{[n]}{k-2}$ which is disjoint from $\{a,b,c,d\}$.  
Then $p_{Sac} p_{Sbd} = p_{Sab} p_{Scd}+p_{Sad} p_{Sbc}$ is 
a \emph{three-term Pl\"ucker relations} for the Grassmannian $Gr_{k,n}$.
Here $Sac$ denotes $S \cup \{a,c\}$, etc.
\end{definition}

\begin{definition}\label{rem:tropPlucker}
Given $S, a, b, c, d$ as in \cref{def:3}, 
we say that the \emph{tropical three-term
Pl\"ucker relation holds} if 
\begin{itemize}
	\item $P_{Sac}+P_{Sbd} = P_{Sab}+P_{Scd} \leq P_{Sad}+P_{Sbc}$ 
		or 
	\item $P_{Sac}+P_{Sbd} = 
P_{Sad}+P_{Sbc} \leq 
P_{Sab}+P_{Scd}$
or 
\item $P_{Sab}+P_{Scd} = P_{Sad}+P_{Sbc}
\leq	P_{Sac}+P_{Sbd}$.
	\end{itemize}
	And we say that the \emph{positive tropical three-term
Pl\"ucker relation holds} if either of the first two conditions above holds.
\end{definition}

\begin{definition}
	The \emph{tropical Grassmannian} $\Trop Gr_{k,n} \subset \R^{\binom{[n]}{k}}$ is 
	the intersection of 
	the tropical hypersurfaces $\Trop(f)$, where $f$ ranges over all
	elements of the Pl\"ucker ideal.
	The \emph{Dressian} $\Dr_{k,n} \subset 
	 \R^{\binom{[n]}{k}}$ is the intersection of 
	the tropical hypersurfaces $\Trop(f)$, where $f$ ranges over all
	three-term Pl\"ucker relations.

Similarly, 
	the \emph{positive tropical Grassmannian} $\Trop^+Gr_{k,n} \subset \R^{\binom{[n]}{k}}$ is the intersection of 
	the positive tropical hypersurfaces $\Trop^+(f)$, 
	where $f$ ranges over all
	elements of the Pl\"ucker ideal.
	The \emph{positive Dressian} $\Dr^+_{k,n} \subset 
	 \R^{\binom{[n]}{k}}$ is the intersection of 
	the positive tropical hypersurfaces $\Trop^+(f)$, 
	where $f$ ranges over all
	three-term Pl\"ucker relations.
\end{definition}

Note that the Dressian $Dr_{k,n}$ (respectively, the positive Dressian $Dr^+_{k,n}$) 
is the subset of $\R^{[n]\choose k}$
	where the tropical (respectively, positive tropical) three-term Pl\"ucker relations hold. 

In general, the Dressian $\Dr_{k,n}$ is much larger than the 
tropical Grassmannian $\Trop Gr_{k,n}$ -- for example,
the dimension of the Dressian $\Dr_{3,n}$ grows quadratically is $n$,
while the dimension of the tropical Grassmannian $\Trop Gr_{3,n}$ is 
linear in $n$ \cite{Herrmann2008HowTD}.  
However, the situation for their positive parts is different.

\begin{theorem}\cite{SW}.
The positive tropical Grassmannian $\Trop^+Gr_{k,n}$ equals
the positive Dressian $\Dr^+_{k,n}$.
\end{theorem}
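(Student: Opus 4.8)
The plan is to prove the two inclusions separately; $\Trop^+ Gr_{k,n}\subseteq \Dr^+_{k,n}$ is immediate, and the reverse inclusion is the substance. For the easy direction I would just observe that each three-term Pl\"ucker relation $p_{Sac}p_{Sbd}-p_{Sab}p_{Scd}-p_{Sad}p_{Sbc}$ belongs to the Pl\"ucker ideal, and that, grouping the monomial $p_{Sac}p_{Sbd}$ on one side and $p_{Sab}p_{Scd}+p_{Sad}p_{Sbc}$ on the other, the set $\Trop^+$ of this polynomial is exactly the locus where the positive tropical three-term Pl\"ucker relation of \cref{rem:tropPlucker} holds. Hence the intersection defining $\Trop^+ Gr_{k,n}$ refines the intersection over just the three-term relations, which is $\Dr^+_{k,n}$.

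For the reverse inclusion I would fix $P\in\Dr^+_{k,n}$ and argue in two steps. \emph{Step 1: $P$ induces a regular positroidal subdivision of $\Delta_{k,n}$.} Since $P$ in particular satisfies the unsigned tropical three-term relations it lies in $\Dr_{k,n}$, so by Kapranov and Speyer \cite{Kapranov, Speyer} it induces a regular matroidal subdivision $\mathcal{D}_P$ of $\Delta_{k,n}$ with matroid-polytope cells. Suppose some cell $\Gamma$ were not a positroid polytope. By \cref{thm:char}, $\Gamma$ has a two-dimensional face $Q=\conv(e_{Sab},e_{Sad},e_{Sbc},e_{Scd})$ with $a<b<c<d$ and $|S|=k-2$ disjoint from $\{a,b,c,d\}$. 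Every vertex of $Q$ has $x_i=1$ for $i\in S$ and $x_j=0$ for $j\notin S\cup\{a,b,c,d\}$, so $Q$ lies in the octahedral face $O\cong\Delta_{2,4}$ of $\Delta_{k,n}$ cut out by those equalities, and it is therefore a cell of the restricted regular subdivision $\mathcal{D}_P|_O$. A short lower-hull computation on the six lifted vertices of $O$ shows that this particular equatorial square $Q$ occurs as a cell of $\mathcal{D}_P|_O$ only when $P_{Sab}+P_{Scd}=P_{Sad}+P_{Sbc}<P_{Sac}+P_{Sbd}$; but the positive tropical three-term Pl\"ucker relation forces $P_{Sac}+P_{Sbd}$ to be the minimum of $P_{Sab}+P_{Scd}$, $P_{Sad}+P_{Sbc}$, $P_{Sac}+P_{Sbd}$, a contradiction. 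So every cell of $\mathcal{D}_P$ is a positroid polytope.

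\emph{Step 2: realizability.} It then remains to show that a regular positroidal subdivision of $\Delta_{k,n}$ arises from a point of $\Trop^+ Gr_{k,n}$; equivalently, that $P$ satisfies the condition defining $\Trop^+(f)$ for \emph{every} $f$ in the Pl\"ucker ideal, not just the three-term relations. A convenient route is to realize $P$ as the tropicalization of a point of $Gr_{k,n}$ over a valued field with totally positive coordinates. I would do this by induction on $n$: the facets of $\Delta_{k,n}$ are the hypersimplices $\{x_i=0\}\cap\Delta_{k,n}\cong\Delta_{k,n-1}$ and $\{x_i=1\}\cap\Delta_{k,n}\cong\Delta_{k-1,n-1}$, corresponding to deletion and contraction of $i$; since faces of positroid polytopes are positroid polytopes (\cref{thm:faces}) and positroids are closed under deletion and contraction, the restriction of $P$ to each facet induces a regular positroidal subdivision of a smaller hypersimplex, hence by the inductive hypothesis lies in the appropriate positive tropical Grassmannian. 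One then has to assemble these boundary realizations into a single global one, using the positroid cells occurring in $\mathcal{D}_P$ and the network parametrization of positroid cells of $Gr^+_{k,n}$ to produce a totally positive valuated matrix whose Pl\"ucker valuations reproduce $P$, and verifying that the inductive boundary data match up. Tropicalizing that matrix exhibits $P\in\Trop^+ Gr_{k,n}$.

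The hard part will be Step 2: converting the purely combinatorial statement that all cells of $\mathcal{D}_P$ are positroid polytopes into an honest point of the positive tropical Grassmannian. The key content is that, once positivity is imposed, the higher Pl\"ucker relations impose no constraint beyond the three-term ones --- something that fails badly for the ordinary Dressian versus the tropical Grassmannian --- so making the compatible gluing of boundary realizations (or a direct construction of a totally positive valuated realization) rigorous is where the real difficulty lies. By contrast, the easy inclusion and Step 1 are essentially bookkeeping with \cref{thm:char} and the combinatorics of the octahedron $\Delta_{2,4}$.
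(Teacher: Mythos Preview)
The paper does not actually prove this theorem: it is stated as a cited result from \cite{SW}, with no proof given in the present paper. So there is no ``paper's own proof'' to compare against here. That said, let me comment on your plan.

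Your easy inclusion and Step~1 are fine; indeed Step~1 is essentially one direction of \cref{prop:positroidal}, which the paper \emph{does} prove, and your argument with the octahedral face and \cref{thm:char} matches that proof closely.

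The genuine gap is Step~2, and you have correctly identified it as the heart of the matter --- but your proposed inductive strategy via restriction to facets and ``gluing boundary realizations'' is not a workable route as stated. Knowing that $P$ restricts to realizable points on each facet $\{x_i=0\}$ and $\{x_i=1\}$ does not give you a way to produce a single realization over the whole of $\Delta_{k,n}$: the restrictions overlap on codimension-two faces, and there is no evident mechanism forcing the separate lifts to agree there, nor any obvious way to interpolate a global totally positive valuated matrix from compatible boundary data. The actual proofs of this result (in \cite{SW} and in related work such as \cite{AHLS}) proceed quite differently: one fixes a cluster (equivalently, a reduced plabic graph) for $Gr^+_{k,n}$, uses the values $P_I$ on the corresponding Pl\"ucker coordinates to write down an explicit point of $Gr^+_{k,n}$ over real Puiseux series via the network/cluster parametrization of \cref{network_param}, and then invokes the positive Laurent phenomenon to check that \emph{all} Pl\"ucker coordinates have the prescribed valuations. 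This bypasses any gluing entirely. If you want to salvage your approach you would need to replace the induction-on-$n$ scheme with such a direct positive-parametrization argument.
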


\begin{definition}
	We say that a point $\{P_I\}_{I\in \binom{[n]}{k}}\in \R^{\binom{[n]}{k}}$
	is a \emph{(finite) tropical Pl\"ucker vector} 
	if it lies in the Dressian $\Dr_{k,n}$, i.e. 
	for every three-term Pl\"ucker relation, it lies in 
	the associated tropical hypersurface.
	And we say that $\{P_I\}_{I\in \binom{[n]}{k}}$
	is a \emph{positive tropical Pl\"ucker vector},  
	if it lies in the positive Dressian $\Dr^+_{k,n}$ (equivalently, the 
	positive tropical Grassmannian $\Trop^+Gr_{k,n}$), i.e. 
	 for every three-term Pl\"ucker relation, it lies in 
	the positive part of the associated tropical hypersurface.
\end{definition}

\begin{example}
	For $Gr_{2,4}$, there is only one Pl\"ucker relation, 
	$p_{13} p_{24} = p_{12} p_{34}+p_{14} p_{23}$.  
	The Dressian $\Dr_{2,4} \subset \R^{\binom{[4]}{2}}$ is defined to 
	be the set of points
	$(P_{12}, P_{13}, P_{14}, P_{23}, P_{24}, P_{34})\in \R^6$ such that 
	\begin{itemize}
		\item $P_{13}+P_{24} = P_{12}+P_{34} \leq P_{14}+P_{23}$ 
			or 
		\item $P_{13}+P_{24} = 
	P_{14}+P_{23} \leq 
	P_{12}+P_{34}$
	or 
\item $P_{12}+P_{34} = P_{14}+P_{23}
\leq	P_{13}+P_{24}$.
	\end{itemize}
	And $\Dr^+_{2,4} = \Trop^+Gr_{2,4} \subset \R^{\binom{[4]}{2}}$ is defined to 
	be the set of points
	$(P_{12}, P_{13}, P_{14}, P_{23}, P_{24}, P_{34})\in \R^6$ such that 
	\begin{itemize}
		\item $P_{13}+P_{24} = P_{12}+P_{34} \leq P_{14}+P_{23}$ 
			or 
		\item $P_{13}+P_{24} = 
	P_{14}+P_{23} \leq 
	P_{12}+P_{34}$
	\end{itemize}
\end{example}

\subsection{The positive tropical Grassmannian and positroid subdivisions}\label{sec:DP}

Recall that $\Delta_{k,n}$ denotes the $(k,n)$-hypersimplex, defined as the 
convex hull of the points $e_I$ where $I$ runs over $\binom{[n]}{k}$.
Consider a real-valued function $\{I\} \mapsto P_I$ on the vertices
of $\Delta_{k,n}$.  We define a polyhedral subdivision $\mathcal{D}_P$
of $\Delta_{k,n}$ as follows: consider the points
$(e_I, P_I)\in \Delta_{k,n} \times \R$ and take their convex hull. 
Take the lower faces (those whose outwards normal vector have last component
negative) and project them back down to $\Delta_{k,n}$; this gives us 
the subdivision $\mathcal{D}_P$.  We will omit the subscript $P$ when it is clear from context.  A subdivision obtained in this manner is called \emph{regular}.  

\begin{remark}\label{rem:lower}
A lower face $F$ of the regular subdivision defined above is determined by some 
	vector $\lambda = (\lambda_1,\dots,\lambda_n,-1)$ whose dot product with 
	the vertices of the face $F$ is maximized.
	So if $F$ is the matroid polytope of a matroid $M$ with bases $\mathcal{B}$, 
	this is equivalent to saying that 
	$\lambda_{i_1} + \dots + \lambda_{i_k}-P_I = \lambda_{j_1} + \dots + \lambda_{j_k}-P_J >
	\lambda_{h_1} + \dots + \lambda_{h_k} - P_H$ for any two bases $I,J \in \mathcal{B}$
	and $H\notin \mathcal{B}$.
\end{remark}

Given a subpolytope $\Gamma$ of $\Delta_{k,n}$, we say that $\Gamma$ is \emph{matroidal}
if the vertices of $\Gamma$, considered as elements of $\binom{[n]}{k}$, are the 
bases of a matroid $M$, i.e. $\Gamma = \Gamma_M$.

The following result is originally due to Kapranov \cite{Kapranov}; it was 
also 
proved in \cite[Proposition 2.2]{Speyer}.
\begin{theorem}\label{prop:K}
	The following are equivalent.
	\begin{itemize}
		\item The collection $\{P_I\}_{I \in \binom{[n]}{k}}$ 
			is a tropical Pl\"ucker vector.
		\item The one-skeleta of $\mathcal{D}_P$ and $\Delta_{k,n}$
			are the same.
		\item Every face of $\mathcal{D}_P$ is matroidal.
	\end{itemize}
\end{theorem}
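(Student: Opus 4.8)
The plan is to prove the equivalence as the cycle (2) $\Leftrightarrow$ (3), (3) $\Rightarrow$ (1), (1) $\Rightarrow$ (3), with the last implication being the substantial one. Throughout I would use the Gelfand--Goresky--MacPherson--Serganova criterion (\cref{r:GS}): a polytope whose vertices lie among the $e_I$ is a matroid polytope if and only if every edge is a parallel translate of some $e_i-e_j$.

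\emph{Step 1: (2) $\Leftrightarrow$ (3).} First observe that every edge $[e_I,e_J]$ of $\Delta_{k,n}$ satisfies $|I\triangle J|=2$ and contains no $0$--$1$ vector in its relative interior; hence in any polyhedral subdivision of $\Delta_{k,n}$ with vertices among the $e_I$ — in particular in $\mathcal{D}_P$ — such a segment is never subdivided, so the one-skeleton of $\Delta_{k,n}$ is always contained in that of $\mathcal{D}_P$. Therefore (2) is equivalent to the statement that $\mathcal{D}_P$ introduces no new edges, i.e.\ every edge of $\mathcal{D}_P$ is a parallel translate of some $e_i-e_j$. Since every edge of $\mathcal{D}_P$ is an edge of some maximal cell $C$, and every face of $\mathcal{D}_P$ is a face of such a $C$ and so has its edges among the edges of $C$, \cref{r:GS} shows that this last condition holds if and only if every cell — equivalently, every face — of $\mathcal{D}_P$ is matroidal, which is (3).

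\emph{Step 2: (3) $\Rightarrow$ (1).} Fix data $S,a,b,c,d$ as in \cref{def:3} and consider the octahedral face $O=\conv(e_{Sab},e_{Sac},e_{Sad},e_{Sbc},e_{Sbd},e_{Scd})$ of $\Delta_{k,n}$. The subdivision $\mathcal{D}_P$ restricts to the regular subdivision of $O$ induced by the heights $(P_I)$, whose cells are faces of $\mathcal{D}_P$ and hence matroidal by (3). The only pairs of vertices of $O$ whose difference is not a translate of some $e_i-e_j$ are the three ``long diagonals'' $\{Sab,Scd\}$, $\{Sac,Sbd\}$, $\{Sad,Sbc\}$, so (3) forces the regular subdivision of $O$ to use none of them. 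A short computation — evaluating the defining linear functional of such a diagonal at the midpoints of $O$ — shows that a long diagonal, say $[e_{Sac},e_{Sbd}]$, is a cell of the regular subdivision of $O$ exactly when $P_{Sac}+P_{Sbd}$ is the strict minimum of the three sums $P_{Sab}+P_{Scd}$, $P_{Sac}+P_{Sbd}$, $P_{Sad}+P_{Sbc}$. Hence no long diagonal occurs if and only if this minimum is attained at least twice, which is precisely the tropical three-term Pl\"ucker relation of \cref{rem:tropPlucker}. As $(S,a,b,c,d)$ was arbitrary, $(P_I)$ is a tropical Pl\"ucker vector.

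\emph{Step 3: (1) $\Rightarrow$ (3), and the main obstacle.} By Step 1 it suffices to show every edge of $\mathcal{D}_P$ is a translate of some $e_i-e_j$. Suppose instead that a cell $F$ of $\mathcal{D}_P$ has an edge $[e_I,e_J]$ with $s:=|I\triangle J|/2\ge 2$. When $s=2$, $[e_I,e_J]$ is a long diagonal of the octahedral face of $\Delta_{k,n}$ on $S:=I\cap J$ and the four elements of $I\triangle J$, and, being an edge of $\mathcal{D}_P$ lying in that face, it is a cell of the restricted regular subdivision there; by the computation of Step 2 this makes $P_I+P_J$ the strict minimum of the relevant three sums, contradicting (1). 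For $s\ge 3$ one argues by induction on $s$, using \cref{rem:lower}: a vector $\lambda$ inducing $F$ maximizes $\lambda\cdot e_H-P_H$ (with value $c$) exactly on the vertex set $\mathcal{B}$ of $F$, so $P_H\ge\lambda\cdot e_H-c$ for all $H$, with equality iff $H\in\mathcal{B}$, and $I,J\in\mathcal{B}$. The goal is to produce $H\in\mathcal{B}$ with $|H\triangle I|=2$ and $|H\triangle J|=2(s-1)$: then $e_I,e_H,e_J$ are non-collinear vertices of $F$ (the hypersimplex contains no three collinear $0$--$1$ vectors), so $[e_I,e_J]$ lies in the relative interior of a face of $F$ of dimension $\ge 2$, which is impossible for an edge. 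To find such $H$ one applies the tropical three-term relation to a suitable octahedron on some $S'\subset I$ of size $k-2$ together with elements $a,a'\in I\setminus J$ and $b,b'\in J\setminus I$; combining the relation with the inequalities $P_\bullet\ge\lambda\cdot e_\bullet-c$ pushes the minimum in the relation down to a ``$\lambda$-value'', which pins down an exchanged set of the form $(I\setminus a)\cup b$ (or a neighbour of it) in $\mathcal{B}$, and every such set is strictly closer to $J$. The delicate point — the main obstacle — is that an arbitrary choice of the four octahedron elements need not force the right set into $\mathcal{B}$; one must select them via a minimality argument on $|I\triangle J|$, i.e.\ show that the three-term relations propagate cell-membership through octahedra so as to impose the exchange axiom on $\mathcal{B}$. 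This bookkeeping is exactly what is carried out in \cite{Kapranov} and \cite[Proposition 2.2]{Speyer}, to which I would defer for the details.
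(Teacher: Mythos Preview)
The paper does not give its own proof of this theorem: the sentence immediately preceding it attributes the result to Kapranov and to \cite[Proposition~2.2]{Speyer}, and no argument is supplied. So there is nothing in the paper to compare your proposal against; what you have written is an attempt to reconstruct the cited proof.

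Your Steps~1 and~2, and the $s=2$ case of Step~3, are correct and are essentially the standard reasoning. In particular, the octahedron $O$ really is a face of $\Delta_{k,n}$, the restriction of $\mathcal D_P$ to $O$ consists of faces of $\mathcal D_P$, and your computation that a long diagonal appears in this restriction iff the corresponding pair-sum is the strict minimum is exactly right.

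There is, however, a genuine logical error in the $s\ge 3$ part of Step~3. You take $F$ to be a cell of $\mathcal D_P$ with vertex set $\mathcal B$ and $[e_I,e_J]$ an \emph{edge of $F$}, and you claim that producing $H\in\mathcal B$ with $|H\triangle I|=2$ forces ``$[e_I,e_J]$ to lie in the relative interior of a face of $F$ of dimension $\ge 2$, which is impossible for an edge.'' That implication is simply false: a triangle has three non-collinear vertices, yet each side is still an edge. The existence of another vertex $e_H$ of $F$ places no constraint on whether $[e_I,e_J]$ is an edge of $F$. What you actually need is to take $F$ to be the edge $[e_I,e_J]$ \emph{itself} (it is a cell of $\mathcal D_P$), so that $\mathcal B=\{I,J\}$; then exhibiting a third $H$ with $\lambda\cdot e_H-P_H=c$ is an immediate contradiction. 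But with that reformulation your sketch no longer explains the crucial point: which $a\in I\setminus J$ and $b\in J\setminus I$ to choose, and why the three-term relation together with $P_K\ge\lambda\cdot e_K-c$ forces \emph{equality} at $(I\setminus a)\cup b$ rather than merely non-strict inequality. That is precisely the substance of Speyer's argument (he verifies the basis exchange axiom for the vertex set of each cell directly), and it cannot be waived as ``bookkeeping.''
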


Given a subpolytope $\Gamma$ of $\Delta_{k,n}$, we say that $\Gamma$ is \emph{positroid}
if the vertices of $\Gamma$, considered as elements of $\binom{[n]}{k}$, are the 
bases of a positroid $M$, i.e. $\Gamma = \Gamma_M$.
We now give a positroid version of \cref{prop:K}.

\begin{theorem}\label{prop:positroid}
The following are equivalent.
\begin{itemize}
	\item The collection $\{P_I\}_{I \in \binom{[n]}{k}}$ 
		is a positive tropical Pl\"ucker vector.
	\item Every face of $\mathcal{D}_P$ is positroid.
\end{itemize}
\end{theorem}

\begin{proof}
Suppose that 
the collection $\{P_I\}_{I \in \binom{[n]}{k}}$ 
	are positive tropical Pl\"ucker coordinates.
	Then in particular they are tropical Pl\"ucker coordinates,
	and so by \cref{prop:K}, 
	every face of $\mathcal{D}_P$ is matroidal.  

	Suppose that one of those
	faces $\Gamma_M$ fails to be positroid.  
Then by \cref{thm:char}, 
	$\Gamma_M$ (and hence $\mathcal{D}_P$) has
	 a two-dimensional face with vertices
		$e_{Sab}, e_{Sad},
		e_{Sbc}, e_{Scd}$, for some $1 \leq a<b<c<d\leq n$ and $S$ 
		of size $k-2$ disjoint from $\{a,b,c,d\}$.
		By \cref{rem:lower}, this means that 
		there is a vector $\lambda=(\lambda_1,\dots,\lambda_n,-1)$ whose
		dot product is maximized at the face $F$.  In particular, 
		if we compare the value of the dot product at
		vertices of $F$ versus $e_{Sac}$ and $e_{Sbd}$, we get
	$\lambda_a+\lambda_b-P_{Sab} = \lambda_c+\lambda_d -P_{Scd} = 
	\lambda_a+\lambda_d-P_{Sad} = \lambda_b+\lambda_c-P_{Sbc}$ 
	is greater than either 
	$\lambda_a+\lambda_c-P_{Sac}$ or $\lambda_b+\lambda_d-P_{Sbd}$.
But then 
	$$\lambda_a+\lambda_b-P_{Sab} + \lambda_c+\lambda_d-P_{Scd} = 
	\lambda_a+\lambda_d-P_{Sad} + \lambda_b+\lambda_c-P_{Sbc} >
	\lambda_a+\lambda_c-P_{Sac}+\lambda_b+\lambda_d-P_{Sbd},$$ which 
	implies that 
	$$P_{Sab}+P_{Scd} = P_{Sad}+P_{Sbc} < P_{Sac}+P_{Sbd},$$
	which contradicts the fact that $\{P_I\}$ is a collection of 
	positive tropical Pl\"ucker coordinates.

Suppose that every 
	 face of 
	 $\mathcal{D}_P$ 
	 is positroid.  Then every face is in particular
	 matroidal, and so  by \cref{prop:K},
	the collection $\{P_I\}_{I \in \binom{[n]}{k}}$ 
		are tropical Pl\"ucker coordinates.  Suppose that they fail
		to be positive tropical Pl\"ucker coordinates.
	Then there is some $S\in \binom{[n]}{k-2}$ and $a<b<c<d$
	disjoint from $S$ such that 
	$P_{Sab}+P_{Scd} = P_{Sad}+P_{Sbc} < P_{Sac}+P_{Sbd}$.
	We will obtain a contradiction by showing that $\mathcal{D}_P$ 
	has a two-dimensional (non-positroid) face 
	with vertices
		$e_{Sab}, e_{Sad},
		e_{Sbc}, e_{Scd}$, for some $1 \leq a<b<c<d\leq n$ and $S$ 
		of size $k-2$ disjoint from $\{a,b,c,d\}$.

To show that these vertices form a face, choose some large number 
	$N$ which is greater than the absolute value of 
	any of the tropical Pl\"ucker coordinates, i.e. 
	$N> \max\{|P_I|\}_{I\in \binom{[n]}{k}}$.
	We define a vector $(\lambda_1,\dots,\lambda_n) \in \R^n$ by setting 
	$$\lambda_i = \begin{cases}
		\frac{1}{2}(	P_{Sab}+P_{Sac}+P_{Sad}) &\text{ for i=a}\\
		\frac{1}{2}(P_{Sab}+P_{Sbc}+P_{Sbd}) &\text{ for i=b}\\
		\frac{1}{2} (P_{Sac}+P_{Sbc}+P_{Scd}) &\text{ for i=c}\\
		\frac{1}{2} (P_{Sad}+P_{Sbd}+P_{Scd}) &\text{ for i=d}\\
		\frac{3}{2} N &\text{ for }i\in S\\
		-\frac{3}{2} N &\text{ for }i\notin S\cup \{a,b,c,d\}.
	\end{cases}
	$$
We now compute the lower face of $\mathcal{D}_P$ 
determined by vector $\lambda:=(\lambda_1,\dots,\lambda_n,-1)$,
using \cref{rem:lower}. 
Clearly any point $(e_I, P_I)$ of $\Delta_{k,n} \times \R$ 
maximizing the dot product with 
$\lambda$
must have 
$e_I \in \{e_{Sab}, e_{Sac}, e_{Sad}, e_{Sbc}, e_{Sbd}, e_{Scd}\}$.
The relation 
$P_{Sab}+P_{Scd} = P_{Sad}+P_{Sbc} < P_{Sac}+P_{Sbd}$
implies that the lower face of 
 $\mathcal{D}_P$ determined by $\lambda$ has vertices
		$e_{Sab}, e_{Sad},
		e_{Sbc}, e_{Scd}$.   
\end{proof}

It follows from \cref{prop:positroid}
that the regular subdivisions of $\Delta_{k+1,n}$ consisting of positroid polytopes
are precisely those of the form $\mathcal{D}_P$, where $P=\{P_I\}$ is a positive tropical
Pl\"ucker vector. This motivates the following definition.

\begin{definition}\label{def:regularpos}
We say that a positroid dissection of $\Delta_{k+1,n}$
is a \emph{regular positroid subdivision} if it has the form $\mathcal{D}_P$, 
where $P = \{P_I\} \in \R^{[n] \choose k}$ is a positive tropical Pl\"ucker vector.
\end{definition}

\begin{remark}\label{rem:good}
	Every regular subdivision of a polytope is a polytopal subdivision, 
and so in particular it is a good dissection 
(see \cref{def:good_dissection}).
\end{remark}

\subsection{Fan structures on the Dressian and positive 
Dressian}\label{sec:fan}

As described in \cite{Herrmann2008HowTD},
there are two natural fan structures on the (positive) Dressian: the \emph{Pl\"ucker fan},
and the \emph{secondary fan.} 

We say that two elements of the Dressian, i.e. two 
tropical Pl\"ucker
vectors
$\{P_I\}_{I\in \binom{[n]}{k}}$
and $\{P'_I\}_{I\in \binom{[n]}{k}}
	\in \R^{\binom{[n]}{k}}$, lie in the same cone of the \emph{Plucker fan}
	if for each 
$S, a, b, c, d$ as in \cref{rem:tropPlucker}, 
 the same inequality holds for 
 both $\{P_{Sac}, P_{Sbd}, P_{Sab}, P_{Scd},P_{Sad}, P_{Sbc}\}$ and 
 $\{P'_{Sac}, P'_{Sbd}, P'_{Sab}, P'_{Scd},P'_{Sad}, P'_{Sbc}\}$.
	In particular, the maximal cones in the Pl\"ucker fan structure are the cones where the 
inequalities from \cref{rem:tropPlucker} are all strict.

On the other hand, using  
\cref{prop:K}  and 
\cref{prop:positroid},
we say that 
 two elements of the Dressian, i.e. two 
tropical Pl\"ucker
vectors
$\{P_I\}_{I\in \binom{[n]}{k}}$
and $\{P'_I\}_{I\in \binom{[n]}{k}}
\in \R^{\binom{[n]}{k}}$, lie in the same cone of the \emph{secondary fan}
if 
the matroidal subdivisions $\mathcal{D}_P$ and $\mathcal{D}_{P'}$ coincide.
In particular, the maximal cones in the secondary fan structure are the cones 
corresponding to the unrefinable positroid subdivisions.

In \cite{Herrmann2008HowTD}
it was shown that for the Dressian $Dr_{3,n}$, the Pl\"ucker fan structure 
and the secondary fan structure coincide.  And in \cite[Theorem 14]{Olarte} it was shown 
that the fan structures coincide for general Dressians $Dr_{k,n}$.
We can now just refer to 
the fan structure on $Dr^+_{k,n} = \Trop^+Gr_{k,n}$ without specifying either ``Pl\"ucker fan''
or ``secondary fan.''

  We have the following result.
\begin{corollary}\label{cor:reg0}
A collection $\mathcal{C} = \{S_{\pi}\}$ of positroid cells of $\Gr$
gives a regular positroid tiling 
of $\Delta_{k,n}$ (see \cref{def:dissection1}) if and only if 
this tiling has the form $\mathcal{D}_P$, for 
$P = \{P_I\}_{I \in \binom{[n]}{k}}$  a positive
tropical Pl\"ucker vector from a maximal cone of 
$\Trop^+Gr_{k,n}$.
\end{corollary}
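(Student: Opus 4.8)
The plan is to combine \cref{prop:positroidal} with the known equivalence of the Pl\"ucker and secondary fan structures on the Dressian. First I would recall the setup: a regular positroidal triangulation of $\Delta_{k,n}$ is, by \cref{def:regularpos} together with \cref{prop:positroidal}, precisely a subdivision $\mathcal{D}_P$ all of whose faces are positroidal, which is equivalent to $P = \{P_I\}$ being a positive tropical Pl\"ucker vector — i.e.\ a point of $\Dr^+_{k,n} = \Trop^+Gr_{k,n}$. So the only remaining content is the statement that $\mathcal{D}_P$ is a \emph{triangulation} (the finest kind of subdivision, on which $\mu$ is injective on each cell) if and only if $P$ lies in a \emph{maximal} cone of $\Trop^+Gr_{k,n}$.

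The key step is to identify ``$\mathcal{D}_P$ is unrefinable among positroidal subdivisions'' with ``$P$ lies in a maximal cone.'' By the discussion in \cref{sec:fan}, the maximal cones of the secondary fan structure on $\Trop^+Gr_{k,n}$ are exactly the cones corresponding to unrefinable positroidal subdivisions; and by \cite{Olarte} (and \cite{Herrmann2008HowTD} for $k=3$) the secondary and Pl\"ucker fan structures coincide, so ``maximal cone'' is unambiguous. Thus $P$ lies in a maximal cone of $\Trop^+Gr_{k,n}$ iff $\mathcal{D}_P$ is a finest positroidal subdivision. It then remains to check that a finest regular positroidal subdivision of the hypersimplex is the same thing as a regular positroidal triangulation in the sense of \cref{def:dissection1} — that is, that the moment map $\mu$ restricts to an injection on each maximal cell $S_\pi$ of such a subdivision. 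For this I would invoke \cref{prop:homeo}: $\mu$ is injective on $S_\pi$ iff $\dim S_\pi = \dim \Gamma_\pi = n - c$ where $c$ is the number of connected components of $M_\pi$. A cell $\Gamma_M$ of a finest positroidal subdivision of $\Delta_{k,n}$ must itself admit no further positroidal subdivision; by \cref{prop:positroidal} and \cref{thm:char}, a positroid polytope that is not a simplex-like (more precisely, that has dimension $< n-1$ or is a product) still might be subdividable, so one argues that a maximal cell of a finest subdivision is necessarily a positroid polytope $\Gamma_M$ with $M$ connected and $\dim\Gamma_M = n-1$ — equivalently, by \cref{prop:homeo2}, its reduced plabic graph is a tree — whence $\mu$ is injective on it.

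The main obstacle I anticipate is precisely this last point: verifying that the top-dimensional cells of a finest regular positroidal subdivision are exactly the generalized triangles (cells on which $\mu$ is injective), rather than merely full-dimensional positroid polytopes that happen to be unrefinable \emph{within the class of regular positroidal subdivisions} but still carry a non-injective moment map. One must rule out the possibility of a full-dimensional positroid polytope $\Gamma_M$ that is not a generalized triangle (so $M$ is disconnected, or $\dim\Gamma_M < n-1$) yet appears as a maximal cell — the disconnected case is handled because a disconnected positroid polytope is a nontrivial product and hence can be refined further using a finer subdivision on one factor (this is where one uses that $\Trop^+Gr$ genuinely has cones of all dimensions, i.e.\ finest subdivisions exist and are triangulations, paralleling \cite{SW} and Speyer's $f$-vector result cited in the introduction). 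Assembling these pieces, the corollary follows. I would keep the write-up short, citing \cref{prop:positroidal}, \cref{prop:homeo}, \cref{prop:homeo2}, \cite{Olarte}, and \cite{SW}, since each ingredient is already available.
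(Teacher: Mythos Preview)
Your overall strategy matches the paper's: use \cref{prop:positroidal} to identify regular positroidal subdivisions with points of $\Trop^+Gr_{k,n}$, invoke the equality of the secondary and Pl\"ucker fan structures to identify maximal cones with unrefinable subdivisions, and then use \cref{prop:homeo}/\cref{prop:homeo2} to connect unrefinability with injectivity of $\mu$.

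However, your identification of the obstacle is off, and this matters for the actual argument. You write that a full-dimensional positroid polytope $\Gamma_M$ failing to be a generalized triangle means ``$M$ is disconnected, or $\dim\Gamma_M<n-1$.'' But a top-dimensional piece of any subdivision of $\Delta_{k,n}$ has $\dim\Gamma_M=n-1$ automatically, and then by \cref{prop:dim} $M$ is connected. So neither of the cases you try to rule out can occur. The real issue is whether $\dim S_\pi = n-1$, i.e.\ whether the reduced plabic graph for $\pi$ is a tree; a priori one could have a full-dimensional $\Gamma_\pi$ coming from a cell $S_\pi$ of dimension strictly larger than $n-1$, on which $\mu$ is not injective. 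Your product-of-factors argument does not address this.

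The paper closes this gap with a direct plabic-graph argument in both directions. If $\mu$ is injective on $S_\pi$, the plabic graph $G$ is a tree; any strict sub-positroid $\pi'$ with $\Gamma_{\pi'}\subset\Gamma_\pi$ has $S_{\pi'}\subset\overline{S_\pi}$, so its plabic graph $G'$ is obtained from $G$ by deleting edges, hence has fewer faces, hence $\dim S_{\pi'}<n-1$ and $\Gamma_{\pi'}$ cannot be full-dimensional --- so $\Gamma_\pi$ is unrefinable. Conversely, if $\Gamma_\pi$ is unrefinable then the plabic graph must be a tree (otherwise one could subdivide), whence $\mu$ is injective by \cref{prop:homeo2}. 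This is the step your write-up needs to supply in place of the disconnectedness discussion.
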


\begin{proof}
Suppose that a collection
$\{S_{\pi}\}$ of positroid cells of $\Gr$
is a regular positroid tiling; in other words,
the images of the cells $\{S_{\pi}\}$ under the moment map are 
the top-dimensional 
positroid polytopes in the subdivision
$\mathcal{D}_P$
of $\Delta_{k,n}$, and the moment map is an 
 injection
on each $S_{\pi}$.  Therefore
by \cref{prop:homeo} and \cref{prop:homeo2}, 
$\dim S_{\pi}=n-1$, each positroid
$M_{\pi}$ is connected, and the reduced plabic graph associated
	to $\pi$ is a (planar) tree.

We claim that  the collection
$\{S_{\pi}\}$ gives an unrefineable possible positroid subdivision
of the hypersimplex.
That is, there is no nontrival way to subdivide one of the 
positroid polytopes $\Gamma_{\pi}$ into two full dimensional
	positroid polytopes.  If we \emph{can} subdivide $\Gamma_{\pi}$
	as above, and there is another full-dimensional positroid polytope
	$\Gamma_{\pi'}$ strictly contained in $\Gamma_{\pi}$, then 
	the bases of $M_{\pi'}$ are a subset of the bases of $\Gamma_{\pi}$,
	and hence the cell $S_{\pi'}$ lies in the closure of 
	$S_{\pi}$.  But then a reduced plabic graph $G'$ for $S_{\pi'}$ can 
	be obtained by deleting some edges from a reduced plabic
	graph $G$ for $S_{\pi}$; this means that $G'$ has fewer faces
	than $G$ and hence has the corresponding cell has smaller dimension,
	which is a contradiction, so the claim is true.

But now 	
the fact that $\{S_{\pi}\}$ gives an unrefineable positroid subdivision
means that it came from a maximal cone of $\Trop^+Gr_{k,n}$. 

Conversely, consider a regular positroid subdivision 
$\mathcal{D}_P$
coming from a maximal cone of 
$\Trop^+Gr_{k,n}$. 
Then the subdivision 
	$\mathcal{D}_P$ (which we identify with its top-dimensional
	pieces
	$\{S_{\pi}\}$) is an unrefineable positroid subdivision.
	In other words,
	none of the positroid polytopes $\Gamma_\pi$ can be subdivided
	into two full-dimensional positroid polytopes, which in turn 
	means that the reduced plabic graph corresponding to $\pi$
	must be a tree.  This implies that the moment
	map is an injection on each $S_{\pi}$ and hence 
	$\{S_{\pi}\}$ gives a regular positroid tiling of 
	$\Delta_{k,n}$.
\end{proof}

\begin{corollary}\label{cor:reg}
The number of regular positroid tilings of the
hypersimplex $\Delta_{k,n}$ equals
the number of maximal cones in the positive tropical Grassmannian
	$\Trop^+Gr_{k,n}$.
	\end{corollary}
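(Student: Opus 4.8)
The plan is to deduce this counting statement almost immediately from \cref{cor:reg0} together with the coincidence of the Pl\"ucker and secondary fan structures on the positive Dressian. First I would recall from \cref{cor:reg0} that a collection $\{S_{\pi}\}$ of positroid cells of $\Gr$ is a regular positroidal triangulation of $\Delta_{k,n}$ if and only if it equals $\mathcal{D}_P$ for some positive tropical Pl\"ucker vector $P$ lying in the relative interior of a maximal cone of $\Trop^+Gr_{k,n}$. This already produces a surjection from the set of maximal cones onto the set of regular positroidal triangulations: to a maximal cone $\sigma$ we assign the subdivision $\mathcal{D}_P$ for any $P \in \relint\sigma$, and this is well defined because, by the secondary–fan description of the cones of $Dr^+_{k,n}=\Trop^+Gr_{k,n}$ recalled just before \cref{cor:reg0}, the subdivision $\mathcal{D}_P$ depends only on which cone of the fan contains $P$.

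Next I would check injectivity of this assignment: if $\sigma \neq \sigma'$ are two maximal cones, then for $P\in\relint\sigma$ and $P'\in\relint\sigma'$ one has $\mathcal{D}_P \neq \mathcal{D}_{P'}$, again directly from the definition of the secondary fan — two tropical Pl\"ucker vectors lie in a common cone exactly when they induce the same subdivision of $\Delta_{k,n}$. Here I am using that the Pl\"ucker fan structure and the secondary fan structure on $Dr^+_{k,n}=\Trop^+Gr_{k,n}$ coincide (by \cite[Theorem 14]{Olarte} and the discussion preceding \cref{cor:reg0}), so that the phrase ``maximal cone of $\Trop^+Gr_{k,n}$'' is unambiguous and the cone decomposition is a genuine fan. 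Combining surjectivity with injectivity yields a bijection between maximal cones of $\Trop^+Gr_{k,n}$ and regular positroidal triangulations of $\Delta_{k,n}$, and hence the asserted equality of cardinalities.

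I expect no real obstacle here beyond bookkeeping: all the substantive content has already been packaged into \cref{cor:reg0} (which rests on \cref{thm:char}, \cref{prop:positroidal}, \cref{prop:homeo}, and \cref{prop:homeo2}) and into the equivalence of the two fan structures. The one point that warrants care is to make sure that ``number of maximal cones'' is interpreted with respect to this well-defined fan structure rather than some ad hoc cone decomposition of $\Trop^+Gr_{k,n}$; this is precisely why the cited coincidence of the Pl\"ucker and secondary fans, together with \cref{prop:positroidal} identifying the positroidal subdivisions, is invoked.
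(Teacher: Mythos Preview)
Your proposal is correct and follows essentially the same approach as the paper: the corollary is stated immediately after \cref{cor:reg0} with no separate proof, since the bijection between maximal cones of $\Trop^+Gr_{k,n}$ and regular positroidal triangulations of $\Delta_{k,n}$ is precisely the content of \cref{cor:reg0} together with the secondary-fan interpretation of the cones. You have simply spelled out the surjectivity/injectivity bookkeeping that the paper leaves implicit.
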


The fact that the Pl\"ucker fan structure and the secondary fan structure on 
$\Trop^+Gr_{k,n}$ coincide also implies that the $f$-vector of $\Trop^+Gr_{k,n}$ reflects
the number of positroid subdivisions of $\Delta_{k,n}$ (with maximal cones corresponding
to unrefineable subdivisions and rays corresponding to coarsest subdivisions).

\section{Regular positroid subdivisions 
of \texorpdfstring{$\Delta_{k+1,n}$}{} and 
\texorpdfstring{$\mathcal{A}_{n,k,2}(Z)$}{}
from \texorpdfstring{$\Trop^+Gr_{k+1,n}$}{}}
\label{sec:regularA}

In \cref{sec:good}, we discussed the fact that arbitrary dissections of the hypersimplex
and the amplituhedron can have rather unpleasant properties, with their maximal cells
intersecting badly at their boundaries.  We 
introduced the notion of \emph{good dissections} for the hypersimplex and amplituhedron 
in \cref{def:good_dissection} and 
\cref{def:good_dissection_amp}.  Our goal in this section is to introduce a large class
of good dissections for the amplituhedron -- these are the 
\emph{regular positroid subdivisions}.

\subsection{Regular positroid subdivisions of $\mathcal{A}_{n,k,2}(Z)$}
Recall from \cref{def:regularpos} that 
the \emph{regular positroid subdivisions} 
 of $\Delta_{k+1,n}$
are precisely the dissections of the form 
 $\mathcal{D}_P$ 
(see \cref{sec:DP})
where $P = \{P_I\} \in \R^{[n] \choose k}$.
We know from \cref{T-duality}, \cref{sec:Ttriangle}, and \cref{sec:parity} that 
T-duality maps dissections of $\Delta_{k+1,n}$ to the amplituhedron $\mathcal{A}_{n,k,2}(Z)$
and preserves various nice properties along the way.  
We therefore use the T-duality map 
from \cref{hatmap}
to define \emph{regular (positroid) subdivisions}
of the $m=2$ amplituhedron $\mathcal{A}_{n,k,2}(Z)$.

\begin{definition}\label{def:regular}
We say that a positroid dissection of $\mathcal{A}_{n,k,2}(Z)$
is a \emph{regular positroid subdivision} if it has the form 
$\{S_{\hat{\pi}}\}$, where 
$\{S_{{\pi}}\}$ is a regular positroid subdivision of 
$\Delta_{k+1,n}$.
\end{definition}

As every regular positroid subdivision of $\Delta_{k+1,n}$ is a polyhedral subdivision
(and hence is good), 
 \cref{prop:subd} implies the following.
\begin{conj}\label{conj:regular}
Every regular positroid subdivision of $\mathcal{A}_{n,k,2}(Z)$ is a good dissection.
\end{conj}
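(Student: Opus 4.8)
The plan is to reduce the statement to \cref{prop:subd}. By construction a regular positroidal subdivision of $\mathcal{A}_{n,k,2}$ has the form $\{S_{\hat\pi}\}$, where $\mathcal{D}_P = \{S_\pi\}$ is a regular positroidal subdivision of $\Delta_{k+1,n}$ arising from a positive tropical Pl\"ucker vector $P$. Since $\mathcal{D}_P$ is an honest regular polyhedral subdivision of the hypersimplex, it is in particular face-to-face, and hence a good dissection of $\Delta_{k+1,n}$ in the sense of \cref{def:good_dissection} (this is \cref{rem:good}). Applying \cref{prop:subd} to the collection $\{S_\pi\}$ then yields that $\{S_{\hat\pi}\}$ is a good dissection of $\mathcal{A}_{n,k,2}$, which is exactly the assertion. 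The substance therefore lies in \cref{prop:subd}, which is presently conjectural; so I would in parallel attempt a direct proof, with three ingredients.

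First, the top cells $S_\pi$ of $\mathcal{D}_P$ are full-dimensional positroid cells of $Gr^+_{k+1,n}$, hence connected ($c=1$) with tree reduced plabic graphs by \cref{prop:homeo2}; by \cref{prop:injective} each T-dual $S_{\hat\pi}$ is a $2k$-dimensional generalized triangle of $\mathcal{A}_{n,k,2}$, so the dimension requirement in \cref{def:good_dissection_amp} is met and $\widetilde{Z}$ is injective on each $S_{\hat\pi}$. Second, one must show $\{S_{\hat\pi}\}$ is a dissection at all: that the images $Z^\circ_{\hat\pi}$ are pairwise disjoint and that $\bigcup_\pi Z_{\hat\pi} = \mathcal{A}_{n,k,2}$. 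This is the regular-subdivision case of \cref{conj:dis}. I would try to prove it by leveraging the $\lambda$-dependent $Q$-map of \cref{sec:Tcells}: since $P$ is a positive tropical Pl\"ucker vector, one can form a one-parameter family $C(t)\in Gr^+_{k+1,n}$ with Pl\"ucker coordinates of order $t^{P_I}$, whose degeneration realizes $\mathcal{D}_P$; pushing this family through the $Q$-map and then through $\widetilde{Z}$ should exhibit $\mathcal{A}_{n,k,2}$ as covered, with disjoint interiors, by the $Z_{\hat\pi}$. Third, one must verify the codimension-one compatibility condition: if $Z_{\widehat{\pi^{(i)}}}\cap Z_{\widehat{\pi^{(j)}}}$ has dimension $2k-1$, it equals $Z_{\widehat{\pi^{(3)}}}$ with $S_{\widehat{\pi^{(3)}}}$ in the closure of both. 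Here the face-to-face property of $\mathcal{D}_P$ gives that $\Gamma_{\pi^{(i)}}$ and $\Gamma_{\pi^{(j)}}$ meet in a common facet $\Gamma_{\pi^{(3)}}$, with $\pi^{(3)}$ loopless by \cref{prop_hyp1} and $S_{\pi^{(3)}}$ in the closure of both $S_{\pi^{(i)}}$ and $S_{\pi^{(j)}}$; thus T-duality applies to all three permutations, and by \cref{prop:2} (with $c=2$) $\dim S_{\widehat{\pi^{(3)}}} = 2k-1$. It then remains to check that T-duality preserves the relevant closure relations, $S_{\widehat{\pi^{(3)}}}\subseteq\overline{S_{\widehat{\pi^{(i)}}}}$, and that $\widetilde{Z}$ carries the shared facet to the shared boundary; the first should follow by analyzing how the closure order on decorated permutations behaves under left multiplication by the shift $\pi_0=(n,1,2,\dots,n-1)$, restricted to loopless/coloopless cells.

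The main obstacle is the second ingredient: establishing that T-duality sends a regular positroidal subdivision of $\Delta_{k+1,n}$ to a genuine dissection of $\mathcal{A}_{n,k,2}$ (covering and disjointness of images). This is a global statement about the map $\widetilde{Z}$ rather than a combinatorial fact about permutations, and it is precisely what is left open in \cref{conj:dis}; the cleanest resolution would presumably come from confirming \cref{conj:fan}, i.e.\ that $\Trop^+ Gr_{k+1,n}$ plays the role of the secondary fan for subdivisions of $\mathcal{A}_{n,k,2}$, so that maximal cones yield (regular) triangulations and lower-dimensional cones yield good subdivisions coarsening them. Once the dissection property is in hand, the first and third ingredients are comparatively routine, relying only on \cref{prop:injective}, \cref{prop:homeo2}, \cref{prop:2}, \cref{prop_hyp1}, and the face-to-face structure of regular polyhedral subdivisions.
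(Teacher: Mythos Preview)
The statement is a \emph{conjecture} in the paper, not a theorem; the paper does not prove it. Its entire justification is the one-line reduction you give in your first paragraph: a regular positroidal subdivision of $\Delta_{k+1,n}$ is a polyhedral (hence good) subdivision by \cref{rem:good}, so \cref{prop:subd} (itself conjectural) would imply that its T-dual is a good dissection of $\mathcal{A}_{n,k,2}$. Your first paragraph is exactly the paper's treatment, and you correctly flag that the substance lies in the unproven \cref{prop:subd}.

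Your additional direct-proof sketch goes beyond the paper, but the ``first ingredient'' contains a genuine gap. You write that the top cells $S_\pi$ of $\mathcal{D}_P$ are connected and therefore have tree reduced plabic graphs by \cref{prop:homeo2}. This conflates two different conditions: connectedness of the positroid means $\dim\Gamma_\pi = n-1$ (\cref{prop:dim}), whereas \cref{prop:homeo2} says the plabic graph is a tree precisely when $\dim S_\pi = n-1$, i.e.\ when the moment map is injective on the cell. These coincide only for the \emph{finest} regular positroidal subdivisions (the triangulations, coming from maximal cones of $\Trop^+Gr_{k+1,n}$; cf.\ \cref{cor:reg0}). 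For a coarser regular subdivision the top cells $S_\pi$ generally have $\dim S_\pi > n-1$, their plabic graphs are not trees, and \cref{prop:injective} does not apply. Likewise, in your third ingredient, invoking \cref{prop:2} to get $\dim S_{\widehat{\pi^{(3)}}} = 2k-1$ requires $\dim S_{\pi^{(3)}} = n-2$, not merely $\dim\Gamma_{\pi^{(3)}} = n-2$; the latter is what \cref{prop_hyp1} gives you. So as written, your direct approach only addresses the triangulation case, and the general subdivision case would need a separate argument (or, as the paper does, simply be left to \cref{prop:subd}).
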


In \cref{sec:experiments} we provide some computational evidence for \cref{conj:regular}.
For example, for $\mathcal{A}_{6,2,2}(Z)$ and $\mathcal{A}_{7,2,2}(Z)$,  every
regular positroid subdivision is good, and moreover, all good dissections are 
regular positroid subdivisions.  (This appears to also be the case for $\mathcal{A}_{8,2,2}(Z)$; 
but we were only able to compute the number of \emph{tilings} in this case.)
One might hope to strengthen \cref{conj:regular} and conjecture that the 
regular positroid subdivisions are precisely the good dissections.  
However, the notion of regularity is rather subtle (as usual in polyhedral geometry), and 
starting from $\mathcal{A}_{9,2,2}(Z)$, there are some good dissections which are not regular. 

\subsection{A large class of regular positroid tilings of $\Delta_{k+1,n}$ and $\mathcal{A}_{n,k,2}(Z)$}

\begin{definition}
Let $T$ be any planar trivalent tree with $n$ leaves (which will necessarily have $n-2$ internal
vertices), embedded in a disk with the leaves labelled from $1$ to $n$ in clockwise order.
Let $\mathcal{T}_{n,k}$ be the set of ${n-2 \choose k}$ plabic graphs obtained from $T$ by 
	colouring precisely $k$ of the internal vertices black, as in \cref{fig:trees}.
\end{definition}
\begin{figure}
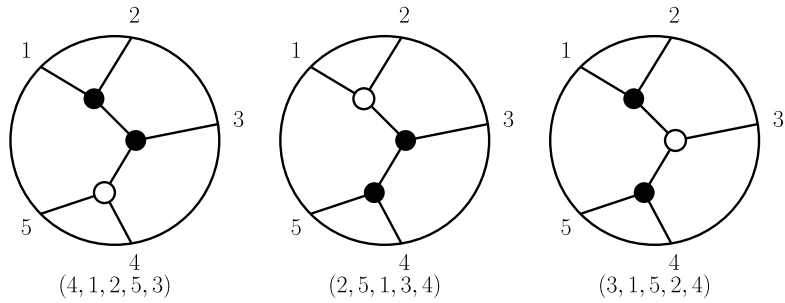

\begin{center}
\begin{tabular}{c}
	\includegraphics[scale=0.18]{Pictures/diss25_4.pdf}\quad\includegraphics[scale=0.18]{Pictures/diss25_5.pdf}\quad\includegraphics[scale=0.18]{Pictures/diss25_6.pdf}
\end{tabular}
\end{center}
	\label{fig:trees}
	\caption{The collection $\mathcal{T}_{5,2}$ of plabic graphs giving a regular subdivision of $\Delta_{3,5}$}
\end{figure}

\begin{proposition}\label{prop:largeclass}
The 
cells of $Gr^{\geq 0}_{k+1,n}$ corresponding to the plabic graphs in $\mathcal{T}_{n,k}$ give a regular
tiling of $\Delta_{k+1,n}$.  Therefore the images of these cells under the T-duality map
give a regular tiling of $\mathcal{A}_{n,k,2}(Z)$.  
\end{proposition}
\begin{proof}
We can use \cref{thm:dishyper} (see \cref{fig:hyp}) to inductively
prove that the cells corresponding
to $\mathcal{T}_{n,k}$ give a tiling of $\Delta_{k+1,n}$.
The fact that the cells corresponding to the plabic graphs 
in $\mathcal{T}_{n,k}$ give a \emph{regular} tiling
of $\Delta_{k+1,n}$ follows from \cite[Theorem 8.4]{Speyer}.  
Now using 
\cref{thm:shift}, it follows that the images of these cells under the T-duality map
	give a tiling of $\mathcal{A}_{n,k,2}(Z)$.  
The fact that this tiling is regular now follows from
\cref{def:regular}.
\end{proof}

\begin{remark}
The above construction gives us $C_{n-2}$ regular tilings of $\mathcal{A}_{n,k,2}(Z)$,
	where $C_n = \frac{1}{n+1} {2n \choose n}$ is the Catalan number.
\end{remark}

\subsection{The fan structure for regular positroid subdivisions}
We now discuss the fan structure for regular positroid 
subdivisions of the hypersimplex and amplituhedron.
\begin{definition}
	Given two subdivisions $\{\Gamma_{\pi}\}$ and $\{\Gamma_{\pi'}\}$ of $\Delta_{k+1,n}$, 
	we say that \emph{
		$\{\Gamma_{\pi}\}$ refines $\{\Gamma_{\pi'}\}$} and write
	$\{\Gamma_{\pi}\} \preceq \{\Gamma_{\pi'}\}$ if every $\Gamma_{\pi}$ is contained
	in some $\Gamma_{\pi'}$.

	Similarly, given two subdivisions $\{Z_{\pi}\}$ and $\{Z_{\pi'}\}$ of $\mathcal{A}_{n,k,2}(Z)$, 
	we say that \emph{
		$\{Z_{\pi}\}$ refines $\{Z_{\pi'}\}$} and write
	$\{Z_{\pi}\} \preceq \{Z_{\pi'}\}$ if every $Z_{\pi}$ is contained
	in some $Z_{\pi'}$.
\end{definition}

Recall from \cref{sec:fan} that we have a fan structure
on $\Trop^+Gr_{k+1,n}$ 
(the \emph{secondary fan}, which coincides with the \emph{Pl\"ucker fan})
which describes the regular positroid subdivisions
of $\Delta_{k+1,n}$, ordered by refinement.
We expect that this fan structure on 
$\Trop^+Gr_{k+1,n}$ also 
describes the regular positroid subdivisions of $\mathcal{A}_{n,k,2}(Z)$.

\begin{conj}\label{conj:fan}
The regular positroid subdivisions of $\mathcal{A}_{n,k,2}(Z)$
are parametrized by the cones of 
$\Trop^+Gr_{k+1,n}$, with the natural partial order on the cones
reflecting the refinement order on positroid subdivisions.
\end{conj}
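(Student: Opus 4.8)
The plan is to split \cref{conj:fan} into a nearly formal bijection statement and a genuine order statement, and then to reduce the order statement to a single claim about how T-duality interacts with the closure order on positroid cells. For the bijection: by \cref{def:regular} the regular positroidal subdivisions of $\mathcal{A}_{n,k,2}$ are, by definition, the collections $\{S_{\hat{\pi}}\}$ obtained by applying T-duality to the regular positroidal subdivisions $\{S_{\pi}\}$ of $\Delta_{k+1,n}$; since $\pi\mapsto\hat{\pi}$ is a bijection on the relevant cells (\cref{lem:hat}), and since by the fan-structure discussion of \cref{sec:fan} (using that the Pl\"ucker and secondary fan structures coincide, \cite{Olarte}) the cones of $\Trop^+Gr_{k+1,n}$ are in bijection with the regular positroidal subdivisions of $\Delta_{k+1,n}$, we immediately obtain a bijection between the cones of $\Trop^+Gr_{k+1,n}$ and the regular positroidal subdivisions of $\mathcal{A}_{n,k,2}$. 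It then remains to show that this bijection is an isomorphism of posets, i.e.\ that for regular positroidal subdivisions $\mathcal{D},\mathcal{D}'$ of $\Delta_{k+1,n}$ one has $\mathcal{D}\preceq\mathcal{D}'$ if and only if $\widehat{\mathcal{D}}\preceq\widehat{\mathcal{D}}'$, where $\widehat{\mathcal{D}}=\{S_{\hat{\pi}}\}$ denotes the T-dual subdivision of $\mathcal{A}_{n,k,2}$.

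For the forward implication I would argue maximal cell by maximal cell. If $\mathcal{D}$ refines $\mathcal{D}'$ and $S_{\pi}$ is a maximal cell of $\mathcal{D}$, then $S_{\pi}$ lies in a unique maximal cell $S_{\pi'}$ of $\mathcal{D}'$, so $\Gamma_{\pi}\subseteq\Gamma_{\pi'}$; since every vertex $e_B$ of $\Gamma_{\pi}$ is a vertex of the hypersimplex, hence an extreme point of the subpolytope $\Gamma_{\pi'}$, this gives $\mathcal{B}(M_{\pi})\subseteq\mathcal{B}(M_{\pi'})$, which for positroids forces $S_{\pi}\subseteq\overline{S_{\pi'}}$ (a property of the positroid stratification, cf.\ \cite{postnikov,ARW}). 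The key step is then to verify that the $Q$-map construction of \cref{sec:Tcells} carries this containment through: the matrix $Q^{(\lambda)}$ depends only on a generic $\lambda$ and not on the cell, and sends $S_{\pi}^{(\lambda)}$ into $S_{\hat{\pi}}$, so one should be able to conclude $S_{\hat{\pi}}\subseteq\overline{S_{\hat{\pi'}}}$; granting this, applying $\widetilde{Z}$ yields $Z_{\hat{\pi}}=\overline{\widetilde{Z}(S_{\hat{\pi}})}\subseteq\overline{\widetilde{Z}(S_{\hat{\pi'}})}=Z_{\hat{\pi'}}$, and running over all maximal cells of $\widehat{\mathcal{D}}$ gives $\widehat{\mathcal{D}}\preceq\widehat{\mathcal{D}}'$.

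The converse — deducing $\mathcal{D}\preceq\mathcal{D}'$ from $\widehat{\mathcal{D}}\preceq\widehat{\mathcal{D}}'$ — is where I expect the real difficulty, since a priori $\widetilde{Z}$ could collapse containments, so that $Z_{\hat{\pi}}\subseteq Z_{\hat{\pi'}}$ need not obviously imply $\mathcal{B}(M_{\pi})\subseteq\mathcal{B}(M_{\pi'})$. I see three plausible routes. First, one could deduce it from \cref{prop:subd} (equivalently \cref{conj:dis}): granting that T-duality is a bijection between good dissections of $\Delta_{k+1,n}$ and of $\mathcal{A}_{n,k,2}$, and using \cref{prop_hyp1} to see that the facets shared by cells of a regular positroidal subdivision are themselves loopless positroid polytopes — hence that T-duality applies to them — one can propagate the refinement relation through T-duality cell-by-facet. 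Second, one could argue directly using the explicit relation between consecutive maximal minors of $C$ and of $\widehat{C}=C\cdot Q^{(\lambda)}$ recorded in \cref{sec:Tcells}, to show that the vanishing/non-vanishing pattern of $C$, hence the positroid $M_{\pi}$, can be reconstructed from that of $\widehat{C}$ together with $\lambda$, so that $Z_{\hat{\pi}}$ ``remembers'' $\Gamma_{\pi}$ finely enough to detect containments among the maximal cells of a fixed subdivision. Third, one could use a global argument: the secondary fan structure on $\Trop^+Gr_{k+1,n}$ expresses every regular positroidal subdivision of $\Delta_{k+1,n}$ as the common coarsening of the regular triangulations refining it; the maximal-cone (triangulation) case of the conjecture is already available for the recursively constructed triangulations by \cref{thm:shift}, \cref{prop:injective} and \cref{cor:reg0}; so if one shows that forming the common coarsening commutes with T-duality, the order statement follows for all cones. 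In every route the essential point is the same: showing that T-duality is \emph{faithful} to the closure/containment order on positroid cells, not merely a bijection of cells. I regard this faithfulness — in effect the content of \cref{conj:dis} and \cref{prop:subd} — as the main obstacle, and would expect a complete proof of \cref{conj:fan} to come bundled with a proof of those conjectures.
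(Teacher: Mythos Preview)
The paper does not prove \cref{conj:fan}: it is explicitly stated as a conjecture, supported only by the computational evidence of \cref{sec:experiments} and its consistency with the subsequent conjecture on refinement. There is therefore no proof in the paper to compare against.

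That said, your analysis of what a proof would require is accurate and matches the paper's own framing. The bijection part is indeed nearly formal given \cref{def:regular}, \cref{lem:hat}, and the secondary-fan discussion in \cref{sec:fan}. The substantive content is exactly the order-compatibility statement, and you correctly trace this back to the unresolved \cref{conj:dis} and \cref{prop:subd}. One small correction: even your ``forward implication'' is not as clean as you suggest. You assert that $S_{\pi}\subseteq\overline{S_{\pi'}}$ implies $S_{\hat{\pi}}\subseteq\overline{S_{\hat{\pi'}}}$ via the $Q$-map, but \cref{sec:Tcells} only shows that $Q^{(\lambda)}$ sends $S_{\pi}^{(\lambda)}$ into $S_{\hat{\pi}}$; it does not establish that T-duality respects the closure order on positroid cells, and the paper does not claim this. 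So both directions, not just the converse, depend on the ``faithfulness'' you isolate at the end. Your conclusion---that a full proof of \cref{conj:fan} should come bundled with a proof of \cref{conj:dis}/\cref{prop:subd}---is exactly the paper's position.
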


\cref{conj:fan} is consistent with the following conjecture.

\begin{conj}
Consider two regular positroid 	
 subdivisions $\{\Gamma_{\pi}\}$ and $\{\Gamma_{\pi'}\}$ of $\Delta_{k+1,n}$, 
 and two corresponding positroid subdivisions 
$\{Z_{\hat{\pi}}\}$ and $\{Z_{\hat{\pi'}}\}$ of $\mathcal{A}_{n,k,2}(Z)$.
Then we have that $\{\Gamma_{\pi}\} \preceq \{\Gamma_{\pi'}\}$ 
if and only if 
$\{Z_{\hat{\pi}}\} \preceq \{Z_{\hat{\pi'}}\}$ 
\end{conj}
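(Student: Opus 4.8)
The plan is to reduce the conjecture to an image-level monotonicity property of the T-duality map and then to attack that property through the $Q^{(\lambda)}$-realization of T-duality from \cref{sec:Tcells}. \emph{Reduction.} Since a polytope is the convex hull of its vertices, two top-dimensional positroid polytopes satisfy $\Gamma_{\pi}\subseteq\Gamma_{\pi'}$ exactly when the set of bases of $M_{\pi}$ is contained in that of $M_{\pi'}$, and $Z_{\hat\pi}\subseteq Z_{\hat{\pi'}}$ is likewise a containment of full-dimensional subsets of $\mathcal{A}_{n,k,2}$; so both relations $\preceq$ are determined by containments among maximal cells. By \cref{prop:positroidal} and the fan structure on $\Trop^{+}Gr_{k+1,n}$ recalled in \cref{sec:fan}, the poset of regular positroidal subdivisions of $\Delta_{k+1,n}$ under $\preceq$ is the face poset of that fan, so it is generated by elementary coarsenings: crossing one codimension-one wall (where a three-term Pl\"ucker relation becomes degenerate) replaces some adjacent maximal positroid polytopes, meeting at the corresponding octahedral cell, by their union $\Gamma_{\tau}$ — again a positroid polytope by \cref{thm:char} — with pairwise facet intersections $\Gamma_{\pi^{(ij)}}$, each $\pi^{(ij)}$ loopless by \cref{prop_hyp1}, and all other cells unchanged. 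It therefore suffices to show, for each such elementary coarsening, that $Z_{\hat\tau}=\bigcup_i Z_{\widehat{\pi^{(i)}}}$, with interiors $Z^{\circ}_{\widehat{\pi^{(i)}}}$ pairwise disjoint and $Z_{\widehat{\pi^{(i)}}}\cap Z_{\widehat{\pi^{(j)}}}=Z_{\widehat{\pi^{(ij)}}}$; running this along saturated chains, in both directions, yields both implications.

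\emph{The core claim, and why it is not a statement about cells.} What remains is: for loopless $\pi,\pi'$ on $[n]$ with $\dim S_{\pi}=\dim S_{\pi'}=n-1$ that occur in a common regular positroidal subdivision and its coarsening, $\Gamma_{\pi}\subseteq\Gamma_{\pi'}$ if and only if $Z_{\hat\pi}\subseteq Z_{\hat{\pi'}}$. This must be proved for the \emph{images}: $S_{\hat\pi}$ and $S_{\hat{\pi'}}$ are distinct $2k$-dimensional cells of $Gr^{+}_{k,n}$ (and $S_{\pi},S_{\pi'}$ are distinct $(n-1)$-cells of $Gr^{+}_{k+1,n}$), so neither closure contains the other; the nesting appears only after the dimension-dropping maps $\mu$ and $\widetilde{Z}$ — which is exactly why \cref{conj:dis} and \cref{conj:fan} are delicate. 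Equivalently, one wants $\widetilde{Z}(\overline{S_{\hat\pi}})$ to depend on $\pi$ only through the positroid polytope $\Gamma_{\pi}$, monotonically in $\Gamma_{\pi}$.

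\emph{Approach and main obstacle.} I would try to establish this using the $Q^{(\lambda)}$-map (\cref{sec:Tcells}): for generic $\lambda$ it carries the slice $S^{(\lambda)}_{\pi}=\{W\in S_{\pi}:\lambda\subset W\}$ onto a dense subset of $S_{\hat\pi}$, compatibly with closures, so that $\widetilde{Z}\circ Q^{(\lambda)}$ lands densely in $Z_{\hat\pi}$. The goal is a cell-independent comparison: a homeomorphism, uniform in $\pi$, between the appropriate $\lambda$-slice on the hypersimplex side and $\mathcal{A}_{n,k,2}$, through which both $\mu|_{S^{(\lambda)}_{\pi}}$ and $\widetilde{Z}\circ Q^{(\lambda)}|_{S^{(\lambda)}_{\pi}}$ factor; assembled over generic $\lambda$, this would produce a continuous partial map or correspondence $\Delta_{k+1,n}\dashrightarrow\mathcal{A}_{n,k,2}$ sending $\Gamma_{\pi}$ to $Z_{\hat\pi}$, from which $\Gamma_{\pi}\subseteq\Gamma_{\pi'}\Rightarrow Z_{\hat\pi}\subseteq Z_{\hat{\pi'}}$ is immediate and, with generic injectivity, so is the converse. \textbf{Constructing this comparison is the crux of the problem}: as the paper stresses, no obvious map between $\Delta_{k+1,n}$ and $\mathcal{A}_{n,k,2}$ is known — the dimensions $n-1$ and $2k$ are unrelated — so even producing a well-defined correspondence with the required compatibility is the main obstacle.

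\emph{Partial routes in the meantime.} For the subclass of recursively-built subdivisions of \cref{thm:dishyper} and \cref{thm:disamp}, the maps $\ipre,\iinc$ and $\Ipre,\Iinc$ visibly preserve containment of positroid polytopes and of $Z$-images (from their effect on bases, as in the proof of \cref{thm:dishyper}), which, combined with the identities $\widehat{\ipre(\pi)}=\Ipre(\hat\pi)$ and $\widehat{\iinc(\pi)}=\Iinc(\hat\pi)$ from the proof of \cref{thm:shift}, gives the statement by induction on $k+n$. The general statement also follows from \cref{conj:fan}, both orders then being the face order on the cones of $\Trop^{+}Gr_{k+1,n}$; and it is consistent with the computations in \cref{sec:regularA}.
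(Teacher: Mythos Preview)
The statement you are attempting to prove is a \emph{conjecture} in the paper, not a theorem: the paper offers no proof, only the remark that it is ``consistent with'' \cref{conj:fan} and the computational evidence of \cref{sec:experiments}. So there is no paper proof to compare against; the question is whether your proposal actually closes the gap.

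It does not, and you are candid about why. Your reduction to elementary coarsenings on the hypersimplex side is fine, and you correctly isolate the crux: one needs $\Gamma_{\pi}\subseteq\Gamma_{\pi'}\Longleftrightarrow Z_{\hat\pi}\subseteq Z_{\hat{\pi'}}$ at the level of \emph{images}, not cells, and no map or correspondence $\Delta_{k+1,n}\dashrightarrow\mathcal{A}_{n,k,2}$ realizing this is known. Your $Q^{(\lambda)}$ approach is a reasonable line of attack, but as you say, constructing the comparison is precisely the open problem; nothing in \cref{sec:Tcells} shows that $\widetilde{Z}\circ Q^{(\lambda)}$ and $\mu$ factor through a common object in the way you need.

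Two of your side remarks also need correction. First, invoking \cref{thm:char} to justify that the union $\Gamma_{\tau}$ in an elementary coarsening is a positroid polytope is a misattribution: \cref{thm:char} characterizes positroid polytopes via their two-dimensional faces, not via unions. The fact you want is immediate from the definition of a positroidal subdivision (the coarser subdivision is itself positroidal, so its cells are positroid polytopes). Second, your ``partial routes'' paragraph does not establish even a special case. The recursions of \cref{thm:dishyper} and \cref{thm:disamp} build individual dissections; they do not produce \emph{refinements} between two dissections, and the intertwining identities from the proof of \cref{thm:shift} say nothing about $Z_{\hat\pi}\subseteq Z_{\hat{\pi'}}$ for $\pi,\pi'$ coming from distinct recursively-built dissections related by coarsening. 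Finally, deducing the statement from \cref{conj:fan} is circular: that is also a conjecture, and the paper presents the two as mutually supporting, not as one implying the other.
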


In particular, the regular positroid tilings of $\mathcal{A}_{n,k,2}(Z)$ should 
come precisely from the maximal cones of $\Trop^+Gr_{k+1,n}$.  More specifically,
if $\{P_I\}$ lies in a 
 maximal cone of $\Trop^+Gr_{k+1,n}$,
and $\{S_{\pi}\}$ is the regular positroid tiling corresponding to $\mathcal{D}_P$,
then $\{S_{\hat{\pi}}\}$ should be a regular positroid tiling of $\mathcal{A}_{n,k,2}(Z)$.
(Moreover, all regular positroid tilings of $\mathcal{A}_{n,k,2}(Z)$ should arise in this way.)

\subsection{The $f$-vector of $\Trop^+Gr_{k+1,n}$}
In light of \cref{conj:fan}, it is useful to compute the $f$-vector of the positive 
tropical Grassmannian.  
This is the vector $(f_0,f_1,\dots, f_{d})$ whose components compute the number of 
cones of fixed dimension.

As shown in \cite{troppos}, the positive tropical Grassmannian has an $n$-dimensional 
lineality space coming from the torus action.  However, one may mod out by this torus action
and study the resulting fan.
The method used in \cite{troppos} was to show that 
$\Trop^+Gr_{k,n}$ (a polyhedral subcomplex of $\R^{\binom{[n]}{k}}$)
is combinatorially equivalent to an $(n-k-1)(k-1)$-dimensional
fan $F_{k,n}$, obtained by using an ``$X$-cluster'' or ``web'' parametrization
of the positive Grassmannian, and modding out by the torus action.
As explained in \cite[Section 6]{troppos}, 
$F_{k,n}$ is the dual fan to the
Minkowski sum of the $\binom{n}{k}$ Newton
polytopes obtained by writing down each Pl\"ucker coordinate 
in the $X$-cluster parametrization.

Using this technique, \cite{troppos} computed 
the $f$-vector 
of $\Trop^+Gr_{2,n}$ (which is the $f$-vector of the associahedron, with maximal cones corresponding
to tilings of a polygon)
$\Trop^+Gr_{3,6}$, and 
$\Trop^+Gr_{3,7}$.
The above $f$-vector computations were recently extended 
in \cite{Arkani-Hamed:2019rds} using the notion of ``stringy canonical forms'' and in \cite{Borges:2019csl,Cachazo:2019xjx} using planar arrays and matrices of Feynman diagrams. 
 See also \cite{Henke:2019hve,Drummond:2019cxm, Early:2019eun} for recent, physics-inspired developments in this direction. 
We list all known results about maximal cones in the positive tropical  Grassmannian $\Trop^+Gr_{k+1,n}$ and their relation to tilings of hypersimplex $\Delta_{k+1,n}$  in \cref{tab:table}.

Apart from the $f$-vector of $\Trop^+Gr_{2,n}$, 
the known $f$-vectors of positive tropical Grassmannians $\Trop^+Gr_{k,n}$ (with $k \leq \frac{n}{2}$)
are the following:
\begin{align*}
	\Trop^+Gr_{3,6}: & (1, 48, 98, 66, 16, 1)\\
	\Trop^+Gr_{3,7}: & (1, 693, 2163, 2583, 1463, 392, 42, 1)\\
	\Trop^+Gr_{3,8}: & (1, 13612, 57768, 100852, 93104, 48544, 14088, 2072, 120,1)\\
	\Trop^+Gr_{4,8}: & (1, 90608, 444930, 922314, 1047200, 706042, 285948, 66740, 7984, 360, 1)
\end{align*}
For $\Trop^+Gr_{4,9}$ it is also known that the second component of the $f$-vector is 
30659424 \cite{Cachazo:2019xjx}.

\begin{remark} \label{clusteralgebras}
The coordinate ring of the Grassmannian has the structure of a \emph{cluster algebra} 
\cite{Scott}.  In particular, $Gr_{2,n}$, $Gr_{3,6}$, $Gr_{3,7}$, $Gr_{3,8}$ have 
cluster structures of finite types $A_n$, $D_4$, $E_6$, and $E_8$, respectively.
As discussed in \cite{troppos}, there is an intriguing connection between
 $\Trop^+Gr_{k,n}$ and the cluster structure.  In particular, 
 $F_{2,n}$ is the fan to the type $A_n$ associahedron, while 
  $F_{3,6}$ and $F_{3,7}$ are refinements of the fans associated to the $D_4$ and $E_6$ associahedra.
Via our correspondence between $\Trop^+Gr_{k+1,n}$ and the amplituhedron $\mathcal{A}_{n,k,2}(Z)$,
the Grassmannian cluster structure on $Gr_{k+1,n}$ should be reflected in good subdivisions
of $\mathcal{A}_{n,k,2}(Z)$.  In particular the type $A_n$ cluster structure should control 
	$\mathcal{A}_{n,1,2}(Z)$ (this is apparent, since $\mathcal{A}_{n,1,2}(Z)$ is a projective polygon),
	while the type $D_4$, $E_6$, and $E_8$ cluster structures should be closely related to 
	$\mathcal{A}_{6,2,2}(Z)$,
	$\mathcal{A}_{7,2,2}(Z)$, and 
	$\mathcal{A}_{8,2,2}(Z)$.
\end{remark}

\subsection{Experimental Data.}\label{sec:experiments}
Checks for this section\footnote{A more detailed discussion of these checks can be found in the arXiv version of this paper (v3).} for small values of $n$ and $k$ have been performed using Wolfram Mathematica. In particular, we used the packages \texttt{`positroid'}  \cite{Bourjaily:2012gy} and  \texttt{`amplituhedronBoundaries'} \cite{LM}. 
This allowed us to find the complete poset of good dissections of $\mathcal{A}_{6,2,2}$ and $\mathcal{A}_{7,2,2}$, whose $f$-vectors read:
\begin{align*}
&\mathcal{A}_{6,2,2}: (1,48,98,66,16,1)\\
&\mathcal{A}_{7,2,2}: (1, 693,2163,2583,1463,392,42, 1)\,.
\end{align*}
These are exactly the $f$-vectors of the positive tropical Grassmannian $\Trop^+Gr_{3,6}$ and $\Trop^+Gr_{3,7}$, respectively. 
For higher values of $n$ and $k$, we have been able to find all (good) tilings, and our findings\footnote{We also included there the results for $Gr^{\geq 0}_{3,9}$ which, by using our conjectures, can be derived from \cite{Cachazo:2019xjx}.} are summarized in \cref{tab:table}. 
\begin{table}[h!]
\begin{center}
\begin{tabular}{c|c|c|c|c}
{$(k,n)$}{}&Tilings&Good tilings&$\Trop^+Gr_{k+1,n}$&Non-regular good tilings\\
\hline\hline
$(1,n)$&$C_{n-2}$&$C_{n-2}$&$C_{n-2}$&0\\
\hline
\hline
$(2,5)$&5&5&5&0\\
\hline
$(2,6)$&120
&48&48&0\\
\hline
$(2,7)$&3073&693&693&0\\
\hline
$(2,8)$&6 443 460&13 612&13 612&0\\
\hline
$(2,9)$&?&346 806&346 710&96\\
\hline\hline
$(3,6)$&14&14&14&0\\
\hline
$(3,7)$&3073&693&693&0\\
\hline
$(3,8)$&?&91 496&90 608&888\\
\hline
$(3,9)$
&?&33 182 763& 30 659 424& 2 523 339
\end{tabular}
\end{center}
	\caption{New results about the tilings of the amplituhedron $\mathcal{A}_{n,k,2}(Z)$ in relation to known results about the number of maximal cones of the positive tropical Grassmannian $\Trop^+Gr_{k+1,n}$.}
	\label{tab:table}
\end{table}
In particular, we observe that for $\mathcal{A}_{8,2,2}(Z)$ the number of good tilings agrees with the number of maximal cones in $\Trop^+Gr_{3,8}$. Starting from $n=9$, the number of good tilings is larger than the number of maximal cones in positive tropical Grassmannian. It is indeed the first example where one can find good tilings which are \emph{not regular}. In particular, out of $346806$ good tilings, $96$ are not regular. Similarly, for $k=3$ and $n=8$, $888$ good tilings of $\mathcal{A}_{8,3,2}(Z)$ are not regular. We note that these correspond exactly to degenerate matrices found in \cite{Cachazo:2019xjx}.

\section{T-duality and the momentum amplituhedron for general (even) m}\label{sec:twistor}

Throughout the paper we have explored the remarkable connection between the hypersimplex and the $m=2$ amplituhedron. This was established via the T-duality map which allowed to relate positroid tiles, tilings, and dissections of both objects. 
It is then a natural question to wonder whether the story  generalizes for any (even) $m$. 

For $m=4$, we know that the amplituhedron $\mathcal{A}_{n,k,4}(Z)$ encodes the geometry of scattering amplitudes in $\mathcal{N}=4$ SYM, expressed in momentum twistor space. 
Physicists have already observed a beautiful connection between this and the formulation of scattering amplitudes of the same theory in 
momentum space\footnote{More precisely it is `spinor helicity' space, or, equivalently (related by half-Fourier transform), in twistor space. See \cite[Section 8]{abcgpt}.}.
At the core of this connection lies the Amplitude-Wilson Loop Duality \cite{Alday:2008yw}, which was shown to arise from a more fundamental duality in String Theory called `T-duality' \cite{Berkovits:2008ic}. 
For both formulations a Grassmannian representation has been found  \cite{Bullimore:2009cb,ArkaniHamed:2009dn}: scattering amplitudes (at tree level) are computed by performing a contour integral around specific cycles inside the positive Grassmannian (what in physics is referred to as a `BCFW contour'). 
If we are in momentum space, then one has to integrate over cycles corresponding to collections of $(2n-4)$-dimensional positroid cells of $Gr^{\geq 0}_{k+2,n}$. Whereas, if we are in momentum twistor space, the integral is over collections of $4k$-dimensional positroid cells of $Gr^{\geq 0}_{k,n}$. 
The two integrals compute the same scattering amplitude, and it was indeed shown that that formulas are related by a change of variables. In particular, this implied the existence of a map between certain $(2n-4)$-dimensional positroid cells of $Gr^{\geq 0}_{k+2,n}$ and certain $4k$-dimensional positroid cells of $Gr^{\geq 0}_{k,n}$ (called `BCFW'), which was defined in \cite[Formula (8.25)]{abcgpt}.
It is easy to see that this map is exactly our T-duality map for the case $m=4$ in \eqref{Tduality_gen}, up to a cyclic shift:
\begin{equation}
\sigma \hat{\pi}(i)=\pi(i-\frac{m}{2}+1)-1=\pi(i-1)-1.
\end{equation}

Collections of $4k$-dimensional `BCFW' positroid cells of $Gr^{\geq 0}_{k,n}$ defined from physics were conjectured to triangulate $\mathcal{A}_{n,k,4}(Z)$. The main results in the literature towards proving this conjecture can be found in \cite{Karp:2017ouj}. On the other hand, the corresponding collections of $(2n-4)$-dimensional `BCFW' positroid cells of $Gr^{\geq 0}_{k+2,n}$ were conjectured to triangulate an object $\mathcal{M}_{n,k,4}(\Lambda,\widetilde\Lambda)$ called `momentum amplituhedron', introduced recently by two of the authors in \cite{mamp}\footnote{In the paper, the momentum amplituhedron was denoted as $\mathcal{M}_{n,k}$, without the subscript `$4$'.}. 

The story aligns with the philosophy of the rest of this paper. In particular, one aims to seek for an object and a map which relates its tiles, tilings (and, more generally, dissections) to the ones of $\mathcal{A}_{n,k,m}(Z)$, for general (even) $m$.
There is a natural candidate for such a map: we have already seen that the T-duality map defined in \eqref{Tduality_gen} does indeed the job in the case of $m=2$ and $m=4$. 
Moreover, some of the statements which has been proven throughout the paper for $m=2$, as \cref{prop:2} and \cref{thm:pardual}, can be generalized for general (even) $m$.

\begin{proposition}\label{prop:2genm}
	Let $S_{\pi}$ be a 
	cell of $Gr^{\geq 0}_{k+\frac{m}{2},n}$ such that, as affine permutation, $\pi(i)\geq i+\frac{m}{2}$.
	Then $S_{\hat{\pi}}$ is a 
	 cell of $Gr_{k,n}^{\geq 0}$ such that $\hat{\pi}(i) \leq i+n-\frac{m}{2}$. Moreover, 
		$\dim(S_{\hat{\pi}}) -mk = \dim(S_\pi) - \frac{m}{2}(n-\frac{m}{2}).$
	In particular, if $\dim S_{\pi} = \frac{m}{2}(n-\frac{m}{2})$,
	then $\dim S_{\hat{\pi}} = mk$.
\end{proposition}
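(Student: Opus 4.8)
The plan is to follow the proof of \cref{prop:2} — which itself mirrors \cite[pages 75--76]{abcgpt} — replacing every occurrence of ``$1$'' by ``$\tfrac m2$''. First I would set up the rank computation. Let $C_\pi=\{c_1,\dots,c_n\}$ be a matrix representing $S_\pi\subset Gr^+_{k+\frac m2,n}$, with the $c_a$ its columns, and recall from \eqref{dim.rank} the dimension formulas
\[
\dim(S_\pi)=\sum_{a=1}^n r[a,\pi(a)]-\Bigl(k+\tfrac m2\Bigr)^2,\qquad
\dim(S_{\hat\pi})=\sum_{a=1}^n \hat r[a,\hat\pi(a)]-k^2,
\]
where $r[a,b]=\rank\{c_a,\dots,c_b\}$, $\hat r[a,b]=\rank\{\hat c_a,\dots,\hat c_b\}$ for $\hat C=C_\pi^{(\lambda)}Q^{(\lambda)}$ as in \eqref{eq:C}, and all indices are read cyclically. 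The target identity is then purely a matter of comparing these two sums.

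The heart of the argument is the claim that, for every $a$,
\[
\hat r\bigl[a+\tfrac m2,\ \hat\pi(a+\tfrac m2)\bigr]=r[a,\pi(a)]-\tfrac m2,
\]
which, since $\hat\pi(a+\tfrac m2)=\pi(a)$ by \eqref{Tduality_gen}, reads $\hat r[a+\tfrac m2,\pi(a)]=r[a,\pi(a)]-\tfrac m2$. For the inequality ``$\le$'' I would combine the span containment $\spn\{\hat c_a,\dots,\hat c_b\}\subseteq\spn\{c_{a-\frac m2},\dots,c_b\}$ from \cref{sec:Tcells} with the fact that the top $\tfrac m2$ rows of $\hat C$ vanish, so that the $\hat c$'s span a subspace of the $k$-dimensional bottom block; this drops the rank by at least $\tfrac m2$, using that $r[a,\pi(a)]\ge\tfrac m2$, which is guaranteed by the hypothesis $\pi(a)\ge a+\tfrac m2$ together with \cref{lem:condgenm} (every interval of the relevant length carries a generic $\tfrac m2$-plane). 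For the reverse inequality ``$\ge$'' I would invoke the proportionality of consecutive maximal minors from \cref{sec:Tcells} — that $(a-\tfrac m2,\dots,a+k-1)_C$ is proportional to $(a,\dots,a+k-1)_{\hat C}$ — to show that enough nonvanishing maximal minors of $C$ survive, after deleting $\tfrac m2$ columns, as nonvanishing maximal minors of $\hat C$. Some bookkeeping is needed because $\hat C$ is built on the slice $S_\pi^{(\lambda)}$ rather than on all of $S_\pi$; but $\lambda$ is generic and the rank profile of the bottom block is independent of that choice, so the $\hat r[\cdot,\cdot]$ are exactly the rank data attached to $S_{\hat\pi}$.

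Granting the claim, summing over $a$ and reindexing cyclically by $i=a+\tfrac m2$ gives $\sum_i\hat r[i,\hat\pi(i)]=\sum_a r[a,\pi(a)]-\tfrac m2 n$, so
\[
\dim(S_{\hat\pi})=\dim(S_\pi)+\Bigl(k+\tfrac m2\Bigr)^2-k^2-\tfrac m2 n=\dim(S_\pi)+mk+\tfrac{m^2}{4}-\tfrac m2 n,
\]
and hence $\dim(S_{\hat\pi})-mk=\dim(S_\pi)-\tfrac m2\bigl(n-\tfrac m2\bigr)$, which specializes to $\dim(S_{\hat\pi})=mk$ when $\dim(S_\pi)=\tfrac m2(n-\tfrac m2)$. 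To finish I would check that $S_{\hat\pi}$ is genuinely a cell of $Gr^+_{k,n}$ with the asserted affine bound on $\hat\pi$: the bound $i\le\hat\pi(i)\le i+n-\tfrac m2$ follows directly from $\hat\pi(i)=\pi(i-\tfrac m2)$ and the affine bounds on $\pi$, while the one-line computation $\sum_{i=1}^n(\hat\pi(i)-i)=\sum_j(\pi(j)-j)-\tfrac m2 n=(k+\tfrac m2)n-\tfrac m2 n=kn$ shows that the rank parameter drops from $k+\tfrac m2$ to $k$, exactly as \cref{lem:hat} records in the $m=2$ case.

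\textbf{Main obstacle.} I expect the delicate step to be the exact equality $\hat r[a+\tfrac m2,\pi(a)]=r[a,\pi(a)]-\tfrac m2$: the upper bound is soft, but the lower bound genuinely uses both the consecutive-minor proportionality and the ``long chain'' hypothesis $\pi(a)\ge a+\tfrac m2$, and one must keep careful track of the passage between $S_\pi$ and the slice $S_\pi^{(\lambda)}$ on which $Q^{(\lambda)}$ acts.
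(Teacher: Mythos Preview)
Your proposal is correct and follows essentially the same route as the paper: both argue via the dimension formula \eqref{dim.rank}, assert that each chain rank drops by exactly $\tfrac m2$ under $Q^{(\lambda)}$, and then sum to obtain the identity. You in fact supply more detail than the paper does---the paper's proof simply states that ``one can check'' the rank drop, whereas you sketch both inequalities and separately verify the affine bounds on $\hat\pi$ and the shift in the anti-excedance count.
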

 \begin{proof}
This is a straightforward generalization of the proof of  \cref{prop:2}. It is enough to observe that, in the language of affine permutations, T-duality maps a $(k+m/2,n)$-bounded affine permutation $\pi_a$ into a $(k,n)$-bounded affine permutation $\hat{\pi}_a=\pi_a \circ t^{m/2}$, with $t^{m/2}:\mathbb{Z} \rightarrow \mathbb{Z}$ the map $i \mapsto i-m/2$. Clearly, $t^{m/2}$ preserve the length of affine permutations. Hence the codimensions of $S_{\pi_a} \subseteq Gr^{\geq 0}_{k+\frac{m}{2},n}$ and $S_{\hat{\pi}_a} \subseteq Gr^{\geq 0}_{k,n}$ are equal.
\end{proof}

It is also natural to think of
parity duality between $\mathcal{A}_{n,k,m}(Z)$ and $\mathcal{A}_{n,n-k-m,m}(Z')$
as a composition of the Grassmannian duality and T-duality
(plus cyclic shifts).
Imitating \cref{def:dualmapm2}, let us define
	$\widetilde{U}_{k,n,m}(\hat{\pi}):=\widehat{{\pi}^{-1}}$. Then we have the following theorem:

\begin{thm}[Parity duality from T-duality and Grassmannian duality]\label{thm:pardualgenm} Let $\lbrace Z_{\pi} \rbrace$ be a collection of Grasstopes which dissects the amplituhedron $\mathcal{A}_{n,k,m}(Z)$. Then the collection of Grasstopes $\lbrace Z_{\widetilde{U}_{k,n,m}\pi}\rbrace$ dissects the amplituhedron $\mathcal{A}_{n,n-k-m,m}(Z')$. 
\end{thm}
\begin{proof}
The parity duality $U_{k,n,m}$ in \cite{Galashin:2018fri}
was defined for any (even) $m$ as: $U_{k,n,m}(\pi):=(\pi-k)^{-1}+(n-k-m)$. 
	Then it easy to show that ${U}_{k,n,m}=\sigma^{k+\frac{m}{2}} \circ \widetilde{U}_{k,n,m}$. Using \cref{th:cyclicsym}, the prove follows immediately. 
\end{proof}

Since we found a natural candidate map, we now introduce a candidate object, which would speculatively relate to $\mathcal{A}_{n,k,m}(Z)$ via the T-duality map. This is a generalization of the momentum amplituhedron $\mathcal{M}_{n,k,4}(\Lambda,\widetilde\Lambda)$ and it is defined below.

\begin{definition}
For $k,n$ such that $k \leq n$, define the \emph{twisted positive part} of $Gr_{k,n}$ as:
\begin{equation}
Gr^{+,\tau}_{k,n} := \lbrace X \in Gr_{k,n}: (-1)^{\mbox{inv}(I, [n] \setminus I)}\Delta_{[n] \setminus I}(X) \geq 0  \rbrace
\end{equation}
where $\mbox{inv}(A,B):=\# \lbrace a \in A, b \in B | a > b \rbrace $ denotes the inversion number.
\end{definition}

The lemma below can be found in \cite[Lemma 1.11]{karp},
which  sketched a
proof and attributed it to Hochster and Hilbert.
\begin{lem}
Suppose $\Delta_I(V)$ are the Pl\"ucker coordinates of a point $V \in Gr_{k,n}$. Then the kernel $V^\perp \in Gr_{n-k,n}$ of $V$ is represented by the point with Pl\"ucker coordinates $\Delta_J (V^\perp)=(-1)^{\mbox{\emph{inv}}(J, [n] \setminus J)}\Delta_{[n] \setminus J}(V)$ for $J \in \binom{[n]}{n-k}$.
\end{lem}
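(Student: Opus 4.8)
The plan is to prove this classical fact by identifying the operation $V\mapsto V^{\perp}$ with the Hodge star on exterior powers and then reading the sign off the formula for the Hodge star on basis multivectors. Since the Pl\"ucker coordinates of a point of $Gr_{n-k,n}$ are only defined up to a common nonzero scalar, it suffices to show that the tuple $\big((-1)^{\mathrm{inv}(J,[n]\setminus J)}\Delta_{[n]\setminus J}(V)\big)_{J\in\binom{[n]}{n-k}}$ is not identically zero, satisfies the Pl\"ucker relations, and represents $V^{\perp}$; all three will follow at once from the Hodge-star description. Throughout, it is harmless that a global sign may be lost, precisely because Pl\"ucker coordinates are homogeneous.

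First I would set up the exterior-algebra picture. Represent $V$ by a $k\times n$ matrix $A$ with rows $v_1,\dots,v_k$, so that under the Pl\"ucker embedding $V$ is the line spanned by $v_1\wedge\cdots\wedge v_k=\sum_{I\in\binom{[n]}{k}}\Delta_I(A)\,e_I$ inside $\bigwedge^{k}\R^{n}$. Let $\star\colon\bigwedge^{k}\R^{n}\to\bigwedge^{n-k}\R^{n}$ be the Hodge star attached to the standard inner product and orientation of $\R^{n}$. I would invoke two standard ingredients: (i) $\star$ carries the decomposable vector $v_1\wedge\cdots\wedge v_k$ to a decomposable vector whose associated $(n-k)$-plane is exactly $V^{\perp}$ — this is the usual characterization of Hodge duality for subspaces, and it is what makes $\star$ induce the map $Gr_{k,n}\to Gr_{n-k,n}$, $V\mapsto V^{\perp}$, on Pl\"ucker lines; and (ii) on basis multivectors $\star e_I=\operatorname{sgn}(\sigma_I)\,e_{[n]\setminus I}$, where $\sigma_I$ is the shuffle that lists the elements of $I$ in increasing order followed by those of $[n]\setminus I$ in increasing order. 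Since the inversions of such a shuffle are exactly the pairs $(i,j)$ with $i\in I$, $j\notin I$ and $i>j$, one gets $\operatorname{sgn}(\sigma_I)=(-1)^{\mathrm{inv}(I,[n]\setminus I)}$.

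Then I would combine these: applying $\star$ gives $\star(v_1\wedge\cdots\wedge v_k)=\sum_{I}\Delta_I(A)\,(-1)^{\mathrm{inv}(I,[n]\setminus I)}\,e_{[n]\setminus I}=\sum_{J\in\binom{[n]}{n-k}}(-1)^{\mathrm{inv}([n]\setminus J,\,J)}\,\Delta_{[n]\setminus J}(A)\,e_J$ after reindexing $J=[n]\setminus I$. Using the identity $\mathrm{inv}(J,[n]\setminus J)+\mathrm{inv}([n]\setminus J,J)=k(n-k)$, the sign $(-1)^{\mathrm{inv}([n]\setminus J,J)}$ equals $(-1)^{k(n-k)}(-1)^{\mathrm{inv}(J,[n]\setminus J)}$, and the factor $(-1)^{k(n-k)}$ is independent of $J$, so it can be absorbed into the projective scaling. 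By ingredient (i), the resulting tuple is a valid set of Pl\"ucker coordinates for $V^{\perp}$, which is exactly the asserted formula $\Delta_J(V^{\perp})=(-1)^{\mathrm{inv}(J,[n]\setminus J)}\Delta_{[n]\setminus J}(V)$.

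I expect the main obstacle to be the sign bookkeeping: carefully justifying $\operatorname{sgn}(\sigma_I)=(-1)^{\mathrm{inv}(I,[n]\setminus I)}$ and then the passage from $\mathrm{inv}(I,[n]\setminus I)$ to $\mathrm{inv}(J,[n]\setminus J)$ with the extra global $(-1)^{k(n-k)}$ — elementary, but easy to get wrong. As a linear-algebra alternative avoiding exterior powers, one can instead choose a column set $C\in\binom{[n]}{k}$ with $\Delta_C(A)\neq0$, perform row operations so that the submatrix of $A$ on columns $C$ is the identity, build the explicit kernel matrix $B$ (identity block in columns $[n]\setminus C$, negated transpose of the off-pivot block of $A$ in columns $C$, zeros elsewhere), check $AB^{T}=0$ and $\operatorname{rank}B=n-k$, and then compare $\Delta_J(B)$ with $\Delta_{[n]\setminus J}(A)$ by Laplace expansion along the two identity blocks: both reduce to the same minor of the off-pivot block, and matching the Laplace signs (together with the signs from the negation and from transposition) reproduces $(-1)^{\mathrm{inv}(J,[n]\setminus J)}$ up to a $J$-independent sign, which can be pinned down by inspecting the single case $J=[n]\setminus C$.
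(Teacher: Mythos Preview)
The paper does not actually prove this lemma; it states it and cites \cite[Lemma~1.11]{karp}, noting that the result goes back to Hochster and Hilbert. So there is no ``paper's own proof'' to compare against here.

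Your proposal is a correct and standard argument for this classical fact. The Hodge-star approach is clean: the identification of $\star(v_1\wedge\cdots\wedge v_k)$ with a decomposable representative of $V^\perp$ is exactly the right geometric input, and your sign computation $\operatorname{sgn}(\sigma_I)=(-1)^{\mathrm{inv}(I,[n]\setminus I)}$ together with the reindexing $J=[n]\setminus I$ and the observation that $(-1)^{k(n-k)}$ is a global scalar all go through. The alternative you sketch (reduced echelon form, explicit kernel matrix, Laplace expansion) is essentially the approach one finds in Karp's paper and in many linear-algebra treatments; either route is fine. One small remark: in your alternative, be sure to keep track of the fact that after row-reducing $A$ you have rescaled its Pl\"ucker coordinates by $\Delta_C(A)^{-1}$, so the ``up to a $J$-independent sign'' you mention is really ``up to a $J$-independent nonzero scalar,'' which is of course all you need.
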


\begin{definition}
For $a,b$ such that $a \leq b$, define $\mbox{Mat}^{>0}_{a,b}$ the set of real $a \times b$ matrices whose $a \times a$ minors are all positive and its \emph{twisted positive part} as
\begin{equation}
\mbox{Mat}^{>0,\tau}_{a,b} := \lbrace A \in \mbox{Mat}_{a,b}: (-1)^{\mbox{inv}(I, [b] \setminus I)}\Delta_{[b] \setminus I}(A) > 0  \rbrace
\end{equation}
\end{definition}

\begin{definition}[The momentum amplituhedron]\label{deftwistroampl}
 Let  $\tilde{\Lambda} \in \mbox{Mat}^{>0}_{n,k'+\frac{m}{2}}, \Lambda \in \mbox{Mat}^{>0,\tau}_{n,n-k'+\frac{m}{2}}$, $k' +m/2 \leq n$. The \emph{momentum amplituhedron map} $\Phi_{\tilde{\Lambda},\Lambda}: Gr^{\geq 0}_{k',n} \rightarrow Gr_{k',k'+\frac{m}{2}} \times Gr_{n-k',n-k'+\frac{m}{2}}$ is defined by $\Phi_{\tilde{\Lambda},\Lambda}(C):=(C \tilde{\Lambda},C^\perp \Lambda)$, where $C$ and $C^\perp$ are matrices representing an element of $Gr_{k',n}^{\ge 0}$ and its orthogonal in $Gr_{k',n}^{\ge 0,\tau}$ respectively, and $C \tilde{\Lambda}$ and $C^\perp \Lambda$ matrices representing an element of $Gr_{k',k'+\frac{m}{2}}$ and $ Gr_{n-k',n-k'+\frac{m}{2}}$ respectively.
The \emph{momentum amplituhedron} $\mathcal{M}_{n,k',m}(\Lambda,\tilde{\Lambda}) \subseteq Gr_{k',k'+\frac{m}{2}} \times Gr_{n-k',n-k'+\frac{m}{2}}$ is the image $\Phi_{\tilde{\Lambda},\Lambda}(Gr^{\geq 0}_{k',n})$.
\end{definition}

\begin{proposition}[Momentum conservation]\label{momcons}
Let $(\tilde{Y},Y) $ represent a point in $Gr_{k',k'+\frac{m}{2}} \times Gr_{n-k',n-k'+\frac{m}{2}}$ and let $\tilde{Y}^\perp$ and $Y^\perp$ be matrices representing the orthogonal complements of $Y$ and $\tilde{Y}$, respectively. 
If $(\tilde{Y},Y)$ is in the momentum amplituhedron $\mathcal{M}_{n,k',m}(\Lambda,\widetilde\Lambda)$, then
\begin{equation}
(Y^\perp \Lambda^T) \cdot (\tilde{Y}^\perp \tilde{\Lambda}^T)^T =0
\end{equation}
\end{proposition}

\begin{proof}
From the identity 
\begin{equation}
0=Y^\perp  Y^T=Y^\perp \Lambda^T (C^\perp)^T
\end{equation}
we deduce that the row-span of $Y^\perp \Lambda^T$ is included in the row-span of the orthogonal of $C^\perp$, i.e. $C$. Analogously, from 
\begin{equation}
0=\tilde{Y}^\perp  \tilde{Y}^T=\tilde{Y}^\perp \tilde{\Lambda}^T C
\end{equation}
we deduce that the row-span of $\tilde{Y}^\perp \tilde{\Lambda}^T$ is included in the row-span of the $C^\perp$. Therefore $Y^\perp \Lambda^T$ and $\tilde{Y}^\perp \tilde{\Lambda}^T$ belong to orthogonal subspaces and satisfy
\begin{eqnarray}
(Y^\perp \Lambda^T) \cdot (\tilde{Y}^\perp \tilde{\Lambda}^T)^T=0.
\end{eqnarray}
\end{proof}

\begin{remark}
In reference to \cref{deftwistroampl}, we observe that: 
\begin{equation}
\mbox{dim} \left( Gr_{k',k'+m/2} \times Gr_{n-k',n-k'+m/2} \right)=\frac{m}{2}k'+\frac{m}{2}\left(n-k'\right)=\frac{m}{2}n.
\end{equation}
Moreover, \cref{momcons} implies that the momentum amplituhedron $\mathcal{M}_{n,k,m}$ is included in a codimension $(\frac{m}{2})^2$ sub-variety of $Gr_{k',k'+m/2} \times Gr_{n-k',n-k'+m/2}$.
Therefore, the dimension of $\mathcal{M}_{n,k',m}$ is at most (and conjectured to be exactly):
\begin{equation}
\frac{m}{2}n-\left(\frac{m}{2}\right)^2=\frac{m}{2}\left(n-\frac{m}{2} \right).
\end{equation}
We observe that, for $m=2$, this dimension is exactly $n-1$, which is the dimension of the hypersimplex $\Delta_{k+1,n}$; whereas, for $m=4$, the dimension is $2n-4$, which is the one of BCFW cells in momentum space.
\end{remark}

\begin{remark} \label{m2ampl}
For $m=2$,  \cref{deftwistroampl} reads:
\begin{equation}
\Phi_{\tilde{\Lambda},\Lambda}: Gr^{\geq 0}_{k',n} \rightarrow Gr_{k',k+1} \times Gr_{n-k',n-k'+1} \cong \mathbb{P}^{k'} \times \mathbb{P}^{n-k'}.
\end{equation}
Moreover, the conditions in \cref{momcons} are equivalent to:
\begin{equation} \lambda \cdot \tilde{\lambda}=0
\end{equation}
where we used the dot product in $\mathbb{R}^n$ of the vectors $\lambda:= {\Lambda}(Y^\perp)^T $ and  $\tilde{\lambda}:= {\tilde{\Lambda}}(\tilde{Y}^\perp)^T $.

Note that the $m=2$ momentum amplituhedron is not equal to the hypersimplex, as pointed out in \cite{Lukowski:2021amu}. 
\end{remark}
\begin{remark} For $m=4$, \cref{deftwistroampl} coincides with the one in \cite{mamp}. This is the positive geometry relevant for scattering amplitudes for $\mathcal{N}=4$ SYM in spinor helicity space.
\end{remark}

Many properties of $\mathcal{M}_{n,k,4}(\Lambda,\widetilde\Lambda)$ have still to be explored and proven. 
Let $\Phi_{\pi}$ denote the image under the amplituhedron map $\Phi_{\Lambda,\tilde{\Lambda}}(\bar{S}_{\pi})$ of (the closure of) a positroid cell $S_{\pi}$ in $Gr_{k',n}$.  Analogously to the amplituhedron, we call $\Phi_{\pi}$ a \emph{positroid tile} of $\mathcal{M}_{n,k,4}(\Lambda,\widetilde\Lambda)$ if it is full-dimensional and if the momentum amplituhedron map is injective on $S_{\pi}$. We also define \emph{positroid tilings} of $\mathcal{M}_{n,k,4}(\Lambda,\widetilde\Lambda)$ collections $\{\Phi_{\pi}\}$ of positroid tiles whose interior is disjoint and cover $\mathcal{M}_{n,k,4}(\Lambda,\widetilde\Lambda)$.
Then the conjecture in \cite{mamp} can be stated as:

\begin{conj}\label{conj:momamplm4} \cite{mamp}
There exists an open subset $\mathcal{P} \subset \Mat^{>0,\tau}_{n,k'+2} \times \Mat^{>0}_{n,n-k'+2}$ such that for all $(\Lambda,\widetilde\Lambda) \in \mathcal{P}$ a collection of positroid tiles $\{\Phi_{\pi}\}$ is a positroid tiling 
	(respectively, dissection) of $\mathcal{M}_{n,k+2,4}(\Lambda,\widetilde\Lambda)$  if and only if 
for all $Z \in \Mat^{>0}_{n,k+4} $ the collection of T-dual Grasstopes $\{Z_{\hat{\pi}}\}$ is a tiling (respectively, dissection) of $\mathcal{A}_{n,k,4}(Z)$.
\end{conj}
\begin{rmk}
\cite{mamp} provided experimental evidence that a subset $\mathcal{P}$ with the properties above can be obtained by imposing positivity of \emph{planar Mandelstam variables}. In particular, choosing the rows of $\Lambda^\perp$ and $\widetilde\Lambda$ on the moment curve as $(\Lambda^\perp)_{i,a}=i^a, \widetilde\Lambda_{i,\dot a}=i^{\dot a}$, with $ i \in [n], a \in [k'-2], \dot a \in [k'+2]$ would give a point in $\mathcal{P}$.  
\end{rmk}

Finally, we speculate that:
\begin{conj}\label{conj:momamplgenm}
Let $m$ be a multiple of $4$ and $k'=k+m/2$.
There exists an open subset $\mathcal{P} \subset \Mat^{>0,\tau}_{n,k'+\frac{m}{2}} \times \Mat^{>0}_{n,n-k'+\frac{m}{2}}$ such that for all $(\Lambda,\widetilde\Lambda) \in \mathcal{P}$ a collection $\{\Phi_{\pi}\}$ of positroid tiles is
a tiling 
	(respectively, dissection) of $\mathcal{M}_{n,k',m}(\Lambda,\widetilde\Lambda)$ if and only if 
the collection of T-dual Grasstopes $\{Z_{\hat{\pi}}\}$ is a
	tiling (respectively, dissection) of $\mathcal{A}_{n,k,m}(Z)$.
\end{conj}

\section{Appendix. Combinatorics of cells of the positive Grassmannian.}\label{app}

In \cite{postnikov}, Postnikov 
classified the cells of the positive Grassmannian, showing that 
the \emph{positroid cells} could be indexed by 
\emph{decorated permutations}
 and also equivalence classes of \emph{reduced plabic graphs}. 
We review these objects here.  
This will give us a canonical way to label each positroid by a decorated permutation or
an equivalence class of plabic graphs.  
We refer to reader to \cite{postnikov} or 
\cite[Section 2]{Karp:2017ouj} for more details.

\begin{defn}\label{defn:decperm}
A \emph{decorated permutation} on $[n]$ is a bijection $\pi : [n] \to [n]$ whose fixed points are each coloured either black (loop) or white (coloop). We denote a black fixed point $i$ by $\pi(i) = \underline{i}$, and a white fixed point $i$ by $\pi(i) = \overline{i}$.
An \emph{anti-excedance} of the decorated permutation $\pi$ is an element $i \in [n]$ such that either $\pi^{-1}(i) > i$ or $\pi(i)=\overline{i}$.  We say that a decorated permutation
	on $[n]$ is of
	\emph{type $(k,n)$} if it has $k$ anti-excedances.
\end{defn}

For example, $\pi = (3,\underline{2},5,1,6,8,\overline{7},4)$ has 
a loop in position $2$, and a coloop in position $7$.
It has three anti-excedances, in positions $4, 7, 8$.

\begin{definition}
Given a $k\times n$ matrix $C = (c_1,\dots,c_n)$ written as a list of its columns,
we associate a decorated  permutation $\pi:=\pi_C$ as follows.
We set $\pi(i):=j$ to be the label of the first column $j$ such that 
$c_i \in \spn \{c_{i+1}, c_{i+2},\dots, c_j\}$.
If $c_i$ is the all-zero vector, we call $i$ a loop or black fixed point
	and if $c_i$ is not in the span of the other column
vectors, we call $i$ a coloop or white fixed point.
	We let $$S_{\pi} = \{C\in Gr^{\geq 0}_{k,n} \ \vert \ \pi_C=\pi\}.$$
\end{definition}

Postnikov showed that  $S_{\pi}$ is a cell, and that  the positive
Grassmannian 
$Gr_{k,n}^{\ge 0}$ is the union of cells $S_{\pi}$ 
where $\pi$ ranges over decorated permutations of type $(k,n)$
\cite[Section 16]{postnikov}.

Decorated permutations can be equivalently thought of as affine permutations \cite{KLS}.
\begin{defn}\label{defn:affperm}
	An \emph{affine permutation} on $[n]$ is a bijection $\pi : \mathbb{Z} \to \mathbb{Z}$ such that for all $i \in \mathbb{Z}$, 
	$\pi(i+n)=\pi(i)+n$ and $i \leq \pi(i) \leq i+n$.
	If 
	$\sum_{i=1}^n (\pi(i)-i) = kn$
	we say $\pi$ is  \emph{$(k,n)$-bounded}. 
\end{defn}

There is a bijection between decorated permutations of type $(k,n)$ and $(k,n)$-bounded 
affine permutations. Given a decorated permutation $\pi_d$ we can define an affine permutation $\pi_a$ by the following procedure: if $\pi_d(i)>i$, then define $\pi_a(i):=\pi_d(i)$; if $\pi_d(i)<i$, then define $\pi_a(i):=\pi_d(i)+n$; if $\pi_d(i)$ is a loop then define $\pi_a(i):=i$; if $\pi_d(i)$ is a coloop then define $\pi_a(i):=i+n$. 
For example, under this map, the decorated permutation $\pi_d=(3,\underline{2},5,1,6,8,\overline{7},4)$ in the previous example gives rise to $\pi_a=(3,2,5,9,6,8,15,12)$.

Let a pair $(i,j)$ be an \emph{inversion} of $\pi_a$ if $i,j\in \Z, i<j$, and $\pi_a(i)>\pi_a(j)$. Two inversions $(i,j)$ and $(i',j')$ are \emph{equivalent} if $i'-i=j'-j \in n \mathbb{Z}$. Then the \emph{length} $\ell(\pi_a)$ of $\pi_a$ is defined to be the number of equivalence classes of inversions. We note that $\ell(\pi_a)$ equals the number of \emph{alignments} of the associated decorated permutation $\pi_d$ (see \cite[Section~5]{postnikov}).

Positroid cells can also be represented by \emph{plabic graphs}.
\begin{defn}
A {\it plabic graph}\footnote{``Plabic'' stands for {\itshape planar bi-colored}.}  is an undirected planar graph $G$ drawn inside a disk
(considered modulo homotopy)
with $n$ {\it boundary vertices} on the boundary of the disk,
	labeled $1,\dots,n$ in clockwise order, as well as some {\it internal vertices}. Each boundary vertex is incident to a single edge, and each internal vertex is colored either black or white. If a boundary vertex is incident to a leaf (a vertex of degree $1$), we refer to that leaf as a \emph{lollipop}. {We will assume that $G$
	has no internal leaves except for lollipops.}
\end{defn}

\begin{definition}
A {\it perfect orientation\/} $\O$ of a plabic graph $G$ is a
choice of orientation of each of its edges such that each
black internal vertex $u$ is incident to exactly one edge
directed away from $u$; and each white internal vertex $v$ is incident
to exactly one edge directed towards $v$.
A plabic graph is called {\it perfectly orientable\/} if it admits a perfect orientation.
Let $G_\O$ denote the directed graph associated with a perfect orientation $\O$ of $G$.
The {\it source set\/} $I_\O \subset [n]$ of a perfect orientation $\O$ is the set of $i$ which 
	are sources
 of the directed graph $G_\O$. Similarly, if $j \in \overline{I}_{\O} := [n] - I_{\O}$, then $j$ is a sink of $\O$.
\end{definition}

\cref{fig:orientation} shows a plabic graph with a perfect orientation. In that example, $I_{\O} = \{2,3,6,8\}$.
 \begin{figure}[ht]
   \includegraphics[scale=0.9]{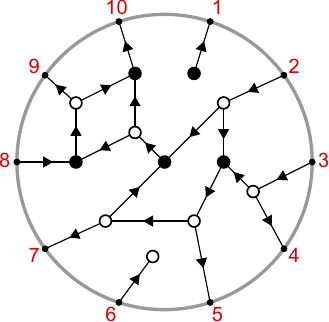}
	 \caption{A plabic graph with a perfect orientation.}
   \label{fig:orientation}
 \end{figure}

All perfect orientations of a fixed plabic graph
$G$ have source sets of the same size $k$, where
$k-(n-k) = \sum \mathrm{color}(v)\cdot(\deg(v)-2)$.
Here the sum is over all internal vertices $v$, $\mathrm{color}(v) = 1$ for a black vertex $v$,
and $\mathrm{color}(v) = -1$ for a white vertex;
see~\cite{postnikov}.  In this case we say that $G$ is of {\it type\/} $(k,n)$.
	
Now let us connect plabic graphs to 
the positroids and positroid cells from \cref{def:positroid}.
\begin{theorem}
	[{\cite[Section 11]{postnikov}}]
	\label{prop:perf}
Let $G$ be a plabic graph of type $(k,n)$.
Then we have a positroid
$M_G$ on $[n]$ defined by
$$M_G = \{I_{\O} \mid \O \text{ is a perfect orientation of }G\},$$
where $I_\O$ is the set of sources of $\O$.  Moreover, every 
	positroid cell has the form $S_{M_G}$ for some plabic graph $G$.
\end{theorem}

One can also read off the positroid from $G$ using \emph{flows} \cite{Talaska}
or  perfect matchings.

If a plabic graph $G$ is \emph{reduced} (see \cite[Section 12]{postnikov})
or \cite[Chapter 7]{FWZ}),
we have that $S_{M_G} = S_{\pi_G}$, where $\pi_G$ is the decorated permutation
defined as follows.
\begin{defn}\label{def:rules}
Let $G$ be a reduced plabic graph  with boundary vertices $1,\dots, n$. For each boundary vertex $i\in [n]$, we follow a path along the edges of $G$ starting at $i$, turning (maximally) right at every internal black vertex, and (maximally) left at every internal white vertex. This path ends at some boundary vertex $\pi(i)$. By \cite[Section 13]{postnikov}, the fact that $G$ is reduced implies that each fixed point of $\pi$ is attached to a lollipop; we color each fixed point by the color of its lollipop. This 
	defines a decorated permutation, called the \emph{decorated trip permutation} $\pi_G = \pi$ of $G$. 
\end{defn}

\bibliographystyle{alpha}
\bibliography{bibliography}

\end{document}